\numberwithin{equation}{section}
\newtheorem*{thm*}{Theorem}
\newtheorem{thm}{Theorem}[section]{\bf}{\it}
\newtheorem{prop}[thm]{Proposition}
\newtheorem{lemma}[thm]{Lemma}
\newtheorem{cor}[thm]{Corollary}
\newtheorem{conj}[thm]{Conjecture}
\theoremstyle{definition}
\newtheorem{dfn}[thm]{Definition}
\newtheorem{cons}[thm]{Construction}
\theoremstyle{remark}
\newtheorem{rmk}[thm]{Remark}
\newtheorem{nota}[thm]{Notation}
\newtheorem{situ}[thm]{Situation}
\newcommand{\B}{\mathrm{B}}
\renewcommand{\AA}{\mathbb{A}}
\newcommand{\DD}{\mathbb{D}}
\newcommand{\BB}{\mathbb{B}}
\newcommand{\C}{\mathbb{C}}
\newcommand{\N}{\mathbb{N}}
\newcommand{\Q}{\mathbb{Q}}
\newcommand{\PP}{\mathbb{P}}
\newcommand{\Z}{\mathbb{Z}}
\newcommand{\CAlg}{\mathrm{CAlg}}
\newcommand{\Cat}{\mathrm{Cat}}
\newcommand{\CAT}{\mathrm{CAT}}
\newcommand{\one}{\mathbf{1}}
\newcommand{\Prl}{{\rm Pr}^{\rm L}}
\newcommand{\Prr}{{\rm Pr}^{\rm R}}
\newcommand{\pt}{\mathrm{pt}}
\newcommand{\alg}{\mathrm{alg}}
\newcommand{\an}{\mathrm{an}}
\newcommand{\An}{\mathrm{An}}
\newcommand{\colim}{\mathrm{colim}}
\newcommand{\ct}{\mathrm{ct}}
\newcommand{\dR}{\mathrm{dR}}
\newcommand{\eff}{\mathrm{eff}}
\newcommand{\et}{\acute{\rm e}{\rm t}}
\newcommand{\Et}{\acute{\rm E}{\rm t}}
\newcommand{\Ev}{\mathrm{Ev}}
\newcommand{\Sus}{\mathrm{Sus}}
\newcommand{\Hom}{\mathrm{Hom}}
\newcommand{\id}{\mathrm{id}}
\newcommand{\Map}{\mathrm{Map}}
\newcommand{\op}{\mathrm{op}}
\newcommand{\qcqs}{\ensuremath{\mathrm{qcqs}}}
\newcommand{\Sm}{\mathrm{Sm}}
\newcommand{\RigSm}{\mathrm{RigSm}}
\newcommand{\Spa}{\mathrm{Spa}}
\newcommand{\Spec}{\mathrm{Spec}}
\newcommand{\Sp}{\mathcal{S}p}
\newcommand{\M}{\mathrm{M}}
\newcommand{\Th}{\mathrm{Th}}
\newcommand{\FSH}{\mathbf{FSH}}
\newcommand{\Mod}{\mathrm{Mod}}
\newcommand{\coMod}{\mathrm{coMod}}
\newcommand{\Psh}{\mathrm{Psh}}
\newcommand{\Shv}{\mathrm{Shv}}
\newcommand{\RigSH}{\mathbf{RigSH}}
\newcommand{\SH}{\mathbf{SH}}
\newcommand{\Rder}{\mathrm{R}}
\newcommand{\Lder}{\mathrm{L}}
\newcommand{\mot}{\mathrm{mot}}
\newcommand{\ellcpl}{\ell\text{-}\mathrm{cpl}}
\newcommand{\cofiber}{\mathrm{cofib}}
\newcommand{\Sect}{\mathrm{Sect}}
\newcommand{\WCT}{\mathrm{WCT}}
\newcommand{\RigWCT}{\mathrm{RigWCT}}
\newcommand{\WSp}{\mathrm{WSp}}
\newcommand{\RigWSp}{\mathrm{RigWSp}}
\newcommand{\Real}{\mathrm{Real}}
\newcommand{\RigReal}{\mathrm{RigReal}}
\newcommand{\Hmot}{\mathcal{H}_{\mathrm{mot}}}
\newcommand{\Cech}{\check{\mathrm{C}}}
\newcommand{\new}{\mathrm{new}}
\newcommand{\Rig}{\mathrm{Rig}}
\newcommand{\rig}{\mathrm{rig}}
\newcommand{\Betti}{\mathrm{B}}
\newcommand{\Lmot}{\mathrm{L}_{\mathrm{mot}}}
\newcommand{\Deltasimp}{\mathbf{\Delta}}
\newcommand{\Hopf}{\mathrm{Hopf}}
\newcommand{\Gm}{\mathbb{G}_{\mathrm{m}}{}}
\renewcommand{\SS}{\mathbb{S}}
\newcommand{\CpSm}{\mathrm{CpSm}}
\newcommand{\cl}{\mathrm{cl}}
\newcommand{\AnSH}{\mathbf{AnSH}}
\newcommand{\Gmot}{\mathcal{G}_{\mathrm{mot}}}
\newcommand{\Fun}{\mathrm{Fun}}
\newcommand{\tor}{\mathrm{tor}}
\begin{document}
\title{Weil cohomology theories and their motivic Hopf algebroids}

\dedicatory{\hfill In memory of Jacob Murre}

\author[Ayoub]{Joseph Ayoub}
\address{University of Zurich / 
LAGA - Universit\'e Sorbonne Paris Nord}
\email{joseph.ayoub@math.uzh.ch}
\urladdr{user.math.uzh.ch/ayoub/}


\keywords{Motives, Weil cohomology theories, motivic Hopf algebras, motivic Galois groups, spectral algebraic geometry}

\begin{abstract}
In this paper we discuss a general notion of Weil cohomology theories,
both in algebraic geometry and in rigid analytic geometry. We allow our Weil cohomology theories to have coefficients in arbitrary commutative ring spectra. Using the theory of motives, we give three equivalent 
viewpoints on Weil cohomology theories: as a cohomology theory on smooth varieties, as a motivic spectrum and as a realization functor. 
We also associate to every Weil cohomology theory a motivic Hopf algebroid
generalizing the construction we gave in 
\cite{gal-mot-1} for the Betti cohomology. Exploiting results and 
constructions from \cite{Nouvelle-Weil}, we are able to prove
that the motivic Hopf algebroids of all the 
classical Weil cohomology theories
are connective. In particular, they give rise to motivic Galois groupoids
that are spectral affine groupoid schemes.
\end{abstract}

\maketitle

\tableofcontents

\section*{Introduction}

Let $k$ be a ground field. Given a complex 
embedding $\sigma:k\hookrightarrow \C$, we introduced in 
\cite{gal-mot-1,gal-mot-2}
a motivic Hopf algebra $\Hmot(k,\sigma)$ coacting on the 
Betti realization of motives over $k$ in a universal way. 
The motivic Hopf algebra 
$\Hmot(k,\sigma)$, which is derived by construction,
is known to be connective by 
\cite[Corollaire 2.105]{gal-mot-1} and hence defines a spectral affine
group scheme $\Gmot(k,\sigma)$ called the motivic Galois group.
(In fact, it is conjectured that $\Hmot(k,\sigma)$
is classical, i.e., concentrated in degree zero, but we 
will not discuss this conjecture in this paper.)

A motivic Hopf algebra can be associated to any Weil cohomology theory. 
This follows from considerations in 
\cite[\S3]{from-mot-to-comod}, but will revisit the construction in 
Section 
\ref{sect:-motivic-Hopf-algebra-weil-coh} taking advantage of the modern 
language of higher category theory and higher algebra.
Using the usual comparison isomorphisms relating Betti cohomology with 
$\ell$-adic and de Rham cohomologies, it is easy to deduce from
\cite[Corollaire 2.105]{gal-mot-1} that 
the motivic Hopf algebras associated to the classical Weil cohomology
theories are connective when $k$ has characteristic zero.

Our initial motivation for writing this paper was the 
desire to extend the connectivity of the motivic Hopf algebras
to the case where $k$ has positive characteristic. 
Our proof of \cite[Corollaire 2.105]{gal-mot-1} 
relies on the explicit model of
in \cite[Corollaire 2.63]{gal-mot-1}
which is very specific to the Betti realization.
When $k$ has positive characteristic, 
the lack a Betti realization for motives over $k$
suggests, at first sight, that a new approach is necessary.
This turned out not to be the case:
we explain in this paper how to prove the connectivity of the 
motivic Hopf algebras in positive 
characteristic by reducing to the zero characteristic case!
Our strategy is to use the new Weil cohomology theories
introduced in \cite{Nouvelle-Weil}.
More precisely, let $K$ be a valued field of height $1$, of unequal characteristic $(0,p)$ and with residue field $k$.  
Fix a complex embedding $K\hookrightarrow \C$.
Then, there is a Weil cohomology theory $\Gamma_{\new,\,\Betti}$ 
for smooth $k$-varieties which is constructed from
the Betti realization for motives over $K$ and the motivic 
rigid analytification functor associated to the valuation of $K$.
The Weil cohomology theory $\Gamma_{\new,\,\Betti}$ compares to 
all the classical Weil cohomology theories for $k$-varieties: 
if $A$ is the coefficient ring of $\Gamma_{\new,\,\Betti}$,
there is a morphism $A \to \Q_{\ell}$, for every prime $\ell\neq p$, 
such that $\Gamma_{\new,\,\Betti}\otimes_A\Q_{\ell}$ 
is canonically identified with the $\ell$-adic cohomology theory.
A similar identification exists also for Berthelot's rigid cohomology
\cite{Rig-Coh-Bert}. 
Therefore, it is enough to show that the motivic Hopf algebra 
$\Hmot(\Gamma_{\new,\,\Betti})$ associated to $\Gamma_{\new,\,\Betti}$ 
is connective. Using that $\Gamma_{\new,\,\Betti}$ is 
constructed from the Betti realization and the rigid analytification 
functor, it is possible to express the underlying
algebra of $\Hmot(\Gamma_{\new,\,\Betti})$
very explicitly so that connectivity can be seen to hold directly.
(See Section \ref{sect:proof-of-main-thm} 
for more details).

To implement the strategy described above, it is natural 
to adopt a generalized notion of a Weil cohomology theory. 
In particular, we allow our Weil cohomology theories to
have coefficients in an arbitrary commutative ring spectrum
(whereas in \cite{Nouvelle-Weil} we insisted on having 
ordinary rings). A large portion of the paper is devoted
to recasting the theory of Weil cohomology theories in 
its natural generality, taking advantage of the modern techniques
of higher category theory. 
In particular, we establish equivalences of $\infty$-categories
that enable us to move freely between the different 
incarnations of Weil cohomology theories, namely: 
as a cohomology theory on smooth varieties,
as a motivic spectrum, and as a realization functor.
See Theorems 
\ref{thm:equivalence-between-weil-cohomol-spectra} and 
\ref{thm:equiv-between-WSp-and-Plain-real}.
We also clarify in Section
\ref{sect:from-alg-geo-to-rig-analytic}
the various relations between Weil cohomology theories
in algebraic geometry and in rigid analytic geometry. 
A notable result is Theorem 
\ref{thm:rig-upper-star-weil-coh-theory}
which is an improved version of the key result used in
\cite{Nouvelle-Weil} for constructing 
the new Weil cohomology theories.
Another notable result is Theorem 
\ref{thm:essential-surjectivity-of-psi-lower-star}
which, roughly speaking, asserts that all Weil cohomology theories
in rigid analytic geometry comes from Weil cohomology theories
in algebraic geometry.
In Section \ref{sect:-motivic-Hopf-algebra-weil-coh},
we recall the definition of the motivic Hopf algebra associated
to a Weil cohomology theory. In Section 
\ref{sect:example-of-weil-coh-theo},
we gather many examples of Weil cohomology theories.
Finally, we prove our connectivity theorem for the motivic Hopf algebras
in Section \ref{sect:proof-of-main-thm}.

\subsection*{Notations and conventions}$\,$

\subsubsection*{Higher categories}
We use the language of higher category theory following Lurie's books
\cite{Lurie}, \cite{Lurie-HA} and 
\cite{Lurie-SAG}, and we assume that the reader is 
familiar with this language. Our notations pertaining to 
higher category theory are very close to that of 
loc.~cit. Nevertheless, we list below some of the notations
that we use frequently. 

Fixing Grothendieck universes, we denote by
$\Cat_{\infty}$ the $\infty$-category of small $\infty$-categories
and $\CAT_{\infty}$ the $\infty$-category of locally small but 
possibly large $\infty$-categories. 
We denote by $\Prl$ (resp. $\Prr$) the $\infty$-category of 
presentable $\infty$-categories and left (resp. right) adjoint functors.
$1$-Categories are typically referred to as just `categories' and 
viewed as $\infty$-categories via the nerve construction.
We denote by $\mathcal{S}$ the $\infty$-category of spaces of small
spectra, by $\Sp$ the $\infty$-category of small spectra
and by $\Sp_{\geq 0}\subset \Sp$ its full subcategory of connective 
spectra.

If $\mathcal{C}^{\otimes}$ is a symmetric monoidal $\infty$-category,
we denote by $\CAlg(\mathcal{C})$ the $\infty$-category of commutative 
algebras in $\mathcal{C}^{\otimes}$. (In particular, 
$\CAlg(\Prl)$ is the $\infty$-category of presentable symmetric monoidal
$\infty$-categories and left adjoint symmetric monoidal functors.)
If $A$ is a commutative algebra in $\mathcal{C}$, we denote by 
$\Mod_A(\mathcal{C})$ the $\infty$-category of $A$-modules. 
When $\mathcal{C}=\Sp$, we often write $\CAlg$ instead of
$\CAlg(\Sp)$ and $\Mod_A$ instead of $\Mod_A(\Sp)$.

Given an $\infty$-category $\mathcal{C}$, we denote by $\Map_{\mathcal{C}}(x,y)$ the mapping space between two objects $x$ and $y$ in $\mathcal{C}$.
Given another $\infty$-category $\mathcal{D}$, we denote by $\Fun(\mathcal{C},\mathcal{D})$ the $\infty$-category of functors from 
$\mathcal{C}$ to $\mathcal{D}$. If $\mathcal{C}$ is small, we denote by 
$\mathcal{P}(\mathcal{C})=\Fun(\mathcal{C}^{\op},\mathcal{S})$
the $\infty$-category of presheaves on $\mathcal{C}$.
If $A$ is a commutative ring spectrum, we denote by 
$\Psh(\mathcal{C};A)=\Fun(\mathcal{C}^{\op},\Mod_A)$
the $\infty$-category of presheaves of $A$-modules
on $\mathcal{C}$. Given a topology $\tau$ on $\mathcal{C}$, 
we denote by $\Shv_{\tau}(\mathcal{C})\subset \mathcal{P}(\mathcal{C})$
and $\Shv_{\tau}(\mathcal{C};A)\subset \Psh(\mathcal{C};A)$
the full sub-$\infty$-categories of $\tau$-hypersheaves.
For an object $X\in \mathcal{C}$, we denote by $\Lambda_{\tau}(X)
\in \Shv_{\tau}(\mathcal{C};\Lambda)$ 
the $\tau$-hypersheaf associated to the presheaf of 
$\Lambda$-modules freely generated on $X$.

\subsubsection*{Varieties (algebraic, rigid analytic)}
We always denote by $k$ the ground field for 
algebraic varieties and by 
$K$ the ground field for rigid analytic varieties. 
By `algebraic $k$-variety' we mean a finite type $k$-scheme 
and by `rigid analytic $K$-variety' we mean
an adic space over $K$, in the sense of Huber, 
which is locally of finite type.
(We don't assume that rigid analytic $K$-varieties are quasi-compact
since the analytification of an algebraic $K$-variety is rarely 
quasi-compact.)
We denote by $\Sm_k$ the category of smooth $k$-varieties
and by $\RigSm_K$ the category of smooth rigid analytic $K$-varieties.
We also denote by $\RigSm^{\qcqs}_K\subset \RigSm_K$ the 
full subcategory of quasi-compact and quasi-separated smooth rigid 
analytic $K$-varieties.
These three categories will be endowed with the \'etale topology
which we abbreviate by `$\et$'. 

It will be convenient for us to allow the ground valued field
$K$ to be non necessary complete, and we denote by $\widehat{K}$
its completion. In particular, 
the expression `rigid analytic $K$-variety'
really means `rigid analytic $\widehat{K}$-variety'. 
We usually assume that the residue field of $K$ is the ground field $k$ 
for algebraic varieties, although we will also consider algebraic varieties
over $K$. In general, we write `$\pt$' for $\Spec(k)$ or 
$\Spa(\widehat{K})$. As usual, we write $\AA^1$ for the 
affine line, $\PP^1$ for the projective line and 
$\BB^1$ for the Tate ball.

\subsubsection*{Motives (algebraic, rigid analytic)}
We fix a connective commutative ring spectrum $\Lambda$,
and we always work $\Lambda$-linearly. (In particular, the 
tensor product of $\Lambda$-modules will be denoted 
by $-\otimes-$ instead of $-\otimes_{\Lambda}-$.) 
We will always assume that 
the exponent characteristic of $k$ in invertible in $\Lambda$.
This applies to the ground field for our algebraic varieties 
and to the residue field of the ground valued field for 
our rigid analytic varieties.

Given a scheme $S$, we denote by $\SH_{\et}(S;\Lambda)$ 
the Morel--Voevodsky $\infty$-category of \'etale 
motives over $S$ with coefficients in $\Lambda$.
Similarly, given a rigid analytic space $S$, we 
denote by $\RigSH_{\et}(S;\Lambda)$ 
the $\infty$-category of \'etale rigid analytic motives over $S$
with coefficients in $\Lambda$. We are mainly interested in the case
where $S$ is the spectrum (resp. adic spectrum) of the ground field,
and in this case we write $\SH_{\et}(k;\Lambda)$ 
(resp. $\RigSH_{\et}(K;\Lambda)$). Given a smooth algebraic $k$-variety
(resp. smooth rigid analytic $K$-variety) $X$, we denote by 
$\M(X)=\Sigma_T^{\infty}\Lmot(\Lambda_{\et}(X))$ its associated motive.

\section{Weil cohomology theories in algebraic geometry}

\label{sect:generalized-weil-coh-theories-alg-geo}

Let $k$ be a ground field and let $\Lambda\in\CAlg$ 
be a connective commutative ring spectrum.
We will always assume that the exponent characteristic 
of $k$ is invertible in $\pi_0(\Lambda)$. 
We denote by $\Sm_k$ the category of smooth algebraic $k$-varieties. 
In this article, we adopt the following general notion 
of a Weil cohomology theory. (Compare with 
\cite[Definition 2.1.4]{Weil-Coh} and 
\cite[D\'efinition 1.1]{Nouvelle-Weil}.)

\begin{dfn}
\label{dfn:generalised-weil-coh-theory}
A Weil cohomology theory $\Gamma_W$ for algebraic 
$k$-varieties is a presheaf of commutative $\Lambda$-algebras on 
$\Sm_k$ satisfying the following properties.
\begin{enumerate}

\item ($\AA^1$-invariance)
The obvious morphism $\Gamma_W(\pt)\to \Gamma_W(\AA^1)$
is an equivalence.

\item The $\Gamma_W(\pt)$-module $\Gamma_W(\PP^1,\infty)=
\cofiber\{\Gamma_W(\pt) \to \Gamma_W(\PP^1)\}$ is invertible.

\item (K\"unneth formula) For every $X,Y\in\Sm_k$, the obvious morphism
$$\Gamma_W(X)\otimes_{\Gamma_W(\pt)}\Gamma_W(Y)\to 
\Gamma_W(X\times Y)$$
is an equivalence.

\item The presheaf $\Gamma_W$ admits \'etale hyperdescent.

\end{enumerate}
The commutative $\Lambda$-algebra $\Gamma_W(\pt)$ is called the 
coefficient ring of $\Gamma_W$.
\end{dfn}

\begin{dfn}
\label{dfn:infty-category-of-WCT}
We denote by $\WCT(k;\Lambda)$ the $\infty$-category of 
Weil cohomology theories for algebraic $k$-varieties. 
This is the nonfull sub-$\infty$-category of 
$\Fun((\Sm_k)^{\op},\CAlg_{\Lambda\backslash})$
spanned by morphisms between Weil cohomology theories 
$\Gamma_W \to \Gamma_{W'}$ such that the induced morphism
$$\Gamma_W(\PP^1,\infty)
\otimes_{\Gamma_W(\pt)}\Gamma_{W'}(\pt) \to \Gamma_{W'}(\PP^1,\infty)$$
is an equivalence.
\end{dfn}

\begin{rmk}
\label{rmk:on-tate-twists-of-weil-coh-theories}
Let $\Gamma_W \in \WCT(k;\Lambda)$ 
be a Weil cohomology theory.
For $n\in \Z$, we set: 
$$\Gamma_W(n)=\Gamma_W\otimes_{\Gamma_W(\pt)}
\left(\Gamma_W(\PP^1,\infty)[-2]\right)^{\otimes -n}.$$
This is an invertible $\Gamma_W$-module. 
It follows from the K\"unneth formula that 
\begin{equation}
\label{eq-rmk:on-tate-twists-of-weil-coh-theories-3}
\Gamma_W(-\times (\PP^1,\infty))(n)[2n] \simeq 
\Gamma_W(-)(n-1)[2n-2].
\end{equation}
Observe that, given a morphism 
of Weil cohomology theories $\Gamma_W \to \Gamma_{W'}$, 
we have natural morphisms of $\Gamma_W$-modules
$\Gamma_W(n) \to \Gamma_{W'}(n)$ for all $n\in \Z$.
\end{rmk}

Weil cohomology theories are representable in the Morel--Voevodsky
stable homotopy category, and even in its \'etale localization.
To explain this, we start by recalling a few basic definitions.

\begin{dfn}
\label{dfn:stable-homotopy-category-Lambda-etale}
We denote by $\SH_{\et}^{\eff}(k;\Lambda) \subset 
\Shv_{\et}(\Sm_k;\Lambda)$ the full sub-$\infty$-category of
$\AA^1$-local \'etale hypersheaves of $\Lambda$-modules on $\Sm_k$.
We denote by 
$$\Lmot:\Shv_{\et}(\Sm_k;\Lambda) \to 
\SH_{\et}^{\eff}(k;\Lambda)$$
the motivic localisation functor.
Note that $\SH_{\et}^{\eff}(k;\Lambda)$ underlies a symmetric monoidal 
structure. We denote by $\SH_{\et}(k;\Lambda)^{\otimes}$ 
the symmetric monoidal $\infty$-category obtained from 
$\SH_{\et}^{\eff}(k;\Lambda)^{\otimes}$ by 
inverting the object $T=\Lmot(\Lambda_{\et}(\PP^1,\infty))$
for the tensor product. Given $X\in \Sm_k$, we denote by 
$\M^{\eff}(X)$ and $\M(X)$ the objects $\Lmot(\Lambda_{\et}(X))$
and $\Sigma^{\infty}_T\Lmot(\Lambda_{\et}(X))$ in 
$\SH_{\et}^{\eff}(k;\Lambda)$ and $\SH_{\et}(k;\Lambda)$.
\end{dfn}

\begin{rmk}
\label{rmk:motivic-spectra-object-SH}
By \cite[Corollary 2.22]{robalo:k-theory-bridge}, the $\infty$-category 
$\SH_{\et}(k;\Lambda)$ is the limit of the tower
$$\cdots \xrightarrow{\Omega^1_T} \SH^{\eff}_{\et}(k;\Lambda)
\xrightarrow{\Omega^1_T} \SH^{\eff}_{\et}(k;\Lambda).$$
Thus, an object of $\SH_{\et}(k;\Lambda)$ is a $T$-spectrum, i.e., 
a sequence $E=(E_n)_{n\in \N}$ of $\AA^1$-local \'etale hypersheaves 
of $\Lambda$-modules together with equivalences
$E_n\simeq \Omega^1_T(E_{n+1})=\underline{\Hom}(T,E_{n+1})$.
The functor $E\mapsto E_n$ will be denoted by $\Ev_T^n$
and its left adjoint will be denoted by $\Sus^n_T$.
For $n=0$, these functors are more commonly denoted by 
$\Omega^{\infty}_T$ and $\Sigma_T^{\infty}$ respectively.
\end{rmk}

\begin{rmk}
\label{rmk:six-functor-for-SH-}
The $\infty$-category $\SH_{\et}(k;\Lambda)$
is part of a six-functor formalism: for every scheme $S$, we
have an $\infty$-category of motivic sheaves 
$\SH_{\et}(S;\Lambda)$
underlying a closed symmetric monoidal structure, and for 
every finite type 
morphism of schemes $f$ we have functors $f^*$, $f_*$, 
$f_!$ and $f^!$. 
We will occasionally 
make use of this formalism, for example in Lemma 
\ref{lemma:generation-by-strongly-dualizable-objects}
below.
\end{rmk}

We will need the following simple 
fact concerning the notion of 
idempotent algebras in the sense of 
\cite[Definition 2.6.0.1]{Lurie-SAG}.

\begin{lemma}
\label{lemma:idempotent-algebra-}
Let $\mathcal{C}^{\otimes}$ be a symmetric monoidal $\infty$-category,
and denote by $\one$ its unit object. Let 
$A$ be an $E_0$-algebra in $\mathcal{C}$, i.e., an object 
of $\mathcal{C}$ endowed with a morphism $u:\one \to A$. 
Then the following conditions are equivalent.
\begin{enumerate}

\item  The morphism $u\otimes \id_A: A \to A\otimes A$
is an equivalence. (We express this by saying that 
the $E_0$-algebra $A$ is idempotent.)

\item The endofunctor 
$\rho:\mathcal{C} \to \mathcal{C}$, $M\mapsto M\otimes A$, 
together with the natural transformation $\id \to \rho$ induced by $u$,
defines a localisation functor. 

\end{enumerate}
Moreover, if these conditions are satisfied, there is a unique 
commutative algebra structure on $A$ extending the given 
$E_0$-algebra structure, and the commutative algebra $A$ is idempotent
in the sense of \cite[Definition 2.6.0.1]{Lurie-SAG}.
\end{lemma}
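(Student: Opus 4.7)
The equivalence of (1) and (2) is a direct application of Lurie's characterization of localization functors \cite[Proposition 5.2.7.4]{Lurie}: a natural transformation $\alpha : \id_{\mathcal{C}} \to L$ exhibits $L$ as a localization precisely when both $L\alpha$ and $\alpha L$ are natural equivalences of functors $L \to LL$. I will apply this criterion to $L = \rho$ with $\alpha$ induced by $u$.

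For $(1) \Rightarrow (2)$, take $\alpha_M = \id_M \otimes u : M \simeq M \otimes \one \to M \otimes A$. Unwinding definitions,
\[
\rho(\alpha_M) = \id_M \otimes u \otimes \id_A, \qquad \alpha_{\rho(M)} = \id_M \otimes \id_A \otimes u,
\]
viewed as morphisms $M \otimes A \to M \otimes A \otimes A$. By the symmetry of $\mathcal{C}^{\otimes}$, the braiding $A \otimes A \to A \otimes A$ interchanges $u \otimes \id_A$ and $\id_A \otimes u$, so both are equivalences as soon as one of them is. Hypothesis (1) then implies that $\rho\alpha$ and $\alpha\rho$ are natural equivalences, and Lurie's criterion yields (2). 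Conversely, for $(2) \Rightarrow (1)$, I evaluate the localization condition at $M = \one$: the map $\alpha_{\rho(\one)} : A \to A \otimes A$ is, up to the unit isomorphism $\one \otimes A \simeq A$, precisely $\id_A \otimes u$, and the symmetry braiding again yields the equivalence of $u \otimes \id_A$.

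For the moreover clause, the strategy is to transfer the symmetric monoidal structure from $\mathcal{C}^{\otimes}$ to the local subcategory $\mathcal{C}_A \subset \mathcal{C}$ consisting of those $M$ for which $\alpha_M$ is an equivalence. Using idempotence, one checks that $\mathcal{C}_A$ is closed under tensor products: if $\alpha_M$ and $\alpha_N$ are equivalences, then $M \otimes N \otimes A \otimes A \simeq M \otimes N \otimes A$ via $\id_{M \otimes N} \otimes (u \otimes \id_A)^{-1}$. By \cite[Proposition 2.2.1.9]{Lurie-HA}, the localization $\rho$ then inherits the structure of a symmetric monoidal localization of $\mathcal{C}^{\otimes}$, and $A = \rho(\one)$ is the unit of the resulting symmetric monoidal structure on $\mathcal{C}_A^{\otimes}$. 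This endows $A$ with a canonical commutative algebra structure extending $u$, essentially uniquely by the contractibility of the space of commutative algebra structures on the unit of a symmetric monoidal $\infty$-category. Idempotence in the sense of \cite[Definition 2.6.0.1]{Lurie-SAG} finally reduces to the fact that the multiplication $A \otimes A \to A$ is inverse to $u \otimes \id_A$.

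The main technical step I expect to require care is the verification that $\rho$ inherits a symmetric monoidal structure, where the key input is precisely the idempotence hypothesis feeding into \cite[Proposition 2.2.1.9]{Lurie-HA}; the arguments in the first two paragraphs, by contrast, are formal consequences of symmetry and of Lurie's localization criterion.
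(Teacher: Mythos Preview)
Your proposal is correct and follows essentially the same route as the paper: both invoke \cite[Proposition 5.2.7.4]{Lurie} for the equivalence of (1) and (2), and \cite[Proposition 2.2.1.9]{Lurie-HA} to upgrade $\rho$ to a symmetric monoidal localization and thereby endow $A=\rho(\one)$ with its commutative algebra structure. One small point of precision: the hypothesis needed for \cite[Proposition 2.2.1.9]{Lurie-HA} is the compatibility condition of \cite[Definition 2.2.1.6]{Lurie-HA} (that $\rho$-equivalences are stable under tensoring with arbitrary objects), which is slightly stronger than the closure of $\mathcal{C}_A$ under tensor products that you verify---though for $\rho = (-)\otimes A$ both are equally immediate from the symmetry of $\mathcal{C}^{\otimes}$; and for uniqueness the paper phrases the argument via symmetric monoidal functors to $\Mod_A(\mathcal{C})$ rather than via the unit of $\mathcal{C}_A^{\otimes}$, but since $\Mod_A(\mathcal{C})\simeq \mathcal{C}_A$ for idempotent $A$ these are the same argument.
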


\begin{proof}
The implication (1) $\Rightarrow$ (2) follows from 
\cite[Proposition 5.2.7.4]{Lurie} and the converse is obvious. 
To prove the second assertion, we note that the localisation 
functor $\rho$ is compatible with the 
symmetric monoidal structure on $\mathcal{C}$
in the sense of \cite[Definition 2.2.1.6 \& 
Example 2.2.1.7]{Lurie-HA}. It follows from 
\cite[Proposition 2.2.1.9]{Lurie-HA} that  
$\rho$ is right-lax monoidal. In particular, $A$ is 
naturally a commutative algebra. For the unicity, we note that a 
commutative algebra structure on $A$ extending the $E_0$-algebra structure
determines (and is determined by) a symmetric monoidal functor 
$\widetilde{\rho}:\mathcal{C}^{\otimes} \to 
\Mod_A(\mathcal{C})^{\otimes}$ 
whose underlying functor is $\rho:\mathcal{C} \to \rho(\mathcal{C})$.
Thus, we may invoke again
\cite[Proposition 2.2.1.9]{Lurie-HA} to conclude.
\end{proof}

\begin{nota}
\label{nota:SH-modules-over-Gamma-}
If $R$ be a commutative algebra in $\SH^{\eff}_{\et}(k;\Lambda)$,
we write $\SH^{\eff}_{\et}(k;R)$ and $\SH_{\et}(k;R)$ for the 
$\infty$-categories $\Mod_R(\SH^{\eff}_{\et}(k;\Lambda))$
and $\Mod_R(\SH_{\et}(k;\Lambda))$.
Similarly, if $R'$ is a commutative algebra in $\SH_{\et}(k;\Lambda)$,
we write $\SH_{\et}(k;R')$ for the 
$\infty$-category $\Mod_{R'}(\SH_{\et}(k;\Lambda))$.
\end{nota}

\begin{prop}
\label{prop:key-for-constructing-bf-Gamma-W}
Let $\Gamma_W\in \WCT(k;\Lambda)$ be a Weil cohomology theory. 
Then $\Gamma_W$ is a commutative algebra in 
$\SH^{\eff}_{\et}(k;\Lambda)$ and 
there is a unique commutative algebra $\mathbf{\Gamma}_W$
in $\SH_{\et}(k;\Gamma_W)$ satisfying the following conditions:
\begin{enumerate}

\item the morphism
$\Gamma_W \to \Omega^{\infty}_T(\mathbf{\Gamma}_W)$ 
is an equivalence;

\item the underlying spectrum of $\mathbf{\Gamma}_W$ is given by 
$\Gamma_W(n)[2n]$ in level $n$ and has assembly maps induced 
from the equivalence in 
\eqref{eq-rmk:on-tate-twists-of-weil-coh-theories-3}.

\end{enumerate}
Moreover, $\mathbf{\Gamma}_W$ is an idempotent algebra in
$\SH^{\eff}_{\et}(k;\Gamma_W)$.
\end{prop}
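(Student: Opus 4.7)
My plan proceeds in four substeps. First, I verify that $\Gamma_W$ is already a commutative algebra in $\SH^{\eff}_{\et}(k;\Lambda)$: axiom (4) is étale hyperdescent, while combining axioms (1) and (3) yields $\AA^1$-invariance at every $X \in \Sm_k$ via the chain $\Gamma_W(X \times \AA^1) \simeq \Gamma_W(X) \otimes_{\Gamma_W(\pt)} \Gamma_W(\AA^1) \simeq \Gamma_W(X)$. The commutative algebra structure is built into the hypotheses. Second, I assemble the underlying $T$-spectrum of $\mathbf{\Gamma}_W$: setting $E_n := \Gamma_W(n)[2n] \in \SH^{\eff}_{\et}(k;\Gamma_W)$, the equivalence \eqref{eq-rmk:on-tate-twists-of-weil-coh-theories-3} supplies assembly maps $E_n \simeq \Omega^1_T E_{n+1}$ (using that $\Omega^1_T F$ evaluated at $X$ computes $F(X \times (\PP^1,\infty))$), and by Remark \ref{rmk:motivic-spectra-object-SH} these data produce an object $\mathbf{\Gamma}_W \in \SH_{\et}(k;\Gamma_W)$ with $\Omega^{\infty}_T \mathbf{\Gamma}_W = E_0 = \Gamma_W$.

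Third, I endow $\mathbf{\Gamma}_W$ with an $E_0$-algebra structure: the identity $\mathrm{id}_{\Gamma_W}: \Gamma_W \to \Omega^{\infty}_T \mathbf{\Gamma}_W$ corresponds under the adjunction $\Sigma^{\infty}_T \dashv \Omega^{\infty}_T$ to a unit map $u: \Sigma^{\infty}_T \Gamma_W \to \mathbf{\Gamma}_W$, and since $\Sigma^{\infty}_T \Gamma_W$ is the unit of the symmetric monoidal $\infty$-category $\SH_{\et}(k;\Gamma_W)$, this $u$ defines an $E_0$-structure. Fourth, I invoke Lemma \ref{lemma:idempotent-algebra-} and verify that this $E_0$-algebra is idempotent. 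The key point is that $u$ is itself already an equivalence: level by level, $(\Sigma^{\infty}_T \Gamma_W)_n \simeq \Gamma_W \otimes T^{\otimes n}$ can be identified with $E_n = \Gamma_W(n)[2n]$ via the Künneth formula (axiom (3)) together with the invertibility of $\Gamma_W(\PP^1,\infty)[-2]$ (axiom (2)), and the structure maps on both sides match by functoriality of Künneth. Hence $u$ is an equivalence and so $u \otimes \id_{\mathbf{\Gamma}_W}$ is trivially an equivalence, establishing idempotency. Lemma \ref{lemma:idempotent-algebra-} then promotes the $E_0$-structure on $\mathbf{\Gamma}_W$ to a unique commutative algebra structure extending it, which is automatically idempotent; the uniqueness of $\mathbf{\Gamma}_W$ satisfying conditions (1) and (2) follows because any such algebra gives rise to the same $E_0$-datum.

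The main obstacle is the identification in the fourth step: verifying that, after passing to $\SH_{\et}(k;\Gamma_W)$, the $T$-suspension of $\Gamma_W$ recovers the Tate twists $\Gamma_W(n)[2n]$ at every level in a manner compatible with the structure maps. This requires unwinding the interplay between the effective and stable motivic categories and applying Künneth together with the invertibility of the $\Gamma_W(\pt)$-module $\Gamma_W(\PP^1,\infty)$ to reconcile the two descriptions. Once this identification is in hand, both the idempotency claim and the application of Lemma \ref{lemma:idempotent-algebra-} become formal.
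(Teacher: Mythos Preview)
Your fourth step has a genuine gap. You claim that $u:\Sigma^{\infty}_T\Gamma_W\to\mathbf{\Gamma}_W$ is itself an equivalence, arguing that $(\Sigma^{\infty}_T\Gamma_W)_n\simeq T^{\otimes n}\otimes\Gamma_W$ and that this identifies with $\Gamma_W(n)[2n]$ via K\"unneth. Both assertions are unjustified. The identity $(\Sigma^{\infty}_T\Gamma_W)_n\simeq T^{\otimes n}\otimes\Gamma_W$ is only the \emph{pre}spectrum description; in the $\Omega$-spectrum model of Remark~\ref{rmk:motivic-spectra-object-SH} it would require the cancellation map $T^{\otimes n}\otimes\Gamma_W\to\Omega^1_T(T^{\otimes n+1}\otimes\Gamma_W)$ to be an equivalence in $\SH^{\eff}_{\et}(k;\Gamma_W)$, and that does not follow from the axioms. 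More seriously, your appeal to K\"unneth conflates $T\otimes\Gamma_W$ with $\Omega^1_T\Gamma_W=\underline{\Hom}(T,\Gamma_W)$: the K\"unneth formula computes $\Gamma_W(X\times(\PP^1,\infty))$, which is $(\Omega^1_T\Gamma_W)(X)$, and this is precisely what supplies the bonding equivalences $E_n\simeq\Omega^1_T E_{n+1}$ for $\mathbf{\Gamma}_W$. It says nothing about the tensor product $T\otimes\Gamma_W$ in the effective category, which is a covariant construction not controlled by evaluating $\Gamma_W$ on products.

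The paper does not attempt to show that $u$ is an equivalence. It proves the weaker statement that $u\otimes\id_{\mathbf{\Gamma}_W}$ is an equivalence by first reducing, via a colimit over subfields finitely generated over the prime field, to the case where $k$ has finite virtual $\Lambda$-cohomological dimension. Under that hypothesis, Lemma~\ref{lem:on-tensor-product-of-spectra} gives an explicit colimit formula for $\Ev^n_T(E\otimes_R F)$; taking $E=F=\mathbf{\Gamma}_W$ and using $\Gamma_W(r)\otimes_{\Gamma_W}\Gamma_W(s)\simeq\Gamma_W(r+s)$ together with the bonding equivalences of $\mathbf{\Gamma}_W$, one reads off $\Ev^n_T(\mathbf{\Gamma}_W\otimes_{\Gamma_W}\mathbf{\Gamma}_W)\simeq\Gamma_W(n)[2n]$. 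This gives idempotency, and Lemma~\ref{lemma:idempotent-algebra-} then supplies the unique commutative algebra structure.
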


\begin{proof}
The first statement is clear. By Remark
\ref{rmk:motivic-spectra-object-SH}, 
there is a $T$-spectrum 
$\mathbf{\Gamma}_W \in \SH_{\et}(k;\Gamma_W)$ 
satisfying the decription in (2).
We have a morphism $u:\Sigma^{\infty}_T\Gamma_W
\to \mathbf{\Gamma}_W$ from the unit object of 
$\SH_{\et}(k;\Gamma_W)$
and, by Lemma 
\ref{lemma:idempotent-algebra-},
it is enough to show that $\mathbf{\Gamma}_W$
is idempotent as an $E_0$-algebra. 
To do so, we first reduce to the case where
$k$ has finite virtual $\Lambda$-cohomological dimension in the sense of
\cite[Definition 2.4.8]{AGAV}.
Consider the family $(k_{\alpha})_{\alpha}$ 
of subfields of $k$ that are finitely generated over their prime field.
Precomposing $\Gamma_W$ with the base change functor 
$\Sm_{k_{\alpha}}\to \Sm_k$ yields a Weil cohomology theory 
$\Gamma_{W,\,\alpha}\in \WCT(k_{\alpha};\Lambda)$. 
The associated motivic spectrum $\mathbf{\Gamma}_{W,\,\alpha}$ 
is nothing but the image of $\mathbf{\Gamma}_W$ by the functor 
$(k/k_{\alpha})_*:\SH_{\et}(k;\Lambda) \to \SH_{\et}(k_{\alpha};\Lambda)$.
In fact, the family $(\mathbf{\Gamma}_{W,\,\alpha})_{\alpha}$ 
defines an object of the $\infty$-category 
$$\underset{\alpha}{\colim}\,\SH_{\et}(k_{\alpha};\Lambda),$$
provided the colimit is computed in $\Prl$
(see \cite[Corollary 5.5.3.4 \& Theorem 5.5.3.18]{Lurie}). 
Also, the obvious symmetric monoidal functor 
$$\underset{\alpha}{\colim}\,\SH_{\et}(k_{\alpha};\Lambda)
\to \SH_{\et}(k;\Lambda),$$
which is a localisation by \cite[Proposition 2.5.11]{AGAV}, takes 
$(\mathbf{\Gamma}_{W,\,\alpha})_{\alpha}$
to $\mathbf{\Gamma}_W$. Thus, to prove that the $E_0$-algebra
$\mathbf{\Gamma}_W$ is idempotent, 
it is enough to do so for the $E_0$-algebras
$\mathbf{\Gamma}_{W,\,\alpha}$. Said differently, we may assume that
$k$ is finitely generated over its prime field, and hence 
of finite virtual $\Lambda$-cohomological dimension in the sense of
\cite[Definition 2.4.8]{AGAV}. Now, the result follows
from Lemma \ref{lem:on-tensor-product-of-spectra} 
below since
$\Gamma_W(r)\otimes_{\Gamma_W}\Gamma_W(s)
=\Gamma_W(r+s)$, for all $r,s\in \Z$.
\end{proof}

The following lemma is needed for the proof of Proposition
\ref{prop:key-for-constructing-bf-Gamma-W}.

\begin{lemma}
\label{lem:on-tensor-product-of-spectra}
Assume that $k$ has finite virtual 
$\Lambda$-cohomological dimension in the sense of 
\cite[Definition 2.4.8]{AGAV}.
Let $R$ be a commutative algebra in $\SH^{\eff}_{\et}(k;\Lambda)$,
and let $E=(E_n)_{n\in \N}$ and $F=(F_n)_{n\in \N}$ 
be two $T$-spectra in $\SH_{\et}(k;R)$.
Then the $T$-spectrum $E\otimes_R F$ is given in level $n$ by
\begin{equation}
\label{eq-lem:on-tensor-product-of-spectra-1}
\Ev^n_T(E\otimes_RF) =
\underset{r+s\geq n}{\colim}\, \underline{\Hom}(T^{\otimes r+s-n},E_r\otimes_RF_s).
\end{equation}
\end{lemma}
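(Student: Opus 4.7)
The idea is to reduce the computation to a Boardman-style formula for the tensor product of spectra, exploiting the fact that every $T$-spectrum is naturally a filtered colimit of pure suspension spectra of its levels. The hypothesis on $k$ enters precisely through the requirement that $\Omega_T$ preserves filtered colimits on $\SH^{\eff}_{\et}(k;R)$, and hence that each $\Ev^n_T$ preserves filtered colimits on $\SH_{\et}(k;R)$.

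First, I would establish that any $T$-spectrum $G\in \SH_{\et}(k;R)$ is canonically the colimit
$$G\simeq \underset{r\in \N}{\colim}\,\Sus^r_T G_r,$$
where $G_r=\Ev^r_T(G)$ and the transition maps $\Sus^r_T G_r \to \Sus^{r+1}_T G_{r+1}$ are obtained from the natural equivalence $\Sus^r_T X\simeq \Sus^{r+1}_T(T\otimes X)$ followed by $\Sus^{r+1}_T$ applied to the bonding map $T\otimes G_r \to G_{r+1}$ adjoint to the structural equivalence $G_r\simeq \Omega_T G_{r+1}$. This is a formal consequence of Robalo's description of $\SH_{\et}(k;R)$ as the $T$-stabilization of $\SH^{\eff}_{\et}(k;R)$ in $\CAlg(\Prl)$, and of the fact that the inclusions $\Sus^r_T$ assemble to a colimit cocone.

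Applying this to both $E$ and $F$, and using that $\otimes_R$ preserves colimits in each variable together with the symmetric monoidal identity $\Sus^r_T X\otimes_R \Sus^s_T Y\simeq \Sus^{r+s}_T(X\otimes_R Y)$, we obtain
$$E\otimes_R F \simeq \underset{(r,s)\in \N\times \N}{\colim}\,\Sus^{r+s}_T(E_r\otimes_R F_s).$$
Applying $\Ev^n_T$ and using the standard identification $\Ev^n_T\Sus^m_T(Z)\simeq \underline{\Hom}(T^{\otimes m-n},Z)$ valid for $m\geq n$, together with the cofinality of the subposet $\{(r,s)\colon r+s\geq n\}$ inside $\N\times \N$, yields the formula \eqref{eq-lem:on-tensor-product-of-spectra-1} provided one can commute $\Ev^n_T$ past the filtered colimit.

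The main obstacle is precisely this last commutation, and it is where the hypothesis on $k$ becomes essential. Under finite virtual $\Lambda$-cohomological dimension, one knows that $\Omega_T$ preserves filtered colimits on $\SH^{\eff}_{\et}(k;\Lambda)$, reflecting a suitable compactness property of $T$ governed by the boundedness of étale cohomological dimension. This transfers to $\SH^{\eff}_{\et}(k;R)$ (as $R$-linearization is compatible with filtered colimits) and, interpreting $\SH_{\et}(k;R)$ via the tower of $\Omega_T$'s, implies that each $\Ev^n_T$ preserves filtered colimits. Granting this, the commutation above is automatic and the proof concludes.
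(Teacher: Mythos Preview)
Your overall strategy matches the paper's: write $E$ and $F$ as filtered colimits $E\simeq\colim_r\Sus^r_T(E_r)$, use the monoidal identity $\Sus^r_T(X)\otimes_R\Sus^s_T(Y)\simeq\Sus^{r+s}_T(X\otimes_R Y)$, and then commute $\Ev^n_T$ past the resulting filtered colimit. The gap is your ``standard identification'' $\Ev^n_T\Sus^m_T(Z)\simeq\underline{\Hom}(T^{\otimes m-n},Z)$ for $m\geq n$. This fails because $T$ is not invertible in $\SH^{\eff}_{\et}(k;R)$: already for $m=n$ it would say $\Ev^n_T\Sus^n_T(Z)\simeq Z$, whereas the unit of that adjunction is the nontrivial stabilization map. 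The correct formula (this is essentially \cite[Th\'eor\`eme 4.3.61]{these-doctorat-II}, and the finite virtual $\Lambda$-cohomological dimension hypothesis on $k$ is exactly what is needed, via \cite[Proposition 3.2.3]{AGAV}, to verify \cite[Hypoth\`ese 4.3.56]{these-doctorat-II} and apply it) reads
\[
\Ev^n_T\Sus^m_T(Z)\;\simeq\;\underset{e\geq 0}{\colim}\,\underline{\Hom}(T^{\otimes e+m-n},\,T^{\otimes e}\otimes Z),
\]
and plugging this into the double colimit produces a triple colimit
\[
\Ev^n_T(E\otimes_R F)\;\simeq\;\underset{e+r+s\geq n}{\colim}\,\underline{\Hom}(T^{\otimes e+r+s-n},\,T^{\otimes e}\otimes E_r\otimes_R F_s).
\]
The paper then carries out the step you are missing: it absorbs the extra index $e$ by observing that the transition map from the $(e,r,s)$-term to the $(e,r+e,s)$-term factors through the $(0,r+e,s)$-term via the assembly morphism $T^{\otimes e}\otimes E_r\to E_{r+e}$ (and symmetrically with $s$ in place of $r$). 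This cofinality argument is what reduces the triple colimit to the double colimit in \eqref{eq-lem:on-tensor-product-of-spectra-1}; without it the proof does not close.
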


\begin{proof}
We have
$E\simeq \colim_r\,\Sus^r_T(E_r)$ and similarly for $F$. It follows that
\begin{equation}
\label{eq-lem:on-tensor-product-of-spectra-3}
E\otimes_R F=\underset{r,s\geq 0}{\colim}\,
\Sus^r_T(E_r)\otimes_R \Sus^s_T(F_s)
\simeq 
\underset{r,s\geq 0}{\colim}\,
\Sus^{r+s}_T(E_r\otimes_R F_s).
\end{equation}
(See \cite[Corollaire 4.3.72]{these-doctorat-II}.)
Using \cite[Th\'eor\`eme 4.3.61]{these-doctorat-II}, we deduce that
\begin{equation}
\label{eq-lem:on-tensor-product-of-spectra-5}
\Ev^n_T(E\otimes_R F)=
\underset{e+r+s\geq n}{\colim}\,
\underline{\Hom}(T^{\otimes e+r+s-n},
T^{\otimes e}\otimes E_r\otimes_R F_s).
\end{equation}
(We note that \cite[Th\'eor\`eme 4.3.61]{these-doctorat-II}
is applicable since 
\cite[Hypoth\`ese 4.3.56]{these-doctorat-II}
is satisfied under our assumption on $k$
by \cite[Proposition 3.2.3]{AGAV}.)
We now notice that, in the colimit in 
\eqref{eq-lem:on-tensor-product-of-spectra-5}, 
the map from the $(e,r,s)$-th term to 
the $(e,r+e,s)$-th term factors through the 
$(0,r+e,s)$-th term using the assembly morphism 
$T^{\otimes e} \otimes E_r \to E_{r+e}$.
The same is true with $s$ in place of $r$. 
This implies that the colimit in
\eqref{eq-lem:on-tensor-product-of-spectra-5}
is equivalent to the one in the statement.
\end{proof}

The commutative algebra $\mathbf{\Gamma}_W$ has the 
following remarkable property that was uncovered in 
\cite[Theorem 2.6.2]{Weil-Coh} in the case where the coefficient ring
of $\Gamma_W$ is a field.

\begin{prop}
\label{prop:every-module-is-extended-constant}
Let $\Gamma_W\in \WCT(k;\Lambda)$ be a Weil cohomology theory, 
and let $\mathbf{\Gamma}_W$ be the associated motivic commutative ring spectrum. Then, the obvious functor 
\begin{equation}
\label{eq-prop:every-module-is-extended-constant-1}
\Mod_{\Gamma_W(\pt)} \to \SH_{\et}(k;\mathbf{\Gamma}_W), \quad 
M\mapsto \mathbf{\Gamma}_W \otimes_{\Gamma_W(\pt)}M
\end{equation}
is an equivalence.
\end{prop}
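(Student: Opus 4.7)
The natural approach is to realise the functor of the statement, call it $L : M \mapsto \mathbf{\Gamma}_W \otimes_{\Gamma_W(\pt)} M$, as the left adjoint in an adjunction whose unit and counit are both equivalences. Its right adjoint is $R(E) = \Map_{\SH_{\et}(k;\mathbf{\Gamma}_W)}(\mathbf{\Gamma}_W, E)$, regarded as a $\Gamma_W(\pt)$-module via the identification $\Gamma_W(\pt) \simeq \End_{\SH_{\et}(k;\mathbf{\Gamma}_W)}(\mathbf{\Gamma}_W)$. Both $L$ and $R$ preserve small colimits: $L$ as a left adjoint, and $R$ because the unit $\mathbf{\Gamma}_W$ of $\SH_{\et}(k;\mathbf{\Gamma}_W)$ is a compact object, a property it inherits from the compactness of the unit of $\SH_{\et}(k;\Lambda)$.

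Colimit-preservation on both sides reduces the verification of the unit $M \to R(L(M))$ to $M = \Gamma_W(\pt)$, where it is the tautological equivalence $\Gamma_W(\pt) \simeq \End(\mathbf{\Gamma}_W)$. The content lies in the counit $\varepsilon : L \circ R \to \mathrm{id}$. Colimit-preservation combined with Lemma~\ref{lemma:generation-by-strongly-dualizable-objects} reduces us to verifying $\varepsilon_E$ on strongly dualizable $E$; since Tate twists are trivialised up to invertible $\Gamma_W(\pt)$-modules after tensoring with $\mathbf{\Gamma}_W$, we may further specialise to $E = \mathbf{\Gamma}_W \otimes \M(X)$ with $X$ a smooth projective $k$-variety of dimension $d$. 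Combining Poincar\'e duality $\M(X)^{\vee} \simeq \M(X)(-d)[-2d]$ with the representability of $\Gamma_W$-cohomology identifies $R(E) \simeq \Gamma_W(X)(d)[2d]$, so that $\varepsilon_E$ becomes the canonical comparison
$$\mathbf{\Gamma}_W \otimes_{\Gamma_W(\pt)} \Gamma_W(X)(d)[2d] \longrightarrow \mathbf{\Gamma}_W \otimes \M(X).$$

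The main obstacle will be proving this last comparison is an equivalence for every smooth projective $X$. The plan is to view both sides as functors of $X \in \Sm_k$ valued in $\SH_{\et}(k;\mathbf{\Gamma}_W)$ and to observe that both satisfy the K\"unneth formula (on the right tautologically, on the left via the K\"unneth axiom defining the Weil cohomology theory $\Gamma_W$), \'etale hyperdescent, and $\AA^1$-invariance, and that the comparison is obviously an equivalence at $X = \pt$. K\"unneth then transports the equivalence to all products of $\pt$, and a slightly more subtle argument handles $\PP^n$ via its Tate cellular decomposition. Treating an arbitrary smooth projective $X$ is the delicate step: the cleanest route I anticipate is to argue that the full sub-$\infty$-category of $\SH_{\et}(k;\mathbf{\Gamma}_W)$ on which $\varepsilon$ is an equivalence is stable under colimits, retracts, Tate twists, and the symmetric monoidal structure; together with the \'etale-local dualizability of $\mathbf{\Gamma}_W \otimes \M(X)$ and the K\"unneth compatibility of both sides, this should force it to contain every $\mathbf{\Gamma}_W \otimes \M(X)$ for $X$ smooth projective. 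Any residual difficulty in positive characteristic can be handled by de~Jong alterations, together with the idempotent-completeness of $\SH_{\et}(k;\mathbf{\Gamma}_W)$.
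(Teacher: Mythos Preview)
Your argument has a genuine gap at the compactness step. You claim that $R$ preserves colimits because $\mathbf{\Gamma}_W$ is compact in $\SH_{\et}(k;\mathbf{\Gamma}_W)$, ``a property it inherits from the compactness of the unit of $\SH_{\et}(k;\Lambda)$''. But the unit of $\SH_{\et}(k;\Lambda)$ is \emph{not} compact for an arbitrary field $k$: this requires $k$ to have finite virtual $\Lambda$-cohomological dimension. The proposition is stated without any such hypothesis, so your justification fails. (A posteriori $\mathbf{\Gamma}_W$ \emph{is} compact, since the target turns out to be $\Mod_{\Gamma_W(\pt)}$, but invoking that here is circular.) The paper's proof of full faithfulness sidesteps compactness entirely: it computes $\Map(\mathbf{\Gamma}_W\otimes_{\Gamma_W(\pt)}P,\,\mathbf{\Gamma}_W\otimes_{\Gamma_W(\pt)}Q)$ directly from the levelwise description of these $T$-spectra, using the idempotency of $\mathbf{\Gamma}_W$ over $\Gamma_W$ established in Proposition~\ref{prop:key-for-constructing-bf-Gamma-W} and the invertibility of each $\Gamma_W(r)$ to see that the resulting pro-system over the levels is constant.

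Your counit argument is also more elaborate than necessary, and again leans on $R$ preserving colimits. The paper's route is shorter and avoids smooth projective varieties, alterations, and cellular decompositions altogether. Lemma~\ref{lemma:generation-by-strongly-dualizable-objects} already gives that $\pi_{X,\sharp}\Lambda$ is dualizable with dual $\pi_{X,*}\Lambda$ for \emph{every} smooth $X$, and that the objects $\pi_{X,*}\Lambda$ generate under colimits and Tate twists. Dualizability yields $(\pi_{X,*}\Lambda)\otimes\mathbf{\Gamma}_W\simeq\underline{\Hom}(\pi_{X,\sharp}\Lambda,\mathbf{\Gamma}_W)\simeq\pi_{X,*}\pi_X^*\mathbf{\Gamma}_W$, and the K\"unneth axiom, read levelwise, identifies this with $\mathbf{\Gamma}_W\otimes_{\Gamma_W(\pt)}\Gamma_W(X)$, which is manifestly in the essential image of $L$. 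Since $L$ is already fully faithful and colimit-preserving, this finishes the proof.
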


\begin{proof}
The functor 
\eqref{eq-prop:every-module-is-extended-constant-1}
is fully faithful.
Indeed, let $P$ and $Q$ be two $\Gamma_W(\pt)$-modules. 
The motivic spectrum $\mathbf{\Gamma}_W\otimes_{\Gamma_W(\pt)}P$
is given in level $r$ by 
$\Gamma_W\otimes_{\Gamma_W(\pt)}P(r)[2r]$, and similarly for $Q$ 
in place of $P$.
It follows that we have a chain of equivalences: 
$$\begin{array}{cl}
&\Map_{\SH_{\et}(k;\mathbf{\Gamma}_W)}(\mathbf{\Gamma}_W\otimes_{\Gamma_W(\pt)}P,\mathbf{\Gamma}_W\otimes_{\Gamma_W(\pt)}Q)\\
& \vspace{-.3cm}\\
\overset{(1)}{\simeq} & \Map_{\SH_{\et}(k;\Gamma_W)}(\mathbf{\Gamma}_W\otimes_{\Gamma_W(\pt)}P,\mathbf{\Gamma}_W\otimes_{\Gamma_W(\pt)}Q)\\
\vspace{-.3cm} &\\
\overset{(2)}{\simeq} & \lim_r
\Map_{\SH^{\eff}_{\et}(k;\Gamma_W)}(\Gamma_W\otimes_{\Gamma_W(\pt)}P(r)[2r],
\Gamma_W\otimes_{\Gamma_W(\pt)}Q(r)[2r])\\
\vspace{-.3cm} &\\
\overset{(3)}{\simeq} & 
\Map_{\SH^{\eff}_{\et}(k;\Gamma_W)}(\Gamma_W\otimes_{\Gamma_W(\pt)}P,
\Gamma_W\otimes_{\Gamma_W(\pt)}Q)\\
\vspace{-.3cm} &\\
\overset{(4)}{\simeq} & 
\Map_{\Mod_{\Gamma_W(\pt)}}(P,\Gamma(\pt;
\Gamma_W\otimes_{\Gamma_W(\pt)}Q))\\
\overset{(5)}{\simeq} & 
\Map_{\Mod_{\Gamma_W(\pt)}}(P,Q)
\end{array}$$
where:
\begin{enumerate}

\item follows from the fact that $\mathbf{\Gamma}_W$
is an idempotent commutative algebra in $\SH_{\et}(k;\Gamma_W)$;

\item follows from Remark 
\ref{rmk:motivic-spectra-object-SH};

\item follows by noticing that the pro-system on the 
previous line is constant;

\item is by adjunction;

\item is obvious.

\end{enumerate}
Since the functor 
\eqref{eq-prop:every-module-is-extended-constant-1}
is colimit-preserving, it remains to see that its essential
image contains a set of objects generating 
$\SH_{\et}(k;\mathbf{\Gamma}_W)$
under colimits. By Lemma 
\ref{lemma:generation-by-strongly-dualizable-objects}
below, we are reduced to showing that 
$(\pi_{X,\,*}\Lambda)\otimes \mathbf{\Gamma}_W$
belongs to the image of functor 
\eqref{eq-prop:every-module-is-extended-constant-1}.
But, since $\pi_{X,\,*}\Lambda$ is dualizable with dual 
$\pi_{X,\,\sharp}\Lambda$, we have equivalences
$$(\pi_{X,\,*}\Lambda)\otimes \mathbf{\Gamma}_W\simeq 
\underline{\Hom}(\pi_{X,\,\sharp}\Lambda,\mathbf{\Gamma}_W)\simeq 
\pi_{X,\,*}\pi_X^*\mathbf{\Gamma}_W.$$
The motivic spectrum 
$\pi_{X,\,*}\pi_X^*\mathbf{\Gamma}_W$
is given in level $r$ by 
$\Gamma_W(-\times X)(r)[2r]$
which, by the K\"unneth formula, can be identified with 
$\Gamma_W(-)\otimes_{\Gamma_W(\pt)}\Gamma_W(X)(r)[2r]$.
This shows that $\pi_{X,\,*}\pi_X^*\mathbf{\Gamma}_W$
is equivalent to 
$\mathbf{\Gamma}_W\otimes_{\Gamma_W(\pt)}\Gamma_W(X)$
as needed.
\end{proof}

Given an algebraic $k$-variety $X$, we denote by 
$\pi_X:X \to \pt$ its structural morphism. 
The following lemma was used in the proof of 
Proposition
\ref{prop:every-module-is-extended-constant}.

\begin{lemma}
\label{lemma:generation-by-strongly-dualizable-objects}
For every $X\in \Sm_k$, the motive 
$\pi_{X,\,\sharp}\Lambda\in \SH_{\et}(k;\Lambda)$ is dualizable 
with dual $\pi_{X,\,*}\Lambda$. Moreover, 
the following holds.
\begin{enumerate}

\item The $\infty$-category $\SH_{\et}(k;\Lambda)$ is generated 
under colimits, desuspension and Tate twists 
by the objects $\pi_{X,\,\sharp}\Lambda$
for $X\in \Sm_k$.

\item The $\infty$-category $\SH_{\et}(k;\Lambda)$ is generated 
under colimits, desuspension and Tate twists 
by the objects $\pi_{X,\,*}\Lambda$
for $X\in \Sm_k$.

\end{enumerate}
\end{lemma}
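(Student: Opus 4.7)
The plan is to handle the three assertions in turn, with the six-functor formalism of Remark \ref{rmk:six-functor-for-SH-} as the main tool.

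For the dualizability, I would use relative purity for smooth morphisms, which gives a natural equivalence $\pi_X^!\Lambda \simeq \Th_X(T_{X/k})$ where $\Th_X(T_{X/k})$ is the invertible Thom spectrum of the tangent bundle, and equivalently $\pi_{X,\sharp}\Lambda \simeq \pi_{X,!}\Th_X(T_{X/k})$. Applying the internal $(\pi_{X,!},\pi_X^!)$-adjunction and invertibility of the Thom spectrum then yields
$$\underline{\Hom}(\pi_{X,\sharp}\Lambda,\Lambda)\simeq \pi_{X,*}\underline{\Hom}(\Th_X(T_{X/k}),\Th_X(T_{X/k}))\simeq \pi_{X,*}\Lambda,$$
identifying the internal dual with $\pi_{X,*}\Lambda$. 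To promote this to strong dualizability, one combines purity with the projection formula for $\pi_{X,!}$ (which always holds) to construct explicit coevaluation and evaluation maps satisfying the snake identities; this is the standard recipe showing that the motives of smooth finite-type schemes are constructible and dualizable in $\SH_\et$.

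Part (1) is essentially a restatement of the construction of $\SH_\et(k;\Lambda)$. Indeed, the $\infty$-category $\Shv_\et(\Sm_k;\Lambda)$ is generated under colimits by the representable hypersheaves $\Lambda_\et(X)$ for $X\in\Sm_k$; applying the motivic localisation $\Lmot$ preserves colimit-generation, and the $T$-stabilisation (Remark \ref{rmk:motivic-spectra-object-SH}) introduces exactly the Tate twists, so that the family $\{\pi_{X,\sharp}\Lambda\}_{X\in\Sm_k}$ generates $\SH_\et(k;\Lambda)$ under colimits, desuspension, and Tate twists.

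For part (2), the plan is to deduce it formally from part (1) and the dualizability established above. The compact objects of $\SH_\et(k;\Lambda)$ form a thick subcategory which, by part (1), is generated (under finite colimits, shifts, Tate twists, and retracts) by the $\pi_{X,\sharp}\Lambda$; since each of these generators is strongly dualizable, every compact object is dualizable, and the duality functor $(-)^\vee$ restricts to an anti-equivalence on this thick subcategory. Applying $(-)^\vee$ to the generating set $\{\pi_{X,\sharp}\Lambda(n)[m]\}$ gives $\{\pi_{X,*}\Lambda(-n)[-m]\}$, which therefore also generates the thick subcategory of compacts, hence generates all of $\SH_\et(k;\Lambda)$ under colimits, desuspension, and Tate twists.

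The main obstacle is the first step, establishing strong dualizability of $\pi_{X,\sharp}\Lambda$ for $X$ smooth but not necessarily proper; here one cannot use classical Atiyah duality directly and must go through the $\pi_!/\pi^!$ framework, relying on relative purity as the essential input. Once dualizability is in hand, (1) is formal and (2) is a clean consequence of the duality symmetry on the thick subcategory of compact objects.
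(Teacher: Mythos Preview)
Your outline for parts (1) and (2) is sound and matches the paper: (1) is immediate from the construction of $\SH_{\et}(k;\Lambda)$, and for (2) the paper uses exactly the argument you give --- duality is an anti-equivalence on the thick subcategory generated by the $\pi_{X,\sharp}\Lambda(m)[n]$, so the duals $\pi_{X,*}\Lambda(m)[n]$ generate the same thick subcategory and hence the whole $\infty$-category under colimits.

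The gap is in the dualizability step. Relative purity and the projection formula for $\pi_{X,!}$ do identify the \emph{internal} dual $\underline{\Hom}(\pi_{X,\sharp}\Lambda,\Lambda)\simeq\pi_{X,*}\Lambda$ and produce an evaluation pairing, but they do not by themselves yield a coevaluation $\Lambda\to\pi_{X,\sharp}\Lambda\otimes\pi_{X,*}\Lambda$ satisfying the triangle identities when $\pi_X$ is not proper. Concretely, the projection formula gives $\pi_{X,\sharp}\Lambda\otimes\pi_{X,*}\Lambda\simeq\pi_{X,\sharp}(\pi_X^*\pi_{X,*}\Lambda)$, and there is no adjunction map from $\Lambda$ \emph{into} a $\pi_{X,\sharp}$ of anything; the base-change manoeuvres one would like to use (to bring in the diagonal, say) require $\pi_{X,*}\simeq\pi_{X,!}$, i.e.\ properness. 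The ``standard recipe'' you allude to is the standard recipe only in the smooth proper case, where Atiyah duality is indeed formal.

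For smooth non-proper $X$ over a field one needs a further geometric input to reduce to the proper case. The paper passes to the Nisnevich-local $\SH(k;\Lambda)$ and invokes \cite[Proposition~3.1.3]{perfection-SH}, itself based on \cite{Bondarko-Deglise}, which uses alterations (available here since the exponent characteristic of $k$ is invertible in $\pi_0(\Lambda)$) to show that the thick subcategory generated by the $\pi_{X,\sharp}\Lambda(m)[n]$ coincides with the subcategory of dualizable objects. An alternative classical route, for affine $X$, is Riou's Thom--Pontryagin argument via a closed embedding into affine space; but that too is a geometric construction, not a formal consequence of purity and the projection formula.
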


\begin{proof}
It suffices to prove this for $\SH(k;\Lambda)$,
the Nisnevich local version of 
$\SH_{\et}(k;\Lambda)$.
The lemma then follows from
\cite[Proposition 3.1.3]{perfection-SH}
which is based on \cite{Bondarko-Deglise}.
Indeed, by loc.~cit., the sub-$\infty$-category 
of dualizable objets coincides with the stable 
thick sub-$\infty$-category generated by $\pi_{X,\,\sharp}\Lambda(m)[n]$,
for $X\in \Sm_k$ and $m,n\in \Z$.
But since the duality functor $M\mapsto M^{\vee}$ 
is an anti-equivalence of the sub-$\infty$-category of dualizable objects, 
we deduce that the latter is also generated by 
$\pi_{X,\,*}\Lambda(m)[n]$,
for $X\in \Sm_k$ and $m,n\in \Z$.
\end{proof}

\begin{prop}
\label{prop:equivalent-condition-for-a-motivic-ring-spectrum}
Let $R=(R_n)_{n\in \N}$ be a commutative algebra in 
$\SH_{\et}(k;\Lambda)$. Then the following conditions
are equivalent.
\begin{enumerate}

\item There is a Weil cohomology theory $\Gamma_W\in \WCT(k;\Lambda)$
such that $R$ is equivalent to $\mathbf{\Gamma}_W$.

\item The presheaf of commutative algebras $R_0=\Omega^{\infty}_T(R)$
is a Weil cohomology theory and the $R_0$-modules 
$R_n$ are invertible. 

\item For every $R$-module $M=(M_n)_{n\in \N}$, 
the obvious morphism $R\otimes_{R_0}M_0 \to M$
is an equivalence.
 
\end{enumerate}
\end{prop}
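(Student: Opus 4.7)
The plan is to establish the equivalences (1) $\Leftrightarrow$ (2) and (1) $\Leftrightarrow$ (3) separately, since three of the four directions follow almost immediately from Propositions \ref{prop:key-for-constructing-bf-Gamma-W} and \ref{prop:every-module-is-extended-constant}. The one subtle direction is (3) $\Rightarrow$ (1), which I would route through (2).

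The implication (1) $\Rightarrow$ (2) is tautological from the construction of $\mathbf{\Gamma}_W$: level $0$ is the Weil cohomology $\Gamma_W$ itself, and level $n$ is $\Gamma_W(n)[2n]$, which is invertible because $\Gamma_W(\PP^1,\infty)$ is by axiom~(2) of Definition~\ref{dfn:generalised-weil-coh-theory}. For (2) $\Rightarrow$ (1), I would set $\Gamma_W := R_0$ and apply Proposition \ref{prop:key-for-constructing-bf-Gamma-W} to produce $\mathbf{\Gamma}_W$. The unit map $\Sigma^\infty_T R_0 \to R$ promotes $R$ to a commutative algebra in $\SH_\et(k;\Gamma_W)$ with $\Omega^\infty_T R \simeq \Gamma_W$, so by the uniqueness clause of Proposition \ref{prop:key-for-constructing-bf-Gamma-W} it suffices to identify each $R_n$ with $\Gamma_W(n)[2n]$ compatibly with the assembly maps. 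Invertibility of $R_n$ as an $R_0$-module is exactly what pins this down: the $T$-spectrum bonding maps, combined with the invertibility of both $R_n$ and $\Gamma_W(n)[2n]$ as $\Gamma_W$-modules, force the identification.

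The implication (1) $\Rightarrow$ (3) follows directly from Proposition \ref{prop:every-module-is-extended-constant}. Given $M \in \Mod_R$, set $P := \Gamma(\pt;M)$; then $M \simeq \mathbf{\Gamma}_W \otimes_{\Gamma_W(\pt)} P$, whence $M_0 \simeq \Gamma_W \otimes_{\Gamma_W(\pt)} P$ by examining level~$0$, so that
\[ R \otimes_{R_0} M_0 \simeq \mathbf{\Gamma}_W \otimes_{\Gamma_W} \Gamma_W \otimes_{\Gamma_W(\pt)} P \simeq M. \]
For (3) $\Rightarrow$ (1), I would argue via (2). The axioms of $\AA^1$-invariance and \'etale hyperdescent for $R_0 = \Omega^\infty_T R$ are automatic since $R_0 \in \SH^\eff_\et(k;\Lambda)$. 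For the K\"unneth formula I would apply (3) to the $R$-module $M := R \otimes \pi_{Y,*}\Lambda \simeq \pi_{Y,*}\pi_Y^* R$ for $Y \in \Sm_k$ (using dualizability from Lemma \ref{lemma:generation-by-strongly-dualizable-objects}); computing $M_0$ sectionwise yields $R_0(-\times Y)$, and the equivalence $M \simeq R \otimes_{R_0} M_0$ delivers the K\"unneth decomposition upon evaluation at $X$. Invertibility of each $R_n$ as an $R_0$-module then follows by applying (3) to the invertible $R$-module $R \otimes T^{\otimes n}$ (where $T$ is invertible in $\SH_\et(k;\Lambda)$), whose zeroth level is a canonical Tate shift of $R_n$.

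The main obstacle is thus (3) $\Rightarrow$ (2), where the entire Weil cohomology package for $R_0$ together with invertibility of the $R_n$ must be extracted from the single module-theoretic statement~(3). The guiding insight is that condition (3) is a strong form of idempotency of $R$ relative to $R_0$, which forces compatible Tate-twist structure on the higher levels; the challenge is to identify the correct choices of test modules (typically built from smooth varieties and dualizable motives) whose zeroth levels isolate each of the Weil axioms and the invertibility conditions in turn.
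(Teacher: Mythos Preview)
Your handling of (1) $\Leftrightarrow$ (2) and (1) $\Rightarrow$ (3) follows the paper and is fine. The gap is in (3) $\Rightarrow$ (2), at the K\"unneth step.

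You apply (3) to $M = R \otimes \pi_{Y,*}\Lambda$ and record that the \emph{presheaf} $M_0$ equals $R_0(-\times Y)$. But then the equivalence $M \simeq R\otimes_{R_0} M_0$, read at level~$0$, says $R_0(-\times Y)\simeq R_0\otimes_{R_0}R_0(-\times Y)$, which is a tautology: the tensor is over the presheaf algebra $R_0$, so no comparison with $R_0(X)\otimes_{R_0(\pt)}R_0(Y)$ ever appears. ``Evaluation at $X$'' does not help, since sections at $X$ do not commute with $\otimes_{R_0}$. What is actually needed from (3) is the identification of an $R$-module $M$ with $R\otimes_{R_0(\pt)}\Gamma(\pt;M_0)$, tensor over the \emph{coefficient ring}; this is the form that carries content at level~$0$, and it is how the paper deploys the hypothesis.

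Concretely, the paper first applies (3) to the shifted $R$-modules $(R_{n+m})_{n\geq 0}$ to obtain $R_m \simeq R_0\otimes_{R_0(\pt)}R_m(\pt)$; combining this with the bonding equivalence $R_0(\pt)\simeq R_m((\PP^1,\infty)^{\wedge m})$ forces $R_m(\pt)$ to be invertible, which settles axiom~(2) of Definition~\ref{dfn:generalised-weil-coh-theory}. For K\"unneth the paper uses dualizability of \emph{both} $\pi_{X,*}\Lambda$ and $\pi_{Y,*}\Lambda$ to obtain $\pi_{X,*}R\otimes_R\pi_{Y,*}R\simeq\pi_{X\times Y,*}R$ in $\Mod_R$, rewrites each factor via (3) as $R\otimes_{R_0(\pt)}R_0(X)$, and passes to level~$0$ and global sections. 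The essential move you are missing is the tensor product \emph{over $R$} of two test modules: that is what converts the module-theoretic hypothesis into a multiplicative identity for sections of $R_0$.
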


\begin{proof}
The implication (1) $\Rightarrow$ (2) is clear. 
To prove the converse, we note that the condition in (2)
implies that $R$, viewed as an $E_0$-algebra in 
$\SH_{\et}(k;R_0)$, is equivalent to the 
$E_0$-algebra associated to the Weil cohomology theory $R_0$
as in the proof of Proposition
\ref{prop:key-for-constructing-bf-Gamma-W}.
The result then follows from Lemma
\ref{lemma:idempotent-algebra-}. The implication (1) $\Rightarrow$ (3)
follows from Proposition
\ref{prop:every-module-is-extended-constant}.
To finish the proof, it remains to see that (3) implies (2).
Assume that $R=(R_n)_{n\in \N}$ satisfies (3).
Since the $T$-spectrum $(R_{n+m})_{n\in \N}$ is naturally an 
$R$-module, we get that $R_m\simeq R_0\otimes_{R_0(\pt)}R_m(\pt)$.
This gives the equivalences
$$R_0(\pt)\simeq R_m((\PP^1,\infty)^{\wedge m})
\simeq R_0((\PP^1,\infty)^{\wedge m})\otimes_{R_0(\pt)}R_m(\pt)$$
showing that $R_m(\pt)$ is invertible. To conclude, it 
remains to see that $R_0$ is a Weil cohomology theory. 
Clearly, the presheaf $R_0$ is $\AA^1$-invariant and 
admits \'etale hyperdescent. Also, we have just proven that 
$R_0(\PP^1,\infty)$ is invertible with inverse $R_1(\pt)$. 
We now check the K\"unneth formula for two smooth 
algebraic $k$-varieties $X$ and $Y$. In $\SH_{\et}(k;\Lambda)$,
we have an equivalence
$\pi_{X,\,*}\Lambda \otimes \pi_{Y,\,*}\Lambda
\simeq \pi_{X\times Y,\,*}\Lambda$
where $\pi_X$, $\pi_Y$ and $\pi_{X\times Y}$ are the structural morphisms
of $X$, $Y$ and $X\times Y$.
This follows for instance from the fact that 
$\pi_{X,\,*}\Lambda$ is dualizable with dual $\pi_{X,\,\sharp}\Lambda$
(see Lemma 
\ref{lemma:generation-by-strongly-dualizable-objects}).
Tensoring with $R$, we obtain an equivalence
\begin{equation}
\label{eq-prop:equivalent-condition-for-a-motivic-ring-spectrum-13}
(R\otimes \pi_{X,\,*}\Lambda)\otimes_R (R\otimes 
\pi_{Y,\,*}\Lambda)
\simeq R\otimes \pi_{X\times Y,\,*}\Lambda.
\end{equation}
Using again that $\pi_{X,\,*}\Lambda$ is dualizable 
with dual $\pi_{X,\,\sharp}\Lambda$, we have 
$R\otimes \pi_{X,\,*}\Lambda\simeq \pi_{X,\,*}R$
and similarly for $Y$ and $X\times Y$ in place of $X$.
Thus, we can rewrite the equivalence
\eqref{eq-prop:equivalent-condition-for-a-motivic-ring-spectrum-13}
as 
\begin{equation}
\label{eq-prop:equivalent-condition-for-a-motivic-ring-spectrum-17}
\pi_{X,\,*}R\otimes_R \pi_{Y,\,*}R \simeq \pi_{X\times Y,\,*}R.
\end{equation}
Since $\pi_{X,\,*}R$ is an $R$-module, it is equivalent to 
$R\otimes_{R_0(\pt)}R_0(X)$, and similarly for $Y$ and $X\times Y$ 
in place of $X$.
Thus, passing to the $0$-level and taking global sections 
in the equivalence 
\eqref{eq-prop:equivalent-condition-for-a-motivic-ring-spectrum-17}
yield the equivalence 
$R_0(X)\otimes_{R_0(\pt)}R_0(Y)\simeq R_0(X\times Y)$.
This finishes the proof. 
\end{proof}

\begin{dfn}
\label{dfn:weil-spectrum-in-SH}
A Weil spectrum for algebraic $k$-varieties is a commutative algebra
in the $\infty$-category $\SH_{\et}(k;\Lambda)$ satisfying the equivalent conditions in Proposition 
\ref{prop:equivalent-condition-for-a-motivic-ring-spectrum}.
We denote by $\WSp(k;\Lambda)$ the full sub-$\infty$-category of 
$\CAlg(\SH_{\et}(k;\Lambda))$ spanned by Weil spectra.
\end{dfn}

\begin{cor}
\label{cor:algebra-over-weil-spectrum-weil-spectrum}
Let $\Gamma_W\in \WCT(k;\Lambda)$ be a Weil cohomology theory,
and let $R$ be a commutative $\mathbf{\Gamma}_W$-algebra
in $\SH_{\et}(k;\Lambda)$. Then $R$ is a Weil spectrum and we have
an equivalence
\begin{equation}
\label{eq-cor:algebra-over-weil-spectrum-weil-spectrum-1}
R\simeq \mathbf{\Gamma}_W\otimes_{\Gamma_W(\pt)}\Rder\Gamma(\pt;\Omega^{\infty}_T(R)).
\end{equation}
\end{cor}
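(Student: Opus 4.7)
The plan is to deduce both assertions from Proposition \ref{prop:every-module-is-extended-constant}, which identifies $\SH_{\et}(k;\mathbf{\Gamma}_W)$ with $\Mod_{\Gamma_W(\pt)}$ via the base-change functor $\Phi\colon M\mapsto \mathbf{\Gamma}_W\otimes_{\Gamma_W(\pt)}M$. A quick inspection of the chain of equivalences in its proof (in particular step (5), which uses $\Rder\Gamma(\pt;\Gamma_W\otimes_{\Gamma_W(\pt)}Q)\simeq Q$) shows that the inverse of $\Phi$ sends $N$ to $\Rder\Gamma(\pt;\Omega^{\infty}_T N)$. Since $R$ is by hypothesis a commutative $\mathbf{\Gamma}_W$-algebra, it is in particular an object of $\SH_{\et}(k;\mathbf{\Gamma}_W)$; applying the inverse of $\Phi$ to $N=R$ then produces at once the equivalence \eqref{eq-cor:algebra-over-weil-spectrum-weil-spectrum-1}.

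To show that $R$ is a Weil spectrum, I would verify condition~(3) of Proposition \ref{prop:equivalent-condition-for-a-motivic-ring-spectrum}. Any $R$-module $N$ is also a $\mathbf{\Gamma}_W$-module, so the previous paragraph gives $N\simeq \mathbf{\Gamma}_W\otimes_{\Gamma_W(\pt)}\bar N$ with $\bar N=\Rder\Gamma(\pt;\Omega^{\infty}_T N)$, and similarly $R\simeq \mathbf{\Gamma}_W\otimes_{\Gamma_W(\pt)}\bar R$ with $\bar R=\Rder\Gamma(\pt;\Omega^{\infty}_T R)$. Since the base-change $\mathbf{\Gamma}_W\otimes_{\Gamma_W(\pt)}-$ acts levelwise on $T$-spectra, taking $\Omega^{\infty}_T$ yields $N_0\simeq \Gamma_W\otimes_{\Gamma_W(\pt)}\bar N$ and $R_0\simeq \Gamma_W\otimes_{\Gamma_W(\pt)}\bar R$. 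A short formal computation using associativity of the tensor product then collapses $R\otimes_{R_0}N_0$ to $\mathbf{\Gamma}_W\otimes_{\Gamma_W(\pt)}\bar N$, which is $N$.

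A cleaner variant of the second step proceeds by first noting that $\Phi$ is symmetric monoidal — a routine consequence of the idempotency of $\mathbf{\Gamma}_W$ in $\SH_{\et}(k;\Gamma_W)$ established in Proposition \ref{prop:key-for-constructing-bf-Gamma-W}, which produces the required isomorphism between $\Phi(M)\otimes_{\mathbf{\Gamma}_W}\Phi(M')$ and $\Phi(M\otimes_{\Gamma_W(\pt)}M')$. The monoidal equivalence $\Phi$ then upgrades to an equivalence between commutative algebras sending $\bar R$ to $R$, and for each such algebra to an equivalence of module $\infty$-categories; condition~(3) becomes tautological. The one delicate point, in either approach, is to interpret the tensor product $R\otimes_{R_0}N_0$ correctly in $\SH_{\et}(k;\Lambda)$ — namely as extension of scalars along $\Sigma^{\infty}_T R_0\to R$, using the description of $\mathbf{\Gamma}_W$ given by Proposition \ref{prop:key-for-constructing-bf-Gamma-W}(2). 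Once this bookkeeping between effective and non-effective motives is agreed upon, the argument is entirely formal and Proposition \ref{prop:every-module-is-extended-constant} does all the real work.
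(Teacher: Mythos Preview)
Your argument is correct. The derivation of the equivalence \eqref{eq-cor:algebra-over-weil-spectrum-weil-spectrum-1} is exactly the paper's: both simply invoke Proposition~\ref{prop:every-module-is-extended-constant} and identify the inverse of the equivalence $\Phi$ with $\Rder\Gamma(\pt;\Omega^{\infty}_T(-))$.

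For the second assertion the paper takes a different, shorter route: it verifies condition~(2) of Proposition~\ref{prop:equivalent-condition-for-a-motivic-ring-spectrum} rather than condition~(3). Once one knows $R\simeq\mathbf{\Gamma}_W\otimes_{\Gamma_W(\pt)}A$ with $A=\Rder\Gamma(\pt;\Omega^{\infty}_T R)$, the levelwise description of $\mathbf{\Gamma}_W$ from Proposition~\ref{prop:key-for-constructing-bf-Gamma-W}(2) gives $R_n\simeq\Gamma_W(n)[2n]\otimes_{\Gamma_W(\pt)}A$ directly. Then $R_0=\Gamma_W\otimes_{\Gamma_W(\pt)}A$ is visibly a Weil cohomology theory (scalar extension of one), and each $R_n$ is an invertible $R_0$-module since $\Gamma_W(n)[2n]$ is invertible over $\Gamma_W$. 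This sidesteps entirely the effective/stable bookkeeping you flag as the delicate point: there is no need to interpret $R\otimes_{R_0}N_0$ at all. Your approach via (3) works and has the merit of being purely module-theoretic once the monoidality of $\Phi$ is in hand, but the paper's use of (2) is the more economical choice here precisely because the levelwise structure of $\mathbf{\Gamma}_W$ is already explicit.
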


\begin{proof}
The equivalence 
\eqref{eq-cor:algebra-over-weil-spectrum-weil-spectrum-1}
follows from Proposition
\ref{prop:every-module-is-extended-constant}. 
Letting $A=\Rder\Gamma(\pt;\Omega^{\infty}_T(R))$, 
we deduce that $R$ is given by $\Gamma_W(n)[2n]\otimes_{\Gamma_W(\pt)}A$
in level $n$. In particular, $R_0$ is a Weil cohomology theory
and the $R_0$-module $R_n$ is invertible for every $n\geq 0$. 
We conclude using Proposition
\ref{prop:equivalent-condition-for-a-motivic-ring-spectrum}.
\end{proof}

\begin{thm}
\label{thm:equivalence-between-weil-cohomol-spectra}
The functor $\Omega^{\infty}_T:\WSp(k;\Lambda) \to \WCT(k;\Lambda)$ is an 
equivalence of $\infty$-categories.
\end{thm}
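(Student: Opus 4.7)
The strategy is to construct an explicit inverse functor $\mathbf{\Gamma}_{(-)}: \WCT(k;\Lambda) \to \WSp(k;\Lambda)$ and verify that it is inverse to $\Omega^{\infty}_T$ on both sides.

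First, I would upgrade the object-level construction $\Gamma_W \mapsto \mathbf{\Gamma}_W$ of Proposition \ref{prop:key-for-constructing-bf-Gamma-W} into a functor $\mathbf{\Gamma}_{(-)}: \WCT(k;\Lambda) \to \WSp(k;\Lambda)$. Given a morphism $\Gamma_W \to \Gamma_{W'}$ of Weil cohomology theories, one has, by Remark \ref{rmk:on-tate-twists-of-weil-coh-theories}, induced maps $\Gamma_W(n) \to \Gamma_{W'}(n)$ of Tate twists, and the condition imposed on morphisms in Definition \ref{dfn:infty-category-of-WCT} is exactly what ensures that these are compatible with the assembly maps built from \eqref{eq-rmk:on-tate-twists-of-weil-coh-theories-3}. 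This produces a morphism of the underlying $T$-spectra, which is one of commutative algebras by the uniqueness part of Proposition \ref{prop:key-for-constructing-bf-Gamma-W}. That the target actually lies in $\WSp(k;\Lambda)$ is the content of condition (2) in Proposition \ref{prop:equivalent-condition-for-a-motivic-ring-spectrum}.

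Second, the composition $\Omega^{\infty}_T \circ \mathbf{\Gamma}_{(-)}$ is equivalent to the identity on $\WCT(k;\Lambda)$, directly from condition (1) of Proposition \ref{prop:key-for-constructing-bf-Gamma-W}.

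Third, and more substantially, I would verify that $\mathbf{\Gamma}_{(-)} \circ \Omega^{\infty}_T \simeq \id_{\WSp(k;\Lambda)}$. Given a Weil spectrum $R$, set $\Gamma_W = R_0 = \Omega^{\infty}_T R$. Both $R$ and $\mathbf{R_0}$ are commutative algebras in $\SH_{\et}(k; R_0)$ whose level zero is the identity on $R_0$. To invoke the uniqueness of Proposition \ref{prop:key-for-constructing-bf-Gamma-W}, one must identify $R$ in each level $n$ with $R_0(n)[2n]$, with assembly maps coming from \eqref{eq-rmk:on-tate-twists-of-weil-coh-theories-3}. This follows from the proof of Proposition \ref{prop:equivalent-condition-for-a-motivic-ring-spectrum}: the shifted $T$-spectrum $(R_{n+m})_{m\in\N}$ is an $R$-module, so condition (3) yields $R_n \simeq R_0 \otimes_{R_0(\pt)} R_n(\pt)$ with $R_n(\pt)$ an invertible $R_0(\pt)$-module, and the assembly structure forces $R_n(\pt)$ to be canonically identified with the appropriate tensor power of $R_0(\PP^1,\infty)[-2]$. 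Uniqueness in Proposition \ref{prop:key-for-constructing-bf-Gamma-W} then gives $R \simeq \mathbf{R_0}$, naturally in $R$.

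The main obstacle will be organizing Steps 1 and 3 coherently enough to produce equivalences of $\infty$-categories rather than mere bijections on equivalence classes; that is, producing a genuinely functorial lift of Proposition \ref{prop:key-for-constructing-bf-Gamma-W} and natural equivalences of composites. Once this $\infty$-categorical bookkeeping is settled, the uniqueness clause of Proposition \ref{prop:key-for-constructing-bf-Gamma-W} makes the actual identifications formal.
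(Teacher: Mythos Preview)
Your overall strategy matches the paper's: construct an inverse $\Gamma_W \mapsto \mathbf{\Gamma}_W$, check one composite is the identity on the nose, and use the uniqueness coming from idempotence (Proposition~\ref{prop:key-for-constructing-bf-Gamma-W}/Lemma~\ref{lemma:idempotent-algebra-}) to identify the other composite with the identity. You also correctly isolate the real content: the $\infty$-categorical coherence in making $\Gamma_W \mapsto \mathbf{\Gamma}_W$ a genuine functor and in producing natural equivalences of composites.

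Where your proposal stops short is exactly there. Describing induced maps $\Gamma_W(n)\to\Gamma_{W'}(n)$ and invoking uniqueness of the commutative algebra structure only gives you a functor on homotopy categories; it does not, by itself, organize an $\infty$-functor. The paper handles this not by assembling the spectrum level by level, but by working with \emph{sections of cocartesian fibrations}. Concretely, one forms the fibration
\[
p:\int_{\Gamma_W\in \WCT(k;\Lambda)} \SH_{\et}(k;\Gamma_W)\to \WCT(k;\Lambda),
\]
exhibits explicit sections $\sigma_r:\Gamma_W\mapsto \Sus^r_T(\Gamma_W(r)[2r])$, and takes $\sigma=\colim_r\sigma_r$. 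This is automatically $\infty$-functorial because it is a colimit in the $\infty$-category of sections. By Proposition~\ref{prop:key-for-constructing-bf-Gamma-W} the $E_0$-algebra $\sigma$ (with unit $\sigma_0\to\sigma$) is idempotent \emph{in $\Sect(p)$}, so Lemma~\ref{lemma:idempotent-algebra-} upgrades it uniquely to a commutative algebra in $\Sect(p)$, i.e., to a section of the fibration of $\CAlg$'s. Projecting to $\CAlg(\SH_{\et}(k;\Lambda))$ yields the desired functor $\beta$ landing in $\WSp(k;\Lambda)$. For the composite $\beta\circ\Omega^{\infty}_T$, the paper again avoids a level-by-level identification: one has the section $\sigma'=\sigma\circ\Omega^{\infty}_T$ and the tautological diagonal section $\delta$ of the analogous fibration over $\WSp(k;\Lambda)$, and Lemma~\ref{lemma:idempotent-algebra-} reduces showing $\sigma'\simeq\delta$ as commutative algebras to the underlying $E_0$-algebras, which is immediate.

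So the missing ingredient in your proposal is not a new idea but a packaging device: replace ``build $\mathbf{\Gamma}_W$ and then make it functorial'' with ``build a section of a cocartesian fibration and apply the idempotent upgrade in the $\infty$-category of sections.'' Once you do this, your Steps~2 and~3 go through essentially as you outline.
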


\begin{proof}
We first construct a functor 
$\beta:\WCT(k;\Lambda)\to \WSp(k;\Lambda)$ sending a Weil cohomology theory 
$\Gamma_W$ to its Weil spectrum $\mathbf{\Gamma}_W$.
Consider the cocartesian fibrations
$$p^{(\eff)}:\int_{\Gamma_W\in \WCT(k;\Lambda)}
\SH^{(\eff)}_{\et}(k;\Gamma_W)
\to \WCT(k;\Lambda).$$
By Remark
\ref{rmk:motivic-spectra-object-SH},
the domain of $p$ is the limit of a tower having
the domain of $p^{\eff}$ on every stage 
and where the functors between successive stages are
given by $\Omega^1_T$ fiberwise. In particular, we have 
$$\Sect(p)=\lim\left(\cdots \xrightarrow{\Omega^1_T} \Sect(p^{\eff})
\xrightarrow{\Omega^1_T} \Sect(p^{\eff})\right).$$
For $r\in \N$, we have a section
$\sigma_r\in \Sect(p)$
given by $\Gamma_W \mapsto \Sus^r_T(\Gamma_W(r)[2r])$.
Noting that the section of $p^{\eff}$ sending 
$\Gamma_W$ to the presheaf 
$\Gamma_W((\PP^1,\infty)\times -)(r)[2r]\simeq \Gamma_W(r-1)[2r-2]$
maps naturally to the section
$\Ev_T^{r-1}(\sigma_r)$,
we obtain natural morphisms $\sigma_{r-1} \to \sigma_r$ for $r\geq 1$. 
We let $\sigma=\colim_r \,\sigma_r$.
By Proposition 
\ref{prop:key-for-constructing-bf-Gamma-W},
the $E_0$-algebra $\sigma$ in $\Sect(p)$
(with unit $\sigma_0 \to \sigma$) is idempotent. 
Thus, $\sigma$ is naturally a commutative algebra in 
$\Sect(p)$. Said differently, we can view $\sigma$ as a 
section of the cocartesian fibration 
$$\widetilde{p}:\int_{\Gamma_W\in \WCT(k;\Lambda)}
\CAlg(\SH_{\et}(k;\Gamma_W))
\to \WCT(k;\Lambda).$$
Composing $\sigma$ with the obvious projection 
to $\CAlg(\SH_{\et}(k;\Lambda))$, we obtain a functor 
$$\beta:\WCT(k;\Lambda) \to \CAlg(\SH_{\et}(k;\Lambda))$$
whose image lies in the sub-$\infty$-category $\WSp(k;\Lambda)$.
By construction, the composition of 
$$\WCT(k;\Lambda) \xrightarrow{\beta} 
\WSp(k;\Lambda) \xrightarrow{\Omega^{\infty}_T}
\WCT(k;\Lambda)$$
is the identity functor. To see that the composition 
$\beta\circ \Omega^{\infty}_T$ is also the identity functor,
we are reduced to showing that the section 
$\sigma'=\sigma\circ \Omega^{\infty}_T$ of the cocartesian fibration 
$$\widetilde{p}{}':\int_{\mathbf{\Gamma}_W\in \WSp(k;\Lambda)}
\CAlg(\SH_{\et}(k;\Omega^{\infty}_T\mathbf{\Gamma}_W))
\to \WSp(k;\Lambda)$$
is equivalent to the obvious diagonal section $\delta$.
By Lemma \ref{lemma:idempotent-algebra-},
it is enough to do so for the underlying $E_0$-algebras,
which is clear.
\end{proof}

\begin{dfn}
\label{dfn:homological-realization-weil-coh}
Let $\Gamma_W\in \WCT(k;\Lambda)$ be a Weil cohomology theory.
The realization functor associated to $\Gamma_W$
is the functor 
$\Rder_W^*:\SH_{\et}(k;\Lambda)\to 
\Mod_{\Gamma_W(\pt)}$  
given by the composition of
$$\SH_{\et}(k;\Lambda) \xrightarrow{\mathbf{\Gamma}_W\otimes -}
\SH_{\et}(k;\mathbf{\Gamma}_W)\simeq \Mod_{\Gamma_W(\pt)},$$
where the equivalence is provided by Proposition 
\ref{prop:every-module-is-extended-constant}. The functor 
$\Rder_W^*$ underlies a symmetric monoidal functor 
and it admits a right adjoint $\Rder_{W,\,*}$ sending 
a $\Gamma_W(\pt)$-module $M$ to 
$\mathbf{\Gamma}_W\otimes_{\Gamma_W(\pt)}M$.
\end{dfn}

\begin{rmk}
\label{rmk:weil-cohomology-recovered-realization}
A Weil cohomology theory $\Gamma_W$ can be recovered from the 
associated realization functor $\Rder^*_W$ since 
$\mathbf{\Gamma}_W\simeq \Rder_{W,\,*}\Rder^*_W\Lambda$.
\end{rmk}

\begin{dfn}
\label{dfn:plain-realization-functor-k-varieties}
A plain realization functor for algebraic $k$-varieties is a morphism 
$${\rm R}^*:\SH_{\et}(k;\Lambda)^{\otimes}
\to \Mod^{\otimes}_A$$
in $\CAlg(\Prl)$,
where $A\in \CAlg$ is a commutative ring spectrum. 
The $\infty$-category $\Real(k;\Lambda)$
of plain realization functors is the full sub-$\infty$-category of 
$\CAlg(\Prl)_{\SH_{\et}(k;\Lambda)^{\otimes}\backslash}$
spanned by functors with codomain of the form 
$\Mod^{\otimes}_A$ for some $A\in \CAlg$.
\end{dfn}

\begin{prop}
\label{prop:weil-spectrum-from-realization-functor}
For $\Rder^*\in \Real(k;\Lambda)$, the commutative algebra
$\Rder_*\Rder^*\Lambda$ is 
a Weil spectrum.
\end{prop}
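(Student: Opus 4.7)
Writing $A := \Rder^*\Lambda \in \CAlg$, we have $\mathbf{R}_W := \Rder_*\Rder^*\Lambda = \Rder_* A$. Since $\Rder^*$ is symmetric monoidal and colimit-preserving, its right adjoint $\Rder_*$ carries a canonical right-lax symmetric monoidal structure, so $\mathbf{R}_W$ is indeed a commutative algebra in $\SH_{\et}(k;\Lambda)$. The plan is to verify condition (2) of Proposition \ref{prop:equivalent-condition-for-a-motivic-ring-spectrum}: namely, that $R_0 := \Omega^{\infty}_T(\mathbf{R}_W)$ is a Weil cohomology theory and that each $R_n := \Ev^n_T(\mathbf{R}_W)$ is invertible as an $R_0$-module.

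The key computation proceeds via the adjunctions $\Sus^n_T \dashv \Ev^n_T$ and $\Rder^* \dashv \Rder_*$. For $X \in \Sm_k$ and $n \geq 0$, we obtain
$$R_n(X) \simeq \Map_{\SH_{\et}(k;\Lambda)}(\Sus^n_T \M^{\eff}(X), \mathbf{R}_W) \simeq \Map_{\Mod_A}(\Rder^*(\Sus^n_T\M^{\eff}(X)), A).$$
By Lemma \ref{lemma:generation-by-strongly-dualizable-objects}, $\M(X) = \pi_{X,\sharp}\Lambda$ is dualizable in $\SH_{\et}(k;\Lambda)$, hence so are $\Sus^n_T\M^{\eff}(X) \simeq T^{\otimes n} \otimes \M(X)$ and its image under the symmetric monoidal functor $\Rder^*$. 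Therefore the mapping spectrum above identifies with $\Rder^*((T^{\otimes n}\otimes \M(X))^\vee)$; specializing to $n=0$ gives $R_0(X) \simeq \Rder^*(\M(X)^\vee)$.

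I then verify the Weil axioms for $R_0$. Since $R_0$ lies in $\SH^{\eff}_{\et}(k;\Lambda)$, it is $\AA^1$-invariant and satisfies etale hyperdescent automatically. The Tate axiom follows from invertibility of $T$ in $\SH_{\et}(k;\Lambda)$: indeed $R_0(\PP^1,\infty) \simeq \Rder^*(T^\vee) = (\Rder^* T)^\vee$ is invertible in $\Mod_A = \Mod_{R_0(\pt)}$ since $\Rder^*$ preserves invertibles. The K\"unneth formula is obtained by combining dualizability, preservation of duals by $\Rder^*$, and the motivic K\"unneth identity $\M(X\times Y) \simeq \M(X)\otimes \M(Y)$:
$$R_0(X)\otimes_A R_0(Y) \simeq \Rder^*(\M(X)^\vee \otimes \M(Y)^\vee) \simeq \Rder^*(\M(X\times Y)^\vee) \simeq R_0(X\times Y).$$
Finally, the same duality manipulation yields $R_n(X) \simeq R_0(X) \otimes_A (\Rder^*T)^{\otimes -n}$ and in particular $R_n(\pt) \simeq (\Rder^*T)^{\otimes -n}$, which is invertible in $\Mod_A$; together these give $R_n \simeq R_0 \otimes_{R_0(\pt)} R_n(\pt)$ as $R_0$-modules, so $R_n$ is invertible.

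There is no serious obstacle in this argument: everything reduces to the facts that symmetric monoidal functors preserve dualizable and invertible objects, that $\M(X)$ is dualizable for $X \in \Sm_k$, and that the motivic K\"unneth isomorphism holds. The mildly subtle point is the final globalization, i.e.\ upgrading the pointwise equivalence $R_n(X) \simeq R_0(X) \otimes_{R_0(\pt)} R_n(\pt)$ to an equivalence of presheaves (hence of $R_0$-modules in $\SH^{\eff}_{\et}(k;\Lambda)$); this is automatic once one observes that both sides are etale hypersheaves computed by the same formula $X \mapsto \Rder^*(\M(X)^\vee) \otimes_A R_n(\pt)$ with $R_n(\pt)$ an invertible $A$-module.
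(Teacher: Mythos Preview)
Your proof is correct and takes a genuinely different route from the paper's. The paper verifies condition~(3) of Proposition~\ref{prop:equivalent-condition-for-a-motivic-ring-spectrum}: it factors $\Rder^*$ through $\widetilde{\Rder}{}^*:\SH_{\et}(k;\Rder_*A)\to\Mod_A$ and shows this is an equivalence, which requires an explicit reduction to fields of finite virtual $\Lambda$-cohomological dimension (to ensure compact generation by dualizable objects) and an appeal to \cite[Lemme~2.8]{gal-mot-1}. You instead verify condition~(2) directly, computing $R_n(X)$ via the adjunctions and the dualizability of $\M(X)$ from Lemma~\ref{lemma:generation-by-strongly-dualizable-objects}. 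Your route is more elementary and self-contained; the reduction to finite cohomological dimension is still present but is now hidden inside the black-box citation of Proposition~\ref{prop:equivalent-condition-for-a-motivic-ring-spectrum} (needed for the implication (2)$\Rightarrow$(1)).

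Two minor corrections. First, there is a sign slip: one has $\Sus^n_T\M^{\eff}(X)\simeq T^{\otimes -n}\otimes\M(X)$ rather than $T^{\otimes n}\otimes\M(X)$ (equivalently $\Ev^n_T(E)\simeq\Omega^\infty_T(T^{\otimes n}\otimes E)$), so that $R_n(\pt)\simeq(\Rder^*T)^{\otimes n}$; this is harmless since only invertibility is used. Second, your $\Map$ should be read as the $\Lambda$-enriched mapping spectrum rather than the mapping space, so that $R_n(X)$ lands in $\Mod_\Lambda$; with that adjustment the adjunction manipulations go through as written, since $\Rder^*$ is a morphism of $\Mod_\Lambda$-modules in $\Prl$.
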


\begin{proof}
Let $\Mod_A^{\otimes}$ be the codomain of $\Rder^*$.
The functor $\Rder^*$ factors through the functor 
$$\widetilde{\Rder}{}^*:\SH_{\et}(k;\Rder_*A)
\to \Mod_A, \quad M\mapsto \Rder^*(M)\otimes_{\Rder^*\Rder_*A}A.$$
By Corollary
\ref{prop:equivalent-condition-for-a-motivic-ring-spectrum},
it is enough to show that $\widetilde{\Rder}{}^*$ is an equivalence.
To do so, we first reduce to the case where
$k$ has finite virtual $\Lambda$-cohomological dimension in the sense of
\cite[Definition 2.4.8]{AGAV}.
Consider the family $(k_{\alpha})_{\alpha}$ 
of subfields of $k$ that are finitely generated over their prime field.
Precomposing with the functors $(k/k_{\alpha})^*$ yields 
plain realizations functors 
$\Rder_{\alpha}^*\in\Real(k_{\alpha};\Lambda)$ 
and we have the induced functor 
$$\widetilde{\Rder}{}_{\alpha}^*:\SH_{\et}(k_{\alpha};\Rder_{\alpha,\,*}A)
\to \Mod_A.$$
It is easy to see that we have a commutative triangle
$$\xymatrix{\underset{\alpha}{\colim}\, 
\SH_{\et}(k_{\alpha};\Rder_{\alpha,\,*}A) \ar[r]^-{\phi} \ar[dr]_-{\widetilde{\Rder}{}^*_{\infty}} & 
\SH_{\et}(k;\Rder_*A) \ar[d]^-{\widetilde{\Rder}{}^*}\\
& \Mod_A}$$
where the colimit is taken in $\Prl$.
Moreover, it follows from \cite[Proposition 2.5.11]{AGAV}
that the functor $\phi$ is a localization.
If we knew that the $\Rder_{\alpha}^*$'s were equivalences, 
we would deduce that the localisation functor $\phi$ admits 
a retraction, and hence must be an equivalence, and we would be done.
Thus, it suffices to treat the case where $k$ is finitely
generated over its prime field, and hence 
of finite virtual $\Lambda$-cohomological dimension in the sense of
\cite[Definition 2.4.8]{AGAV}.

We are now in the favourable situation were the $\infty$-category
$\SH_{\et}(k;\Lambda)$ is compactly generated by its dualizable 
objects. (Use \cite[Proposition 3.2.3]{AGAV}
and \cite[Proposition 3.1.3]{perfection-SH}.)
The same is true for $\SH_{\et}(k;\Rder_*A)$.
Since dualizable objects in $\Mod_A$ are compact
(by \cite[Proposition 7.2.4.4]{Lurie-HA}), 
the functor $\widetilde{\Rder}{}^*$ preserves compact objects
and its right adjoint
$\widetilde{\Rder}_*$ is colimit-preserving.
Since $\widetilde{\Rder}{}^*$ is clearly essentially surjective, 
we only need to show that the unit morphism 
$\id \to \widetilde{\Rder}_*\widetilde{\Rder}{}^*$
is an equivalence. Since the domain and codomain of this morphism
are colimit-preserving, it is enough to check this after 
evaluation on compact generators, and hence on objects of the form
$M\otimes \Rder_*A$ for $M\in \SH_{\et}(k;\Lambda)$ dualizable. 
In this case, the morphism we are considering can be written 
as $M\otimes \Rder_*A \to \Rder_*\Rder^*M$. 
We now conclude using \cite[Lemme 2.8]{gal-mot-1}.
\end{proof}

\begin{thm}
\label{thm:equiv-between-WSp-and-Plain-real}
The functor $\Real(k;\Lambda) \to 
\WSp(k;\Lambda)$, given by 
$\Rder^*\mapsto \Rder_*\Rder^*\Lambda$,
is an equivalence of $\infty$-categories.
\end{thm}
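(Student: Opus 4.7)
The plan is to produce a functor $\alpha:\WSp(k;\Lambda)\to \Real(k;\Lambda)$ sending a Weil spectrum $\mathbf{\Gamma}_W$ to its associated realization functor $\Rder^*_W$ of Definition \ref{dfn:homological-realization-weil-coh}, and then to verify that $\alpha$ is inverse to the functor $\beta:\Rder^*\mapsto \Rder_*\Rder^*\Lambda$ of the statement. Constructing $\alpha$ at the level of objects is immediate from Proposition \ref{prop:every-module-is-extended-constant}, which furnishes the factorization $\Rder^*_W:\SH_{\et}(k;\Lambda)\xrightarrow{\mathbf{\Gamma}_W\otimes -}\SH_{\et}(k;\mathbf{\Gamma}_W)\simeq \Mod_{\Gamma_W(\pt)}$, a composition of symmetric monoidal left adjoints. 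To upgrade this to an $\infty$-functor, I would mimic the bookkeeping used in the proof of Theorem \ref{thm:equivalence-between-weil-cohomol-spectra}: work with the cocartesian fibration $\int_{\mathbf{\Gamma}_W\in \WSp(k;\Lambda)} \CAlg(\Prl)_{\SH_{\et}(k;\Lambda)^{\otimes}/} \to \WSp(k;\Lambda)$ and produce $\alpha$ as a section, using that the equivalence of Proposition \ref{prop:every-module-is-extended-constant} is itself natural in $\mathbf{\Gamma}_W$.

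For the round trip $\beta\circ\alpha\simeq \id_{\WSp}$, note that since $\Rder^*_W$ is symmetric monoidal it sends the unit $\Lambda$ to the unit $\Gamma_W(\pt)$ of $\Mod_{\Gamma_W(\pt)}$; its right adjoint $\Rder_{W,\,*}$ returns $\mathbf{\Gamma}_W\otimes_{\Gamma_W(\pt)}\Gamma_W(\pt)\simeq \mathbf{\Gamma}_W$. This identification is natural in $\mathbf{\Gamma}_W$ by construction.

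For the round trip $\alpha\circ\beta\simeq \id_{\Real}$, start with $\Rder^*:\SH_{\et}(k;\Lambda)^{\otimes}\to \Mod_A^{\otimes}$ and set $\mathbf{\Gamma}_W=\Rder_*A\simeq \Rder_*\Rder^*\Lambda$, which is a Weil spectrum by Proposition \ref{prop:weil-spectrum-from-realization-functor}. The proof of that proposition already produces a factorization
\[
\Rder^*:\SH_{\et}(k;\Lambda)\longrightarrow \SH_{\et}(k;\mathbf{\Gamma}_W)\xrightarrow{\widetilde{\Rder}{}^*}\Mod_A
\]
in which $\widetilde{\Rder}{}^*$ is a symmetric monoidal equivalence; composing with the equivalence $\SH_{\et}(k;\mathbf{\Gamma}_W)\simeq \Mod_{\Gamma_W(\pt)}$ of Proposition \ref{prop:every-module-is-extended-constant} identifies $\Rder^*$ with $\Rder^*_W$ up to the canonical identification $A\simeq \Gamma_W(\pt)$ induced by $\widetilde{\Rder}{}^*$ on units. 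This proves $\alpha\circ\beta\simeq \id$ pointwise.

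The main obstacle is the coherent, $\infty$-categorical assembly: both round-trip comparisons must be promoted from pointwise equivalences to natural equivalences between functors of $\infty$-categories. I expect to handle this exactly as in Theorem \ref{thm:equivalence-between-weil-cohomol-spectra}, by realising each functor as a section of a suitable cocartesian fibration and invoking the uniqueness clause of Lemma \ref{lemma:idempotent-algebra-} to reduce compatibility of the resulting sections to the underlying $E_0$-algebra data, which is transparent. Once naturality is in place the two round trips are identities by the pointwise computations above, and $\alpha$ is the desired inverse.
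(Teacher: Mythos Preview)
Your proposal is correct and follows essentially the same approach as the paper: construct an inverse sending $\mathbf{\Gamma}_W$ to $\Rder^*_W$, then verify both round trips using Remark~\ref{rmk:weil-cohomology-recovered-realization} for one direction and the factorization through the equivalence $\widetilde{\Rder}{}^*$ (from the proof of Proposition~\ref{prop:weil-spectrum-from-realization-functor}) for the other. The paper's proof is terser than yours—it does not invoke the idempotent-algebra uniqueness of Lemma~\ref{lemma:idempotent-algebra-} for the coherence, treating the naturality of the commutative triangle $\Rder^*\simeq \widetilde{\Rder}{}^*\circ(\Rder_*A\otimes -)$ as evident—but the substance is the same.
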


\begin{proof}
Let us call $\alpha$ the functor of the statement. 
As in Definition 
\ref{dfn:homological-realization-weil-coh},
we also have a functor 
$\beta:\WSp(k;\Lambda) \to \Real(k;\Lambda)$ 
sending a Weil spectrum $\mathbf{\Gamma}_W$ to the associated 
realization functor $\Rder_W^*$. By Remark 
\ref{rmk:weil-cohomology-recovered-realization},
we have $\alpha\circ \beta=\id$. To prove that $\beta\circ \alpha=\id$, 
we observe that there is a commutative diagram
$$\xymatrix{\SH_{\et}(k;\Lambda) \ar[dr]^-{\Rder^*} \ar[d]_-{\Rder_*(A)\otimes -} & \\
\SH_{\et}(k;\Rder_*A) \ar[r]_-{\widetilde{\Rder}{}^*} & \Mod_A,}$$
which is natural in $\Rder^*\in \Real(k;\Lambda)$.
But we have $\beta\circ \alpha(\Rder^*)=
\widetilde{\Rder}{}^*(\Rder_*(A)\otimes -)$
since $\widetilde{\Rder}{}^*$ is a quasi-inverse to the obvious
functor $\Mod_A\to \SH_{\et}(k;\Rder_*A)$. This finishes the proof.
\end{proof}

We end this section with the following result.

\begin{prop}
\label{prop:the-right-fibration-WCT-to-CAlg}
The functor $\WCT(k;\Lambda) 
\to \CAlg_{\Lambda\backslash}$, given by 
$\Gamma_W \mapsto \Gamma_W(\pt)$,
is a left fibration. The analogous statement is also true for 
$\WSp(k;\Lambda)$ and $\Real(k;\Lambda)$.
\end{prop}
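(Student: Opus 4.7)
The plan is to apply the standard undercategory characterization of left fibrations: a functor $p:\mathcal{C}\to\mathcal{D}$ of $\infty$-categories is a left fibration if and only if, for every $c\in \mathcal{C}$, the induced functor $\mathcal{C}_{c/}\to\mathcal{D}_{p(c)/}$ is an equivalence of $\infty$-categories (cf.\ \cite{Lurie}, Chapter 2). I will verify this criterion directly for $p:\WSp(k;\Lambda)\to \CAlg_{\Lambda\backslash}$; the statements for $\WCT(k;\Lambda)$ and $\Real(k;\Lambda)$ will then follow by transport of structure along the equivalences of Theorems \ref{thm:equivalence-between-weil-cohomol-spectra} and \ref{thm:equiv-between-WSp-and-Plain-real}.

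Fix a Weil spectrum $\mathbf{\Gamma}_W$. By Corollary \ref{cor:algebra-over-weil-spectrum-weil-spectrum}, every commutative $\mathbf{\Gamma}_W$-algebra in $\SH_{\et}(k;\Lambda)$ is automatically a Weil spectrum, so the inclusion $\WSp(k;\Lambda)\subset \CAlg(\SH_{\et}(k;\Lambda))$ yields
$$\WSp(k;\Lambda)_{\mathbf{\Gamma}_W/}\simeq \CAlg(\SH_{\et}(k;\Lambda))_{\mathbf{\Gamma}_W/} \simeq \CAlg(\SH_{\et}(k;\mathbf{\Gamma}_W)),$$
where the second equivalence is the standard identification of commutative algebras under $\mathbf{\Gamma}_W$ with commutative algebras in the category of $\mathbf{\Gamma}_W$-modules. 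Proposition \ref{prop:every-module-is-extended-constant} provides an equivalence $\Mod_{\Gamma_W(\pt)}\simeq \SH_{\et}(k;\mathbf{\Gamma}_W)$ via $M\mapsto \mathbf{\Gamma}_W\otimes_{\Gamma_W(\pt)}M$; since this is realized as base change along a morphism of commutative algebras, it is symmetric monoidal, and passage to commutative algebras therefore yields
$$\CAlg(\SH_{\et}(k;\mathbf{\Gamma}_W))\simeq \CAlg_{\Gamma_W(\pt)/}\simeq (\CAlg_{\Lambda\backslash})_{\Gamma_W(\pt)/}.$$
A direct unraveling shows that the composite of these identifications is the functor induced by $p$, sending a morphism $\mathbf{\Gamma}_W\to R$ to the map $\Gamma_W(\pt)\to R_0(\pt)$ on coefficient rings.

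For $\WCT(k;\Lambda)$, the equivalence $\Omega^{\infty}_T:\WSp(k;\Lambda)\xrightarrow{\sim} \WCT(k;\Lambda)$ of Theorem \ref{thm:equivalence-between-weil-cohomol-spectra} commutes with the projections to $\CAlg_{\Lambda\backslash}$ because $\Omega^{\infty}_T(\mathbf{\Gamma}_W)(\pt)\simeq \Gamma_W(\pt)$ by Proposition \ref{prop:key-for-constructing-bf-Gamma-W}. For $\Real(k;\Lambda)$, the equivalence $\Rder^*\mapsto \Rder_*\Rder^*\Lambda$ of Theorem \ref{thm:equiv-between-WSp-and-Plain-real} sends a plain realization with codomain $\Mod_A^{\otimes}$ to a Weil spectrum whose coefficient ring is computed by adjunction as $\Rder_*\Rder^*\Lambda(\pt)\simeq \Map_{\Mod_A}(\Rder^*\Lambda,A)\simeq A$, using that $\Rder^*$ is symmetric monoidal; this again matches the target functor on the nose.

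The only genuinely non-formal step in this plan is the use of Proposition \ref{prop:every-module-is-extended-constant} as a \emph{symmetric monoidal} equivalence, rather than merely as an equivalence of underlying $\infty$-categories—this is what permits the passage to commutative algebras in the central step. It should be immediate from the fact that the functor there is realized as base change along a morphism of commutative algebras, but it deserves a brief explicit justification at this point in the paper.
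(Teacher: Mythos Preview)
Your argument is correct, but it takes a different route from the paper's. The paper works with $\WCT(k;\Lambda)$ rather than $\WSp(k;\Lambda)$, and uses the characterization of left fibrations as cocartesian fibrations with groupoidal fibers \cite[Proposition 2.4.2.4]{Lurie}. It observes that the cocartesian lift of a map $\Gamma_W(\pt)\to A$ is $\Gamma_W\otimes_{\Gamma_W(\pt)}A$, and then checks that the fibers are groupoids by showing that a morphism $\Gamma_W\to\Gamma_{W'}$ of Weil cohomology theories inducing an equivalence on coefficient rings is itself an equivalence---which follows because $\mathbf{\Gamma}_{W'}\simeq\mathbf{\Gamma}_W\otimes_{\Gamma_W(\pt)}\Gamma_{W'}(\pt)$ by Proposition~\ref{prop:every-module-is-extended-constant}. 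Your approach, via the undercategory criterion together with Corollary~\ref{cor:algebra-over-weil-spectrum-weil-spectrum} and the symmetric monoidal form of Proposition~\ref{prop:every-module-is-extended-constant}, is somewhat longer but extracts a stronger statement along the way: an explicit identification $\WSp(k;\Lambda)_{\mathbf{\Gamma}_W/}\simeq\CAlg_{\Gamma_W(\pt)/}$, which could be useful in its own right. Both arguments rest on the same non-formal input, namely Proposition~\ref{prop:every-module-is-extended-constant}; your remark that its symmetric monoidality deserves a word of justification is fair, and indeed it is immediate from the base-change description.
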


\begin{proof}
Since the $\infty$-categories $\WCT(k;\Lambda)$, 
$\WSp(k;\Lambda)$ and $\Real(k;\Lambda)$ are all equivalent,
it is enough to prove the first statement. 
Recall that a left fibration is a cocartesian fibration 
whose fibers are groupoids (see \cite[Proposition 2.4.2.4]{Lurie}).
Clearly, $\WCT(k;\Lambda) 
\to \CAlg_{\Lambda\backslash}$ is a cocartesian fibration 
classified by the functor sending a commutative 
$\Lambda$-algebra $A$ to the $\infty$-category $\WCT(k)_A$ 
of Weil cohomology theories $\Gamma_W$ such that 
$\Gamma_W(\pt)=A$. Thus, to conclude, it is enough to show
that a morphism of Weil cohomology theories $\Gamma_W \to \Gamma_{W'}$
is an equivalence provided that the induced morphism 
$\Gamma_W(\pt)\to \Gamma_{W'}(\pt)$
is an equivalence. To prove this, we note that since
$\mathbf{\Gamma}_{W'}$ is a $\mathbf{\Gamma}_W$-module, 
we have $\mathbf{\Gamma}_{W'}=\mathbf{\Gamma}_W\otimes_{\Gamma_W(\pt)}\Gamma_{W'}(\pt)$ by Proposition
\ref{prop:every-module-is-extended-constant}.
\end{proof}

\section{Weil cohomology theories in rigid analytic geometry}

\label{sect:generalized-weil-coh-theories-rigid-analytic}

Let $K$ be a field endowed with a nontrivial valuation 
of height $1$, and let $k$ be the residue field of $K$. 
We denote by $\widehat{K}$ the completion of $K$, and 
we denote by $K^{\circ}$ and $\widehat{K}{}^{\circ}$ the 
rings of integers of $K$ and $\widehat{K}$.
By `rigid analytic $K$-variety' we mean 
a locally finite type adic $\widehat{K}$-space in the sense of Huber.
We denote by $\RigSm_K$ the category of smooth rigid analytic 
$K$-varieties, and by $\RigSm^{\qcqs}_K$ its full subcategory 
spanned by the quasi-compact and quasi-separated ones.
We fix a connective 
commutative ring spectrum $\Lambda\in \CAlg$
and assume that the exponent characteristic of 
$k$ is invertible in $\pi_0(\Lambda)$.
In this section, we extend the theory developed in 
Section \ref{sect:generalized-weil-coh-theories-alg-geo}
to the setting of rigid analytic $K$-varieties. This turns out to be 
straightforward most of the time.

\begin{dfn}
\label{dfn:generalised-weil-coh-theory-rigid-anal}
A Weil cohomology theory $\Gamma_W$
for rigid analytic $K$-varieties is a presheaf of commutative 
$\Lambda$-algebras on 
$\RigSm_K$ satisfying the following properties.
\begin{enumerate}

\item ($\BB^1$-invariance)
The obvious morphism $\Gamma_W(\pt)\to \Gamma_W(\BB^1)$
is an equivalence.

\item The $\Gamma_W(\pt)$-module $\Gamma_W(\PP^1,\infty)=
\cofiber\{\Gamma_W(\pt) \to \Gamma_W(\PP^1)\}$ is invertible.

\item (K\"unneth formula) For every $X,Y\in\RigSm^{\qcqs}_K$,
the obvious morphism
$$\Gamma_W(X)\otimes_{\Gamma_W(\pt)}\Gamma_W(Y)\to 
\Gamma_W(X\times Y)$$
is an equivalence.

\item The presheaf $\Gamma_W$ admits \'etale hyperdescent.

\end{enumerate}
The commutative $\Lambda$-algebra $\Gamma_W(\pt)$ is called the 
coefficient ring of $\Gamma_W$.
\end{dfn}

\begin{rmk}
\label{rmk:definition-in-ANT-incorrect}
Definition 
\ref{dfn:generalised-weil-coh-theory-rigid-anal}
corrects \cite[D\'efinition 2.15]{Nouvelle-Weil}
where the K\"unneth formula was requested for all $X,Y\in \RigSm_K$.
This is clearly an unreasonable demand since the tensor 
product of $\Lambda$-modules 
does not commute with infinite direct products in both variables,
unless $\Lambda$ is zero. However, using Proposition
\ref{prop:every-module-is-extended-constant-rig-anal}
below, one can prove that the K\"unneth formula holds if 
only $X$ or $Y$ is assumed to be quasi-compact and quasi-separated. 
Indeed, if $X$ is quasi-compact and quasi-separated,
the $\Gamma_W(\pt)$-module $\Gamma_W(X)$ is dualizable, and hence 
the functor $\Gamma_W(X)\otimes_{\Gamma_W(\pt)}(-)$ is limit-preserving. 
Without using Proposition
\ref{prop:every-module-is-extended-constant-rig-anal},
we still can conclude that the K\"unneth formula is true when 
$X=\PP^1$ and $Y$ is general using that $\Gamma_W(\PP^1,\infty)$
is invertible.
In particular, defining the twisted presheaves
$\Gamma_W(n)$, for $n\in \Z$, as in Remark 
\ref{rmk:on-tate-twists-of-weil-coh-theories},
we still have equivalences 
\begin{equation}
\label{eq-rmk:definition-in-ANT-incorrect-1}
\Gamma_W(-\times (\PP^1,\infty))(n)[2n] \simeq 
\Gamma_W(-)(n-1)[2n-2]
\end{equation}
of presheaves on $\RigSm_K$.
\end{rmk}

\begin{dfn}
\label{dfn:infty-category-of-rigid-WCT}
We denote by $\RigWCT(K;\Lambda)$ the $\infty$-category of 
Weil cohomology theories for rigid analytic $K$-varieties. 
This is the nonfull sub-$\infty$-category of 
$\Fun((\RigSm_K)^{\op};\CAlg_{\Lambda\backslash})$
spanned by morphisms between Weil cohomology theories 
$\Gamma_W \to \Gamma_{W'}$ such that the induced morphism
$$\Gamma_W(\PP^1,\infty)
\otimes_{\Gamma_W(\pt)}\Gamma_{W'}(\pt) \to \Gamma_{W'}(\PP^1,\infty)$$
is an equivalence.
\end{dfn}

We need to recall the rigid analytic version of the Morel--Voevodsky
stable homotopy category which is the natural home for rigid analytic
Weil spectra.

\begin{dfn}
\label{dfn:stable-homotopy-category-Lambda-etale-rigid-anal}
We denote by $\RigSH_{\et}^{\eff}(K;\Lambda) \subset 
\Shv_{\et}(\RigSm_K;\Lambda)$ the full sub-$\infty$-category of
$\BB^1$-local \'etale hypersheaves of $\Lambda$-modules on 
$\RigSm_K$.
We denote by 
$$\Lmot:\Shv_{\et}(\RigSm_K;\Lambda) \to 
\RigSH_{\et}^{\eff}(K;\Lambda)$$
the motivic localisation functor.
We denote by $\RigSH_{\et}(K;\Lambda)^{\otimes}$ 
the symmetric monoidal $\infty$-category obtained from 
$\RigSH_{\et}^{\eff}(K;\Lambda)^{\otimes}$ by 
inverting the object $T=\Lmot(\Lambda_{\et}(\PP^1,\infty))$
for the tensor product. Given $X\in \RigSm_K$, we denote by 
$\M^{\eff}(X)$ and $\M(X)$ the objects $\Lmot\Lambda_{\et}(X)$
and $\Sigma^{\infty}_T\Lmot\Lambda_{\et}(X)$ in 
$\RigSH_{\et}^{\eff}(K;\Lambda)$ and $\RigSH_{\et}(K;\Lambda)$.
\end{dfn}

Remarks 
\ref{rmk:motivic-spectra-object-SH}
and 
\ref{rmk:six-functor-for-SH-}
apply in the rigid analytic setting. 
Below, we will use the obvious analogue of Notation
\ref{nota:SH-modules-over-Gamma-}.

\begin{prop}
\label{prop:key-for-constructing-bf-Gamma-W-rig-anal}
Let $\Gamma_W\in \RigWCT(K;\Lambda)$ be a Weil cohomology theory. 
Then $\Gamma_W$ is a commutative algebra in 
$\RigSH^{\eff}_{\et}(K;\Lambda)$ and 
there is a unique commutative algebra $\mathbf{\Gamma}_W$
in $\RigSH_{\et}(K;\Gamma_W)$ 
satisfying the following conditions:
\begin{enumerate}

\item the morphism
$\Gamma_W \to \Omega^{\infty}_T(\mathbf{\Gamma}_W)$ 
is an equivalence;

\item the underlying spectrum of $\mathbf{\Gamma}_W$ is given by 
$\Gamma_W(n)[2n]$ in level $n$ and has assembly maps induced 
from the equivalence in 
\eqref{eq-rmk:definition-in-ANT-incorrect-1}.

\end{enumerate}
Moreover, $\mathbf{\Gamma}_W$ is an idempotent algebra in
$\RigSH^{\eff}_{\et}(K;\Gamma_W)$.
\end{prop}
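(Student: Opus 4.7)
The plan is to mirror the proof of Proposition \ref{prop:key-for-constructing-bf-Gamma-W}, working in the $\infty$-category $\RigSH_{\et}(K;\Lambda)$ throughout. That $\Gamma_W$ is a commutative $\Lambda$-algebra in $\RigSH^{\eff}_{\et}(K;\Lambda)$ is immediate from conditions (1) and (4) of Definition \ref{dfn:generalised-weil-coh-theory-rigid-anal}. Using the rigid analytic analogue of Remark \ref{rmk:motivic-spectra-object-SH}, the data $(\Gamma_W(n)[2n])_{n\in \N}$ together with the assembly maps induced by the equivalence \eqref{eq-rmk:definition-in-ANT-incorrect-1} assemble into a $T$-spectrum $\mathbf{\Gamma}_W\in \RigSH_{\et}(K;\Gamma_W)$ satisfying condition (2), with condition (1) automatic from the construction.

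Next, I would equip $\mathbf{\Gamma}_W$ with the $E_0$-algebra structure coming from the unit morphism $u:\Sigma^{\infty}_T\Gamma_W \to \mathbf{\Gamma}_W$ in $\RigSH_{\et}(K;\Gamma_W)$. By Lemma \ref{lemma:idempotent-algebra-}, both the existence and uniqueness of a commutative algebra refinement and the final idempotence claim will follow formally once one shows that $\mathbf{\Gamma}_W$ is idempotent as an $E_0$-algebra. The task thereby reduces to proving that the map $u\otimes \id:\mathbf{\Gamma}_W \to \mathbf{\Gamma}_W\otimes_{\Gamma_W}\mathbf{\Gamma}_W$ is an equivalence in $\RigSH_{\et}(K;\Gamma_W)$.

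To compute this tensor product I would invoke a rigid analytic analogue of Lemma \ref{lem:on-tensor-product-of-spectra}, which would yield
\[
\Ev^n_T(\mathbf{\Gamma}_W\otimes_{\Gamma_W}\mathbf{\Gamma}_W)\simeq \underset{r+s\geq n}{\colim}\,\underline{\Hom}(T^{\otimes (r+s-n)}, \Gamma_W(r)[2r]\otimes_{\Gamma_W}\Gamma_W(s)[2s]).
\]
Since $\Gamma_W(r)\otimes_{\Gamma_W}\Gamma_W(s)\simeq \Gamma_W(r+s)$ (where the K\"unneth formula for qcqs varieties and the invertibility of the Tate twists enter, as already noted in Remark \ref{rmk:definition-in-ANT-incorrect}), the colimit should collapse to $\Gamma_W(n)[2n]$, matching $\mathbf{\Gamma}_W$ in level $n$ and establishing the desired equivalence.

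The hard part will be establishing the rigid analytic analogue of Lemma \ref{lem:on-tensor-product-of-spectra}. In the algebraic case, the argument invoked \cite[Th\'eor\`eme 4.3.61]{these-doctorat-II} under the hypothesis that $k$ has finite virtual $\Lambda$-cohomological dimension, a hypothesis arranged by reducing to finitely generated subfields $k_{\alpha}\subset k$ and exploiting that $\colim_{\alpha}\,\SH_{\et}(k_{\alpha};\Lambda)\to \SH_{\et}(k;\Lambda)$ is a localization in $\Prl$. The rigid analytic setting should admit a parallel reduction: write $\widehat{K}$ as arising from a filtered system of suitably small valued subfields whose residue fields are finitely generated, verify the corresponding localization property for the induced comparison of $\RigSH_{\et}$'s, and check the requisite finiteness hypothesis on each stage (so that the cited result from \cite{these-doctorat-II} applies). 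Once this reduction is in place, the remainder of the argument proceeds formally just as in the algebraic case.
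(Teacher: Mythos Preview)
Your proposal is correct and follows essentially the same route as the paper. The paper's proof is terser but makes the same reductions: it writes $K$ as the union of its valued subfields $K_{\alpha}$ that are non-discrete and finitely generated over their prime field (this is the precise smallness condition ensuring finite virtual $\Lambda$-cohomological dimension), and it cites \cite[Theorem 2.5.1]{AGAV} for the fact that $\colim_{\alpha}\,\RigSH_{\et}(K_{\alpha};\Lambda)\to \RigSH_{\et}(K;\Lambda)$ is a localization in $\Prl$; with that in hand, the rigid analytic analogue of Lemma~\ref{lem:on-tensor-product-of-spectra} goes through verbatim.
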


\begin{proof}
The proof of Proposition
\ref{prop:key-for-constructing-bf-Gamma-W}
extends easily to the rigid analytic setting. 
We need to reduce to the case where $\widehat{K}$ has 
finite virtual 
$\Lambda$-cohomological dimension. To do so, we write $K$ as the 
union of its subfields $K_{\alpha}$ 
which are non discrete and finitely 
generated over their prime field.
Then, by \cite[Theorem 2.5.1]{AGAV},
$$\underset{\alpha}{\colim}\,\RigSH_{\et}(K_{\alpha};\Lambda)
\to \RigSH_{\et}(K;\Lambda)$$
is a localisation functor, as needed for the reduction. 
Finally, we note that the 
rigid analytic version of Lemma
\ref{lem:on-tensor-product-of-spectra}
holds true and can be proven similarly.
\end{proof}

\begin{prop}
\label{prop:every-module-is-extended-constant-rig-anal}
Let $\Gamma_W\in \RigWCT(K;\Lambda)$ be a Weil cohomology theory, 
and let $\mathbf{\Gamma}_W$ be the associated motivic commutative ring spectrum. Then, the obvious functor 
\begin{equation}
\label{eq-prop:every-module-is-extended-constant-rig-anal-1}
\Mod_{\Gamma_W(\pt)} \to \RigSH_{\et}(K;\mathbf{\Gamma}_W), \quad 
M\mapsto \mathbf{\Gamma}_W \otimes_{\Gamma_W(\pt)}M
\end{equation}
is an equivalence.
\end{prop}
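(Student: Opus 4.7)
The plan is to mimic, step for step, the proof of Proposition \ref{prop:every-module-is-extended-constant}, replacing the algebraic ingredients with their rigid analytic analogues. Write $\rho$ for the functor in \eqref{eq-prop:every-module-is-extended-constant-rig-anal-1}. As in the algebraic case, the argument splits into fully faithfulness plus generation of the target by the essential image of $\rho$.

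For fully faithfulness I would reproduce verbatim the five-step chain of equivalences used before. Given $\Gamma_W(\pt)$-modules $P$ and $Q$, the motivic spectrum $\mathbf{\Gamma}_W\otimes_{\Gamma_W(\pt)}P$ is still described level by level by $\Gamma_W\otimes_{\Gamma_W(\pt)}P(r)[2r]$, by construction of $\mathbf{\Gamma}_W$ in Proposition \ref{prop:key-for-constructing-bf-Gamma-W-rig-anal}. Step (1) relies on the idempotence of $\mathbf{\Gamma}_W$ as a commutative algebra in $\RigSH_{\et}(K;\Gamma_W)$, which is exactly what Proposition \ref{prop:key-for-constructing-bf-Gamma-W-rig-anal} provides; step (2) uses the rigid analytic analogue of Remark \ref{rmk:motivic-spectra-object-SH}; step (3) is the fact that the resulting pro-system is eventually constant (the transition maps are equivalences because $\Gamma_W(\PP^1,\infty)$ is invertible); step (4) is adjunction between $\Gamma(\pt;-)$ and the constant presheaf functor after $\BB^1$-localization and étale hypersheafification; and step (5) identifies the global sections of $\Gamma_W\otimes_{\Gamma_W(\pt)}Q$ as $Q$, which follows by evaluating at $\pt$.

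For essential surjectivity, the only additional input needed is the rigid analogue of Lemma \ref{lemma:generation-by-strongly-dualizable-objects}, namely that $\RigSH_{\et}(K;\Lambda)$ is generated under colimits, suspensions and Tate twists by the objects $\pi_{X,\,*}\Lambda$ for $X\in \RigSm^{\qcqs}_K$, and that each such object is dualizable with dual $\pi_{X,\,\sharp}\Lambda$. Granting this, $\rho$ is colimit-preserving, so it suffices to show that $(\pi_{X,\,*}\Lambda)\otimes \mathbf{\Gamma}_W$ lies in the essential image for every smooth qcqs $X$. Using dualizability, this object identifies with $\pi_{X,\,*}\pi_X^*\mathbf{\Gamma}_W$, which in level $r$ is the presheaf $\Gamma_W(-\times X)(r)[2r]$. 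The K\"unneth formula of Definition \ref{dfn:generalised-weil-coh-theory-rigid-anal}, applicable precisely because $X$ is quasi-compact and quasi-separated, rewrites this as $\Gamma_W(-)\otimes_{\Gamma_W(\pt)}\Gamma_W(X)(r)[2r]$, which is exactly $\rho(\Gamma_W(X))$ in level $r$.

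The main obstacle, and the only place where the rigid setting genuinely diverges from the algebraic one, is the qcqs hypothesis: the K\"unneth formula is only postulated for qcqs factors, so we must ensure the generators $\pi_{X,\,*}\Lambda$ can be chosen with $X$ in $\RigSm^{\qcqs}_K$, and that these are strongly dualizable. The dualizability part should follow from the six-functor formalism in rigid analytic motives (Remark \ref{rmk:six-functor-for-SH-}) together with a Bondarko--D\'eglise type result in the rigid setting, while the generation statement should reduce, via the topology of $K$-adic spaces by qcqs opens, to the fact that any smooth rigid analytic $K$-variety admits an \'etale hypercover by qcqs ones. Once that auxiliary lemma is established, the rest of the proof proceeds word for word as in the algebraic case.
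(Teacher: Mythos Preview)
Your approach is essentially identical to the paper's: the paper states that the proof of Proposition~\ref{prop:every-module-is-extended-constant} extends literally once one has the rigid analytic analogue of Lemma~\ref{lemma:generation-by-strongly-dualizable-objects}, and then supplies exactly that as Lemma~\ref{lemma:generation-good-reduction-dualizability}. The one place where your speculation diverges is in how to establish that auxiliary lemma: rather than a direct Bondarko--D\'eglise argument in the rigid setting, the paper reduces (after passing to finite virtual $\Lambda$-cohomological dimension) to rigid analytic varieties with potentially good reduction, uses properness of finite \'etale covers to reduce to good reduction, and then pulls the dualizability statement back from the algebraic side via the functor $\xi$.
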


\begin{proof}
The proof of Proposition
\ref{prop:every-module-is-extended-constant}
extends literally to the rigid analytic setting 
provided that we have a rigid analytic version of Lemma 
\ref{lemma:generation-by-strongly-dualizable-objects}.
This is the subject of Lemma
\ref{lemma:generation-good-reduction-dualizability}
below.
\end{proof}

\begin{lemma}
\label{lemma:push-forward-strongly-dualizable-object}
Let $f:T \to S$ be a smooth and proper morphism of rigid analytic 
spaces. Let $M\in \RigSH_{\et}(T)$ be a dualizable object. 
Then $f_*M$ is dualizable with dual $f_{\sharp}M^{\vee}$.
\end{lemma}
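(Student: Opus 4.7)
The plan is to verify directly that $f_\sharp M^\vee$ is a dual of $f_*M$ by checking the universal property through a chain of adjunctions and projection formulas from the rigid analytic six-functor formalism invoked in Remark \ref{rmk:six-functor-for-SH-}. The inputs needed are: the adjunction $f_\sharp \dashv f^*$ together with the smooth projection formula $f_\sharp(N \otimes f^*P) \simeq f_\sharp(N) \otimes P$, both available since $f$ is smooth; the adjunction $f^* \dashv f_*$; the identification $f_! \simeq f_*$ coming from the properness of $f$, together with the projection formula $f_*(N \otimes f^*P) \simeq f_*(N) \otimes P$ that it induces; and the dualizability of $M$ in $\RigSH_{\et}(T)$, which yields a natural equivalence $\Map_T(A \otimes M^\vee, B) \simeq \Map_T(A, M \otimes B)$.

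Given these inputs, for all $Z, W \in \RigSH_{\et}(S)$ I would exhibit a natural equivalence
$$\Map_S(Z, f_*M \otimes W) \simeq \Map_S(Z \otimes f_\sharp M^\vee, W),$$
which is exactly the statement that $f_\sharp M^\vee$ is a dual of $f_*M$. The chain reads: absorb $W$ into $f_*$ via the proper projection formula to get $\Map_S(Z, f_*(M \otimes f^*W))$; pass to $T$ via $f^* \dashv f_*$ to get $\Map_T(f^*Z, M \otimes f^*W)$; swap $M^\vee$ across using dualizability of $M$ to get $\Map_T(f^*Z \otimes M^\vee, f^*W)$; return to $S$ via $f_\sharp \dashv f^*$ to get $\Map_S(f_\sharp(f^*Z \otimes M^\vee), W)$; and finally pull $Z$ out of $f_\sharp$ via the smooth projection formula.

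The main obstacle is not conceptual but organisational: one must confirm that the rigid analytic six-functor formalism supplies each of the adjunctions and projection formulas above at the level of generality required here, including in particular the identification $f_! \simeq f_*$ for proper morphisms of arbitrary rigid analytic spaces, with coefficients in the connective commutative ring spectrum $\Lambda$. Once these are in place, the argument is essentially formal. As an alternative, one may bypass the mapping-space manipulations by assembling the same inputs into explicit unit and counit maps $\one_S \to f_*M \otimes f_\sharp M^\vee$ and $f_\sharp M^\vee \otimes f_*M \to \one_S$, and verifying the triangle identities diagrammatically via a standard chase through the projection formulas; the mapping-space route nevertheless seems the cleanest.
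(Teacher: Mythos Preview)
Your argument is correct and the chain of equivalences you propose does exactly what is needed: the natural equivalence
\[
\Map_S(Z\otimes f_\sharp M^\vee, W)\;\simeq\;\Map_S(Z, f_*M\otimes W)
\]
expresses that $-\otimes f_\sharp M^\vee$ is left adjoint to $f_*M\otimes -$, which is one of the standard characterisations of a duality datum in a symmetric monoidal $\infty$-category. The ingredients you invoke (the adjunctions $(f_\sharp,f^*)$ and $(f^*,f_*)$, and the smooth and proper projection formulas, the latter via $f_!\simeq f_*$) are all supplied by the rigid analytic six-functor formalism of \cite{AGAV}.

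The paper's proof takes a different route: it computes the internal Hom $\underline{\Hom}(f_*M,N)$ by passing through $f^!$, using the purity identification $f^!N\simeq f^!\Lambda\otimes f^*N$ for smooth $f$, and only at the very end recognises the result as $f_!(\Th(\Omega_f)\otimes M^\vee)\otimes N = f_\sharp M^\vee\otimes N$. Your approach is in a sense more economical: it never mentions $f^!$ or the Thom twist, using instead the adjunction $f_\sharp\dashv f^*$ and the smooth projection formula directly. The paper's route, on the other hand, makes the underlying duality between $f_\sharp$ and $f_!\circ\Th(\Omega_f)$ explicit and ties the computation more visibly to the purity theorem. Both are standard variations on the same theme.
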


\begin{proof}
For $N\in \RigSH_{\et}(S)$, we have the following chain of equivalences
$$\underline{\Hom}(f_*M,N)\overset{(1)}{\simeq} 
\underline{\Hom}(f_!M,N) \overset{(2)}{\simeq} f_*\underline{\Hom}(M,f^!N)
\overset{(3)}{\simeq} f_*\underline{\Hom}(M,f^!\Lambda \otimes f^*N)$$
$$\overset{(4)}{\simeq} f_*\left(\underline{\Hom}(M,f^!\Lambda)
\otimes f^*N\right)
\overset{(5)}{\simeq} f_*\underline{\Hom}(M,f^!\Lambda) \otimes N
\overset{(6)}{\simeq} f_!(\Th(\Omega_f)\otimes M^{\vee}) \otimes N$$
where:
\begin{enumerate}

\item[(1)] follows from the identification $f_!\simeq f_*$
(see \cite[Proposition 4.4.27]{AGAV});

\item[(2)] follows from \cite[Corollary 4.5.4]{AGAV};

\item[(3)] follows from \cite[Theorem 4.4.29]{AGAV};

\item[(4)] follows from the assumption that $M$ is dualizable;

\item[(5)] follows from \cite[Proposition 4.1.7]{AGAV};

\item[(6)] follows from \cite[Theorem 4.4.29]{AGAV}.

\end{enumerate}
This proves that $f_*M$ is dualizable with dual 
$f_!(\Th(\Omega_f)\otimes M^{\vee})=f_{\sharp}M^{\vee}$
as needed.
\end{proof}

Given a rigid analytic $K$-variety $X$, we denote by
$\pi_X:X \to \pt=\Spa(\widehat{K})$ its structural morphism. 
The following lemma is used in the proof of Proposition
\ref{prop:every-module-is-extended-constant-rig-anal}.

\begin{lemma}
\label{lemma:generation-good-reduction-dualizability}
For every $X\in \RigSm^{\qcqs}_K$, the motive 
$\pi_{X,\,\sharp}\Lambda\in \RigSH_{\et}(K;\Lambda)$ is dualizable 
with dual $\pi_{X,\,*}\Lambda$. Moreover, 
the following holds.
\begin{enumerate}

\item The $\infty$-category $\RigSH_{\et}(K;\Lambda)$ is generated 
under colimits, desuspension and Tate twsists 
by the objects $\pi_{X,\,\sharp}\Lambda$
for $X\in \RigSm^{\qcqs}_K$.

\item The $\infty$-category $\RigSH_{\et}(K;\Lambda)$ is generated 
under colimits, desuspension and Tate twsists 
by the objects $\pi_{X,\,*}\Lambda$
for $X\in \RigSm^{\qcqs}_K$.

\end{enumerate}
In fact, in (1) and (2), we may restrict to those 
$X$'s with potential good reduction.
\end{lemma}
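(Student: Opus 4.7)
The plan is to mimic the strategy of the algebraic analogue (Lemma \ref{lemma:generation-by-strongly-dualizable-objects}): establish first that the subcategory of dualizable objects of $\RigSH_{\et}(K;\Lambda)$ is generated as a thick sub-$\infty$-category by the motives of smooth qcqs rigid analytic varieties, and then exploit duality as an anti-auto-equivalence on that subcategory. The main inputs, beyond the six-functor formalism of Remark \ref{rmk:six-functor-for-SH-}, are Lemma \ref{lemma:push-forward-strongly-dualizable-object}, which provides dualizability in the proper smooth case, and a rigid analytic semistable alteration input (in the spirit of Temkin's work), which allows the reduction to potential good reduction.

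When $X\in \RigSm^{\qcqs}_K$ is moreover proper, Lemma \ref{lemma:push-forward-strongly-dualizable-object} applied to $f=\pi_X$ and $M=\Lambda$ shows directly that $\pi_{X,*}\Lambda$ is dualizable with dual $\pi_{X,\sharp}\Lambda$, whence reciprocally $\pi_{X,\sharp}\Lambda$ is dualizable with dual $\pi_{X,*}\Lambda$. For the generation statement, I would first recall that by construction, $\RigSH_{\et}(K;\Lambda)$ is generated under colimits, desuspensions, and Tate twists by $\pi_{X,\sharp}\Lambda$ for $X\in \RigSm_K$, and one may restrict to qcqs $X$'s since every rigid analytic variety is a filtered colimit of its qcqs open sub-varieties. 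To restrict further to $X$'s of potential good reduction, I would invoke a Temkin-style alteration: every $X\in \RigSm^{\qcqs}_K$ admits an \'etale hypercover $X_\bullet\to X$ whose terms are disjoint unions of smooth qcqs rigid analytic varieties of potential good reduction (and, after further refinement, of proper good reduction). By \'etale hyperdescent, $\pi_{X,\sharp}\Lambda$ is the geometric realization of the $\pi_{X_\bullet,\sharp}\Lambda$'s, giving the required generation.

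For the dualizability in general, one combines the above with the fact that the hypercover may be chosen with proper terms, so that each $\pi_{X_n,\sharp}\Lambda$ is already dualizable with dual $\pi_{X_n,*}\Lambda$. A coherent descent argument (using, e.g., the compatibility of duals with finite colimits and retracts along the \v{C}ech nerve of a finite \'etale cover dominating the hypercover) then identifies both $\pi_{X,\sharp}\Lambda$ and $\pi_{X,*}\Lambda$ as, respectively, the geometric realization and the totalization of a compatible system of mutually dual objects, so that $\pi_{X,\sharp}\Lambda$ is dualizable with dual $\pi_{X,*}\Lambda$. Statement (2) follows formally from (1) via the duality anti-equivalence $M\mapsto M^{\vee}$ on dualizable objects: dualizing the generators in (1) produces, up to shifts and Tate twists accounting for the relative dimension of $X$, the objects $\pi_{X,*}\Lambda(m')[n']$ for $X$ of potential good reduction, which therefore also generate $\RigSH_{\et}(K;\Lambda)$ under colimits.

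The principal obstacle is the alteration input and the careful descent identification of duals: concretely, the existence of an \'etale hypercover of any $X\in \RigSm^{\qcqs}_K$ by smooth qcqs rigid analytic varieties of proper good reduction, together with the coherent matching of $\pi_{X_\bullet,\sharp}\Lambda$ and $\pi_{X_\bullet,*}\Lambda$ as mutual duals along the hypercover. Once these geometric and formal points are secured, the rest of the argument is a transcription of the algebraic proof, leveraging Lemma \ref{lemma:push-forward-strongly-dualizable-object} and the standard calculus of dualizable objects in symmetric monoidal $\infty$-categories.
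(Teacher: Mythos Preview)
Your approach has a genuine gap in the dualizability argument, and it stems from a geometric misconception about the objects with potential good reduction.

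First, the reduction to \emph{proper} rigid varieties via \'etale hypercovers cannot work as stated. The objects with potential good reduction are generic fibres of smooth formal $\widehat{K}{}^{\circ}$-schemes; these are typically affinoid (e.g.\ closed polydiscs), not proper. There is no \'etale hypercover of a general qcqs $X$ by smooth \emph{proper} rigid varieties, so you never reach the situation where Lemma~\ref{lemma:push-forward-strongly-dualizable-object} applies to $\pi_{X_n}$. Second, even granting dualizable terms, your ``coherent descent'' step is not valid: dualizability is closed under finite colimits and retracts, but not under geometric realizations of simplicial objects. Identifying $\pi_{X,\sharp}\Lambda$ as a colimit of dualizable objects does not make it dualizable, and there is no general mechanism matching the dual of a colimit with the limit of the duals.

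The paper proceeds quite differently and avoids properness entirely. After reducing (via a continuity argument) to the case where $\widehat{K}$ has finite virtual $\Lambda$-cohomological dimension, it invokes \cite[Proposition~3.7.17]{AGAV} for the generation by $X$'s with potential good reduction. For such an $X$, one has $X=\mathcal{X}_\eta$ with $\mathcal{X}$ a smooth formal $L^{\circ}$-scheme for a finite \'etale extension $L/\widehat{K}$. Lemma~\ref{lemma:push-forward-strongly-dualizable-object} is used only for the finite \'etale map $\Spa(L)\to\Spa(\widehat{K})$, reducing to $L=\widehat{K}$. The crucial point is then that $\pi_{X,\sharp}\Lambda$ is the image of $\pi_{\mathcal{X}_\sigma,\sharp}\Lambda\in\SH_{\et}(k;\Lambda)$ under the symmetric monoidal functor
\[
\xi:\SH_{\et}(k;\Lambda)\simeq\FSH_{\et}(\widehat{K}{}^{\circ};\Lambda)\to\RigSH_{\et}(K;\Lambda),
\]
and dualizability of $\pi_{\mathcal{X}_\sigma,\sharp}\Lambda$ is already provided by the algebraic Lemma~\ref{lemma:generation-by-strongly-dualizable-objects}. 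Since symmetric monoidal functors preserve dualizable objects, dualizability of $\pi_{X,\sharp}\Lambda$ follows; no infinite-colimit descent for duals is needed. Your outline for (1) with potential good reduction and for deducing (2) from (1) via the duality anti-equivalence is fine; the missing ingredient is this transfer through $\xi$ from the special fibre.
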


\begin{proof}
Since $\underline{\Hom}(\pi_{X,\,\sharp}\Lambda,\Lambda)=
\pi_{X,\,*}\Lambda$, the dual of $\pi_{X,\,\sharp}\Lambda$ 
is necessarily $\pi_{X,\,*}\Lambda$ provided that $\pi_{X,\,\sharp}\Lambda$
is dualizable. Therefore, it suffices to show point (1) and that 
the thick stable sub-$\infty$-category generated by 
$\pi_{X,\,\sharp}\Lambda(m)[n]$, for $m,n\in \N$ and 
$X\in \RigSm^{\qcqs}_K$,
is closed under taking the dual. To prove this, we may 
employ the devise used in the proof of Proposition 
\ref{prop:key-for-constructing-bf-Gamma-W-rig-anal}
to reduce to the case where $\widehat{K}$ is has finite virtual 
$\Lambda$-cohomological dimension. In this case, we know
that $\RigSH_{\et}(K;\Lambda)$ is compactly generated by 
the $\pi_{X,\,\sharp}\Lambda(m)[n]$, for $m,n\in \N$ and 
$X\in \RigSm^{\qcqs}_K$, 
and it remains to see that these generators are 
dualizable. (Indeed, the dual is then necessary compact, and the 
second point to check is automatic.)

By \cite[Proposition 3.7.17]{AGAV},
the $\infty$-category 
$\RigSH_{\et}(K;\Lambda)$ is generated 
under colimits by objects of the form
$\pi_{X,\,\sharp}\Lambda(-m)[-n]$
where $m,n\in \Z$ are integers and $X$
has potentially good reduction. 
More precisely, we can assume that there are a finite \'etale extension
$L/\widehat{K}$ and a smooth formal $L^{\circ}$-scheme $\mathcal{X}$
such that $X=\mathcal{X}_{\eta}$.
By Lemma \ref{lemma:push-forward-strongly-dualizable-object}
applied to $f:\Spa(L) \to \Spa(\widehat{K})=\pt$, it is enough to show that 
$\pi_{X/L,\,\sharp}\Lambda$ is dualizable.
Said differently, we may assume that $X$ has good reduction, i.e., that 
$X=\mathcal{X}_{\eta}$ with $\mathcal{X}$ smooth over 
$\widehat{K}{}^{\circ}$. 
In this case, $\pi_{X,\,\sharp}\Lambda$ is the image of 
$\pi_{\mathcal{X}_{\sigma},\,\sharp}\Lambda$ by the 
symmetric monoidal functor 
$$\xi:\SH_{\et}(k;\Lambda) \simeq \FSH_{\et}(\widehat{K}{}^{\circ};\Lambda)
\to \RigSH_{\et}(K;\Lambda)$$
described in \cite[Notation 3.1.12]{AGAV}.
(See also Remark
\ref{rmk:two-functors-relating-motive-rigan-mot}
below.) By Lemma
\ref{lemma:generation-by-strongly-dualizable-objects}, 
$\pi_{\mathcal{X}_{\sigma},\,\sharp}\Lambda$
is dualizable, and this enables us to conclude.
\end{proof}

\begin{prop}
\label{prop:equivalent-cond-for-motivic-ring-spectr-rigan}
Let $R=(R_n)_{n\in \N}$ be a commutative algebra in 
$\RigSH_{\et}(K;\Lambda)$. Then the following conditions
are equivalent.
\begin{enumerate}

\item There is a Weil cohomology theory $\Gamma_W\in 
\RigWCT(K;\Lambda)$
such that $R$ is equivalent to $\mathbf{\Gamma}_W$.

\item The presheaf of commutative algebras $R_0=\Omega^{\infty}_T(R)$
is a Weil cohomology theory and the $R_0$-modules 
$R_n$ are invertible. 

\item For every $R$-module $M=(M_n)_{n\in \N}$, 
the obvious morphism $R\otimes_{R_0}M_0 \to M$
is an equivalence.
 
\end{enumerate}
\end{prop}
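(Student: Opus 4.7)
The plan is to follow the structure of Proposition~\ref{prop:equivalent-condition-for-a-motivic-ring-spectrum} step by step, substituting the rigid analytic analogues at each stage: Proposition~\ref{prop:key-for-constructing-bf-Gamma-W-rig-anal} for the construction of $\mathbf{\Gamma}_W$, Proposition~\ref{prop:every-module-is-extended-constant-rig-anal} for the module extension equivalence, and Lemma~\ref{lemma:generation-good-reduction-dualizability} for the dualizability of motives of qcqs smooth rigid analytic varieties.

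The implication (1) $\Rightarrow$ (2) is immediate from the explicit level description of $\mathbf{\Gamma}_W$ given in Proposition~\ref{prop:key-for-constructing-bf-Gamma-W-rig-anal}, and (1) $\Rightarrow$ (3) is a restatement of Proposition~\ref{prop:every-module-is-extended-constant-rig-anal}. For (2) $\Rightarrow$ (1), I would set $\Gamma_W := R_0$; condition (2) makes $R$, regarded as an $E_0$-algebra in $\RigSH_{\et}(K;R_0)$ via the morphism from the unit, equivalent to the $E_0$-algebra $\mathbf{\Gamma}_W$ built in Proposition~\ref{prop:key-for-constructing-bf-Gamma-W-rig-anal}. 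Lemma~\ref{lemma:idempotent-algebra-} then identifies both sides as commutative algebras in $\RigSH_{\et}(K;\Lambda)$.

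The substantive step is (3) $\Rightarrow$ (2). Applying condition (3) to the $R$-module $(R_{n+m})_{n\in \N}$ yields $R_m \simeq R_0 \otimes_{R_0(\pt)} R_m(\pt)$; evaluating this equivalence on $(\PP^1,\infty)^{\wedge m}$ shows $R_0(\pt) \simeq R_0((\PP^1,\infty)^{\wedge m}) \otimes_{R_0(\pt)} R_m(\pt)$, forcing each $R_m(\pt)$ to be an invertible $R_0(\pt)$-module. The $\BB^1$-invariance and \'etale hyperdescent of $R_0$ are automatic from its membership in $\RigSH^{\eff}_{\et}(K;\Lambda)$. For the K\"unneth formula with $X, Y \in \RigSm^{\qcqs}_K$: by Lemma~\ref{lemma:generation-good-reduction-dualizability}, $\pi_{X,\,\sharp}\Lambda$ is dualizable with dual $\pi_{X,\,*}\Lambda$, whence $\pi_{X,\,*}\Lambda \otimes \pi_{Y,\,*}\Lambda \simeq \pi_{X \times Y,\,*}\Lambda$; tensoring with $R$ and using the projection formula $R \otimes \pi_{X,\,*}\Lambda \simeq \pi_{X,\,*}R$ gives $\pi_{X,\,*}R \otimes_R \pi_{Y,\,*}R \simeq \pi_{X\times Y,\,*}R$; condition (3) identifies each $\pi_{X,\,*}R$ with $R \otimes_{R_0(\pt)} R_0(X)$, and passing to level zero and taking global sections then yields the desired K\"unneth equivalence.

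The main obstacle I anticipate is the qcqs restriction. Since Lemma~\ref{lemma:generation-good-reduction-dualizability} only provides dualizability of $\pi_{X,\,\sharp}\Lambda$ for quasi-compact and quasi-separated $X$, the projection formula argument only delivers the K\"unneth formula for such varieties; but this is exactly what Definition~\ref{dfn:generalised-weil-coh-theory-rigid-anal} demands, so no additional difficulty arises beyond those already encountered in the algebraic case.
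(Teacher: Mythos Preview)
Your proposal is correct and follows essentially the same approach as the paper: the paper's proof simply states that the argument of Proposition~\ref{prop:equivalent-condition-for-a-motivic-ring-spectrum} extends literally, using Lemma~\ref{lemma:generation-good-reduction-dualizability} in place of Lemma~\ref{lemma:generation-by-strongly-dualizable-objects}. You have spelled this out in detail, including the observation that the qcqs restriction in Lemma~\ref{lemma:generation-good-reduction-dualizability} matches the K\"unneth hypothesis in Definition~\ref{dfn:generalised-weil-coh-theory-rigid-anal}.
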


\begin{proof}
The proof of Proposition
\ref{prop:equivalent-condition-for-a-motivic-ring-spectrum}
extends literally to the rigid analytic setting.
Of course, one needs to use Lemma 
\ref{lemma:generation-good-reduction-dualizability}
in place of Lemma 
\ref{lemma:generation-by-strongly-dualizable-objects}.
\end{proof}

\begin{dfn}
\label{dfn:weil-spectrum-in-SH-rigan}
A Weil spectrum for rigid analytic $K$-varieties is a
commutative algebra in the $\infty$-category
$\RigSH_{\et}(K;\Lambda)$ 
satisfying the equivalent conditions
in Proposition 
\ref{prop:equivalent-cond-for-motivic-ring-spectr-rigan}.
We denote by $\RigWSp(K;\Lambda)$ 
the full sub-$\infty$-category of 
$\CAlg(\RigSH_{\et}(K;\Lambda))$ spanned by Weil spectra.
\end{dfn}

\begin{cor}
\label{cor:algebra-over-weil-spectrum-weil-spectrum-rigan}
Let $\Gamma_W\in \RigWCT(K;\Lambda)$ be a Weil cohomology theory,
and let $R$ be a commutative $\mathbf{\Gamma}_W$-algebra
in $\RigSH_{\et}(K;\Lambda)$. Then $R$ is a Weil spectrum and we have
an equivalence
\begin{equation}
\label{eq-cor:algebra-over-weil-spectrum-weil-spect-rigan-1}
R\simeq \mathbf{\Gamma}_W\otimes_{\Gamma_W(\pt)}\Rder\Gamma(\pt;\Omega^{\infty}_T(R)).
\end{equation}
\end{cor}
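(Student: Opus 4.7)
The plan is to mimic the proof of Corollary \ref{cor:algebra-over-weil-spectrum-weil-spectrum} verbatim, substituting the rigid analytic counterparts of the results invoked there. Concretely, given a commutative $\mathbf{\Gamma}_W$-algebra $R$ in $\RigSH_{\et}(K;\Lambda)$, I view $R$ as an object of $\RigSH_{\et}(K;\mathbf{\Gamma}_W)$ and apply Proposition \ref{prop:every-module-is-extended-constant-rig-anal}, which yields the desired equivalence \eqref{eq-cor:algebra-over-weil-spectrum-weil-spect-rigan-1} with $A=\Rder\Gamma(\pt;\Omega^{\infty}_T R)$ by taking global sections of the $0$-th level.

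Next, I read off from the equivalence \eqref{eq-cor:algebra-over-weil-spectrum-weil-spect-rigan-1} combined with the explicit level-wise description of $\mathbf{\Gamma}_W$ from Proposition \ref{prop:key-for-constructing-bf-Gamma-W-rig-anal}(2) that $R$ is given in level $n$ by $\Gamma_W(n)[2n]\otimes_{\Gamma_W(\pt)} A$. In particular, $R_0 = \Omega^{\infty}_T(R) \simeq \Gamma_W\otimes_{\Gamma_W(\pt)} A$ inherits the $\BB^1$-invariance, \'etale hyperdescent, and K\"unneth property of $\Gamma_W$ after tensoring with the flat $\Gamma_W(\pt)$-module $A$, so $R_0$ is a Weil cohomology theory; and $R_n = R_0(n)[2n]\otimes_{\Gamma_W(\pt)} A$ is an invertible $R_0$-module since $\Gamma_W(n)$ is an invertible $\Gamma_W$-module.

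Finally, with condition (2) of Proposition \ref{prop:equivalent-cond-for-motivic-ring-spectr-rigan} verified, I conclude that $R$ is a Weil spectrum. The only mildly nontrivial step is checking that the level-wise tensor product with $A$ preserves the K\"unneth formula, but since $A$ is a $\Gamma_W(\pt)$-algebra and the K\"unneth map is a morphism of $\Gamma_W(\pt)$-modules, this is immediate from base change of the equivalence. There is no real obstacle beyond ensuring that the rigid analytic analogues of the ingredients used in the proof of Corollary \ref{cor:algebra-over-weil-spectrum-weil-spectrum}, namely Propositions \ref{prop:every-module-is-extended-constant-rig-anal} and \ref{prop:equivalent-cond-for-motivic-ring-spectr-rigan}, are in place, which they are.
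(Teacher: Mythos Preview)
Your proposal is correct and follows exactly the same approach as the paper, which simply notes that the proof of Corollary~\ref{cor:algebra-over-weil-spectrum-weil-spectrum} extends literally: use Proposition~\ref{prop:every-module-is-extended-constant-rig-anal} to obtain the equivalence, read off the level-wise description $R_n\simeq\Gamma_W(n)[2n]\otimes_{\Gamma_W(\pt)}A$, and then invoke condition~(2) of Proposition~\ref{prop:equivalent-cond-for-motivic-ring-spectr-rigan}. One small remark: your aside that $A$ is a ``flat $\Gamma_W(\pt)$-module'' is neither needed nor known in general; as you yourself observe later, the K\"unneth formula for $R_0$ follows simply by base-changing the K\"unneth equivalence for $\Gamma_W$ along $\Gamma_W(\pt)\to A$, and similarly the formula $R_n=R_0(n)[2n]\otimes_{\Gamma_W(\pt)}A$ should read $R_n\simeq\Gamma_W(n)[2n]\otimes_{\Gamma_W}R_0$, which is visibly invertible over $R_0$.
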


\begin{proof}
The proof of Corollary 
\ref{cor:algebra-over-weil-spectrum-weil-spectrum}
extends literally. 
\end{proof}

\begin{thm}
\label{thm:equivalence-between-weil-cohomol-spectra-rigan}
The functor $\Omega^{\infty}_T:\RigWSp(K;\Lambda) \to 
\RigWCT(K;\Lambda)$ is an 
equivalence of $\infty$-categories.
\end{thm}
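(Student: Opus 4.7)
The plan is to imitate the proof of Theorem \ref{thm:equivalence-between-weil-cohomol-spectra} verbatim in the rigid analytic setting, since every ingredient used there has already been established in this section. I will construct a functor $\beta:\RigWCT(K;\Lambda)\to \RigWSp(K;\Lambda)$ assembling the Weil spectra of Proposition \ref{prop:key-for-constructing-bf-Gamma-W-rig-anal} in families, and then check $\Omega^{\infty}_T\circ \beta \simeq \id$ and $\beta\circ \Omega^{\infty}_T \simeq \id$.

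First I would consider the cocartesian fibrations
$$p^{(\eff)}:\int_{\Gamma_W\in \RigWCT(K;\Lambda)}\RigSH_{\et}^{(\eff)}(K;\Gamma_W) \to \RigWCT(K;\Lambda),$$
and use the rigid analytic analogue of Remark \ref{rmk:motivic-spectra-object-SH} to express $\Sect(p)$ as the inverse limit of the tower whose stages are $\Sect(p^{\eff})$ with transition functors induced by $\Omega^1_T$ fiberwise. For each $r\in\N$, the assignment $\Gamma_W\mapsto \Sus^r_T(\Gamma_W(r)[2r])$ defines a section $\sigma_r$, and the equivalences \eqref{eq-rmk:definition-in-ANT-incorrect-1} give natural maps $\sigma_{r-1}\to \sigma_r$. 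I would set $\sigma=\colim_r \sigma_r$.

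Next, Proposition \ref{prop:key-for-constructing-bf-Gamma-W-rig-anal} tells us that fiberwise $\sigma$ is an idempotent $E_0$-algebra, hence by Lemma \ref{lemma:idempotent-algebra-} it acquires a canonical commutative algebra structure; thus $\sigma$ upgrades to a section of the cocartesian fibration $\widetilde{p}:\int_{\Gamma_W}\CAlg(\RigSH_{\et}(K;\Gamma_W))\to \RigWCT(K;\Lambda)$. Composing with the forgetful projection to $\CAlg(\RigSH_{\et}(K;\Lambda))$ yields the desired functor $\beta$, which by construction lands in $\RigWSp(K;\Lambda)$ (it sends $\Gamma_W$ to the motivic spectrum $\mathbf{\Gamma}_W$ described in Proposition \ref{prop:key-for-constructing-bf-Gamma-W-rig-anal}). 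The identity $\Omega^{\infty}_T\circ \beta \simeq \id$ is then clear from condition (1) of that proposition.

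For the other composition, the problem reduces to showing that the section $\sigma'=\sigma\circ \Omega^{\infty}_T$ of the fibration $\widetilde{p}':\int_{\mathbf{\Gamma}_W\in \RigWSp(K;\Lambda)}\CAlg(\RigSH_{\et}(K;\Omega^{\infty}_T\mathbf{\Gamma}_W))\to \RigWSp(K;\Lambda)$ is equivalent to the diagonal section $\delta$ sending $\mathbf{\Gamma}_W$ to itself. By the uniqueness clause in Lemma \ref{lemma:idempotent-algebra-}, it is enough to exhibit an equivalence of underlying $E_0$-algebras, which is immediate from condition (2) of Proposition \ref{prop:key-for-constructing-bf-Gamma-W-rig-anal} characterizing the underlying spectrum of $\mathbf{\Gamma}_W$. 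The only potential obstacle is the bookkeeping with cocartesian fibrations and sections, but this is formal and proceeds exactly as in the proof of Theorem \ref{thm:equivalence-between-weil-cohomol-spectra}; no new geometric input is needed beyond the results already established in this section.
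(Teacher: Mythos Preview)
Your proposal is correct and follows precisely the same approach as the paper, which simply states that the proof of Theorem~\ref{thm:equivalence-between-weil-cohomol-spectra} extends literally. You have faithfully unwound that argument, substituting the rigid analytic ingredients (Proposition~\ref{prop:key-for-constructing-bf-Gamma-W-rig-anal} and the equivalence~\eqref{eq-rmk:definition-in-ANT-incorrect-1}) for their algebraic counterparts at each step.
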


\begin{proof}
The proof of Theorem 
\ref{thm:equivalence-between-weil-cohomol-spectra}
extends literally. 
\end{proof}

\begin{dfn}
\label{dfn:homological-realization-weil-coh-rigan}
Let $\Gamma_W\in \RigWCT(K;\Lambda)$ be a Weil cohomology theory.
The realization functor associated to $\Gamma_W$
is the functor 
$\Rder_W^*:\RigSH_{\et}(K;\Lambda)\to \Mod_{\Gamma_W(\pt)}$  
given by the composition of
$$\RigSH_{\et}(K;\Lambda) \xrightarrow{\mathbf{\Gamma}_W\otimes -}
\RigSH_{\et}(K;\mathbf{\Gamma}_W)\simeq \Mod_{\Gamma_W(\pt)},$$
where the equivalence is provided by Proposition 
\ref{prop:every-module-is-extended-constant-rig-anal}. The functor 
$\Rder_W^*$ underlies a symmetric monoidal functor 
and it admits a right adjoint $\Rder_{W,\,*}$ sending 
a $\Gamma_W(\pt)$-module $M$ to 
$\mathbf{\Gamma}_W\otimes_{\Gamma_W(\pt)}M$.
\end{dfn}

\begin{dfn}
\label{dfn:plain-realization-functor-k-varieties-rigan}
A plain realization functor for rigid analytic 
$K$-varieties is a morphism
$${\rm R}^*:\RigSH_{\et}(K;\Lambda)^{\otimes}
\to \Mod^{\otimes}_A$$
in $\CAlg(\Prl)$,
where $A$ is a commutative ring spectrum. 
The $\infty$-category $\RigReal(K;\Lambda)$
of plain realization functors is the full sub-$\infty$-category of 
$\CAlg(\Prl)_{\RigSH_{\et}(K;\Lambda)^{\otimes}\backslash}$
spanned by functors with codomain of the form 
$\Mod^{\otimes}_A$ for some $A\in \CAlg$.
\end{dfn}

\begin{prop}
\label{prop:weil-spectrum-from-realization-functor-rigan}
For $\Rder^*\in \RigReal(K;\Lambda)$, the commutative algebra
$\Rder_*\Rder^*\Lambda$ is a Weil spectrum.
\end{prop}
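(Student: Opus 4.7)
The plan is to follow the proof of Proposition \ref{prop:weil-spectrum-from-realization-functor} essentially verbatim, replacing each ingredient by its rigid analytic counterpart. Let $\Mod_A^{\otimes}$ be the codomain of $\Rder^*$. The functor $\Rder^*$ factors through the symmetric monoidal functor
$$\widetilde{\Rder}{}^*:\RigSH_{\et}(K;\Rder_*A)\to \Mod_A, \quad M\mapsto \Rder^*(M)\otimes_{\Rder^*\Rder_*A}A.$$
By Proposition \ref{prop:equivalent-cond-for-motivic-ring-spectr-rigan} (criterion (3)), to conclude it suffices to show that $\widetilde{\Rder}{}^*$ is an equivalence.

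The first step is to reduce to the case where $\widehat{K}$ has finite virtual $\Lambda$-cohomological dimension, using the same device as in Proposition \ref{prop:key-for-constructing-bf-Gamma-W-rig-anal}: writing $K$ as the union of its non-discrete subfields $K_{\alpha}$ that are finitely generated over their prime field, \cite[Theorem 2.5.1]{AGAV} provides a localisation
$$\underset{\alpha}{\colim}\,\RigSH_{\et}(K_{\alpha};\Lambda)\to \RigSH_{\et}(K;\Lambda)$$
in $\Prl$. Precomposing $\Rder^*$ with the pullback functors yields plain realization functors $\Rder^*_{\alpha}\in \RigReal(K_{\alpha};\Lambda)$ and induced functors $\widetilde{\Rder}{}^*_{\alpha}$. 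Assembling these into a commutative triangle exactly as in the algebraic case, if each $\widetilde{\Rder}{}^*_{\alpha}$ is an equivalence then the localisation from the colimit admits a retraction, hence is an equivalence, and we are done.

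In the reduced case, Lemma \ref{lemma:generation-good-reduction-dualizability} asserts that $\RigSH_{\et}(K;\Lambda)$ is compactly generated by its dualizable objects (the $\pi_{X,\,\sharp}\Lambda(m)[n]$ for $X\in \RigSm^{\qcqs}_K$ with potentially good reduction), and the same holds for $\RigSH_{\et}(K;\Rder_*A)$. Since dualizable objects in $\Mod_A$ are compact by \cite[Proposition 7.2.4.4]{Lurie-HA}, the functor $\widetilde{\Rder}{}^*$ preserves compact objects, and its right adjoint $\widetilde{\Rder}_*$ is colimit-preserving. Essential surjectivity of $\widetilde{\Rder}{}^*$ is clear. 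It then remains to verify that the unit $\id \to \widetilde{\Rder}_*\widetilde{\Rder}{}^*$ is an equivalence, which may be checked on compact generators of the form $M\otimes \Rder_*A$ for $M\in \RigSH_{\et}(K;\Lambda)$ dualizable. On such objects the unit rewrites as $M\otimes \Rder_*A \to \Rder_*\Rder^*M$, a projection-type morphism which is an equivalence by (the rigid analytic analogue of) \cite[Lemme 2.8]{gal-mot-1}.

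The main potential obstacle is the last step: one needs the projection-formula statement \cite[Lemme 2.8]{gal-mot-1} to hold for the functor $\Rder^*$ and dualizable objects in $\RigSH_{\et}(K;\Lambda)$. Since that statement is formulated for an arbitrary symmetric monoidal colimit-preserving functor between presentable symmetric monoidal $\infty$-categories and relies only on dualizability, it transfers to the rigid analytic setting without modification, and this is what makes the literal extension of the algebraic proof go through.
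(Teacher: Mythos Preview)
Your proposal is correct and follows exactly the approach of the paper, which simply indicates that the proof of Proposition~\ref{prop:weil-spectrum-from-realization-functor} adapts to the rigid analytic setting, with the reduction step handled as in Proposition~\ref{prop:key-for-constructing-bf-Gamma-W-rig-anal} and compact generation by dualizable objects supplied by Lemma~\ref{lemma:generation-good-reduction-dualizability} together with \cite[Proposition~2.4.22]{AGAV}. The only small imprecision is that Lemma~\ref{lemma:generation-good-reduction-dualizability} by itself gives generation and dualizability, while compactness of the generators in the reduced case requires the additional input \cite[Proposition~2.4.22]{AGAV}; otherwise your argument matches the paper's.
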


\begin{proof}
The proof of Proposition 
\ref{prop:weil-spectrum-from-realization-functor}
can be easily adapted to the rigid analytic setting.
The reduction to the case where $K$ has finite virtual 
$\Lambda$-cohomological dimension is obtained as in the proof of
Proposition
\ref{prop:key-for-constructing-bf-Gamma-W-rig-anal}. 
In this case $\RigSH_{\et}(K;\Lambda)$ is 
compactly generated by its dualizable objects 
as it follows from Lemma
\ref{lemma:generation-good-reduction-dualizability}
and \cite[Proposition 2.4.22]{AGAV}.
\end{proof}

\begin{thm}
\label{thm:equiv-between-WSp-and-Plain-real-rigan}
The functor $\RigReal(K;\Lambda) \to 
\RigWSp(K;\Lambda)$, given by 
$\Rder^*\mapsto \Rder_*\Rder^*\Lambda$,
is an equivalence of $\infty$-categories.
\end{thm}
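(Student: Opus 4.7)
The plan is to transport the proof of Theorem \ref{thm:equiv-between-WSp-and-Plain-real} to the rigid analytic setting essentially verbatim, since every ingredient of that argument has now been established for $K$ in this section: the extension-of-scalars equivalence (Proposition \ref{prop:every-module-is-extended-constant-rig-anal}), the fact that $\Rder_*\Rder^*\Lambda$ is a Weil spectrum (Proposition \ref{prop:weil-spectrum-from-realization-functor-rigan}), and the factorization $\Rder^* = \widetilde{\Rder}{}^* \circ (\Rder_*A \otimes -)$ with $\widetilde{\Rder}{}^*$ an equivalence, which was extracted in the proof of that latter proposition.

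First I would denote by $\alpha$ the functor of the statement and produce a candidate inverse $\beta\colon \RigWSp(K;\Lambda) \to \RigReal(K;\Lambda)$ sending a Weil spectrum $\mathbf{\Gamma}_W$ to its realization functor $\Rder_W^*$ of Definition \ref{dfn:homological-realization-weil-coh-rigan}. The composite $\alpha\circ\beta$ is then the identity by the rigid analytic analogue of Remark \ref{rmk:weil-cohomology-recovered-realization}: directly from the construction of $\Rder_W^*$ and Proposition \ref{prop:every-module-is-extended-constant-rig-anal}, one reads off $\Rder_{W,\,*}\Rder_W^*\Lambda \simeq \mathbf{\Gamma}_W$.

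For the identification $\beta\circ\alpha = \id$, I would fix $\Rder^* \in \RigReal(K;\Lambda)$ with target $\Mod_A^{\otimes}$ and invoke the commutative triangle, natural in $\Rder^*$, supplied by the proof of Proposition \ref{prop:weil-spectrum-from-realization-functor-rigan}: the diagonal is $\Rder^*$, the vertical map is $\Rder_*A \otimes -\colon \RigSH_{\et}(K;\Lambda) \to \RigSH_{\et}(K;\Rder_*A)$, and the horizontal map is the equivalence $\widetilde{\Rder}{}^*\colon \RigSH_{\et}(K;\Rder_*A) \xrightarrow{\simeq} \Mod_A$. Unwinding Definition \ref{dfn:homological-realization-weil-coh-rigan}, one sees that $\beta\circ\alpha(\Rder^*)$ coincides with this factorization composed with the quasi-inverse equivalence to $\widetilde{\Rder}{}^*$, hence is naturally equivalent to $\Rder^*$.

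I do not expect any substantive obstacle at this stage; the argument is essentially formal. The only point requiring mild care is the naturality in $\Rder^*$ of the factorization through $\widetilde{\Rder}{}^*$, which is already implicit in the construction used in the proof of Proposition \ref{prop:weil-spectrum-from-realization-functor-rigan}.
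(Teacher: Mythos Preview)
Your proposal is correct and matches the paper's approach exactly: the paper's own proof simply says that the proof of Theorem~\ref{thm:equiv-between-WSp-and-Plain-real} extends literally, and you have spelled out that literal extension, invoking the rigid analytic analogues (Proposition~\ref{prop:every-module-is-extended-constant-rig-anal}, Proposition~\ref{prop:weil-spectrum-from-realization-functor-rigan}, Definition~\ref{dfn:homological-realization-weil-coh-rigan}) in place of their algebraic counterparts.
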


\begin{proof}
The proof of Theorem 
\ref{thm:equiv-between-WSp-and-Plain-real}
extends literally.
\end{proof}

\begin{prop}
\label{prop:the-right-fibration-WCT-to-CAlg-rigan}
The functor $\RigWCT(K;\Lambda) 
\to \CAlg_{\Lambda\backslash}$, given by 
$\Gamma_W \mapsto \Gamma_W(\pt)$,
is a left fibration. The analogous statement is also true for 
$\RigWSp(K;\Lambda)$ and $\RigReal(K;\Lambda)$.
\end{prop}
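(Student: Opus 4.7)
The plan is to mimic the proof of Proposition \ref{prop:the-right-fibration-WCT-to-CAlg} in the algebraic setting, replacing the inputs by their rigid analytic analogues. First, by Theorems \ref{thm:equivalence-between-weil-cohomol-spectra-rigan} and \ref{thm:equiv-between-WSp-and-Plain-real-rigan}, the three $\infty$-categories $\RigWCT(K;\Lambda)$, $\RigWSp(K;\Lambda)$ and $\RigReal(K;\Lambda)$ are canonically equivalent over $\CAlg_{\Lambda\backslash}$, so it suffices to establish the statement for $\RigWCT(K;\Lambda)$.

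Second, I would recall (see \cite[Proposition 2.4.2.4]{Lurie}) that a left fibration is a cocartesian fibration whose fibers are $\infty$-groupoids. The fact that $\Gamma_W \mapsto \Gamma_W(\pt)$ is a cocartesian fibration is formal: a morphism $\Gamma_W \to \Gamma_{W'}$ in $\RigWCT(K;\Lambda)$ is cocartesian over $\Gamma_W(\pt) \to \Gamma_{W'}(\pt)$ precisely when the induced map on $(\PP^1,\infty)$-components is an equivalence, which is exactly the condition singled out in Definition \ref{dfn:infty-category-of-rigid-WCT}. The cocartesian fibration is thus classified by the functor $A \mapsto \RigWCT(K)_A$, where $\RigWCT(K)_A$ denotes the fiber of Weil cohomology theories with coefficient ring $A$.

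Third, the only substantive point is to show that each fiber $\RigWCT(K)_A$ is a groupoid, i.e., that any morphism $\Gamma_W \to \Gamma_{W'}$ of rigid analytic Weil cohomology theories inducing an equivalence $\Gamma_W(\pt) \xrightarrow{\sim} \Gamma_{W'}(\pt)$ is itself an equivalence. For this, I would pass to the associated motivic commutative ring spectra $\mathbf{\Gamma}_W$ and $\mathbf{\Gamma}_{W'}$ from Proposition \ref{prop:key-for-constructing-bf-Gamma-W-rig-anal}. Since $\mathbf{\Gamma}_{W'}$ is naturally a commutative $\mathbf{\Gamma}_W$-algebra, Proposition \ref{prop:every-module-is-extended-constant-rig-anal} (applied in the variant of Corollary \ref{cor:algebra-over-weil-spectrum-weil-spectrum-rigan}) yields
\[
\mathbf{\Gamma}_{W'} \simeq \mathbf{\Gamma}_W \otimes_{\Gamma_W(\pt)} \Gamma_{W'}(\pt).
\]
Under the hypothesis that $\Gamma_W(\pt) \to \Gamma_{W'}(\pt)$ is an equivalence, the right-hand side reduces to $\mathbf{\Gamma}_W$, so the morphism $\mathbf{\Gamma}_W \to \mathbf{\Gamma}_{W'}$ is an equivalence. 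Applying $\Omega^{\infty}_T$ and invoking Theorem \ref{thm:equivalence-between-weil-cohomol-spectra-rigan} shows that $\Gamma_W \to \Gamma_{W'}$ was an equivalence to begin with.

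The only delicate aspect is checking that the cocartesian morphisms in $\RigWCT(K;\Lambda)$ are indeed those defined in Definition \ref{dfn:infty-category-of-rigid-WCT}, but this is a direct verification using the universal property in $\Fun((\RigSm_K)^{\op},\CAlg_{\Lambda\backslash})$ combined with the K\"unneth formula, which guarantees that extension of the coefficient ring preserves the Weil cohomology axioms. All other steps are formal transfers of the algebraic argument.
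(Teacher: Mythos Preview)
Your proof is correct and follows exactly the same approach as the paper, which simply states that the proof of Proposition~\ref{prop:the-right-fibration-WCT-to-CAlg} extends literally to the rigid analytic setting. You have spelled out the rigid analytic substitutions (Propositions~\ref{prop:key-for-constructing-bf-Gamma-W-rig-anal} and~\ref{prop:every-module-is-extended-constant-rig-anal}, Theorems~\ref{thm:equivalence-between-weil-cohomol-spectra-rigan} and~\ref{thm:equiv-between-WSp-and-Plain-real-rigan}) precisely as intended.
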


\begin{proof}
The proof of Proposition
\ref{prop:the-right-fibration-WCT-to-CAlg}
extends literally.
\end{proof}

\section{From algebraic geometry to rigid analytic geometry and back}

\label{sect:from-alg-geo-to-rig-analytic}

We keep the running notations and assumptions from Sections
\ref{sect:generalized-weil-coh-theories-alg-geo} and
\ref{sect:generalized-weil-coh-theories-rigid-analytic}.
We will discuss the 
relations between Weil cohomology theories for algebraic varieties 
and for rigid analytic varieties. 
For this, we need to recall the two functors
relating algebraic motives and rigid analytic motives.

\begin{rmk}
\label{rmk:two-functors-relating-motive-rigan-mot}
There are two symmetric monoidal functors 
$$\Rig^*:\SH_{\et}(K;\Lambda) \to 
\RigSH_{\et}(K;\Lambda)
\qquad \text{and} \qquad
\xi^*:\SH_{\et}(k;\Lambda)
\to \RigSH_{\et}(K;\Lambda).$$
The functor $\Rig^*$ is induced by the analytification functor 
$X\mapsto X^{\an}$ sending a smooth $K$-variety $X$ to its 
rigid analytification. (The effective version of this functor 
is constructed in \cite[Proposition 1.3.6]{mot-rig} 
and its stabilisation is introduced on page 101 of \cite{mot-rig};
this functor is also recalled on page 34 of
\cite{AGAV} under the name `$\An^*$'.) 
Modulo the equivalence 
$\FSH_{\et}(\widehat{K}{}^{\circ};\Lambda) \simeq 
\SH_{\et}(k;\Lambda)$ of \cite[Corollaire 1.4.24]{mot-rig}, 
the functor $\xi^*$ is induced by the functor 
$\mathcal{X}\mapsto \mathcal{X}_{\eta}$
taking a formal $\widehat{K}{}^{\circ}$-scheme to its Raynaud generic fiber.
(This functor is discussed in \cite[\S 3.1]{AGAV}.) 
As usual, we denote by $\Rig_*$ and $\xi_*$ the right adjoints 
of $\Rig^*$ and $\xi^*$.
\end{rmk}

\begin{prop}
\label{prop:rig-lower-star-of-weil-cohomology-theory}
Let $\mathbf{\Gamma}_W\in \RigWSp(K;\Lambda)$ be a Weil spectrum
for rigid analytic $K$-varieties. Then $\Rig_*\mathbf{\Gamma}_W$ 
and $\xi_*\mathbf{\Gamma}_W$
are Weil spectra for algebraic varieties.
Moreover, if $\Rder_W^*$ is the realization functor associated 
to $\mathbf{\Gamma}_W$, then $\Rder_W^*\circ \Rig^*$
and $\Rder_W^*\circ \xi^*$ are the realization functors associated 
to $\Rig_*\mathbf{\Gamma}_W$ 
and $\xi_*\mathbf{\Gamma}_W$ respectively.
\end{prop}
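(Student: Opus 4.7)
The plan is to leverage the equivalences between Weil spectra and plain realization functors established in Theorems \ref{thm:equiv-between-WSp-and-Plain-real} and \ref{thm:equiv-between-WSp-and-Plain-real-rigan}, combined with the fact that $\Rig^*$ and $\xi^*$ are morphisms in $\CAlg(\Prl)$. The first observation is that both functors
$$\Rig^*:\SH_{\et}(K;\Lambda)^{\otimes} \to \RigSH_{\et}(K;\Lambda)^{\otimes},\qquad \xi^*:\SH_{\et}(k;\Lambda)^{\otimes}\to \RigSH_{\et}(K;\Lambda)^{\otimes}$$
are symmetric monoidal left adjoints, as recalled in Remark \ref{rmk:two-functors-relating-motive-rigan-mot}. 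The realization functor $\Rder_W^*$ is also a morphism in $\CAlg(\Prl)$ by Definition \ref{dfn:homological-realization-weil-coh-rigan}. Thus the compositions $\Rder_W^*\circ \Rig^*$ and $\Rder_W^*\circ \xi^*$ are plain realization functors in the sense of Definition \ref{dfn:plain-realization-functor-k-varieties}, applied to the base fields $K$ and $k$ respectively.

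Next, I would invoke Proposition \ref{prop:weil-spectrum-from-realization-functor} to deduce that the associated commutative algebras
$$(\Rder_W^*\circ \Rig^*)_*(\Rder_W^*\circ \Rig^*)(\Lambda),\qquad (\Rder_W^*\circ \xi^*)_*(\Rder_W^*\circ \xi^*)(\Lambda)$$
are Weil spectra for algebraic $K$-varieties and algebraic $k$-varieties respectively. Since $\Rig^*$ and $\xi^*$ are symmetric monoidal, they preserve the unit, so $\Rig^*\Lambda \simeq \Lambda$ and $\xi^*\Lambda\simeq \Lambda$. Unwinding the right adjoints and using Remark \ref{rmk:weil-cohomology-recovered-realization} (which gives $\mathbf{\Gamma}_W\simeq \Rder_{W,\,*}\Rder_W^*\Lambda$), these Weil spectra are identified respectively with
$$\Rig_*\Rder_{W,\,*}\Rder_W^*\Lambda \simeq \Rig_*\mathbf{\Gamma}_W \qquad\text{and}\qquad \xi_*\Rder_{W,\,*}\Rder_W^*\Lambda \simeq \xi_*\mathbf{\Gamma}_W,$$
which establishes the first assertion.

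For the second assertion, I would simply appeal to the fact that Theorem \ref{thm:equiv-between-WSp-and-Plain-real} exhibits $\Rder^* \mapsto \Rder_*\Rder^*\Lambda$ as an \emph{equivalence} of $\infty$-categories between $\Real$ and $\WSp$, with quasi-inverse sending a Weil spectrum to its associated realization functor. Since we have just computed that $\Rder_W^*\circ \Rig^*$ is sent by this equivalence to $\Rig_*\mathbf{\Gamma}_W$, the realization functor associated to $\Rig_*\mathbf{\Gamma}_W$ must be $\Rder_W^*\circ \Rig^*$; likewise for $\xi$. There is no real obstacle to this argument: the only thing one must be slightly careful about is verifying that $\Rig^*$ and $\xi^*$ really are in $\CAlg(\Prl)$ (which is immediate from their construction as symmetric monoidal left adjoints) and tracking the identifications of right adjoints of composites, which is formal.
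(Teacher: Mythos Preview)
Your proposal is correct and follows essentially the same approach as the paper's own proof, which simply observes that $\Rder_W^*\circ \Rig^*$ and $\Rder_W^*\circ \xi^*$ belong to $\Real(K;\Lambda)$ and $\Real(k;\Lambda)$ and that their associated Weil spectra are $\Rig_*\mathbf{\Gamma}_W$ and $\xi_*\mathbf{\Gamma}_W$. Your write-up is merely a more explicit unwinding of the same argument.
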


\begin{proof}
It is clear that 
$\Rder_W^*\circ \Rig^*$
and $\Rder_W^*\circ \xi^*$ belong 
$\Real(K;\Lambda)$ and $\Real(k;\Lambda)$
respectively, and that the Weil spectra associated to these plain 
realization functors are  
$\Rig_*\mathbf{\Gamma}_W$ 
and $\xi_*\mathbf{\Gamma}_W$.
\end{proof}

\begin{rmk}
\label{rmk:compatibility-of-WCT-rig-star-xi}
We have a commutative diagram of $\infty$-categories
$$\xymatrix{\Real(k;\Lambda) \ar[r]^-{\sim} & \WSp(k;\Lambda)
\ar[r]^-{\Omega^{\infty}_T}_-{\sim} & \WCT(k;\Lambda)\\
\RigReal(K;\Lambda) \ar[r]^-{\sim} \ar[u]^-{-\circ \xi^*} 
\ar[d]_-{-\circ \Rig^*}
& \RigWSp(K;\Lambda)
\ar[r]^-{\Omega^{\infty}_T}_-{\sim} \ar[u]_-{\xi_*} \ar[d]^-{\Rig_*} 
& \RigWCT(K;\Lambda) \ar[u]_-{\xi_*} \ar[d]^-{\Rig_*} \\
\Real(K;\Lambda) \ar[r]^-{\sim} & \WSp(K;\Lambda)
\ar[r]^-{\Omega^{\infty}_T}_-{\sim} & \WCT(K;\Lambda)}$$
where the horizontal arrows are equivalences.
Moreover, the functor 
$$\Rig_*:\RigWCT(K;\Lambda) \to 
\WCT(K;\Lambda)$$
is just the naive one given by composing a Weil cohomology theory 
on rigid analytic $K$-varieties with the rigid analytification functor 
$X\mapsto X^{\an}$.
\end{rmk}

The following result will play an important role in the sequel. 

\begin{thm}
\label{thm:rig-upper-star-weil-coh-theory}
Let $\mathbf{\Gamma}_W\in \WSp(K;\Lambda)$ be a Weil spectrum for 
algebraic $K$-varieties. Then $\Rig^*\mathbf{\Gamma}_W$ 
is a Weil spectrum for rigid analytic $K$-varieties. 
Thus, we have a functor 
$$\Rig^*:\WSp(K;\Lambda) \to \RigWSp(K;\Lambda)$$
which is left adjoint to the functor $\Rig_*$
from Remark
\ref{rmk:compatibility-of-WCT-rig-star-xi}.
\end{thm}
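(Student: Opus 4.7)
The plan is to verify condition~(3) of Proposition~\ref{prop:equivalent-cond-for-motivic-ring-spectr-rigan} for $R:=\Rig^*\mathbf{\Gamma}_W$. Since $\Rig^*$ is symmetric monoidal, $R$ is automatically a commutative algebra in $\RigSH_{\et}(K;\Lambda)$, and once $R\in \RigWSp(K;\Lambda)$ is established, the desired adjunction at the level of Weil spectra is inherited from the underlying adjunction on commutative algebras combined with Proposition~\ref{prop:rig-lower-star-of-weil-cohomology-theory} for the other direction. Set $A:=R_0(\pt)$; the unit of $\Rig^*\dashv\Rig_*$ furnishes a natural map $\Gamma_W(\pt)\to A$.

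The first observation is that for any $\mathbf{\Gamma}_W$-module $N\in\SH_{\et}(K;\mathbf{\Gamma}_W)$, the image $\Rig^* N$ already satisfies the extension condition. Indeed, by Proposition~\ref{prop:every-module-is-extended-constant} applied to $\mathbf{\Gamma}_W$, one writes $N\simeq \mathbf{\Gamma}_W\otimes_{\Gamma_W(\pt)}P$ for a unique $P\in\Mod_{\Gamma_W(\pt)}$. Symmetric monoidality of $\Rig^*$ and its commutation with base change along the ordinary ring spectrum $\Gamma_W(\pt)$ yield $\Rig^* N\simeq R\otimes_{\Gamma_W(\pt)}P\simeq R\otimes_A(A\otimes_{\Gamma_W(\pt)}P)$, exhibiting $\Rig^* N$ as extended from $A$. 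Passing to the $0$-th level gives $(\Rig^* N)_0\simeq R_0\otimes_A(A\otimes_{\Gamma_W(\pt)}P)$ (using that $\Omega^\infty_T$ commutes with tensoring against plain $A$-modules), and one then checks $R\otimes_{R_0}(\Rig^* N)_0\simeq \Rig^* N$.

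Since the extension condition is preserved by colimits, it remains to show that every $R$-module lies in the colimit-closure of those of the form $\Rig^* N$. By Lemma~\ref{lemma:generation-good-reduction-dualizability}, the $R$-module $\infty$-category $\RigSH_{\et}(K;R)$ is generated, under colimits and Tate twists, by objects $R\otimes \pi_{X,\,\sharp}\Lambda$ for $X\in\RigSm^{\qcqs}_K$ of potentially good reduction. When $X=Y^{\an}$ is the analytification of a smooth algebraic $K$-variety $Y$, one has $\pi_{X,\,\sharp}\Lambda\simeq \Rig^*\pi_{Y,\,\sharp}\Lambda$, whence $R\otimes\pi_{X,\,\sharp}\Lambda$ lies in the image of $\Rig^*$. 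The principal obstacle is the case where $X=\mathcal{X}_\eta$ has good reduction but is not an analytification; here I would invoke the techniques of \cite{Nouvelle-Weil}, using Raynaud-style formal-to-algebraic approximation together with the finite \'etale reduction of Lemma~\ref{lemma:push-forward-strongly-dualizable-object} to present $\pi_{X,\,\sharp}\Lambda$ motivically as a colimit (possibly after Tate twists) of motives of analytifications, so that $R\otimes\pi_{X,\,\sharp}\Lambda$ lies in the colimit-closure of the image of $\Rig^*$. This density statement is the technical heart of the argument and is precisely the key result of \cite{Nouvelle-Weil} that the present theorem sharpens to allow arbitrary commutative ring spectrum coefficients.
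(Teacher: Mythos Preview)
Your overall strategy is in the right spirit --- the heart of the matter is indeed that the image of $\Rig^*$ generates $\RigSH_{\et}(K;\Lambda)$ under colimits --- but the mechanism you propose for this generation statement is not correct, and the first observation also hides a gap.

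On the generation statement: there is no ``Raynaud-style formal-to-algebraic approximation'' that expresses the motive of a good-reduction rigid variety as a colimit of motives of analytifications. The paper proves the generation statement (Proposition~\ref{prop:rig-upper-star-generates-under-colim}) by a completely different route: one splits an arbitrary object $M$ into $M\otimes\Q$ and $M\otimes\Q/\Z$; the rational part is handled by \cite[Proposition~2.31]{Nouvelle-Weil} (ultimately \cite[Th\'eor\`eme~2.5.35]{mot-rig}), and the torsion part is reduced via the rigidity theorems \cite[Theorems~2.10.3 \& 2.10.4]{AGAV} to \'etale sheaves on the small \'etale site, where Krasner's lemma finishes the job. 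Your reference to \cite{Nouvelle-Weil} is thus only half the story, and the extension from $\Q$-algebras to arbitrary $\Lambda$ --- which you flag as the point of the present theorem --- is precisely this rational/torsion splitting argument, not a refinement of any approximation technique.

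On the first observation: the claim that $(\Rig^*N)_0\simeq R_0\otimes_A(A\otimes_{\Gamma_W(\pt)}P)$ tacitly assumes that $\Omega^\infty_T$ commutes with $-\otimes_{\Gamma_W(\pt)}P$. But $\Omega^\infty_T$ is a right adjoint and need not preserve the colimits involved in tensoring with an arbitrary $P$, and $\Rig^*$, being a left adjoint, has no reason to commute with $\Omega^\infty_T$ either. So as written this step is unjustified.

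The paper organizes the argument so as to avoid both difficulties. Once Proposition~\ref{prop:rig-upper-star-generates-under-colim} is available, it is packaged (together with Lemma~\ref{lemma:Rig-lower-star-commutes-with-colimits}) into the equivalence $\widetilde{\Rig}{}^*:\SH_{\et}(K;\Rig_*\Lambda)\xrightarrow{\sim}\RigSH_{\et}(K;\Lambda)$ of Theorem~\ref{thm:the-main-equivalence-for-new-Weil-coh}. This identifies $\RigSH_{\et}(K;\Rig^*\mathbf{\Gamma}_W)$ with $\SH_{\et}(K;\mathbf{\Gamma}_W\otimes\Rig_*\Lambda)$, and since $\mathbf{\Gamma}_W\otimes\Rig_*\Lambda$ is a $\mathbf{\Gamma}_W$-algebra, Corollary~\ref{cor:algebra-over-weil-spectrum-weil-spectrum} (or directly Proposition~\ref{prop:every-module-is-extended-constant}) shows it is a Weil spectrum for algebraic $K$-varieties. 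The equivalence $\Mod_B\simeq\SH_{\et}(K;\mathbf{\Gamma}_W\otimes\Rig_*\Lambda)$ then transports back to give what is needed. No direct computation of $\Omega^\infty_T$ on the rigid side is required.
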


To prove Theorem
\ref{thm:rig-upper-star-weil-coh-theory}, 
we need the following result. 

\begin{prop}
\label{prop:rig-upper-star-generates-under-colim}
The image of the functor 
$\Rig^*:\SH_{\et}(K;\Lambda) \to \RigSH_{\et}(K;\Lambda)$
generates the $\infty$-category $\RigSH_{\et}(K;\Lambda)$
under colimits.
\end{prop}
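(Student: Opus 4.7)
My strategy is to reduce to a concrete generating family of $\RigSH_{\et}(K;\Lambda)$ and to exhibit each generator as a colimit of images under $\Rig^*$. First, I note that $\Rig^*$ sends the Tate object $T_{\alg} \in \SH_{\et}(K;\Lambda)$ to the Tate object $T_{\rig} \in \RigSH_{\et}(K;\Lambda)$, because the analytification of $\PP^1_K$ is the rigid projective line. Hence the colimit closure of the image of $\Rig^*$ is automatically stable under Tate twists, shifts, and desuspensions. Combining this with Lemma~\ref{lemma:generation-good-reduction-dualizability}, it suffices to show that $\pi_{X,\,\sharp}\Lambda$ lies in this colimit closure for every $X \in \RigSm^{\qcqs}_K$ with potential good reduction.

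Second, by \'etale hyperdescent in $\RigSH_{\et}(K;\Lambda)$ applied to a finite \'etale cover trivializing the potential good reduction, followed by a Zariski or Nisnevich descent on a smooth formal model, I reduce to the case where $X = \mathcal{X}_\eta$ for a smooth \emph{affine} formal $\widehat{K}{}^{\circ}$-scheme $\mathcal{X}$. Elkik's algebraization theorem then writes such an $\mathcal{X}$ as the $\pi$-adic completion $\widehat{\mathcal{Y}}$ of a smooth affine $\widehat{K}{}^{\circ}$-scheme $\mathcal{Y}$, and the algebraic generic fiber $Y = \mathcal{Y}_K$ is a smooth algebraic $K$-variety whose rigid analytification $Y^{\an}$ contains $\mathcal{X}_\eta$ as a quasi-compact admissible open (the tube of the special fiber). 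Since $\M(Y^{\an}) = \Rig^* \M(Y)$, it remains to express $\M(\mathcal{X}_\eta)$ as a colimit of motives of analytifications of algebraic $K$-varieties.

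\textbf{Main obstacle.} The crux is this last step: realizing the admissible open $\mathcal{X}_\eta \subset Y^{\an}$, at the level of motives, from analytifications of algebraic varieties. The natural approach is to cover $\mathcal{X}_\eta$ inside $Y^{\an}$ by opens arising from \'etale maps to suitable blow-ups or rational modifications of $\mathcal{Y}$, so that the \'etale \v{C}ech nerve expresses $\M(\mathcal{X}_\eta)$ as a colimit of objects of the form $\Rig^* \M(Y')$ for smooth algebraic $K$-varieties $Y'$. Making this precise---arranging the algebraization so that the tube of the special fiber becomes genuinely algebraic on a refining \'etale cover, and keeping the resulting diagram compatible with $\Rig^*$---is the principal technical hurdle, though the circle of ideas is standard in the theory of rigid analytic motives.
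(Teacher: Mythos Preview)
Your approach is genuinely different from the paper's, and the gap you yourself flag as the ``main obstacle'' is real and unresolved.

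The paper's proof does not attempt a direct geometric construction. Instead, it splits any $M\in\RigSH_{\et}(K;\Lambda)$ via the cofiber sequence $M\to M\otimes\Q\to M\otimes\Q/\Z$ and treats the two pieces separately. For $M\otimes\Q$ it invokes the rational case, which is \cite[Proposition~2.31]{Nouvelle-Weil} and ultimately rests on \cite[Th\'eor\`eme~2.5.35]{mot-rig}. For $M\otimes\Q/\Z$ it uses rigidity (\cite[Theorems~2.10.3 \& 2.10.4]{AGAV}) to reduce to showing that $\Shv_{\et}(\Et_K;\Lambda)\to\Shv_{\et}(\Et_{\widehat K};\Lambda)$ has generating image, which follows from Krasner's lemma. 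Thus the hard geometric content is outsourced entirely to the cited rational result.

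Your Elkik-based strategy is aiming to reprove that hard rational result from scratch, and at the crucial step---expressing $\M(\mathcal{X}_\eta)$, the motive of a tube inside $Y^{\an}$, as a colimit of $\Rig^*$-images---you only gesture at ``\'etale maps to suitable blow-ups or rational modifications''. This step is precisely the substance of \cite[Th\'eor\`eme~2.5.35]{mot-rig} and is not routine: one needs a careful argument that the motive of the tube is $\BB^1$-locally equivalent to the motive of something algebraic, typically via approximation of rigid analytic functions by polynomials and a homotopy argument. Without carrying this out, the proposal is incomplete. Moreover, even if you completed it rationally, you would still need a separate argument for torsion coefficients (the paper's use of rigidity is exactly what handles this), since the approximation-and-homotopy techniques in the rational case do not obviously extend to arbitrary $\Lambda$.
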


\begin{proof}
The case where $\Lambda$ is a commutative $\Q$-algebra
was discussed in \cite[Proposition 2.31]{Nouvelle-Weil} and it is based on
\cite[Th\'eor\`eme 2.5.35]{mot-rig}. (In loc.~cit., $\Lambda$ 
is supposed to be classical, but this is not 
a real restriction since the statement for $\Lambda=\Q$ implies 
the statement for any commutative $\Q$-algebra.) 
To prove the proposition, we reduce the general case 
to the case where $\Lambda$ is a $\Q$-algebra using rigidity. 
Given $M\in \RigSH_{\et}(K;\Lambda)$, 
we may consider the cofiber sequence
$$M \to M\otimes \Q \to M\otimes \Q/\Z.$$
Since the forgetful functor $\Mod_{\Lambda\otimes \Q}
\to \Mod_{\Lambda}$ is colimit-preserving, 
there is a commutative square
$$\xymatrix@C=3pc{\SH_{\et}(K;\Lambda\otimes \Q) \ar[r]^-{\Rig^*\otimes \,\Q} \ar[d]
& \RigSH_{\et}(K;\Lambda\otimes \Q) \ar[d] \\
\SH_{\et}(K;\Lambda) \ar[r]^-{\Rig^*} & \RigSH_{\et}(K;\Lambda).\!}$$
The object $M\otimes \Q$ belongs to the image of the right vertical 
functor. Thus, by \cite[Proposition 2.31]{Nouvelle-Weil},
$M\otimes \Q$ belongs to the localizing subcategory generated by
the image of $\Rig^*$.
To conclude, it remains to prove the same for 
$M\otimes \Q/\Z$. Using 
\cite[Theorems 2.10.3 \& 2.10.4]{AGAV}, 
we are reduced to showing that the image of the functor
$$\Shv_{\et}(\Et_K;\Lambda) \to \Shv_{\et}(\Et_{\widehat{K}};\Lambda)$$
generates the $\infty$-category $\Shv_{\et}(\Et_{\widehat{K}};\Lambda)$
under colimits. This follows from the fact that every finite separable 
extension of $\widehat{K}$ is a base change 
of a separable extension of $K$ (by Krasner's lemma).
\end{proof}

\begin{nota}
\label{nota:widetilde-Rig-upper-star-module-Rig-lower-star}
The functor $\Rig_*:\RigSH_{\et}(K;\Lambda)
\to \SH_{\et}(K;\Lambda)$ is right-lax monoidal. In particular, 
$\Rig_*\Lambda$ is a commutative algebra in 
$\SH_{\et}(K;\Lambda)$. We have an adjunction 
$$\widetilde{\Rig}{}^*:
\SH_{\et}(K;\Rig_*\Lambda) \rightleftarrows
\RigSH_{\et}(K;\Lambda):\widetilde{\Rig}_*,$$
where $\widetilde{\Rig}{}^*$ is given by the formula
$\widetilde{\Rig}{}^*(M)=\Rig^*(M)\otimes_{\Rig^*\Rig_*\Lambda}\Lambda$.
\end{nota}

\begin{thm}
\label{thm:the-main-equivalence-for-new-Weil-coh}
The functor $\widetilde{\Rig}{}^*$ 
is an equivalence of $\infty$-categories.
\end{thm}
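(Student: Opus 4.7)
The plan is to show that the adjunction $\widetilde{\Rig}{}^* \dashv \widetilde{\Rig}_*$ is an equivalence by verifying that $\widetilde{\Rig}{}^*$ is essentially surjective and that the unit is a natural equivalence. As a preliminary step, I would reduce to the case where $\widehat{K}$ has finite virtual $\Lambda$-cohomological dimension, using the same device as in the proofs of Propositions \ref{prop:key-for-constructing-bf-Gamma-W-rig-anal} and \ref{prop:weil-spectrum-from-realization-functor}: write $K$ as a filtered colimit of its non-discrete subfields that are finitely generated over their prime field and use \cite[Theorem 2.5.1]{AGAV} to identify the two $\infty$-categories in the statement, up to localization, with colimits over these subfields. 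Under this finiteness assumption, $\RigSH_{\et}(K;\Lambda)$ and $\SH_{\et}(K;\Rig_*\Lambda)$ are both compactly generated by dualizable objects, by combining Lemmas \ref{lemma:generation-good-reduction-dualizability} and \ref{lemma:generation-by-strongly-dualizable-objects} with \cite[Proposition 2.4.22]{AGAV}, \cite[Proposition 3.2.3]{AGAV} and \cite[Proposition 7.2.4.4]{Lurie-HA}.

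For essential surjectivity, I would observe that for any $M\in \SH_{\et}(K;\Lambda)$ one has $\widetilde{\Rig}{}^*(M\otimes \Rig_*\Lambda) \simeq \Rig^*M$, so the essential image of $\widetilde{\Rig}{}^*$ contains that of $\Rig^*$. Since $\widetilde{\Rig}{}^*$ is a left adjoint, its essential image is closed under colimits, and Proposition \ref{prop:rig-upper-star-generates-under-colim} then forces $\widetilde{\Rig}{}^*$ to be essentially surjective.

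For fully faithfulness, since $\widetilde{\Rig}{}^*$ is symmetric monoidal it preserves dualizable objects, and under the finiteness assumption these agree with compact objects on both sides; hence $\widetilde{\Rig}_*$ is colimit-preserving, and it suffices to verify that the unit $\id \to \widetilde{\Rig}_*\widetilde{\Rig}{}^*$ is an equivalence on compact generators of the form $N = M\otimes \Rig_*\Lambda$, with $M\in \SH_{\et}(K;\Lambda)$ dualizable. Unwinding the definition of $\widetilde{\Rig}_*$, this unit reduces to the canonical morphism
\[
M\otimes \Rig_*\Lambda \longrightarrow \Rig_*\Rig^*M.
\]
This is the main technical step and is exactly a projection formula for the adjunction $\Rig^*\dashv \Rig_*$ restricted to dualizable inputs. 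I would establish it by the standard chain of equivalences
\[
\Rig_*\Rig^*M \simeq \Rig_*\,\underline{\Hom}\bigl(\Rig^*(M^{\vee}),\Lambda\bigr) \simeq \underline{\Hom}(M^{\vee},\Rig_*\Lambda) \simeq M\otimes \Rig_*\Lambda,
\]
where the first equivalence uses that $\Rig^*$ is symmetric monoidal and hence sends duals to duals, the second is the internal-hom form of the $\Rig^*\dashv \Rig_*$ adjunction, and the third uses dualizability of $M$. This argument is the analogue in the rigid-analytification setting of the use of \cite[Lemme 2.8]{gal-mot-1} in the proof of Proposition \ref{prop:weil-spectrum-from-realization-functor}, and it is the only nontrivial ingredient; the rest of the argument is formal from Proposition \ref{prop:rig-upper-star-generates-under-colim} and compact generation.
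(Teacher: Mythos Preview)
Your overall strategy matches the paper's: essential surjectivity via Proposition~\ref{prop:rig-upper-star-generates-under-colim}, then full faithfulness by checking the unit on generators $M\otimes\Rig_*\Lambda$ with $M$ dualizable, where the unit becomes the projection-formula map $M\otimes\Rig_*\Lambda\to\Rig_*\Rig^*M$. Your chain of equivalences for this last step is precisely the content of \cite[Lemme~2.8]{gal-mot-1}, which the paper simply cites.

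The one point that does not go through as written is your preliminary reduction to finite virtual $\Lambda$-cohomological dimension. This is not quite the same device as in Proposition~\ref{prop:weil-spectrum-from-realization-functor}: there the target $\Mod_A$ is fixed, and one only needs the continuity localization on the source side. Here both the source $\SH_{\et}(K;\Rig_*\Lambda)$ and the target $\RigSH_{\et}(K;\Lambda)$ vary with $K$, and to identify $\SH_{\et}(K;\Rig_*\Lambda)$ as a localization of $\colim_\alpha\SH_{\et}(K_\alpha;\Rig_{\alpha,*}\Lambda)$ you would need to control the base-change morphisms $(K/K_\alpha)^*\Rig_{\alpha,*}\Lambda\to\Rig_*\Lambda$; this is not covered by the references you cite and is of essentially the same difficulty as the colimit-preservation of $\Rig_*$ you are trying to extract. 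The paper sidesteps the issue entirely: rather than reducing, it proves directly (Lemma~\ref{lemma:Rig-lower-star-commutes-with-colimits}) that $\Rig_*$ is colimit-preserving, by splitting into rational and torsion parts and invoking rigidity on the torsion side. Once that lemma is in hand, the remainder of your argument works verbatim without any finiteness hypothesis.
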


\begin{proof}
The functor $\widetilde{\Rig}{}^*$ commutes with colimits. 
Using Proposition 
\ref{prop:rig-upper-star-generates-under-colim}, 
it remains to see that it is fully faithful, i.e., 
that the unit of the adjunction
$\id \to \widetilde{\Rig}_*\widetilde{\Rig}{}^*$
is an equivalence. By Lemma 
\ref{lemma:Rig-lower-star-commutes-with-colimits}
below, it is enough to check this on a set of objects generating 
$\SH_{\et}(K;\Rig_*\Lambda)$ under colimits. By Lemma
\ref{lemma:generation-by-strongly-dualizable-objects}
and Proposition 
\ref{prop:rig-upper-star-generates-under-colim}, 
such a set is given by objects of the form $M\otimes \Rig_*\Lambda$,
with $M\in \SH_{\et}(K;\Lambda)$ dualizable.
The unit morphism evaluated at such an object 
coincides with the obvious morphism
$M \otimes \Rig_* \Lambda \to \Rig_*\Rig^*M$
which is an equivalence by 
\cite[Lemme 2.8]{gal-mot-1}.
\end{proof}

\begin{lemma}
\label{lemma:Rig-lower-star-commutes-with-colimits}
The functor $\Rig_*:\RigSH_{\et}(K;\Lambda)
\to \SH_{\et}(K;\Lambda)$
is colimit-preserving.
\end{lemma}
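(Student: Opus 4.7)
The plan is to exploit that $\Rig_*$ is a right adjoint between stable presentable $\infty$-categories. As such, it preserves all limits, and in particular is exact, hence preserves finite colimits. It therefore suffices to prove that $\Rig_*$ preserves filtered colimits, or equivalently, that its left adjoint $\Rig^*$ sends compact objects to compact objects.

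The first step is to reduce to the case where $\widehat{K}$ has finite virtual $\Lambda$-cohomological dimension, following the device already used in the proof of Proposition \ref{prop:key-for-constructing-bf-Gamma-W-rig-anal}: write $K = \bigcup_\alpha K_\alpha$ with the $K_\alpha$ non-discretely valued and finitely generated over their prime field. By \cite[Proposition 2.5.11]{AGAV} and \cite[Theorem 2.5.1]{AGAV}, the categories $\SH_{\et}(K;\Lambda)$ and $\RigSH_{\et}(K;\Lambda)$ arise as filtered $\Prl$-colimits along base-change functors from the corresponding categories indexed by the $K_\alpha$, compatibly with the functors $\Rig^*$. Since compact objects in filtered $\Prl$-colimits are, up to retracts, images of compact objects from finite stages, the compactness-preservation of $\Rig^*$ at the level of $K$ reduces to the analogous statement at the level of each $K_\alpha$.

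Assuming now that $\widehat{K}$ has finite virtual $\Lambda$-cohomological dimension, both $\SH_{\et}(K;\Lambda)$ and $\RigSH_{\et}(K;\Lambda)$ are compactly generated, and in each of them the compact objects coincide with the dualizable ones: for $\SH_{\et}(K;\Lambda)$ this combines Lemma \ref{lemma:generation-by-strongly-dualizable-objects} with \cite[Proposition 3.2.3]{AGAV} and \cite[Proposition 3.1.3]{perfection-SH} (as invoked in the proof of Proposition \ref{prop:weil-spectrum-from-realization-functor}), and for $\RigSH_{\et}(K;\Lambda)$ this combines Lemma \ref{lemma:generation-good-reduction-dualizability} with \cite[Proposition 2.4.22]{AGAV} (as invoked in the proof of Proposition \ref{prop:weil-spectrum-from-realization-functor-rigan}). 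Since $\Rig^*$ is symmetric monoidal, it preserves dualizable objects, and so, in view of the coincidence of dualizable and compact objects, it sends compact objects to compact objects, as required.

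The main obstacle is the bookkeeping in the reduction step, namely the compatibility of $\Rig^*$ with the presentation of both categories as filtered $\Prl$-colimits along base change from the $K_\alpha$, together with the identification of the compact objects in such colimits. Once this is established, the finite-vcd case is essentially formal, resting entirely on the interplay between symmetric monoidality, dualizability, and compactness.
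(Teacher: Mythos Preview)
Your reduction step has a gap. The references you invoke, \cite[Proposition 2.5.11]{AGAV} and \cite[Theorem 2.5.1]{AGAV}, only assert that the comparison functors
\[
\underset{\alpha}{\colim}\,\SH_{\et}(K_{\alpha};\Lambda) \to \SH_{\et}(K;\Lambda)
\quad\text{and}\quad
\underset{\alpha}{\colim}\,\RigSH_{\et}(K_{\alpha};\Lambda) \to \RigSH_{\et}(K;\Lambda)
\]
are \emph{localizations} in $\Prl$, not equivalences; this is exactly how the paper itself uses them (see the proofs of Propositions~\ref{prop:key-for-constructing-bf-Gamma-W}, \ref{prop:key-for-constructing-bf-Gamma-W-rig-anal} and~\ref{prop:weil-spectrum-from-realization-functor}). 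A Bousfield localization of a compactly generated $\infty$-category need not be compactly generated, and even when it is, its compact objects need not arise as retracts of images from the source. Consequently you cannot conclude that $\SH_{\et}(K;\Lambda)$ is compactly generated, and without that the implication ``$\Rig^*$ preserves compacts $\Rightarrow$ $\Rig_*$ preserves filtered colimits'' is unavailable. This is precisely the obstruction that the finite-vcd hypothesis is designed to remove, and your reduction does not circumvent it.

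The paper's proof takes a different route: it splits a filtered system $(M_i)$ into $(M_i\otimes\Q)$ and $(M_i\otimes\Q/\Z)$. The rational piece factors through the $\Lambda\otimes\Q$-linear categories, where $\Rig_*$ is colimit-preserving (the paper takes this as given; your compact-equals-dualizable argument does go through there, since rational coefficients force finite virtual cohomological dimension of any field). For the torsion piece, rigidity \cite[Theorems 2.10.3 \& 2.10.4]{AGAV} identifies the torsion subcategories on both sides with torsion \'etale hypersheaves on the small \'etale sites of $\widehat{K}$ and $K$, and one checks directly that $\iota_*:\Shv_{\et}(\Et_{\widehat{K}};\Lambda)\to\Shv_{\et}(\Et_K;\Lambda)$ preserves \'etale-local equivalences (using that finite \'etale covers of $\widehat{K}\otimes_K L$ can be refined by base changes of finite separable extensions of $L$), hence colimits. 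So your finite-vcd argument is essentially the rational half of the actual proof, but the torsion half requires this separate sheaf-theoretic input.
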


\begin{proof}
Let $(M_i)_{i\in I}$ be an inductive system in 
$\RigSH_{\et}(K;\Lambda)$. We want to show that the morphism
$$\underset{i}{\colim}\,\Rig_*(M_i) \to 
\Rig_*\left(\underset{i}{\colim}\,M_i\right)$$
is an equivalence. It is enough to do so for the systems 
$(M_i\otimes \Q)_i$ and $(M_i\otimes \Q/\Z)_i$.
For the system $(M_i\otimes \Q)_i$, we use the commutative 
square
$$\xymatrix{\RigSH_{\et}(K;\Lambda\otimes \Q) \ar[r]^-{\Rig_*} 
\ar[d] & 
\SH_{\et}(K;\Lambda \otimes \Q) \ar[d]\\
\RigSH_{\et}(K;\Lambda) \ar[r]^-{\Rig_*} & \SH_{\et}(K;\Lambda)}$$
where all the functors are colimit-preserving, except possibly the 
horizontal bottom one.

For the system $(M_i\otimes \Q/\Z)_i$, we need to work a bit more.
We first prove that the functor 
\begin{equation}
\label{eq-lemma:Rig-lower-star-commutes-with-colimits-47}
\iota_*:\Shv_{\et}(\Et_{\widehat{K}};\Lambda)
\to \Shv_{\et}(\Et_K;\Lambda)
\end{equation}
is colimit-preserving.
To do so, it is enough to prove that the functor 
$$\iota_*:\Shv_{t_{\amalg}}(\Et_{\widehat{K}}) \to 
\Shv_{t_{\amalg}}(\Et_K),$$
which is obviously colimit-preserving, 
preserves the $\et$-local equivalences.
(Here, we denote by $t_{\amalg}$ the Grothendieck topology generated
by covers of the form $(U_j \to \coprod_{i\in I}U_i)_{j\in I}$.)
Given a finite separable extension $L/K$, any faithfully flat 
\'etale $\widehat{K}\otimes_KL$-algebra can be refined by an
algebra of the form $\widehat{K}\otimes_KL'$ for a finite separable
extension $L'/L$. This can be used to show that if a morphism $u:A \to B$ 
of pointed $t_{\amalg}$-sheaves of spaces on $\Et_{\widehat{K}}$ 
induces isomorphisms $\Lder_{\et}\pi_n(A) \simeq \Lder_{\et}\pi_n(B)$
for $n\in \N$, then $\iota_*(u)$ also induces isomorphisms
$\Lder_{\et}\pi_n(\iota_*(A))\simeq \Lder_{\et}\pi_n(\iota_*(B))$.

Since the functor
\eqref{eq-lemma:Rig-lower-star-commutes-with-colimits-47}
is colimit-preserving, it also preserves torsion objects. 
Consider the following square
\begin{equation}
\label{eq-lemma:Rig-lower-star-commutes-with-colimits-123}
\begin{split}
\xymatrix{\Shv_{\et}(\Et_{\widehat{K}};\Lambda)_{\tor}
\ar[r]^-{\iota_*} \ar[d] & 
\Shv_{\et}(\Et_K;\Lambda)_{\tor} \ar[d]\\
\RigSH_{\et}(K;\Lambda) \ar[r]^-{\Rig_*} & 
\SH_{\et}(K;\Lambda)}
\end{split}
\end{equation}
where the vertical arrows are colimit-preserving, fully faithful
and induce equivalences with the sub-$\infty$-categories
of torsion objects by \cite[Theorems 2.10.3 \& 2.10.4]{AGAV}.
This square is in fact commutative. Indeed, 
the vertical arrows send a torsion \'etale hypersheaf $F$
on $\Et_{\widehat{K}}$ (resp. $\Et_K$)
to the $T$-spectrum given in level $n$ by 
$F(n)[2n]$ left Kan extended to $\RigSm_K$ (resp. $\Sm_K$)
and hypersheafified.
It now clear how to conclude: 
the system 
$(M_i\otimes \Q/\Z)_i$
belongs to the essential image of the left vertical arrow
in the square 
\eqref{eq-lemma:Rig-lower-star-commutes-with-colimits-123}.
Since all the functors in this square are colimit-preserving, except possibly the horizontal bottom one, the result follows.
\end{proof}

We can now give the proof of Theorem 
\ref{thm:rig-upper-star-weil-coh-theory}.

\begin{proof}[Proof of Theorem 
\ref{thm:rig-upper-star-weil-coh-theory}]
Set $A=\Gamma(\pt;\Omega^{\infty}_T
\Rig^*\mathbf{\Gamma}_W)$.
By Proposition
\ref{prop:equivalent-cond-for-motivic-ring-spectr-rigan},
it is enough to show that the functor 
$$\Mod_A \to \RigSH_{\et}(K;\Rig^*\mathbf{\Gamma}_W)$$
is an equivalence. By Theorem 
\ref{thm:the-main-equivalence-for-new-Weil-coh},
we have an equivalence of $\infty$-categories
$$\RigSH_{\et}(K;\Rig^*\mathbf{\Gamma}_W)
\simeq \SH_{\et}(K;\mathbf{\Gamma}_W\otimes \Rig_*\Lambda).$$
Thus, setting $B=\Gamma(\pt;\Omega^{\infty}_T(
\mathbf{\Gamma}_W\otimes \Rig_*\Lambda))$,
it is enough to show that the functor 
$$\Mod_B \to \SH_{\et}(K;\mathbf{\Gamma}_W\otimes \Rig_*\Lambda)$$
is an equivalence.
By Proposition 
\ref{prop:equivalent-condition-for-a-motivic-ring-spectrum},
we need to show that 
$\mathbf{\Gamma}_W\otimes \Rig_*\Lambda$ is a Weil spectrum. 
Since $\mathbf{\Gamma}_W$ is a Weil spectrum, Proposition 
\ref{prop:every-module-is-extended-constant}
furnishes an equivalence of commutative algebras
$\mathbf{\Gamma}_W\otimes_{\Gamma_W(\pt)}B\simeq 
\mathbf{\Gamma}_W\otimes \Rig_*\Lambda$, showing that 
$\mathbf{\Gamma}_W\otimes \Rig_*\Lambda$ is indeed a Weil spectrum.
\end{proof}

The following is a corollary of the proof of Theorem 
\ref{thm:rig-upper-star-weil-coh-theory}.

\begin{cor}
\label{cor:coefficients-of-rig-upper-star-Gamma-W}
Let $\mathbf{\Gamma}_W\in \WSp(K;\Lambda)$ be a Weil spectrum for 
algebraic $K$-varieties, and let $\Rder^*_W:\SH_{\et}(K;\Lambda)
\to \Mod_{\Gamma_W(\pt)}$ be the associated realization. 
\begin{enumerate}

\item The coefficient ring of the 
Weil spectrum $\Rig^*\mathbf{\Gamma}_W$ is equivalent to 
$\Rder^*_W(\Rig_*\Lambda)$.

\item The realization associated to $\Rig^*\mathbf{\Gamma}_W$
is given by the composition of 
\begin{equation}
\label{eq-cor:coefficients-of-rig-upper-star-Gamma-W-1}
\RigSH_{\et}(K;\Lambda)
\xrightarrow{\widetilde{\Rig}_*}
\SH_{\et}(K;\Rig_*\Lambda)
\xrightarrow{\Rder^*_W}
\Mod_{\Rder^*_W(\Rig_*\Lambda)}.
\end{equation}

\end{enumerate}
\end{cor}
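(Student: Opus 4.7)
The plan is to deduce both assertions directly from the chain of equivalences built in the proof of Theorem \ref{thm:rig-upper-star-weil-coh-theory}. Setting $A=\Gamma(\pt;\Omega^{\infty}_T \Rig^*\mathbf{\Gamma}_W)$ and $B=\Gamma(\pt;\Omega^{\infty}_T(\mathbf{\Gamma}_W\otimes \Rig_*\Lambda))$, that proof exhibits a chain of equivalences
\[
\Mod_A \xleftarrow{\sim} \RigSH_{\et}(K;\Rig^*\mathbf{\Gamma}_W) \xleftarrow{\sim} \SH_{\et}(K;\mathbf{\Gamma}_W\otimes \Rig_*\Lambda)\xleftarrow{\sim} \Mod_B,
\]
with the middle equivalence induced by $\widetilde{\Rig}^*$ from Theorem \ref{thm:the-main-equivalence-for-new-Weil-coh}, and the outer two by Proposition \ref{prop:every-module-is-extended-constant-rig-anal} and Proposition \ref{prop:every-module-is-extended-constant}.

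For part (1), the first step is to observe that Proposition \ref{prop:every-module-is-extended-constant} gives, for every $N\in \SH_{\et}(K;\Lambda)$, the explicit formula $\Rder^*_W(N)\simeq \Gamma(\pt;\Omega^{\infty}_T(\mathbf{\Gamma}_W\otimes N))$ for the realization; this is simply the quasi-inverse to $M\mapsto \mathbf{\Gamma}_W\otimes_{\Gamma_W(\pt)} M$ evaluated on $\mathbf{\Gamma}_W\otimes N$. Taking $N=\Rig_*\Lambda$ yields $\Rder^*_W(\Rig_*\Lambda)\simeq B$. It then remains to identify $A$ with $B$: since $\widetilde{\Rig}^*$ is a symmetric monoidal equivalence sending the unit algebra $\mathbf{\Gamma}_W\otimes \Rig_*\Lambda$ of $\SH_{\et}(K;\Rig_*\Lambda)$ to the unit algebra $\Rig^*\mathbf{\Gamma}_W$ of $\RigSH_{\et}(K;\Rig_*\mathbf{\Gamma}_W)$, passing to global sections at the zeroth level gives $A\simeq B$, hence $A\simeq \Rder^*_W(\Rig_*\Lambda)$.

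For part (2), by construction the realization associated to $\Rig^*\mathbf{\Gamma}_W$ is the symmetric monoidal composition of $M\mapsto \Rig^*\mathbf{\Gamma}_W\otimes M$ with the equivalence $\RigSH_{\et}(K;\Rig^*\mathbf{\Gamma}_W)\simeq \Mod_A$. I will trace $\Rig^*\mathbf{\Gamma}_W\otimes M$ through the chain displayed above. Under $\widetilde{\Rig}_*$, which is symmetric monoidal, this object corresponds to $(\mathbf{\Gamma}_W\otimes \Rig_*\Lambda)\otimes_{\Rig_*\Lambda}\widetilde{\Rig}_*M$, viewed as an $(\mathbf{\Gamma}_W\otimes \Rig_*\Lambda)$-module; the further equivalence $\SH_{\et}(K;\mathbf{\Gamma}_W\otimes\Rig_*\Lambda)\simeq \Mod_B$ then sends it to $\Rder^*_W(\widetilde{\Rig}_*M)$ with its canonical $\Rder^*_W(\Rig_*\Lambda)$-module structure, matching the composition \eqref{eq-cor:coefficients-of-rig-upper-star-Gamma-W-1}.

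The main obstacle, such as it is, is purely bookkeeping: one needs to verify at each step that the extra $\Rig_*\Lambda$- (respectively $\Rder^*_W(\Rig_*\Lambda)$-) module structures are transported faithfully along the equivalences. This reduces to checking that every functor in play, notably $\widetilde{\Rig}^*$ and the extension-of-scalars equivalence of Proposition \ref{prop:every-module-is-extended-constant}, is symmetric monoidal, so that module structures over any commutative algebra are automatically preserved.
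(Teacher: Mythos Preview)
Your proof is correct and follows essentially the same approach as the paper: both arguments extract the chain of equivalences $\Mod_A \simeq \RigSH_{\et}(K;\Rig^*\mathbf{\Gamma}_W) \simeq \SH_{\et}(K;\mathbf{\Gamma}_W\otimes \Rig_*\Lambda) \simeq \Mod_B$ from the proof of Theorem~\ref{thm:rig-upper-star-weil-coh-theory} and use the formula $\Rder^*_W(-)\simeq \Gamma(\pt;\Omega^{\infty}_T(\mathbf{\Gamma}_W\otimes -))$. The only cosmetic difference is in part~(2): the paper verifies the claim by checking that the right adjoint of \eqref{eq-cor:coefficients-of-rig-upper-star-Gamma-W-1} sends $\Rder^*_W(\Rig_*\Lambda)$ to $\Rig^*\mathbf{\Gamma}_W$, whereas you trace the left adjoint forward; these are two sides of the same adjunction and amount to the same verification.
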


\begin{proof}
The coefficient ring of $\Rig^*\mathbf{\Gamma}_W$ is the commutative 
algebra $A=\Gamma(\pt;\Omega^{\infty}_T
\Rig^*\mathbf{\Gamma}_W)$ which is equivalent to 
the commutative algebra 
$B=\Gamma(\pt;\Omega^{\infty}_T(
\mathbf{\Gamma}_W\otimes \Rig_*\Lambda))$.
This proves the first claim since the realization functor $\Rder^*_W$
is given by $\Gamma(\pt;\Omega^{\infty}_T(\mathbf{\Gamma}_W\otimes -))$.
To prove the second statement, we need to show that the right 
adjoint to the functor 
\eqref{eq-cor:coefficients-of-rig-upper-star-Gamma-W-1}
sends $\Rder^*_W(\Rig_*\Lambda)$ to $\Rig^*\mathbf{\Gamma}_W$.
This is clear since the equivalence $\widetilde{\Rig}_*$
sends $\Rig^*\mathbf{\Gamma}_W$ to $\mathbf{\Gamma}_W\otimes \Rig_*\Lambda$
which is also the image of $\Rder^*_W(\Rig_*\Lambda)$
by the right adjoint functor 
$\Rder_{W,\,*}:\Mod_{\Rder_W^*(\Rig_*\Lambda)}\to 
\SH_{\et}(K;\Rig_*\Lambda)$.
\end{proof}

\begin{rmk}
\label{rmk:new-Weil-cohomology-theory-}
Corollary 
\ref{cor:coefficients-of-rig-upper-star-Gamma-W}
shows that the Weil cohomology theory represented by the 
Weil spectrum $\Rig^*\mathbf{\Gamma}_W$ is precisely the 
`new' Weil cohomology theory associated to $\Gamma_W$ 
in the sense of \cite[\S 2E]{Nouvelle-Weil}. Here, contrary
to loc.~cit., we allow Weil cohomology theories with non necessarily 
classical coefficients rings.  
\end{rmk}

In a similar vein, we have the following result.

\begin{thm}
\label{thm:xi-of-Weil-cohom-theory--K-algebraically-closed}
Assume that $K$ is algebraically closed.
If $\mathbf{\Gamma}_W\in \WSp(k;\Lambda)$ is a Weil spectrum
on algebraic $k$-varieties, then $\xi^*\mathbf{\Gamma}_W$ 
is a Weil spectrum on rigid analytic $K$-varieties
whose ring of coefficients is equivalent to
$\Rder_W^*(\xi_*\Lambda)$,
where $\Rder_W^*:\SH_{\et}(k;\Lambda) \to \Mod_{\Gamma_W(\pt)}$
is the realization functor associated to $\Gamma_W$.
Thus, we have a functor 
$$\xi^*:\WSp(k;\Lambda) \to \RigWSp(K;\Lambda)$$
which is left adjoint to the functor $\xi_*$ in 
Remark \ref{rmk:compatibility-of-WCT-rig-star-xi}.
\end{thm}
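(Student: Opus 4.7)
The plan is to mimic the proof of Theorem \ref{thm:rig-upper-star-weil-coh-theory} with $\xi^*$ replacing $\Rig^*$ throughout. The crucial input from which everything else flows is an analogue of Proposition \ref{prop:rig-upper-star-generates-under-colim}, namely that the image of $\xi^*$ generates $\RigSH_{\et}(K;\Lambda)$ under colimits, desuspensions, and Tate twists. This is where the hypothesis that $K$ is algebraically closed enters: in that case $\widehat{K}$ is also algebraically closed (by Krasner's lemma, one can show that the completion of an algebraically closed valued field of height $1$ is again algebraically closed), so every finite \'etale extension $L/\widehat{K}$ appearing in Lemma \ref{lemma:generation-good-reduction-dualizability} is trivial. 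Thus every smooth qcqs rigid $K$-variety with potentially good reduction is directly of the form $\mathcal{X}_{\eta}$ with $\mathcal{X}$ a smooth formal $\widehat{K}{}^{\circ}$-scheme, and by the identification $\FSH_{\et}(\widehat{K}{}^{\circ};\Lambda)\simeq \SH_{\et}(k;\Lambda)$ this means $\pi_{X,\,\sharp}\Lambda \simeq \xi^*\pi_{\mathcal{X}_{\sigma},\,\sharp}\Lambda$. Combined with the generation statement of Lemma \ref{lemma:generation-good-reduction-dualizability}, this gives the required generation by the image of $\xi^*$.

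Having secured this, I would next establish the analogues of Lemma \ref{lemma:Rig-lower-star-commutes-with-colimits} (namely that $\xi_*$ is colimit-preserving; the proof transplants, decomposing into rational and torsion parts and using the rigidity theorems from \cite[Theorems 2.10.3 \& 2.10.4]{AGAV}) and of the projection formula $M\otimes \xi_*\Lambda \simeq \xi_*\xi^*M$ for dualizable $M\in \SH_{\et}(k;\Lambda)$, which is again an instance of \cite[Lemme 2.8]{gal-mot-1}. Forming the adjunction
\[
\widetilde{\xi}{}^*:\SH_{\et}(k;\xi_*\Lambda) \rightleftarrows \RigSH_{\et}(K;\Lambda):\widetilde{\xi}_*, \qquad \widetilde{\xi}{}^*(M)=\xi^*(M)\otimes_{\xi^*\xi_*\Lambda}\Lambda,
\]
the proof of Theorem \ref{thm:the-main-equivalence-for-new-Weil-coh} transplants verbatim to show that $\widetilde{\xi}{}^*$ is an equivalence of $\infty$-categories.

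With this equivalence in hand, I would conclude as in the proof of Theorem \ref{thm:rig-upper-star-weil-coh-theory}. Setting $A=\Gamma(\pt;\Omega^{\infty}_T\xi^*\mathbf{\Gamma}_W)$ and $B=\Gamma(\pt;\Omega^{\infty}_T(\mathbf{\Gamma}_W\otimes \xi_*\Lambda))$, the equivalence $\widetilde{\xi}{}^*$ reduces the statement ``$\xi^*\mathbf{\Gamma}_W$ is a Weil spectrum'' to the statement ``$\mathbf{\Gamma}_W\otimes \xi_*\Lambda$ is a Weil spectrum on algebraic $k$-varieties''. The latter follows from Proposition \ref{prop:every-module-is-extended-constant} applied to $\mathbf{\Gamma}_W$: the equivalence $\mathbf{\Gamma}_W\otimes \xi_*\Lambda\simeq \mathbf{\Gamma}_W\otimes_{\Gamma_W(\pt)}\Rder^*_W(\xi_*\Lambda)$ exhibits $\mathbf{\Gamma}_W\otimes \xi_*\Lambda$ as the image of a $\Gamma_W(\pt)$-algebra under the functor of \eqref{eq-prop:every-module-is-extended-constant-1}, hence a Weil spectrum by Corollary \ref{cor:algebra-over-weil-spectrum-weil-spectrum}. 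This same equivalence identifies the coefficient ring $A\simeq B$ with $\Rder^*_W(\xi_*\Lambda)$. Once the functor $\xi^*$ is shown to land in $\RigWSp(K;\Lambda)$, the adjunction with $\xi_*$ from Remark \ref{rmk:compatibility-of-WCT-rig-star-xi} is automatic.

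The main obstacle I expect is the generation statement: one must carefully justify that when $K$ is algebraically closed, $\widehat{K}$ is algebraically closed as well, and verify that the colimit-preservation of $\xi_*$ holds in the form needed (which is not as standard as for $\Rig_*$, since the formal-to-rigid generic fiber functor is quite different in nature from analytification; it is likely that an argument analogous to the torsion/$\Q$-linearization decomposition of Lemma \ref{lemma:Rig-lower-star-commutes-with-colimits} is required, possibly using the continuity statement from \cite[Theorem 2.5.1]{AGAV} to reduce to the case of finite virtual $\Lambda$-cohomological dimension).
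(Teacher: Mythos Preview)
Your approach is correct and matches the paper's in spirit: both reduce to showing that $\mathbf{\Gamma}_W\otimes\xi_*\Lambda$ is a Weil spectrum on algebraic $k$-varieties (which follows from Corollary~\ref{cor:algebra-over-weil-spectrum-weil-spectrum}) via the equivalence $\widetilde{\xi}_*:\RigSH_{\et}(K;\Lambda)\xrightarrow{\sim}\SH_{\et}(k;\xi_*\Lambda)$. The only difference is that the paper does not re-prove this equivalence but simply cites it as \cite[Theorem~3.7.21]{AGAV}, whereas you propose to derive it from scratch by transplanting the proof of Theorem~\ref{thm:the-main-equivalence-for-new-Weil-coh}. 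Your plan for that derivation is sound (the generation argument via Lemma~\ref{lemma:generation-good-reduction-dualizability} when $\widehat{K}$ is algebraically closed is exactly right), and your identification of the colimit-preservation of $\xi_*$ as the step requiring care is accurate; but all of this is unnecessary once you invoke the AGAV reference.
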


\begin{proof}
The proof of Theorem
\ref{thm:xi-of-Weil-cohom-theory--K-algebraically-closed}
is very similar to the proof of Theorem
\ref{thm:rig-upper-star-weil-coh-theory}.
Recall from \cite[Theorem 3.7.21]{AGAV} that the functor 
$$\widetilde{\xi}_*:\RigSH_{\et}(K;\Lambda)
\to \SH_{\et}(k;\xi_*\Lambda)$$
is an equivalence of $\infty$-categories. 
(In loc.~cit., the adjunction 
$(\xi^*,\xi_*)$ is denoted by $(\xi,\chi)$.)
Thus, the functor $\xi^*$ is equivalent to 
$-\otimes \xi_*\Lambda:\SH_{\et}(k;\Lambda)
\to \SH_{\et}(k;\xi_*\Lambda)$. To conclude, it remains to see that the 
functor $\Mod_{\Rder_W^*(\xi_*\Lambda)}
\to \SH_{\et}(k;\mathbf{\Gamma}_W\otimes \xi_*\Lambda)$
is an equivalence, which follows from the fact that 
$\mathbf{\Gamma}_W\otimes \xi_*\Lambda$ is a Weil spectrum on 
algebraic $k$-varieties with coefficient ring 
$\Rder_W^*(\xi_*\Lambda)$.
\end{proof}

To go further, we consider the following situation.

\begin{situ}
\label{situ:finite-rank-value-group-rho-for-psi}
Assume that $K$ is algebraically closed, and that the value
group of $K$ is finite dimensional over $\Q$.
Let $P$ be a set of primes and let
$P^{\times}$ be the submonoid of $\N^{\times}$ generated by $P$.
We assume that the characteristic of $k$ is not in 
$P$ and that $P$ contains all the primes that are
not invertible in $\pi_0(\Lambda)$. (This is possible since the exponent 
characteristic of $k$ is invertible in $\pi_0(\Lambda)$; 
note that if $\Lambda$ is
a $\Q$-algebra, we may take $P=\emptyset$ so to have $P^{\times}=\{1\}$.)
Let $R$ be a strictly henselian regular ring, 
let $a_1,\ldots, a_n\in R$ be a 
regular sequence in $R$, and let $\rho:R \to K^{\circ}$ be a local 
morphism such that $|\rho(a_1)|,\ldots, |\rho(a_n)|$ form a basis 
of the value group of $K$ over $\Q$.
(The existence of such a morphism follows from de Jong's
theorem on resolution of singularities by alterations
\cite{alteration-de-Jong}.) We consider the unique 
factorization 
$$R \to \overline{R} \xrightarrow{\overline{\rho}}
K^{\circ}$$ 
where $\overline{R}$ is the profinite $R$-algebra obtained 
by extracting all the $r$-th roots of the $a_i$'s for the 
integers $r \in P^{\times}$.
We set $S=\Spec(R)$ and, for $\emptyset\neq 
I\subset \{1,\ldots, n\}$, we set $D_I=\Spec(R/(a_s, s\in I))$.
We also write $C$ for $D_{\{1,\ldots, n\}}$, and set 
$D=\bigcup_{s=1}^n D_s$ and $U=S\smallsetminus D$.
We form the
commutative diagram with cartesian squares (up to nil-immersions)
$$\xymatrix{\Spec(K) \ar[r]^-{\widehat{j}} 
\ar[d] & \Spec(K^{\circ}) \ar[d]
& \Spec(K) \ar[l]_-{\widehat{i}} \ar[d]\\
\overline{U} \ar[r]^-{\overline{j}} 
\ar[d] & \overline{S} \ar[d] & C
\ar[l]_-{\overline{i}} \ar@{=}[d] \\
U \ar[r]^-j & S & C \ar[l]_-i}$$
where $\overline{S}=\Spec(\overline{R})$.
\end{situ}

As usual, given a symmetric monoidal $\infty$-category 
$\mathcal{C}^{\otimes}$, we denote by ${\rm S}(-):\mathcal{C}
\to \CAlg(\mathcal{C})$ the functor sending 
an object to the associated free commutative algebra.

\begin{thm}
\label{thm:the-functor-Psi-using-basis-of-value-group}
In Situation
\ref{situ:finite-rank-value-group-rho-for-psi},
we have equivalences
\begin{equation}
\label{eq-thm:the-functor-Psi-using-basis-of-value-group-1}
\xi_*\Lambda\simeq 
(\overline{i}{}^*\overline{j}_*\Lambda)|_{\Spec(k)}\simeq {\rm S}(\Lambda_{\Q}^{\oplus n}(-1)[-1])
\end{equation}
of commutative algebras in $\SH_{\et}(k;\Lambda)$.
(Here and below we write $\Lambda_{\Q}$ for $\Lambda \otimes \Q$.)
\end{thm}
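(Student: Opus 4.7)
The plan is to establish the two equivalences separately, since the first is a geometric "continuity" statement comparing rigid-analytic and algebraic nearby cycles, while the second is a direct Kummer-Koszul computation.

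For the first equivalence $\xi_*\Lambda \simeq (\overline{i}{}^*\overline{j}_*\Lambda)|_{\Spec(k)}$, I would proceed in two stages. Recall that $\xi^*$ is the formal generic fiber functor, modulo the equivalence $\mathrm{FSH}_{\et}(\widehat{K}{}^{\circ};\Lambda)\simeq \SH_{\et}(k;\Lambda)$. In the presence of a good algebraic model, $\xi_*\Lambda$ is computed by the algebraic nearby cycles $\widehat{i}{}^*\widehat{j}_*\Lambda$ over $\Spec(\widehat{K}{}^{\circ})$; I would invoke the relevant comparison from \cite{AGAV} to obtain this identification. Next, I would pass from the formal picture over $\Spec(\widehat{K}{}^{\circ})$ to the algebraic approximation $\overline{S}$ via the factorization $R\to \overline{R}\xrightarrow{\overline{\rho}}K^{\circ}$. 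The crucial point is that, because the values $|\rho(a_1)|,\ldots,|\rho(a_n)|$ form a $\Q$-basis of the value group of $K$ and $\overline{R}$ has been enlarged by all $r$-th roots with $r\in P^{\times}$, the subgroup of $|K^{\times}|$ generated by the values of $\overline{R}\setminus\{0\}$ is $\Q$-dense in $|K^{\times}|$. Combined with the fact that all residue-characteristic-prime torsion is under control, this density forces $\overline{\rho}$ to induce an equivalence on \'etale cohomology of the relevant open and closed pieces (standard henselian/Popescu-style approximation for \'etale sheaves). Transporting through the cartesian diagram, one deduces the first equivalence.

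For the second equivalence $(\overline{i}{}^*\overline{j}_*\Lambda)|_{\Spec(k)}\simeq {\rm S}(\Lambda_{\Q}^{\oplus n}(-1)[-1])$, I would argue by induction on $n$, using a K\"unneth-type decomposition corresponding to the regular sequence $a_1,\ldots,a_n$. The geometry of $\overline{S}$ is essentially a product of $n$ one-dimensional pieces, one for each coordinate $a_i$; the complementary stratification of $\overline{D}=\bigcup \overline{D}_i$ is the natural normal-crossings stratification, and the $\overline{D}_i$'s meet transversally in $C$. The one-variable computation is the core case: for $\overline{R}_{\text{loc}}=\mathrm{colim}_{r\in P^{\times}}R[a^{1/r}]$, absolute purity together with the Kummer sequence gives $j_{r,*}\Lambda|_{V(a)}\simeq \Lambda\oplus \Lambda(-1)[-1]/r$ at the finite stage, and in the colimit over $r\in P^{\times}$ the $(-1)[-1]$-summand becomes $P^{\times}$-divisible. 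Since the primes outside $P$ are by assumption invertible in $\pi_0(\Lambda)$, this $P^{\times}$-divisibility combined with the invertibility of complementary primes rationalizes the contribution, giving $\Lambda\oplus \Lambda_{\Q}(-1)[-1]$ at $C$. Iterating over the $n$ coordinates via K\"unneth yields the tensor product
\[
\bigotimes_{i=1}^n\bigl(\Lambda\oplus \Lambda_{\Q}(-1)[-1]\bigr),
\]
and since $\Lambda_{\Q}(-1)[-1]$ lives in odd cohomological degree, this tensor product identifies (using divisibility by $n!$ in $\Lambda_{\Q}$) with the free commutative algebra ${\rm S}(\Lambda_{\Q}^{\oplus n}(-1)[-1])$. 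Restricting from $C$ to $\Spec(k)$ via $\rho$ is harmless as both sides are constant along $C$.

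The main obstacle is the first equivalence, and specifically the step relating $\widehat{i}{}^*\widehat{j}_*\Lambda$ to $(\overline{i}{}^*\overline{j}_*\Lambda)|_{\Spec(k)}$. The issue is that $K^{\circ}$ is the valuation-theoretic completion of $\overline{R}$ only after accounting for the primes outside $P^{\times}$, and one must carefully ensure that the discrepancy between the algebraic henselian approximation and the completed valuation ring is cohomologically invisible for the sheaves under consideration. A secondary technical point is tracking the commutative algebra structures throughout: both $\xi_*\Lambda$ and $\overline{i}{}^*\overline{j}_*\Lambda$ inherit natural algebra structures from the lax monoidality of the functors involved, and on the Koszul side the free-algebra structure arises from the multiplicative structure on tame Kummer covers; verifying that these match under the equivalences should follow naturally from the K\"unneth decomposition but deserves explicit check.
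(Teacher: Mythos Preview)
Your overall strategy is sound, and you correctly identify the starting point $\xi_*\Lambda \simeq \widehat{i}{}^*\widehat{j}_*\Lambda$ from \cite{AGAV}. However, there is a genuine gap in your argument for the first equivalence.

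The passage from $\widehat{i}{}^*\widehat{j}_*\Lambda$ over $\Spec(K^\circ)$ to $(\overline{i}{}^*\overline{j}_*\Lambda)|_{\Spec(k)}$ cannot be handled by a ``Popescu-style approximation'' as you suggest. The morphism $\overline{\rho}:\overline{R}\to K^\circ$ is typically far from ind-smooth: $K^\circ$ can be enormously larger than $\overline{R}$ (think of $K=\C_p$), and the density of values in $|K^\times|$ does not by itself control motivic nearby cycles. The paper's approach is different: one writes $K^\circ$ as a filtered colimit of strictly henselian regular local $R$-algebras $R_\alpha$ in which the preimage of $D$ is still a normal crossings divisor (this uses de Jong's alterations), and applies continuity to express $\widehat{i}{}^*\widehat{j}_*\Lambda$ as the colimit of the $(\overline{i}{}_\alpha^*\overline{j}_{\alpha,*}\Lambda)|_{\Spec(k)}$. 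The heart of the matter is then that each comparison map $(\overline{i}{}^*\overline{j}_*\Lambda)|_{C_\alpha}\to \overline{i}{}_\alpha^*\overline{j}_{\alpha,*}\Lambda$ is an equivalence. After reducing by purity and continuity to the model case of affine space over $\Z[\sqrt{-1},P^{-1}]$ with the coordinate hyperplanes removed, one replaces $i^*j_*$ by $q_*$ for the complementary pro-torus (Lemma~\ref{lemma:i-upper-star-vs-p-lower-star-torus}) and then invokes the key Lemma~\ref{lemma:morphism-of-protorus-and-motives}: a morphism of pro-tori whose dual ind-lattices become isomorphic after $\otimes\,\Q$, with cokernel having torsion only at primes invertible in $\pi_0(\Lambda)$, induces an equivalence on $p_*\Lambda$. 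Your ``density of values'' observation is the geometric shadow of the hypothesis of this lemma, but it does not replace the argument.

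For the second equivalence your Kummer--K\"unneth approach is a legitimate alternative and essentially parallel to the paper's. The paper again uses Lemma~\ref{lemma:i-upper-star-vs-p-lower-star-torus} to convert $\overline{i}{}^*\overline{j}_*\Lambda$ into the pushforward along a pro-torus, and then appeals to Proposition~\ref{prop:action-of-elevation-power-m-on-Gm} (a Milnor--Witt computation showing that the $m$-th power map on $\Gm$ acts as multiplication by $m$ on the Tate summand when $-1$ is a square) to identify the colimit with ${\rm S}(\Lambda_\Q^{\oplus n}(-1)[-1])$. Your mechanism for the rationalization---primes in $P$ are inverted by root-extraction, primes outside $P$ are already invertible in $\pi_0(\Lambda)$---is exactly what is at work here.
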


\begin{proof}
We will deduce this from 
\cite[Theorem 3.8.1]{AGAV}.
Indeed, by loc. cit., we have 
$$\xi_*\Lambda=\widehat{i}{}^*\widehat{j}_*\Lambda,$$
and the task is to compute the 
commutative algebra $\widehat{i}{}^*\widehat{j}_*\Lambda$.
Using \cite{alteration-de-Jong}, 
we can write $K^{\circ}$ as a filtered colimit of 
strictly henselian local $R$-algebras $R_{\alpha}$ 
with local homomorphisms
$\rho_{\alpha}:R_{\alpha} \to  K^{\circ}$
such that $S_{\alpha}=\Spec(R_{\alpha})$
is regular and the inverse image of $D$
in $S_{\alpha}$ is a normal crossing divisor.
Thus, we can find a regular sequence 
$a_{\alpha,\,1},\ldots, a_{\alpha,\,n}$ in $R_{\alpha}$ 
such that the $|\rho_{\alpha}(a_{\alpha,\,s})|$, for $1\leq s \leq n$,
form a basis of the 
value group of $K$ over $\Q$ and the inverse image of $D$ is 
$D_{\alpha}=\bigcup_{s=1}^nD_{\alpha,\,s}$ with $D_{\alpha,\,s}=\Spec(R_{\alpha}/(a_{\alpha,\,s}))$.
Then we have relations in $R_{\alpha}$
$$a_s=u_s\cdot a_{\alpha,\,1}^{e_{s,\,1}}\cdots a_{\alpha,\,n}^{e_{s,\,n}}$$
with $u_s$ invertible in $R_{\alpha}$ and $e_{s,j}\in \N$.
In particular, letting $U_{\alpha}=S_{\alpha}\smallsetminus D_{\alpha}$
and $C_{\alpha}=D_{\alpha,\,1}\cap \cdots \cap D_{\alpha,\,n}$, 
we can form a commutative diagram with cartesian squares 
(up to nil-immersions) 
$$\xymatrix{\Spec(K) \ar[r]^-{\widehat{j}} \ar[d] 
& \Spec(K^{\circ}) \ar[d] & \Spec(k) \ar[l]_-{\widehat{i}} \ar[d] \\
\overline{U}_{\alpha} \ar[r]^-{\overline{j}_{\alpha}} \ar[d] 
& \overline{S}_{\alpha} \ar[d] & 
C_{\alpha}\ar[l]_-{\overline{i}_{\alpha}} \ar[d] \\
U_{\alpha} \ar[r]^-{j_{\alpha}} 
& S_{\alpha} & 
C_{\alpha}\ar[l]_-{i_{\alpha}}}$$
with $\overline{S}_{\alpha}=\Spec(\overline{R}_{\alpha})$
as in Situation
\ref{situ:finite-rank-value-group-rho-for-psi}.
It follows from \cite[Proposition 3.2.4]{AGAV} that
$$\widehat{i}{}^*\widehat{j}_*\Lambda
\simeq \underset{\alpha}{\colim}\;
(\overline{i}{}_{\alpha}^*\overline{j}_{\alpha,\,*}\Lambda)|_{\Spec(k)}.$$
Thus, to establish the first equivalence in
\eqref{eq-thm:the-functor-Psi-using-basis-of-value-group-1}, 
it suffices to show that the morphisms
$$(\overline{i}{}^*\overline{j}_*\Lambda)|_{C_{\alpha}} \to 
\overline{i}{}_{\alpha}^*\overline{j}_{\alpha,\,*}\Lambda$$
are equivalences. Consider the ring 
$A=\Z[\sqrt{-1},P^{-1}]$. Using purity in the form of 
\cite[Th\'eor\`eme 7.4]{real-etale} and continuity 
\cite[Proposition 3.2.4]{AGAV}, 
we are reduced to showing the analogous property for the morphism 
of $A$-algebras
$$A\left[x^{1/r}_1,\ldots, x^{1/r}_n\mid r\in P^{\times}\right]
\to A\left[u^{\pm 1/r}_1,\ldots, u^{\pm 1/r}_n,
y^{1/r}_1,\ldots, y^{1/r}_n\mid r\in P^{\times}\right],$$
sending $x^{1/r}_s$ to $u_s^{1/r}\cdot y_1^{e_{s,1}/r}\cdots 
y_n^{e_{s,\,n}/r}$,
in place of the morphism $\overline{R} \to \overline{R}_{\alpha}$.
Consider the schemes: 
\begin{itemize}

\item $T=\Spec(A[x^{\pm 1/r}_1,\ldots, x^{\pm 1/r}_n\mid r\in P^{\times}])$,

\item $B=\Spec(A[u^{\pm 1/r}_1,\ldots, u^{\pm 1/r}_n\mid r\in P^{\times}])$,

\item $T'=\Spec(A[u^{\pm 1/r}_1,\ldots, u^{\pm 1/r}_n,
y^{\pm 1/r}_1,\ldots, y^{\pm 1/r}_n\mid r\in P^{\times}])$,

\end{itemize}
and the commutative square
$$\xymatrix{T' \ar[r]^-h \ar[d]_-q & T \ar[d]^-p\\
B \ar[r] & \Spec(\Z[P^{-1}])}$$
where $h$ is induced by the morphism sending 
$x^{1/r}_s$ to $u_s^{1/r}\cdot y_1^{e_{s,1}/r}\cdots 
y_n^{e_{s,\,n}/r}$. 
By Lemma
\ref{lemma:i-upper-star-vs-p-lower-star-torus} 
below, it suffices to show that the morphism 
$$(p_*\Lambda)|_B \to q_*\Lambda$$
is an equivalence. This follows from Lemma
\ref{lemma:morphism-of-protorus-and-motives}
below by noticing that $(h,q):T'\to T\times B$ 
can be viewed as a morphism of 
pro-tori over $B$. (This requires a change of coordinates on $T'$ 
of the form
$y'_s=u_1^{a_{s,1}}\cdots u_n^{a_{s,n}}\cdot y_s$, 
with the $a_{s,j}$'s in $\Q$.)

It remains to establish the second equivalence in
\eqref{thm:the-functor-Psi-using-basis-of-value-group}.
Using purity in the form of 
\cite[Th\'eor\`eme 7.4]{real-etale}
and continuity 
\cite[Proposition 3.2.4]{AGAV},
we are again reduced to showing the analogous property 
for the $A$-algebra
$$A\left[x^{1/r}_1,\ldots, x^{1/r}_n\mid r\in P^{\times}\right].$$
The result follows from Lemma
\ref{lemma:i-upper-star-vs-p-lower-star-torus} below
and Proposition
\ref{prop:action-of-elevation-power-m-on-Gm}.
\end{proof}

\begin{lemma}
\label{lemma:i-upper-star-vs-p-lower-star-torus}
Let $S$ be a scheme, and consider the commutative diagram
of schemes
$$\xymatrix{\AA^n_S\smallsetminus H \ar[r]^-j \ar[dr]_-q 
& \AA^n_S \ar[d]^-p & S \ar@{=}[dl]
\ar[l]_i\\
& S,\! & }$$
where $H\subset \AA^n_S$ is the union of the standard hyperplanes,
$j$ is the obvious inclusion and $i$ is the zero section.
Then there is a natural equivalence
$$q_*q^* \xrightarrow{\sim} i^*j_*q^*$$
between endofunctors of $\SH_{\et}(S;\Lambda)$.
\end{lemma}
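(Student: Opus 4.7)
The plan is to reduce the statement to a computation with constant coefficients via projection formulas, handle the case $n=1$ by localization and purity, and then induct on $n$ using the product decomposition of $(V,U)$.

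First, since $j$ is an open immersion and $p$ is smooth, the projection formulas yield natural equivalences $q_* q^* M \simeq M \otimes q_* \Lambda_U$ and $i^* j_* q^* M \simeq M \otimes i^* j_* \Lambda_U$ (the latter using $i^* p^* = \id$ and the symmetric monoidality of $i^*$; the former using $p_* p^* \simeq \id$ by $\AA^1$-homotopy invariance). Under these identifications, the candidate natural transformation $q_* q^* \to i^* j_* q^*$, induced by the canonical map $p_* F \to i^* F$ arising from the unit $\id \to i_* i^*$ together with $p \circ i = \id_S$, becomes $M$ tensored with a natural map $q_* \Lambda_U \to i^* j_* \Lambda_U$. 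Thus it suffices to prove the latter is an equivalence.

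For the base case $n=1$, I would invoke the localization fiber sequence $i_* i^! \Lambda_V \to \Lambda_V \to j_* \Lambda_U$ combined with absolute purity $i^! \Lambda_V \simeq \Lambda_S(-1)[-2]$ (the zero section being a regular closed immersion of codimension one in $\AA^1_S$ with trivial normal bundle). Applying $p_*$ and $i^*$ separately gives two fiber sequences in $\SH_{\et}(S;\Lambda)$, both of the form $\Lambda_S(-1)[-2] \to \Lambda_S \to (-)$, and the canonical map $p_* \to i^*$ restricts to the identity on the first two terms (by the triangle identities for $(p^*, p_*)$ and $(i^*, i_*)$ combined with $p i = \id_S$). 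Hence the induced map on the third terms is an equivalence.

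For general $n$, I induct on $n$ using the product decomposition $V = V_1 \times_S V_{n-1}$ with $V_1 = \AA^1_S$ and $V_{n-1} = \AA^{n-1}_S$, and correspondingly $U = U_1 \times_S U_{n-1}$, $j = j_1 \times_S j_{n-1}$, and $i = (i_1, i_{n-1})$. Factoring $j$ through the intermediate open $V_1 \times_S U_{n-1}$, smooth base change together with the projection formula for open immersions yield the K\"unneth decompositions
$$j_* \Lambda_U \simeq \mathrm{pr}_1^* j_{1*} \Lambda_{U_1} \otimes \mathrm{pr}_2^* j_{n-1,\,*} \Lambda_{U_{n-1}}, \qquad q_* \Lambda_U \simeq q_{1*} \Lambda_{U_1} \otimes q_{n-1,\,*} \Lambda_{U_{n-1}}.$$
Applying $i^*$ to the first, using $\mathrm{pr}_k \circ i = i_k$, gives the analogous decomposition of $i^* j_* \Lambda_U$, so the natural map $q_* \Lambda_U \to i^* j_* \Lambda_U$ decomposes as a tensor product of the corresponding natural maps for the $n=1$ and $n-1$ cases. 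The main obstacle will be the bookkeeping in this final step: verifying that the natural transformation $p_* F \to i^* F$ is compatible with the K\"unneth decompositions, which requires careful tracking of the base-change isomorphisms and the units of the various adjunctions. Once this compatibility is secured, both factors are equivalences by induction and the $n=1$ case, completing the proof.
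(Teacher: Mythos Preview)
Your route differs from the paper's: you reduce to constant coefficients and then induct via a K\"unneth decomposition coming from the product $\AA^n = \AA^1 \times \AA^{n-1}$, whereas the paper keeps the functorial statement, shows for $n=1$ that $p_*j_!q^* \simeq 0$ directly from homotopy invariance, and handles the inductive step by restricting $j_*q^*M$ to intersections of hyperplanes via the specialisation system of \cite[Th\'eor\`eme 3.3.10]{these-doctorat-II}. Your approach is more elementary in that it avoids that machinery; the paper's approach avoids the projection-formula issue below.

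There is a genuine gap: you repeatedly invoke a projection formula of the shape $j_*j^*G \simeq G \otimes j_*\Lambda$ for an open immersion $j$ (this is what gives both your reduction to constants and your K\"unneth formula for $j_*\Lambda_U$), but the six-functor projection formula is for $f_!$, not $f_*$. Via localisation, the identity $j_*j^*G \simeq G \otimes j_*\Lambda$ is equivalent to $i'^{!}G \simeq i'^{*}G \otimes i'^{!}\Lambda$ for the complementary closed immersion $i'$, which is a purity statement holding when $i'$ is regular; for $n \geq 2$ the divisor $H$ is singular, so this fails as stated. The fix is to drop the global reduction to constants and organise the K\"unneth step so that the only projection formula needed is for $j_1 \times \id_{V_{n-1}}$, whose closed complement $\{0\} \times V_{n-1}$ \emph{is} a smooth divisor: first use smooth base change (writing $q^*M$ as pulled back from $U_{n-1}$) to get $(\id \times j_{n-1})_*q^*M \simeq \mathrm{pr}_2^* j_{n-1,*} q_{n-1}^*M$, then apply the now-valid projection formula for $j_1 \times \id$ to obtain $j_*q^*M \simeq \mathrm{pr}_1^* j_{1,*}\Lambda_{U_1} \otimes \mathrm{pr}_2^* j_{n-1,*} q_{n-1}^*M$; applying $i^*$ separates this into the $n=1$ case tensored with the $(n-1)$-case applied to $M$, and $q_*q^*M \simeq q_{1,*}q_1^*(q_{n-1,*}q_{n-1}^*M)$ by smooth base change alone.
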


\begin{proof}
The natural morphism $q_*q^*\to i^*j_*q^*$ is obtained by applying 
$p_*$ to $j_*q^* \to i_*i^*j_*q^*$. We show that this morphism
is an equivalence by induction on $n$. When $n=1$, we need to show that
$p_*j_!q^*\simeq 0$ which follows from the fact that the morphism 
$p_*p^*\to p_*i_*i^*p^*$ is an equivalence by homotopy invariance.
The general case follows by induction using 
\cite[Th\'eor\`eme 3.3.10]{these-doctorat-II}
for the canonical specialisation system to conclude that 
$j_*q^*M$ restricted to the intersection of $n-1$ standard 
hyperplanes is the direct image along the inclusion 
$\AA^1_S\smallsetminus 0_S \to \AA_S^1$ of a motive 
pulled back from $S$. We leave the details to the reader.
\end{proof}

\begin{lemma}
\label{lemma:morphism-of-protorus-and-motives}
Consider a morphism of pro-tori
$$\xymatrix{T' \ar[rr]^e \ar[dr]_-{p'} & & T \ar[dl]^-p\\
& S & }$$
over a scheme $S$.
Let $L$ and $L'$ be the dual ind-lattices of $T$ and $T'$.
Assume that the induced morphism $e^*:L\otimes \Q \to L'\otimes \Q$
is an isomorphism and that $L'/L$ is $p$-torsion free for every prime
$p$ which is not invertible in $\pi_0(\Lambda)$.
Then the induced morphism
$p_*\Lambda \to p'_*\Lambda$ is an equivalence in 
$\SH_{\et}(S;\Lambda)$.
\end{lemma}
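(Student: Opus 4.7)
The plan is to reduce via continuity to a morphism of finite-rank tori, then to decompose the pushforward $p_*\Lambda$ as an exterior-algebra functorial in the character lattice, and finally to conclude by the elementary divisor theorem.

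First, I would write $T=\lim_\alpha T_\alpha$ as the cofiltered limit of its finite-rank quotient tori, indexed by the saturated finitely generated sublattices $L_\alpha\subset L$, and similarly $T'=\lim_\beta T'_\beta$. Continuity of $\SH_{\et}$ along such limits with affine transition maps \cite[Proposition 3.2.4]{AGAV} gives $p_*\Lambda\simeq\colim_\alpha p_{\alpha,*}\Lambda$ and likewise for $T'$. Choosing $L'_{\beta(\alpha)}$ to be the saturation in $L'$ of the image of $L_\alpha$ produces a compatible system of morphisms $e_\alpha\colon T'_{\beta(\alpha)}\to T_\alpha$ of finite-rank tori whose lattice maps $L_\alpha\hookrightarrow L'_{\beta(\alpha)}$ have cokernel a subgroup of $L'/L$, hence torsion of orders invertible in $\pi_0(\Lambda)$. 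This reduces the statement to the case where $T,T'$ are finite-rank tori and $e$ is an isogeny whose degree is invertible in $\pi_0(\Lambda)$.

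Second, for a split finite-rank torus $T$ of rank $n$ over $S$ with character lattice $L$, K\"unneth applied to the splitting $\M(\Gm)\simeq\Lambda\oplus\Lambda(1)[1]$ gives a natural equivalence in $\SH_{\et}(S;\Lambda)$:
\begin{equation*}
p_*\Lambda \;\simeq\; \bigoplus_{i=0}^{n}\,{\textstyle\bigwedge}^{i} L \,\otimes_{\Z}\,\Lambda(-i)[-i].
\end{equation*}
The same formula extends to non-split tori by \'etale descent (all tori split \'etale-locally), with $\bigwedge^i L$ interpreted as a locally constant \'etale sheaf on $S$. The decomposition is manifestly functorial in morphisms of tori: for $e\colon T'\to T$ with induced lattice map $e^*\colon L\to L'$, the map $p_*\Lambda\to p'_*\Lambda$ is the direct sum of the maps $\bigwedge^i(e^*)\otimes\id_{\Lambda(-i)[-i]}$.

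To conclude, apply the elementary divisor theorem to $L\hookrightarrow L'$: choose a basis $f_1,\dots,f_n$ of $L'$ and positive integers $d_1\mid\dots\mid d_n$ such that $(d_s f_s)_{s=1}^n$ is a basis of $L$ and $L'/L\simeq\bigoplus_s\Z/d_s\Z$. By hypothesis each $d_s$ is invertible in $\pi_0(\Lambda)$. In the induced bases, $\bigwedge^i(e^*)$ is diagonal with entries $\prod_{s\in I}d_s$ for $|I|=i$, each a product of units in $\pi_0(\Lambda)$. Therefore $\bigwedge^i(e^*)\otimes\id_\Lambda$ is an equivalence of $\Lambda$-modules for every $i$, and summing over $i$ yields the desired equivalence $p_*\Lambda\to p'_*\Lambda$.

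The most delicate step is setting up the decomposition of the second paragraph with full naturality in non-split situations: one must descend the K\"unneth decomposition along the \'etale splitting cover while tracking the Galois action on lattices and on morphisms. A more self-contained alternative, which bypasses this descent, is to use that $e$ is finite \'etale with kernel $K$ of order invertible in $\pi_0(\Lambda)$, to decompose $e_*\Lambda\simeq\bigoplus_{\chi}\mathcal{L}_\chi$ according to the characters of $K$ (after further \'etale localization trivializing $K$), and to establish the Kummer-type vanishing $p_*\mathcal{L}_\chi=0$ for each nontrivial character $\chi$ of order invertible in $\pi_0(\Lambda)$.
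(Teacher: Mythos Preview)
Your approach is viable but contains a genuine gap. The claim that ``the map $p_*\Lambda\to p'_*\Lambda$ is the direct sum of the maps $\bigwedge^i(e^*)\otimes\id_{\Lambda(-i)[-i]}$'' is not manifest and is in fact false over a general base. Already for $T=T'=\Gm_{,\,S}$ with $e$ the $m$-th power map, the induced endomorphism of the summand $\Lambda(-1)[-1]$ is multiplication by $m_\epsilon=\lceil m/2\rceil+\lfloor m/2\rfloor\langle-1\rangle$, not by $m$; this is exactly Proposition~\ref{prop:action-of-elevation-power-m-on-Gm-general}. These agree only when $\langle-1\rangle=1$, i.e., when $-1$ is a square in $\mathcal{O}(S)$ (Corollary~\ref{cor:angle-a-square-angle-is-one-}). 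Thus your diagonal entries are really $\prod_{s\in I}(d_s)_\epsilon$, and you cannot conclude they are units without first replacing $S$ by $S[\sqrt{-1}]$ (legitimate by \'etale descent and localization) and then invoking Proposition~\ref{prop:action-of-elevation-power-m-on-Gm}. This is precisely the input the paper isolates in its Appendix, and it is the one non-formal step in either argument.

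The paper's own proof takes a different route after the same reduction to $\sqrt{-1}\in\mathcal{O}(S)$: rather than decomposing $p_*\Lambda$ explicitly, it first shows (via Proposition~\ref{prop:action-of-elevation-power-m-on-Gm}, after \'etale descent to the split case) that multiplication by any $m$ invertible in $\pi_0(\Lambda)$ induces an autoequivalence of $q_*\Lambda$ for every pro-torus $q$. It then replaces $T$ and $T'$ by $\widetilde{T}=\underline{\Hom}(A,T)$ and $\widetilde{T}'=\underline{\Hom}(A,T')$, where $A\subset\Q$ is obtained by inverting all primes invertible in $\pi_0(\Lambda)$; the previous step gives $q_*\Lambda\simeq\widetilde{q}_*\Lambda$, and since the hypothesis on $L'/L$ forces $\widetilde{e}\colon\widetilde{T}'\to\widetilde{T}$ to be an isomorphism of pro-tori, the result follows. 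This avoids both your continuity reduction to finite rank and the elementary-divisor bookkeeping, at the cost of a less explicit picture.
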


\begin{proof}
Using localization \cite[Corollaire 4.5.47]{these-doctorat-II} 
and \'etale descent \cite[Proposition 3.2.1]{AGAV}, 
we may replace $S$ by $S[\sqrt{-1}]$
and assume that $-1$ is a square in $\mathcal{O}(S)$.
If $m$ is invertible in $\pi_0(\Lambda)$, 
multiplication by $m$ on any pro-torus
$q:E \to S$ induces an autoequivalence of $q_*\Lambda$. 
Indeed, it suffices to prove this when $E$ is of finite type. 
Using \'etale descent \cite[Proposition 3.2.1]{AGAV}, 
we further reduce to the case $E=(\Gm_{,\,S})^{\times n}$. 
We then conclude using Proposition 
\ref{prop:action-of-elevation-power-m-on-Gm}.

Let $A\subset \Q$ be the localization of $\Z$ at all 
primes which are invertible in $\pi_0(\Lambda)$, 
and consider $A$ as an ind-lattice on $S$. Given a 
pro-torus $q:E\to S$, we set $\widetilde{E}=\underline{\Hom}(A,E)$
and form the commutative triangle
$$\xymatrix{\widetilde{E} \ar[rr] \ar[dr]_-{\widetilde{q}} & & E
\ar[dl]^-q\\
& S &}$$
defining a morphism of pro-tori over $S$.
By the previous discussion, the induced morphism
$q_*\Lambda \to \widetilde{q}_*\Lambda$ is 
an equivalence in $\SH_{\et}(S;\Lambda)$.
Going back to the statement of the lemma, we see that it suffices
to prove that the morphism 
$\widetilde{p}_*\Lambda \to \widetilde{p}{}'_*\Lambda$ 
is an equivalence. But, the morphism $\widetilde{e}:
\widetilde{T}{}'\to \widetilde{T}$ 
induces an isomorphism on the dual ind-lattices, and hence is an 
isomorphism. 
\end{proof}

\begin{rmk}
\label{rmk:dependence-of-equivalence-xi-star-Lambda}
The second equivalence in 
\eqref{eq-thm:the-functor-Psi-using-basis-of-value-group-1}
depends on the choice of a compatible sequence of 
roots of the $a_i$'s. Thus, the equivalence
$$\xi_*\Lambda\simeq {\rm S}(\Lambda_{\Q}^{\oplus n}(-1)[-1])$$
depends on the morphism $\rho:R \to K^{\circ}$, on the regular sequence
$a_1,\ldots,a_n$ and the compatible families of $r$-roots 
of the $a_i$'s for $r\in P^{\times}$.
The same type of dependency applies to the functor 
$\psi^*$ constructed below.
\end{rmk}

\begin{cons}
\label{cons:the-vanishing-cycles-functor-in-general}
In Situation 
\ref{situ:finite-rank-value-group-rho-for-psi},
we define the functor 
$$\psi^*:\RigSH_{\et}(K;\Lambda)
\to \SH_{\et}(k;\Lambda)$$
to be the composition of 
$$\RigSH_{\et}(K;\Lambda)
\xrightarrow{\widetilde{\xi}_*}
\SH_{\et}(k;\xi_*\Lambda) 
\xrightarrow{-\otimes_{\xi_*\Lambda}\Lambda}
\SH_{\et}(k;\Lambda),$$
where the second arrow is the base change functor along the morphism of
commutative algebras ${\rm S}(\Lambda_{\Q}^{\oplus n}(-1)[-1])\to \Lambda$
corresponding to the zero morphism $\Lambda_{\Q}^{\oplus n}(-1)[-1] 
\to \Lambda$ in $\SH_{\et}(k;\Lambda)$. Clearly, the functor 
$\psi^*$ underlies a symmetric monoidal functor and admits a right 
adjoint $\psi_*$.
\end{cons}

\begin{rmk}
\label{rmk:relation-with-nearby-cycles-}
The functor $\psi^*$ is a version of the motivic nearby 
functor. Compare with 
\cite[Scholie 1.3.26(2)]{mot-rig}.
\end{rmk}

\begin{lemma}
\label{lemma:psi-star-of-Weil-coho-theories}
In Situation
\ref{situ:finite-rank-value-group-rho-for-psi},
let $\mathbf{\Gamma}_W\in \WSp(k;\Lambda)$ 
be a Weil spectrum on algebraic $k$-varieties. 
Then $\psi_*\mathbf{\Gamma}_W$ is a Weil spectrum on 
rigid analytic $K$-varieties. This defines a functor 
$$\psi_*:\WSp(k;\Lambda) \to \RigWSp(K;\Lambda).$$
\end{lemma}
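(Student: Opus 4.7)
The plan is to identify $\psi_*\mathbf{\Gamma}_W$ as the Weil spectrum associated to a plain realization functor and thereby invoke Proposition \ref{prop:weil-spectrum-from-realization-functor-rigan}. Concretely, let $\Rder_W^*:\SH_{\et}(k;\Lambda)\to \Mod_{\Gamma_W(\pt)}$ be the realization functor associated to $\mathbf{\Gamma}_W$ (Definition \ref{dfn:homological-realization-weil-coh}), and consider the composition
$$\Rder^*:=\Rder_W^*\circ \psi^*:\RigSH_{\et}(K;\Lambda)^{\otimes}\to \Mod_{\Gamma_W(\pt)}^{\otimes}.$$
By Construction \ref{cons:the-vanishing-cycles-functor-in-general}, $\psi^*$ is symmetric monoidal and, being a left adjoint, colimit-preserving; the same is true of $\Rder_W^*$ (Definition \ref{dfn:homological-realization-weil-coh}). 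Hence $\Rder^*$ is a morphism in $\CAlg(\Prl)$, i.e., a plain realization functor in the sense of Definition \ref{dfn:plain-realization-functor-k-varieties-rigan}.

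Next I would apply Proposition \ref{prop:weil-spectrum-from-realization-functor-rigan} to deduce that $\Rder_*\Rder^*\Lambda\in \RigWSp(K;\Lambda)$, and identify this object with $\psi_*\mathbf{\Gamma}_W$. Since the right adjoint of a composition is the composition of right adjoints in reverse order, we have $\Rder_*\simeq \psi_*\circ \Rder_{W,*}$. Moreover $\psi^*\Lambda\simeq \Lambda$ because $\psi^*$ is unital, and Remark \ref{rmk:weil-cohomology-recovered-realization} gives $\Rder_{W,*}\Rder_W^*\Lambda\simeq \mathbf{\Gamma}_W$. Combining these identifications yields
$$\Rder_*\Rder^*\Lambda\simeq \psi_*\Rder_{W,*}\Rder_W^*\psi^*\Lambda\simeq \psi_*\mathbf{\Gamma}_W,$$
as required. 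Functoriality of $\mathbf{\Gamma}_W\mapsto \psi_*\mathbf{\Gamma}_W$ is then automatic, since $\psi_*$ is already a functor on commutative algebras which, by the argument above, restricts to a functor between the indicated sub-$\infty$-categories of Weil spectra.

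I do not anticipate any serious obstacle here, as the main work has been absorbed into Construction \ref{cons:the-vanishing-cycles-functor-in-general}, where $\psi^*$ is constructed as a symmetric monoidal left adjoint, and into the general result Proposition \ref{prop:weil-spectrum-from-realization-functor-rigan}. The only minor conceptual check is that the formation of right adjoints respects composition at the level of symmetric monoidal $\infty$-categories and lax monoidal functors, which is standard in the calculus of adjunctions in $\Prl$. Should one wish to avoid even this verification, a direct alternative is to test condition (3) of Proposition \ref{prop:equivalent-cond-for-motivic-ring-spectr-rigan} on $\psi_*\mathbf{\Gamma}_W$, propagating the extended-constant property provided by Proposition \ref{prop:every-module-is-extended-constant} through $\psi_*$.
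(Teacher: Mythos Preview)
Your proof is correct and follows essentially the same approach as the paper: the paper's proof is the one-liner ``we have a functor $-\circ \psi^*:\Real(k;\Lambda) \to \RigReal(K;\Lambda)$,'' which is exactly what you spell out by composing $\Rder_W^*$ with $\psi^*$ and then identifying $\Rder_*\Rder^*\Lambda$ with $\psi_*\mathbf{\Gamma}_W$.
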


\begin{proof}
Indeed, we have a functor
$-\circ \psi^*:\Real(k;\Lambda) \to \RigReal(K;\Lambda)$.
\end{proof}

\begin{prop}
\label{prop:relation-psi-lower-star-xi-upper-star}
In Situation
\ref{situ:finite-rank-value-group-rho-for-psi}, 
let $\mathbf{\Gamma}_W\in \WSp(k;\Lambda)$ be a Weil spectrum 
on algebraic $k$-varieties. Then, there is an 
equivalence 
$$\psi_*\mathbf{\Gamma}_W\simeq (\xi^*\mathbf{\Gamma}_W)\otimes_{\Rder_W(\xi_*\Lambda)}\Rder^*_W(\Lambda)$$
in $\RigWSp(K;\Lambda)$.
\end{prop}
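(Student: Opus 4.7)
The plan is to exhibit $\psi_*\mathbf{\Gamma}_W$ as a commutative $\xi^*\mathbf{\Gamma}_W$-algebra in $\RigSH_{\et}(K;\Lambda)$ whose coefficient ring is $\Gamma_W(\pt)=\Rder_W^*(\Lambda)$, and then invoke Corollary \ref{cor:algebra-over-weil-spectrum-weil-spectrum-rigan} to obtain the desired equivalence.

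First, I would construct a canonical morphism $\xi^*\mathbf{\Gamma}_W \to \psi_*\mathbf{\Gamma}_W$ in $\CAlg(\RigSH_{\et}(K;\Lambda))$ by adjunction. By Construction \ref{cons:the-vanishing-cycles-functor-in-general}, the functor $\psi^*$ factors as $(-\otimes_{\xi_*\Lambda}\Lambda)\circ \widetilde{\xi}_*$, and the proof of Theorem \ref{thm:xi-of-Weil-cohom-theory--K-algebraically-closed} shows that the equivalence $\widetilde{\xi}_*$ carries $\xi^*\mathbf{\Gamma}_W$ to $\mathbf{\Gamma}_W\otimes\xi_*\Lambda$. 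Combining these yields a canonical equivalence of commutative algebras in $\SH_{\et}(k;\Lambda)$
$$\psi^*\xi^*\mathbf{\Gamma}_W \simeq (\mathbf{\Gamma}_W\otimes \xi_*\Lambda)\otimes_{\xi_*\Lambda}\Lambda \simeq \mathbf{\Gamma}_W,$$
which transposes under the $(\psi^*, \psi_*)$ adjunction to the desired morphism.

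Next, I would identify the coefficient ring of $\psi_*\mathbf{\Gamma}_W$ together with the map induced from $\Rder_W^*(\xi_*\Lambda)$ by the morphism above. Since $\psi^*$ is symmetric monoidal we have $\psi^*\Lambda \simeq \Lambda$, so by adjunction
$$\Omega^{\infty}_T(\psi_*\mathbf{\Gamma}_W)(\pt) \simeq \Map_{\SH_{\et}(k;\Lambda)}(\Lambda, \mathbf{\Gamma}_W) \simeq \Gamma_W(\pt).$$
A similar unwinding, using that the unit of $(\psi^*, \psi_*)$ applied to $\xi^*\mathbf{\Gamma}_W$ corresponds under $\widetilde{\xi}_*$ to the base-change unit for the ring morphism $\xi_*\Lambda \to \Lambda$ evaluated on $\mathbf{\Gamma}_W\otimes\xi_*\Lambda$, identifies the induced map on coefficient rings $\Rder_W^*(\xi_*\Lambda)\to\Rder_W^*(\Lambda)$ with $\Rder_W^*$ applied to the augmentation $\xi_*\Lambda\to\Lambda$ fixed in Construction \ref{cons:the-vanishing-cycles-functor-in-general}.

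Finally, Corollary \ref{cor:algebra-over-weil-spectrum-weil-spectrum-rigan}, applied to the commutative $\xi^*\mathbf{\Gamma}_W$-algebra $\psi_*\mathbf{\Gamma}_W$, produces the desired equivalence of Weil spectra
$$\psi_*\mathbf{\Gamma}_W \simeq (\xi^*\mathbf{\Gamma}_W) \otimes_{\Rder_W^*(\xi_*\Lambda)} \Rder_W^*(\Lambda),$$
with the base change taken along the coefficient-ring map identified in the previous step. The main obstacle will be the bookkeeping involved in tracking commutative-algebra structures through the equivalence $\widetilde{\xi}_*$ and through the $(\psi^*, \psi_*)$ adjunction, so as to pin down precisely the induced map on coefficient rings; this is however a routine verification once the adjunction and the identification of $\widetilde{\xi}_*\xi^*\mathbf{\Gamma}_W$ with $\mathbf{\Gamma}_W\otimes\xi_*\Lambda$ are in place.
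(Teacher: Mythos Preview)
Your approach is correct but takes a genuinely different route from the paper. The paper argues at the level of plain realization functors: it recalls that the realization functor attached to $\xi^*\mathbf{\Gamma}_W$ is $\Rder_W^*\circ\widetilde{\xi}_*$ (landing in $\Mod_{\Rder_W^*(\xi_*\Lambda)}$, as in the proof of Theorem~\ref{thm:xi-of-Weil-cohom-theory--K-algebraically-closed}), and then observes via a single commutative square that post-composing with the base-change $-\otimes_{\Rder_W^*(\xi_*\Lambda)}\Rder_W^*(\Lambda)$ yields exactly $\Rder_W^*\circ\psi^*$, which is the realization attached to $\psi_*\mathbf{\Gamma}_W$. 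The equivalence of Weil spectra is then read off from the equivalence $\RigReal(K;\Lambda)\simeq\RigWSp(K;\Lambda)$ of Theorem~\ref{thm:equiv-between-WSp-and-Plain-real-rigan}.

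Your approach instead stays on the Weil-spectrum side: you build an explicit commutative-algebra map $\xi^*\mathbf{\Gamma}_W\to\psi_*\mathbf{\Gamma}_W$ by adjunction, compute the induced map on coefficient rings, and invoke Corollary~\ref{cor:algebra-over-weil-spectrum-weil-spectrum-rigan}. This produces the equivalence as an honest morphism of Weil spectra rather than via an abstract equivalence of $\infty$-categories, at the price of the adjunction bookkeeping you flag. The paper's route is quicker because the commutative square is essentially tautological once the realizations of $\xi^*\mathbf{\Gamma}_W$ and $\psi_*\mathbf{\Gamma}_W$ have been identified; your route is more explicit and dovetails better with how the statement is actually used later (e.g.\ in the proof of Theorem~\ref{thm:essential-surjectivity-of-psi-lower-star}, where one wants to compare $\psi_*\xi_*\mathbf{\Gamma}_{W'}$ with $\mathbf{\Gamma}_{W'}$ by examining the coefficient-ring map).
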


\begin{proof}
Recall that the realization functor associated
to the Weil spectrum $\xi^*\mathbf{\Gamma}_W$ is given by
the composition of 
$$\RigSH_{\et}(K;\Lambda)
\xrightarrow{\widetilde{\xi}_*}
\SH_{\et}(k;\xi_*\Lambda) \xrightarrow{\Rder_W^*}
\Mod_{\Rder_W^*(\xi_*\Lambda)}.$$
The result follows from the commutative diagram
$$\xymatrix{\RigSH_{\et}(K;\Lambda) 
\ar[r]^-{\widetilde{\xi}_*}_-{\sim} \ar[dr]_-{\psi^*} & 
\SH_{\et}(k;\xi_*\Lambda)
\ar[r]^-{\Rder_W^*} \ar[d]^-{-\otimes_{\xi_*\Lambda}\Lambda}& \Mod_{\Rder_W^*(\xi_*\Lambda)} \ar[d]^-{-\otimes_{\Rder_W(\xi_*\Lambda)}\Rder^*_W(\Lambda)}\\
& \SH_{\et}(k;\Lambda) \ar[r]^-{\Rder^*_W} 
& \Mod_{\Gamma_W(\pt)}}$$
showing that the realization functors associated to the 
Weil spectra under consideration
are naturally equivalent.
\end{proof}

\begin{thm}
\label{thm:essential-surjectivity-of-psi-lower-star}
We work in Situation
\ref{situ:finite-rank-value-group-rho-for-psi}.
Assume furthermore that $\Lambda$ is a $\Q$-algebra.
Let $\mathbf{\Gamma}_{W'}\in \RigWSp(K;\Lambda)$ 
be a Weil spectrum on rigid analytic $K$-varieties
such that $\Gamma_{W'}(\pt)$ and $\Gamma_{W'}(\pt)(1)$ 
are connective.
Then, there exist a Weil spectrum $\mathbf{\Gamma}_W\in \WSp(k;\Lambda)$
such that $\psi_*\mathbf{\Gamma}_W$ is equivalent to 
$\mathbf{\Gamma}_{W'}$. In fact, we may take $\mathbf{\Gamma}_W=\xi_*\mathbf{\Gamma}_{W'}$.
\end{thm}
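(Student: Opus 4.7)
The plan is to express both $\psi_*\xi_*\mathbf{\Gamma}_{W'}$ and $\mathbf{\Gamma}_{W'}$ as base changes of the same Weil spectrum $\xi^*\xi_*\mathbf{\Gamma}_{W'}$ along \emph{a priori} different algebra maps, and then to use the connectivity hypothesis to show that the two maps agree up to homotopy. By Proposition~\ref{prop:rig-lower-star-of-weil-cohomology-theory}, $\mathbf{\Gamma}_W := \xi_*\mathbf{\Gamma}_{W'}$ is a Weil spectrum for algebraic $k$-varieties with realization $\Rder^*_{W'}\circ\xi^*$ and coefficient ring $\Gamma_{W'}(\pt)$. Applying Proposition~\ref{prop:relation-psi-lower-star-xi-upper-star} to $\mathbf{\Gamma}_W$ yields an equivalence
\[
\psi_*\xi_*\mathbf{\Gamma}_{W'}\ \simeq\ \xi^*\xi_*\mathbf{\Gamma}_{W'}\otimes_{\Rder^*_{W'}(\xi^*\xi_*\Lambda)}\Gamma_{W'}(\pt),
\]
where $\Gamma_{W'}(\pt)$ carries the $\Rder^*_{W'}(\xi^*\xi_*\Lambda)$-algebra structure coming from the realization of $\xi^*$ applied to the augmentation $\xi_*\Lambda\to\Lambda$ chosen in Construction~\ref{cons:the-vanishing-cycles-functor-in-general}. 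Separately, the counit $\epsilon\colon\xi^*\xi_*\mathbf{\Gamma}_{W'}\to\mathbf{\Gamma}_{W'}$ of the adjunction is a morphism of commutative algebras in $\RigSH_{\et}(K;\Lambda)$ whose source is a Weil spectrum by Theorem~\ref{thm:xi-of-Weil-cohom-theory--K-algebraically-closed} with coefficient ring $\Rder^*_{W'}(\xi^*\xi_*\Lambda)$, so Corollary~\ref{cor:algebra-over-weil-spectrum-weil-spectrum-rigan} applied to $\epsilon$ gives
\[
\mathbf{\Gamma}_{W'}\ \simeq\ \xi^*\xi_*\mathbf{\Gamma}_{W'}\otimes_{\Rder^*_{W'}(\xi^*\xi_*\Lambda)}\Gamma_{W'}(\pt),
\]
but now with $\Gamma_{W'}(\pt)$ regarded as a $\Rder^*_{W'}(\xi^*\xi_*\Lambda)$-algebra via the realization of the counit $\xi^*\xi_*\Lambda\to\Lambda$.

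The proof thus reduces to identifying these two $E_\infty$-algebra structures. Since $\Lambda$ is a $\Q$-algebra, Theorem~\ref{thm:the-functor-Psi-using-basis-of-value-group} identifies $\xi^*\xi_*\Lambda\simeq{\rm S}(\Lambda^{\oplus n}(-1)[-1])$ in $\RigSH_{\et}(K;\Lambda)$, and applying the symmetric monoidal realization $\Rder^*_{W'}$ yields
\[
\Rder^*_{W'}(\xi^*\xi_*\Lambda)\ \simeq\ {\rm S}\bigl(\Gamma_{W'}(\pt)^{\oplus n}(-1)[-1]\bigr)\qquad\text{in }\CAlg_{\Gamma_{W'}(\pt)\backslash}.
\]
By the universal property of the free commutative algebra, the space of $\Gamma_{W'}(\pt)$-algebra maps from this object to $\Gamma_{W'}(\pt)$ is the mapping space $\Map_{\Mod_{\Gamma_{W'}(\pt)}}(\Gamma_{W'}(\pt)^{\oplus n}(-1)[-1],\Gamma_{W'}(\pt))$, whose set of connected components equals $\pi_{-1}(\Gamma_{W'}(\pt)(1))^{\oplus n}$. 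The connectivity of $\Gamma_{W'}(\pt)(1)$ forces this to vanish, so this space is path-connected, the augmentation and realized counit maps are homotopic as $E_\infty$-algebra maps, and the two tensor product presentations define equivalent objects of $\RigWSp(K;\Lambda)$, giving $\psi_*\xi_*\mathbf{\Gamma}_{W'}\simeq\mathbf{\Gamma}_{W'}$.

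The main obstacle will lie in the last step: one must trace through Construction~\ref{cons:the-vanishing-cycles-functor-in-general} and the adjunction $(\xi^*,\xi_*)$ carefully enough to confirm that the augmentation of the free algebra and the realized counit really do produce two well-defined points in this single mapping space, so that the connectivity argument applies as stated. The connectivity of $\Gamma_{W'}(\pt)$ itself is expected to play a supporting role, ensuring that the free algebra and the relevant Tate twists are sufficiently well-behaved that no higher obstructions arise in this matching.
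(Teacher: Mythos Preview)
Your proposal is correct and follows essentially the same route as the paper: set $\mathbf{\Gamma}_W=\xi_*\mathbf{\Gamma}_{W'}$, express both $\psi_*\mathbf{\Gamma}_W$ and $\mathbf{\Gamma}_{W'}$ as base changes of $\xi^*\mathbf{\Gamma}_W$ along two $\Gamma_{W'}(\pt)$-algebra maps out of the free algebra ${\rm S}(\Gamma_{W'}(\pt)^{\oplus n}(-1)[-1])$, and use connectivity of $\Gamma_{W'}(\pt)(1)$ to see these maps are homotopic. The only cosmetic differences are that the paper phrases things via $\Rder_W^*(\xi_*\Lambda)$ rather than your equivalent $\Rder_{W'}^*(\xi^*\xi_*\Lambda)$ and invokes Proposition~\ref{prop:the-right-fibration-WCT-to-CAlg-rigan} instead of Corollary~\ref{cor:algebra-over-weil-spectrum-weil-spectrum-rigan} for the counit identification; your worry in the last paragraph is unfounded, and indeed the connectivity of $\Gamma_{W'}(\pt)$ itself plays no role in the argument.
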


\begin{proof}
Set $\mathbf{\Gamma}_{W}=\xi_*\mathbf{\Gamma}_{W'}$
and consider the counit morphism
$$\xi^*\mathbf{\Gamma}_W=\xi^*\xi_*\mathbf{\Gamma}_{W'} \to \mathbf{\Gamma}_{W'}.$$
Denote by 
$\theta:\Rder_W^*(\xi_*\Lambda)
\to \Gamma_{W'}(\pt)=\Gamma_W(\pt)$
the induced morphism on the ring of coefficients.
By Proposition
\ref{prop:the-right-fibration-WCT-to-CAlg-rigan}, 
we have an equivalence
$$\xi^*\mathbf{\Gamma}_W\otimes_{\Rder_W^*(\xi_*\Lambda),\,\theta}
\Gamma_W(\pt) \simeq \mathbf{\Gamma}_{W'}.$$
By Theorem
\ref{thm:the-functor-Psi-using-basis-of-value-group}
and because $\Lambda$ is a $\Q$-algebra, 
$\Rder^*_W(\xi_*\Lambda)\simeq {\rm S}_{\Gamma_W(\pt)}
(\Gamma_W(\pt)^{\oplus n}(-1)[-1])$ is the free commutative algebra
on the $\Gamma_W(\pt)$-module $\Gamma_W(\pt)^{\oplus n}(-1)[-1]$.
Thus, $\theta$ is uniquely determined by its restriction 
$$\theta_0:\Gamma_W(\pt)^{\oplus n}(-1)[-1] \to \Gamma_W(\pt).$$
To give a morphism $\theta_0$ is equivalent to given 
$n$ elements in $\pi_{-1}(\Gamma_W(\pt)(1))$ which is zero 
since $\Gamma_W(\pt)(1)$ is connective.
This proves that $\theta$ is homotopic to the realization 
of the obvious morphism $\xi_*\Lambda \to \Lambda$, i.e., 
the one used in Construction
\ref{cons:the-vanishing-cycles-functor-in-general}.
By Proposition
\ref{prop:relation-psi-lower-star-xi-upper-star}, 
$\xi^*\mathbf{\Gamma}_W
\otimes_{\Rder_W^*(\xi_*\Lambda),\,\theta}
\Gamma_W(\pt)$
is thus equivalent to $\psi_*\mathbf{\Gamma}_W$ as needed.
\end{proof}

\begin{rmk}
\label{rmk:relation-with-binda-gallauer-vezzani}
A statement, similar to Theorem 
\ref{thm:essential-surjectivity-of-psi-lower-star},
was obtained recently by Binda--Gallauer--Vezzani
in \cite[Corollary 4.34]{BGV}. In loc.~cit., the authors
consider realization functors valued in abstract stable symmetric monoidal 
$\infty$-categories (and not only $\infty$-categories 
of modules over commutative ring spectra) satisfying a certain compatibility
with the weight structure on rigid analytic motives. Contrary to Theorem 
\ref{thm:essential-surjectivity-of-psi-lower-star}
where the equivalence $\psi_*\xi_*\Gamma_{W'}\simeq \Gamma_{W'}$ 
depends on a path between $\theta$ and the obvious morphism,
the equivalence in \cite[Corollary 4.34]{BGV}
is canonical.
\end{rmk}

\section{The motivic Hopf algebroid of a Weil cohomology theory}

\label{sect:-motivic-Hopf-algebra-weil-coh}

In this section, we associate to every Weil cohomology theory $\Gamma_W$
a motivic Hopf algebroid $\mathcal{H}_{\mot}(\Gamma_W)$. 
When $\Gamma_W$ is the Betti cohomology theory 
associated to a complex embedding 
of the ground field, we recover the motivic Hopf algebra introduced 
and studied in \cite{gal-mot-1,gal-mot-2}.
We start by recalling the notions of group and groupoid objects
in a general $\infty$-category following \cite[Definition 6.1.2.7]{Lurie}.

\begin{dfn}
\label{dfn:group-object-in-infty-category}
Let $\mathcal{C}$ be an $\infty$-category. 
A groupoid in $\mathcal{C}$ is
a cosimplicial object $G:\Deltasimp^{\op} \to \mathcal{C}$ 
such that, for every integer $n\geq 0$ and 
every covering $\{0,\ldots, n\}=I\cup J$ 
with $I\cap J=\{m\}$ a singleton, the square
\begin{equation}
\label{eq-dfn:group-object-in-infty-category-1}
\xymatrix{G(\Delta^{\{0,\ldots,n\}}) \ar[r] \ar[d] & 
G(\Delta^{J})
\ar[d] \\
G(\Delta^I) \ar[r] & G(\Delta^{\{m\}})}
\end{equation}
is Cartesian. We say that $G$ is a group if moreover
$G(\Delta^0)$ is a final object.
\end{dfn}

\begin{dfn}
\label{dfn:spectral-group-scheme-equiv-to-hopf-algebra}
Let $\mathcal{C}^{\otimes}$ be a symmetric monoidal $\infty$-category.
A Hopf algebroid (resp. algebra) in $\mathcal{C}$ 
is a cosimplicial object 
$H:\Deltasimp\to \CAlg(\mathcal{C})$
such that the corresponding simplicial object in 
$\CAlg(\mathcal{C})^{\op}$ is a groupoid
(resp. group). We denote by $\Hopf(\mathcal{C})$
the full sub-$\infty$-category of $\CAlg(\mathcal{C})^{\Deltasimp}$
spanned by Hopf algebroids. When 
$\mathcal{C}^{\otimes}=\Mod_{\Lambda}^{\otimes}$,
for a commutative ring spectrum $\Lambda$, we write 
$\Hopf(\Lambda)$ instead of $\Hopf(\Mod_{\Lambda})$.
\end{dfn}

\begin{lemma}
\label{lemma:connectivity-hopf-algebroid}
Let $\Lambda$ be a commutative ring spectrum, and let
$H\in \Hopf(\Lambda)$ be a Hopf algebroid. The following conditions
are equivalent.
\begin{enumerate}

\item For every integer $n\geq 0$, the algebra 
$H(\Delta^n)$ is connective.

\item The algebras $H(\Delta^0)$ and $H(\Delta^1)$ are connective.  

\end{enumerate}
When these conditions are satisfied, we say that the Hopf algebroid
$H$ is connective. 
\end{lemma}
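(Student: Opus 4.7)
The implication (1) $\Rightarrow$ (2) is tautological. For the converse, my plan is to exhibit $H(\Delta^n)$ as an $n$-fold relative tensor product of $H(\Delta^1)$ over $H(\Delta^0)$, and then apply the standard fact that relative tensor product over a connective commutative ring spectrum preserves connectivity of modules.

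The first step is to translate the Cartesian condition of Definition \ref{dfn:group-object-in-infty-category}, which by assumption holds in $\CAlg(\Mod_\Lambda)^{\op}$, into a coCartesian (pushout) square in $\CAlg(\Mod_\Lambda)$ itself. Applied to the decomposition $I = \{0, \ldots, n-1\}$, $J = \{n-1, n\}$, $m = n-1$, this yields an equivalence of commutative algebras
$$H(\Delta^n) \simeq H(\Delta^{n-1}) \otimes_{H(\Delta^0)} H(\Delta^1).$$
A straightforward induction on $n$ then expresses $H(\Delta^n)$ as the $n$-fold relative tensor product of copies of $H(\Delta^1)$ over $H(\Delta^0)$, for every $n \geq 1$.

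Granting (2), the final step is to invoke the preservation of connectivity under relative tensor products: since $H(\Delta^0)$ is a connective commutative ring spectrum, the bifunctor $-\otimes_{H(\Delta^0)}-$ is right t-exact (a standard consequence of the material in \cite[\S 7.1]{Lurie-HA}), and hence sends pairs of connective $H(\Delta^0)$-modules to connective $H(\Delta^0)$-modules. Induction on $n$, combined with the presentation above, then shows that $H(\Delta^n)$ is connective for every $n$. No real obstacle arises; the only point meriting a brief verification is the translation of the Cartesian Segal squares in $\CAlg(\Mod_\Lambda)^{\op}$ into pushouts of commutative algebras, which uses that coproducts in $\CAlg(\Mod_\Lambda)$ are computed by the relative tensor product.
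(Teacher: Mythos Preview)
Your proof is correct and follows essentially the same approach as the paper: both arguments exhibit $H(\Delta^n)$ as the $n$-fold relative tensor product $H(\Delta^1)\otimes_{H(\Delta^0)}\cdots\otimes_{H(\Delta^0)}H(\Delta^1)$ and conclude connectivity from that of $H(\Delta^0)$ and $H(\Delta^1)$. The paper simply asserts this equivalence ``by definition'' without further comment, whereas you spell out the translation from the Segal condition in $\CAlg(\Mod_\Lambda)^{\op}$ to pushouts in $\CAlg(\Mod_\Lambda)$ and the right t-exactness of the relative tensor product, which is a welcome elaboration but not a different argument.
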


\begin{proof}
Indeed, by definition, there is an equivalence
$$H(\Delta^n)
\simeq 
\overbrace{H(\Delta^1)\otimes_{H(\Delta^0)}\cdots \otimes_{H(\Delta^0)}H(\Delta^1)}^{n\;\text{times}}.$$
This proves the implication (2)$\Rightarrow$(1).
The other implication is obvious.
\end{proof}

\begin{dfn}
\label{dfn:representation-groupoid-comodule-Hopf}
Let $\mathcal{C}^{\otimes}$ be a symmetric monoidal $\infty$-category
and let $H$ be a Hopf algebroid in $\mathcal{C}$.
An $H$-comodule is a module $M \in \Mod_H(\mathcal{C}^{\Deltasimp})$
such that, for all integers $0\leq m\leq n$, the natural morphism
$$M(\Delta^{\{m\}})\otimes_{H(\Delta^{\{m\}})}
H(\Delta^{\{0,\ldots,n\}})
\to M(\Delta^{\{0,\ldots,n\}})$$
is an equivalence. We denote by $\coMod_H(\mathcal{C})$
the full sub-$\infty$-category of $\Mod_H(\mathcal{C}^{\Deltasimp})$
spanned by $H$-comodules. When $\mathcal{C}^{\otimes}=
\Mod_{\Lambda}^{\otimes}$, for a commutative ring spectrum $\Lambda$, 
we write $\coMod_H$ instead of $\coMod_H(\Mod_{\Lambda})$.
\end{dfn}

\begin{rmk}
\label{rmk:relation-group-and-groupoid-and-hopf}
Let $G$ be a groupoid object in an $\infty$-category 
$\mathcal{C}$. If $\mathcal{C}$ admits finite limits, 
the simplicial object $G\times_{\Cech(G_0)}G_0$,
where $\Cech_{\bullet}(G_0)$ is the {\v C}ech nerve of $G_0$,
is a group object in $\mathcal{C}_{/G_0}$.
Similarly, let $\mathcal{C}^{\otimes}$ be a 
presentable symmetric monoidal $\infty$-category
and let $H$ be a Hopf algebroid in a  $\mathcal{C}$. Then 
the cosimplicial algebra $H\otimes_{\Cech(H^0)}H^0$,
where $\Cech^{\bullet}(H^0)$ is the {\v C}ech conerve of $H^0$, is a 
Hopf algebra in $\Mod_{H^0}(\mathcal{C})$. (The
{\v C}ech conerve of a commutative algebra is explicitated in Notation 
\ref{not:cobar-of-a-commutative-algebra-object-in-}
below.) Given a commutative ring spectrum $\Lambda$ and a Hopf algebroid 
$H\in \Hopf(\Lambda)$, if $\Lambda$ and $H$ are connective, then the associated Hopf $H^0$-algebra $H\otimes_{\Cech(H^0)}H^0$ is also
connective.
\end{rmk}

\begin{nota}
\label{not:cobar-of-a-commutative-algebra-object-in-}
Let $\mathcal{C}^{\otimes}$ be a symmetric monoidal 
$\infty$-category and let $A\in \CAlg(\mathcal{C})$ be a
commutative algebra in $\mathcal{C}$. 
The {\v C}ech conerve of $A$ is the cosimplicial
commutative algebra
$$\Cech^{\bullet}(A):\Deltasimp \to \CAlg(\mathcal{C})$$
which is the left Kan extension along the inclusion 
$\Deltasimp^{\leq 0}\subset \Deltasimp$
of the functor sending the unique object $[0]$ of $\Deltasimp^{\leq 0}$
to $A\in \CAlg(\mathcal{C})$.
Informally, $\Cech^{\bullet}(A)$ is given as follows.
\begin{enumerate}

\item For $n\in \N$, we have $\Cech^n(A)=A^{\otimes n+1}$.

\item For $0 \leq i \leq n+1$, the $i$-th face morphism 
$\Cech^n(A) \to \Cech^{n+1}(A)$ is given by 
$$A^{\otimes i} \otimes \one \otimes A^{\otimes n+1-i}
\xrightarrow{\rm u} 
A^{\otimes i} \otimes A \otimes A^{\otimes n+1-i}$$
where $u$ is the unit of $A$.

\item For $0\leq i \leq n-1$, the $i$-th codegeneracy morphism
$\Cech^n(A) \to \Cech^{n-1}(A)$ is given by
$$A^{\otimes i} \otimes (A \otimes A)\otimes A^{\otimes n-i-1}
\xrightarrow{\rm m}
A^{\otimes i} \otimes A \otimes  A^{\otimes n-i-1}$$
where ${\rm m}$ is the multiplication of $A$.

\end{enumerate}
The {\v C}ech conerve of $A$ has 
a natural augmentation given by 
$\Cech^{-1}(A)=\one$.
\end{nota}

\begin{situ}
\label{situ:for-having-hopf-algebra-in-general}
Let $e:\mathcal{C}^{\otimes} \to \mathcal{M}^{\otimes}$ be 
a colimit-preserving 
symmetric monoidal functor between presentable symmetric monoidal 
$\infty$-categories, and denote by 
$d:\mathcal{M} \to \mathcal{C}$ its right adjoint.
Let $A$ be a commutative
algebra in $\mathcal{M}^{\otimes}$, and 
assume that the induced functor 
$$\Mod_{d(A)}(\mathcal{C}) \to \Mod_A(\mathcal{M})$$
is an equivalence of $\infty$-categories.
More concretely, the following two conditions
are satisfied. 
\begin{enumerate}

\item For every $A$-module $M$, the obvious morphism 
$A\otimes_{ed(A)}ed(M) \to M$
is an equivalence.

\item For every $d(A)$-module $L$, the obvious morphism 
$L \to d(A\otimes_{ed(A)}e(L))$
is an equivalence.

\end{enumerate}
\end{situ}

\begin{rmk}
\label{rmk:main-case-of-situ-for-hopf-algebra}
With $k$, $K$ and $\Lambda$ as in the previous sections, 
we are mainly interested in the following two instances of 
Situation
\ref{situ:for-having-hopf-algebra-in-general}:
\begin{enumerate}

\item $\mathcal{C}^{\otimes}=
\Mod_{\Lambda}^{\otimes}$, $\mathcal{M}^{\otimes}=
\SH_{\et}(k;\Lambda)^{\otimes}$, $e$ the obvious functor
and $A=\mathbf{\Gamma}_W$ a Weil spectrum on algebraic $k$-varieties;

\item $\mathcal{C}^{\otimes}=
\Mod_{\Lambda}^{\otimes}$, $\mathcal{M}^{\otimes}=
\RigSH_{\et}(K;\Lambda)^{\otimes}$, $e$ the obvious functor
and $A=\mathbf{\Gamma}_W$ a Weil spectrum on rigid analytic
$K$-varieties.

\end{enumerate}
The assumption on $A$ is satisfied in both cases by Propositions
\ref{prop:every-module-is-extended-constant}
and
\ref{prop:every-module-is-extended-constant-rig-anal}.
Also, in these cases, the functor $d$ is given by 
$\Gamma(\pt;\Omega^{\infty}_T(-))$.
\end{rmk}

\begin{thm}
\label{thm:motivic-hopf-algebra-from-Weil-spectrum}
We work in Situation
\ref{situ:for-having-hopf-algebra-in-general}.
\begin{enumerate}

\item The cosimplicial commutative algebra
$d(\Cech(A))$ is a Hopf algebroid in 
$\mathcal{C}$. 

\item For every object $M\in \mathcal{M}$, the 
$d(\Cech(A))$-module 
$d(\Cech(A)\otimes M)$
is a comodule over the Hopf algebroid 
$d(\Cech(A))$.

\end{enumerate}
Moreover, we have a symmetric monoidal functor 
\begin{equation}
\label{eq-thm:motivic-hopf-algebra-from-Weil-spectrum-1}
d(\Cech(A)\otimes -):
\mathcal{M}^{\otimes}
\to \coMod_{d(\Cech(A))}(\mathcal{C})^{\otimes}.
\end{equation}
\end{thm}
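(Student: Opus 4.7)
The plan is to transfer the Hopf algebroid structure of the {\v C}ech conerve from $\CAlg(\mathcal{M})$ to $\CAlg(\mathcal{C})$ via the symmetric monoidal equivalence $\widetilde{d}: \Mod_A(\mathcal{M}) \xrightarrow{\sim} \Mod_{d(A)}(\mathcal{C})$ furnished by Situation~\ref{situ:for-having-hopf-algebra-in-general}. Its inverse $\widetilde{e}(L) = A \otimes_{ed(A)} e(L)$ is visibly a symmetric monoidal left adjoint (base change along $ed(A) \to A$ composed with $e$), so $\widetilde{d}$ is itself symmetric monoidal. Consequently $d(M_1) \otimes_{d(A)} d(M_2) \simeq d(M_1 \otimes_A M_2)$ for $A$-modules $M_1, M_2$ in $\mathcal{M}$, and the equivalence promotes to commutative algebras; by the same reasoning applied to the chain $\Mod_B(\mathcal{M}) \simeq \Mod_B(\Mod_A(\mathcal{M})) \simeq \Mod_B(\Mod_{d(A)}(\mathcal{C})) \simeq \Mod_{d(B)}(\mathcal{C})$, one gets a symmetric monoidal equivalence $\Mod_B(\mathcal{M}) \simeq \Mod_{d(B)}(\mathcal{C})$ for every $A$-algebra $B$ in $\mathcal{M}$.

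For part (1), recall that the {\v C}ech conerve $\Cech^{\bullet}(A)$ in $\CAlg(\mathcal{M})$ tautologically satisfies the Segal condition, since pushouts in $\CAlg(\mathcal{M})$ are computed as relative tensor products: for every covering $\{0,\ldots,n\} = I \cup J$ with $I \cap J = \{m\}$ one has $A^{\otimes |I|} \otimes_A A^{\otimes |J|} \simeq A^{\otimes n+1}$, with the $A$-module structures given by the $m$-th face inclusions. Applying the symmetric monoidal equivalence $\widetilde{d}$ yields the corresponding equivalence $d(A^{\otimes |I|}) \otimes_{d(A)} d(A^{\otimes |J|}) \simeq d(A^{\otimes n+1})$ in $\CAlg(\mathcal{C})$, which is precisely the Segal condition for $d(\Cech(A))$; its cosimplicial structure maps are obtained by applying $d$ levelwise. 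For part (2), given $M \in \mathcal{M}$ and $0 \le m \le n$, the comodule condition at the face map $[0] \hookrightarrow [n]$, $0 \mapsto m$, amounts to the equivalence
\begin{equation*}
d(A \otimes M) \otimes_{d(A)} d(A^{\otimes n+1}) \simeq d(A^{\otimes n+1} \otimes M),
\end{equation*}
which follows by applying $\widetilde{d}$ to the tautology $(A \otimes M) \otimes_A A^{\otimes n+1} \simeq A^{\otimes n+1} \otimes M$ in $\Mod_A(\mathcal{M})$ (with $A$-module structures via the free factor and the $m$-th factor, respectively).

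For the symmetric monoidal functor in (3), factor $d(\Cech(A) \otimes -)$ as
\begin{equation*}
\mathcal{M} \xrightarrow{\Cech(A) \otimes -} \Mod_{\Cech(A)}(\mathcal{M}^{\Deltasimp}) \xrightarrow{\widetilde{d}} \Mod_{d(\Cech(A))}(\mathcal{C}^{\Deltasimp}),
\end{equation*}
where the first functor is symmetric monoidal as a free-module construction, and the second is obtained by assembling the levelwise symmetric monoidal equivalences $\Mod_{A^{\otimes n+1}}(\mathcal{M}) \simeq \Mod_{d(A^{\otimes n+1})}(\mathcal{C})$. By part (2), the image lies in the full subcategory $\coMod_{d(\Cech(A))}(\mathcal{C})$, and that this subcategory is closed under the induced tensor product $\otimes_{d(\Cech(A))}$ is a routine check that combines the cofreeness property $M(\Delta^n) \simeq M(\Delta^0) \otimes_{H(\Delta^0)} H(\Delta^n)$ with the Segal condition on $H = d(\Cech(A))$. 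The main technical point will be organising the levelwise equivalences $\widetilde{d}$ into a single symmetric monoidal equivalence of cosimplicial-module $\infty$-categories; compatibility with face and degeneracy maps is formal since these are obtained by applying $d$ to morphisms in $\mathcal{M}$, but packaging this coherently is where care is required.
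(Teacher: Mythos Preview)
Your proof is correct and follows essentially the same approach as the paper. Both arguments reduce the Segal condition for $d(\Cech(A))$ to the tautological Segal condition for $\Cech(A)$ in $\mathcal{M}$ via the equivalence $\Mod_{d(A)}(\mathcal{C})\simeq\Mod_A(\mathcal{M})$; you package this as ``$\widetilde{d}$ is symmetric monoidal because its inverse $\widetilde{e}(L)=A\otimes_{ed(A)}e(L)$ visibly is'', whereas the paper unwinds the same statement by hand, rewriting both sides of the Segal map in $\mathcal{M}$ as $A\otimes_{ed(A)}e(-)$ applied to the corresponding objects in $\mathcal{C}$ and then invoking that $A\otimes_{ed(A)}e(-)$ is an equivalence. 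Your formulation is slightly cleaner, and your honest caveat about assembling the levelwise symmetric monoidal equivalences into a cosimplicial one matches the paper, which likewise leaves the ``Moreover'' clause implicit.
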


\begin{proof} 
To prove (1), we need to show that for every 
partition $\{0,\ldots, n\}=I\cup J$ with 
$I\cap J=\{m\}$ a singleton, the natural map 
\begin{equation}
\label{eq-thm:motivic-hopf-algebra-from-Weil-spectrum-3}
d(\Cech^I(A))
\otimes_{d(\Cech^{\{m\}}(A))} d(\Cech^J(A))
\to d(\Cech^{\{0,\ldots, n\}}(A))
\end{equation}
is an equivalence in $\mathcal{C}$. To do so, 
we start by noting that we have an equivalence
\begin{equation}
\label{eq-thm:motivic-hopf-algebra-from-Weil-spectrum-5}
\Cech^I(A)
\otimes_{\Cech^{\{m\}}(A)}
\Cech^J(A)
\xrightarrow{\sim} \Cech^{\{0,\ldots, n\}}(A)
\end{equation}
in $\mathcal{M}$.
By assumption, we also have equivalences
$$\Cech^{\{m\}}(A)\otimes_{ed(\Cech^{\{m\}}(A))}
ed(\Cech^L(A))
\xrightarrow{\sim} \Cech^L(A)$$
for all subsets $L\subset \{0,\ldots, n\}$ containing $m$.
This shows that the domain of the equivalence in 
\eqref{eq-thm:motivic-hopf-algebra-from-Weil-spectrum-5}
is equivalent to
$$\Cech^{\{m\}}(A)\otimes_{ed(\Cech^{\{m\}}(A))}
e(d(\Cech^I(A))\otimes_{d(\Cech^{\{m\}}(A))}
d(\Cech^J(A)))$$
whereas its codomain is equivalent to 
$$\Cech^{\{m\}}(A)\otimes_{ed(\Cech^{\{m\}}(A))}
ed(\Cech^{\{0,\ldots, n\}}(A)).$$
Thus, up to natural identifications, we see that the equivalence 
\eqref{eq-thm:motivic-hopf-algebra-from-Weil-spectrum-5}
can be obtained from the morphism 
\eqref{eq-thm:motivic-hopf-algebra-from-Weil-spectrum-3}
by applying $A\otimes_{ed(A)}e(-)$.
Since this functor is an equivalence by assumption, 
the result follows.

The proof of (2) is very similar to that of (1), but we include
it for the reader's convenience. Here, for every 
integers $0\leq m \leq n$, we need to check that
the obvious morphism 
\begin{equation}
\label{eq-thm:motivic-hopf-algebra-from-Weil-spectrum-11}
d(\Cech^{\{0,\ldots, n\}}(A))
\otimes_{d(\Cech^{\{m\}}(A))}
d(\Cech^{\{m\}}(A)\otimes M)
\to d(\Cech^{\{0,\ldots,n\}}(A)\otimes M)
\end{equation}
is an equivalence in $\mathcal{C}$. 
As above, we start by noting that we have an equivalence
\begin{equation}
\label{eq-thm:motivic-hopf-algebra-from-Weil-spectrum-13}
\Cech^{\{0,\ldots, n\}}(A)
\otimes_{\Cech^{\{m\}}(A)}
(\Cech^{\{m\}}(A)\otimes M)
\xrightarrow{\sim} \Cech^{\{0,\ldots,n\}}(A)
\otimes M
\end{equation}
in $\mathcal{M}$. By assumption, we have equivalences 
$$\Cech^{\{m\}}(A)
\otimes_{ed(\Cech^{\{m\}}(A))}
ed(\Cech^L(A)\otimes N)
\xrightarrow{\sim} 
\Cech^L(A)\otimes N$$
for all subsets $L\subset \{0,\ldots, n\}$ containing $m$ 
and all objects $N\in \mathcal{M}$. 
Using this for $L=\{m\}$ or $L=\{0,\ldots, n\}$
and $N=\one_{\mathcal{M}}$ or $N=M$, we see that, up to natural 
identifications, the equivalence in 
\eqref{eq-thm:motivic-hopf-algebra-from-Weil-spectrum-13}
can be obtained from the morphism in
\eqref{eq-thm:motivic-hopf-algebra-from-Weil-spectrum-11} 
by applying the functor $A\otimes_{ed(A)}e(-)$. We then conclude using that
this functor is an equivalence.
\end{proof}

\begin{dfn}
\label{dfn:the-motivic-Hopf-algebra-for-Weil-spectrum}
Applying Theorem
\ref{thm:motivic-hopf-algebra-from-Weil-spectrum}
to the situations described in Remark 
\ref{rmk:main-case-of-situ-for-hopf-algebra}
we obtain Hopf algebroids 
$$\Hmot(\Gamma_W)=
\Gamma(\pt;\Omega^{\infty}_T(\Cech(\mathbf{\Gamma}_W))).$$
These are the motivic Hopf algebroid associated to the Weil
cohomology theory $\Gamma_W=\Omega^{\infty}_T(\mathbf{\Gamma}_W)$.
The functor corresponding to
\eqref{eq-thm:motivic-hopf-algebra-from-Weil-spectrum-1}
is called the motivic realization associated to $\Gamma_W$
and will be denoted by
\begin{equation}
\label{eq-dfn:the-motivic-Hopf-algebra-for-Weil-spectrum-1}
\Rder^*_{W,\,\mot}:
\SH_{\et}(k;\Lambda) \to 
\coMod_{\Hmot(\Gamma_W)}
\end{equation}
in the algebraic setting, 
and similarly in the rigid analytic setting.
\end{dfn}

\begin{rmk}
\label{rmk:motivic-realization-refines-plain-one}
Clearly, the motivic realization refines the plain realization 
associated to $\Gamma_W$: we have a commutative triangle
$$\xymatrix{\SH_{\et}(k;\Lambda) \ar[r]^-{\Rder^*_{W,\,\mot}} 
\ar[dr]_-{\Rder^*_W} & \coMod_{\Hmot(\Gamma_W)} \ar[d]^-{\rm ff}\\
& \Mod_{\Gamma_W(\pt)}}$$
where ${\rm ff}$ is the obvious forgetful functor.
The same applies in the rigid analytic setting.
\end{rmk}

\begin{conj}
\label{conj:genralization-of-conserv-conj}
Let $\Gamma_W\in \WCT(k;\Lambda)$ 
be a Weil cohomology theory for algebraic $k$-varieties.
Assume that the commutative $\Lambda$-algebra
$\Gamma_W(k)$ is connective and faithfully flat.
Then, the motivic realization functor
\eqref{eq-dfn:the-motivic-Hopf-algebra-for-Weil-spectrum-1}
associated to $\Gamma_W$ becomes fully faithful when restricted 
to the thick stable sub-$\infty$-category 
$\SH_{\et,\,\ct}(k;\Lambda) \subset \SH_{\et}(k;\Lambda)$ 
generated by $\M(X)(n)$, for $X\in \Sm_k$ and $n\in \Z$.
\end{conj}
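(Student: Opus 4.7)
The plan is to attack the conjecture via faithfully flat descent along the Weil spectrum $\mathbf{\Gamma}_W$. Since $\Rder^*_{W,\,\mot}$ is a symmetric monoidal exact functor and $\SH_{\et,\,\ct}(k;\Lambda)$ is a thick stable sub-$\infty$-category, full faithfulness on it follows from full faithfulness on the generators $\M(X)(n)$. By dualizability of $\M(X)(n)\in \SH_{\et,\,\ct}(k;\Lambda)$ (using that $\pi_{X,\,\sharp}\Lambda$ is dualizable, Lemma \ref{lemma:generation-by-strongly-dualizable-objects}) and the symmetric monoidal nature of $\Rder^*_{W,\,\mot}$, this reduces to showing that for every $Y\in\Sm_k$ and every $m\in\Z$, the induced map
\[
\Map_{\SH_{\et}(k;\Lambda)}(\Lambda,\M(Y)(m))
\longrightarrow
\Map_{\coMod_{\Hmot(\Gamma_W)}}\!\bigl(\Rder^*_{W,\,\mot}\Lambda,\,\Rder^*_{W,\,\mot}\M(Y)(m)\bigr)
\]
is an equivalence.

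The next step is to unpack the right-hand side using the \v{C}ech description of $\Hmot(\Gamma_W)$. By Definition \ref{dfn:representation-groupoid-comodule-Hopf}, mapping spaces in $\coMod_{\Hmot(\Gamma_W)}$ are computed as totalizations of cosimplicial diagrams whose $n$-th level is a mapping space in $\Mod_{\Cech^n(\Hmot(\Gamma_W))}$. Proposition \ref{prop:every-module-is-extended-constant}, applied iteratively to each tensor power $\mathbf{\Gamma}_W^{\otimes n+1}$ (which is itself a Weil spectrum by Corollary \ref{cor:algebra-over-weil-spectrum-weil-spectrum}), identifies these with the mapping spaces in $\SH_{\et}(k;\Cech^n(\mathbf{\Gamma}_W))$. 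Thus the conjecture is equivalent to the assertion that for every constructible motive $N$ the unit map
\[
N\longrightarrow \Tot\bigl(\Cech^{\bullet}(\mathbf{\Gamma}_W)\otimes N\bigr)
\]
is an equivalence in $\SH_{\et}(k;\Lambda)$, i.e.\ that $\mathbf{\Gamma}_W$ satisfies descent when tested against constructible objects.

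The third step is to deduce such descent from the faithful flatness of $\Gamma_W(\pt)$ over $\Lambda$. The strategy I would follow is that of Mathew--Naumann--Noel: combining the conservativity of $\mathbf{\Gamma}_W\otimes(-)$ on $\SH_{\et,\,\ct}(k;\Lambda)$ with a uniform cohomological-dimension bound permits one to show that $\mathbf{\Gamma}_W$ descends effectively, i.e.\ it is a \emph{descendable} (hence nilpotent) commutative algebra in $\SH_{\et}(k;\Lambda)$. Faithful flatness of $\Gamma_W(\pt)$ provides the conservativity statement on the level of the coefficient ring; combined with Proposition \ref{prop:every-module-is-extended-constant}, one upgrades this to conservativity of $\mathbf{\Gamma}_W\otimes(-)$ itself. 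One then concludes by the standard fact that descent for a descendable algebra plus the identification of each stage with a module category gives the desired equivalence with the comodule category.

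The hard part is the descendability of $\mathbf{\Gamma}_W$. In essence this is a form of the long-standing conservativity conjecture for Weil realizations on constructible motives, and it is precisely the step where new input is needed: faithful flatness on $\pi_0$ only yields descent on the level of coefficient modules, and without a quantitative vanishing (an explicit bound on the index of nilpotence of the augmentation ideal of $\Cech^{\bullet}(\mathbf{\Gamma}_W)$) the totalization need not converge. Outside of this obstruction every step is formal; consequently, my proposal would either (i) treat the conjecture conditionally on conservativity plus a finite-dimensionality assumption on $k$, or (ii) restrict the statement to the sub-$\infty$-category where descendability can be checked by hand, e.g.\ Artin motives or motives of curves, as a first sanity test of the approach.
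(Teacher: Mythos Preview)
The statement you are trying to prove is labeled as a \emph{Conjecture} in the paper, and the paper does \emph{not} provide a proof of it. Immediately after stating it, the paper remarks (Remark~\ref{rmk:the-localization-property-too-optimistic}) that in the special case where $\Gamma_W$ is Betti cohomology with rational coefficients, the statement is equivalent to \cite[\S2.4, Conjecture~B]{gal-mot-1}, which is open. So there is no proof in the paper for you to compare against.

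Your proposal is not a proof either, and to your credit you say so yourself. The reduction you sketch is reasonable: using dualizability of the generators to reduce to mapping spaces out of the unit, then rewriting the comodule mapping space as a totalization of the \v{C}ech conerve $\Cech^\bullet(\mathbf{\Gamma}_W)$, is the standard unwinding. But the crucial step---that the unit map $N\to\Tot(\Cech^\bullet(\mathbf{\Gamma}_W)\otimes N)$ is an equivalence for constructible $N$---is exactly the content of the conjecture, not something you can deduce from the hypotheses given. You correctly identify this as requiring descendability of $\mathbf{\Gamma}_W$ in $\SH_{\et}(k;\Lambda)$, and you correctly note that faithful flatness of $\Gamma_W(\pt)$ over $\Lambda$ is far too weak to give this: it controls the coefficient ring, not the motivic spectrum. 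Conservativity of $\mathbf{\Gamma}_W\otimes(-)$ on constructible motives is itself a deep open problem (the conservativity conjecture), and even granting it, descendability would require a further nilpotence bound that is not available in general. Your options~(i) and~(ii) at the end are honest acknowledgments that what you have is a conditional reformulation, not a proof.

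In short: there is no gap to name in your argument beyond the one you already named, and there is no paper proof to compare it to because the paper leaves this open.
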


\begin{rmk}
\label{rmk:the-localization-property-too-optimistic}
The full faithfulness of the functor
$\SH_{\et,\,\ct}(k;\Lambda)
\to \Mod_{\Hmot(\Gamma_W)}$,
when $\Gamma_W$ is Betti cohomology with rational coefficients, 
is equivalent to  
\cite[\S2.4, Conjecture B]{gal-mot-1}.
A similar conjecture is also expected in the rigid analytic setting.
\end{rmk}

\begin{lemma}
\label{lem:comparing-of-hopf-algebroid-for-wct}
Let $\Gamma_W \to \Gamma_W'$ be a morphism 
of Weil cohomology theories for algebraic $k$-varieties
(or rigid analytic $K$-varieties).
Then, we have an equivalence of Hopf algebroids 
$$\Hmot(\Gamma_W)\otimes_{\Cech(\Gamma_W(\pt))}\Cech(\Gamma_{W'}(\pt))
\xrightarrow{\sim}
\Hmot(\Gamma_{W'}).$$
\end{lemma}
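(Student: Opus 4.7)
The plan is to establish the equivalence levelwise, after which the naturality of the construction promotes it to an equivalence of cosimplicial objects. Write $R = \Gamma_W(\pt)$, $R' = \Gamma_{W'}(\pt)$ and $d = \Gamma(\pt; \Omega^{\infty}_T(-))$. By Corollary \ref{cor:algebra-over-weil-spectrum-weil-spectrum} (resp.\ Corollary \ref{cor:algebra-over-weil-spectrum-weil-spectrum-rigan} in the rigid analytic case), the morphism of Weil spectra $\mathbf{\Gamma}_W \to \mathbf{\Gamma}_{W'}$ is identified with the canonical base change $\mathbf{\Gamma}_W \to \mathbf{\Gamma}_W \otimes_R R' \simeq \mathbf{\Gamma}_{W'}$, which is a pushout in $\CAlg(\SH_{\et}(k;\Lambda))$.

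Since the tensor product is the coproduct in $\CAlg(\SH_{\et}(k;\Lambda))$ and the diagonal functor is both a left and a right adjoint, the $(n+1)$-fold coproduct functor $A \mapsto A^{\otimes(n+1)}$ preserves all colimits; in particular it preserves the above pushout, yielding for every $n \geq 0$
$$\Cech^n(\mathbf{\Gamma}_{W'}) \simeq \mathbf{\Gamma}_W^{\otimes(n+1)} \otimes_{R^{\otimes(n+1)}} R'^{\otimes(n+1)}.$$
Setting $B := \mathbf{\Gamma}_W^{\otimes(n+1)}$, a further application of Corollary \ref{cor:algebra-over-weil-spectrum-weil-spectrum} shows that $B$, being a commutative $\mathbf{\Gamma}_W$-algebra, is itself a Weil spectrum with coefficient ring $\Hmot(\Gamma_W)^n = d(B)$, and Proposition \ref{prop:every-module-is-extended-constant} (resp.\ Proposition \ref{prop:every-module-is-extended-constant-rig-anal}) supplies the equivalence $d: \SH_{\et}(k; B) \xrightarrow{\sim} \Mod_{\Hmot(\Gamma_W)^n}$. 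The structure map $R^{\otimes(n+1)} \to B$ factors through $\Hmot(\Gamma_W)^n$ via the lax-monoidal structure of $d$ composed with the counit of the $e \dashv d$ adjunction, so the right-hand side above can be rewritten as
$$B \otimes_{\Hmot(\Gamma_W)^n} \bigl(\Hmot(\Gamma_W)^n \otimes_{R^{\otimes(n+1)}} R'^{\otimes(n+1)}\bigr).$$
Applying $d$ and invoking the inverse property of the equivalence yields the desired level-$n$ identification
$$\Hmot(\Gamma_{W'})^n \simeq \Hmot(\Gamma_W)^n \otimes_{\Cech^n(\Gamma_W(\pt))} \Cech^n(\Gamma_{W'}(\pt)).$$

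Naturality in $[n] \in \Deltasimp$ of every step ensures that these levelwise equivalences assemble into an equivalence of cosimplicial commutative $\Lambda$-algebras, hence of Hopf algebroids. The main conceptual input is the preservation of pushouts by the iterated-coproduct functor on commutative algebras; once this is in hand, all the remaining steps are direct applications of the structural results on Weil spectra established in Section \ref{sect:generalized-weil-coh-theories-alg-geo}.
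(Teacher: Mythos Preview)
Your proof is correct and follows essentially the same route as the paper's. The paper argues in three sentences: from $\mathbf{\Gamma}_{W'}\simeq\mathbf{\Gamma}_W\otimes_{\Gamma_W(\pt)}\Gamma_{W'}(\pt)$ (citing Proposition~\ref{prop:the-right-fibration-WCT-to-CAlg} rather than Corollary~\ref{cor:algebra-over-weil-spectrum-weil-spectrum}, but to the same effect) one gets $\Cech(\mathbf{\Gamma}_{W'})\simeq\Cech(\mathbf{\Gamma}_W)\otimes_{\Cech(\Gamma_W(\pt))}\Cech(\Gamma_{W'}(\pt))$, and then one applies $d=\Gamma(\pt;\Omega^{\infty}_T(-))$. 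Your argument makes explicit the two points the paper leaves tacit: that the iterated coproduct in $\CAlg$ preserves the relevant pushout, and---more importantly---why $d$ commutes with the base change along $\Cech(\Gamma_W(\pt))\to\Cech(\Gamma_{W'}(\pt))$, which you justify via the Weil-spectrum structure on $B=\mathbf{\Gamma}_W^{\otimes(n+1)}$ and Proposition~\ref{prop:every-module-is-extended-constant}.
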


\begin{proof}
Indeed, by Proposition 
\ref{prop:the-right-fibration-WCT-to-CAlg},
we have an equivalence 
$\Gamma_W\otimes_{\Gamma_W(\pt)}\Gamma_{W'}(\pt)\simeq 
\Gamma_{W'}$. This induces an equivalence of cosimplicial algebras in 
$\SH_{\et}(k;\Lambda)$:
$$\Cech(\mathbf{\Gamma}_W)\otimes_{\Cech(\Gamma_W(\pt))}\Cech(\Gamma_{W'}(\pt))\simeq \Cech(\mathbf{\Gamma}_{W'}).$$
Applying $\Gamma(\pt;\Omega_T^{\infty}(-))$ to this equivalence
yields the desired result.
\end{proof}

We note the following simple but useful fact,
where our running assumption that $\Lambda$ is connective
is actually important.

\begin{cor}
\label{cor:simple-useful-criterion-connectedness}
Let $\Gamma_W \to \Gamma_W'$ be a morphism 
of Weil cohomology theories for algebraic $k$-varieties
(or rigid analytic $K$-varieties).
Assume that $\Gamma_{W'}(\pt)$ and $\Hmot(\Gamma_W)$ 
are connective. Then, $\Hmot(\Gamma_{W'})$ is also connective. 
\end{cor}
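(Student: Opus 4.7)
The plan is to reduce the connectivity of the full cosimplicial object to finitely many checks using Lemma \ref{lemma:connectivity-hopf-algebroid}, and then apply Lemma \ref{lem:comparing-of-hopf-algebroid-for-wct} to compare $\Hmot(\Gamma_{W'})$ with $\Hmot(\Gamma_W)$. Throughout, the decisive input will be the running hypothesis that $\Lambda$ is connective, which makes the tensor product of connective $\Lambda$-modules connective and keeps Čech conerves level-wise connective.

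By Lemma \ref{lemma:connectivity-hopf-algebroid}, it suffices to verify that $\Hmot(\Gamma_{W'})(\Delta^0)$ and $\Hmot(\Gamma_{W'})(\Delta^1)$ are connective. The level-$0$ statement is immediate: $\Hmot(\Gamma_{W'})(\Delta^0) = \Gamma_{W'}(\pt)$ is connective by hypothesis. For the level-$1$ statement I would invoke Lemma \ref{lem:comparing-of-hopf-algebroid-for-wct}, which provides the equivalence
\[
\Hmot(\Gamma_{W'})(\Delta^1) \simeq \Hmot(\Gamma_W)(\Delta^1)\otimes_{\Gamma_W(\pt)\otimes \Gamma_W(\pt)}\bigl(\Gamma_{W'}(\pt)\otimes \Gamma_{W'}(\pt)\bigr).
\]

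The main point is then to observe that each factor entering this relative tensor product is connective. First, $\Gamma_W(\pt) = \Hmot(\Gamma_W)(\Delta^0)$ is connective by the connectivity hypothesis on $\Hmot(\Gamma_W)$, and $\Hmot(\Gamma_W)(\Delta^1)$ is connective by the same hypothesis. Since $\Lambda$ is connective, the tensor product $\Gamma_W(\pt)\otimes \Gamma_W(\pt)$ of connective $\Lambda$-algebras is connective, and similarly $\Gamma_{W'}(\pt)\otimes\Gamma_{W'}(\pt)$ is connective. Hence the displayed expression is a relative tensor product of connective modules over a connective $E_\infty$-ring, and the standard fact that such tensor products remain connective (a direct consequence of the bar resolution together with the connectivity of tensor products of connective spectra) yields that $\Hmot(\Gamma_{W'})(\Delta^1)$ is connective.

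There is no serious obstacle here: the proof is essentially bookkeeping, and the only potentially subtle point is the implicit use of connectivity of $\Lambda$ to ensure that all the ambient rings appearing in the Čech conerve remain connective. Once one notes this, Lemma \ref{lemma:connectivity-hopf-algebroid} together with Lemma \ref{lem:comparing-of-hopf-algebroid-for-wct} does all the heavy lifting.
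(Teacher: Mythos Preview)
Your proof is correct and follows essentially the same approach as the paper: both rely on Lemma~\ref{lem:comparing-of-hopf-algebroid-for-wct} together with the connectivity of $\Lambda$ to conclude. The paper's version is marginally more economical in that it applies the equivalence of Lemma~\ref{lem:comparing-of-hopf-algebroid-for-wct} at all cosimplicial levels simultaneously (noting that $\Cech(\Gamma_W(\pt))$ and $\Cech(\Gamma_{W'}(\pt))$ are degreewise connective) rather than first reducing to degrees $0$ and $1$ via Lemma~\ref{lemma:connectivity-hopf-algebroid}, but the substance is identical.
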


\begin{proof}
Since $\Lambda$ is connective, the cosimplicial commutative
algebras  
$\Cech(\Gamma_W(\pt))$ and $\Cech(\Gamma_{W'}(\pt))$
are degreewise connective.
Thus, the result follows from Lemma
\ref{lem:comparing-of-hopf-algebroid-for-wct}.
\end{proof}

To ease comparison with the construction in 
\cite{gal-mot-1}, we end this 
section with an alternative construction of the
motivic Hopf algebroid of a Weil cohomology theory 
in term of the associated realization functor.

\begin{cons}
\label{cons:hopf-algebra-in-term-of-realization-funtcor}
In Situation 
\ref{situ:for-having-hopf-algebra-in-general},
we let $f:\mathcal{M}^{\otimes} \to 
\Mod_{d(A)}(\mathcal{C})^{\otimes}$
be the composite functor
$$\mathcal{M}^{\otimes} \to \Mod_A(\mathcal{M})^{\otimes}
\simeq \Mod_{d(A)}(\mathcal{C})^{\otimes},$$
and $g:\Mod_{d(A)}(\mathcal{C}) \to \mathcal{M}$ 
its right adjoint. (Note that in the situations described in Remark 
\ref{rmk:main-case-of-situ-for-hopf-algebra}, the functor $f$ is the 
associated realization functor $\Rder^*_W$.)
By \cite[Proposition 4.7.3.3]{Lurie-HA}
applied to the functor $(f)^{\op}$, the composite functor 
$g\circ f$ underlies a coalgebra structure in the 
$\infty$-category ${\rm EndFun}(\mathcal{M}^{\otimes})$
of lax symmetric monoidal endofunctors of 
$\mathcal{M}^{\otimes}$. Said differently, there is a cosimplicial 
object $\Phi^{\bullet}_f:\Deltasimp\to {\rm EndFun}(\mathcal{M}^{\otimes})$
which is informally given as follows.
\begin{enumerate}

\item For $n\in \N$, we have 
$\Phi^n_f=(g\circ f)^{\circ n+1}$.

\item For $0 \leq i \leq n+1$, the $i$-th face morphism 
$\Phi_f^n \to \Phi_f^{n+1}$ is given by 
$$(g\circ f)^{\circ i} \circ \id \circ (g\circ f)^{\circ n+1-i}
\xrightarrow{\eta} 
(g\circ f)^{\circ i} \circ (g\circ f) \circ (g\circ f)^{\circ n+1-i}$$
where $\eta$ is the unit of the adjunction $(f,g)$.

\item For $0\leq i \leq n-1$, the $i$-th codegeneracy morphism
$\Phi_f^n \to \Phi_f^{n-1}$ is given by
$$g\circ (f\circ g)^{\circ i} \circ (f\circ g)\circ (f\circ g)^{\circ n-i-1}
\circ f \xrightarrow{\delta}
g\circ (f\circ g)^{\circ i} \circ \id \circ (f\circ g)^{\circ n-i-1}
\circ f$$
where $\delta$ is the counit of the adjunction 
$(f,g)$.

\end{enumerate}
We may think of $\Phi_f^{\bullet}$ as a right-lax symmetric monoidal
functor 
$\Phi_f:\mathcal{M}^{\otimes} \to 
(\mathcal{M}^{\Deltasimp})^{\otimes}$.
In particular, $\Phi_f(\one)$ is a cosimplicial 
commutative algebra in $\mathcal{M}^{\otimes}$. 
\end{cons}

\begin{lemma}
\label{lemma:equivalence-phif-one-with-cobar-on-A}
We work in Situation
\ref{situ:for-having-hopf-algebra-in-general}
and we keep the notation from Construction
\ref{cons:hopf-algebra-in-term-of-realization-funtcor}.
There is an equivalence of cosimplicial commutative algebras
$\Cech(A) \xrightarrow{\sim} \Phi_f(\one)$.
Moreover, for $M\in \mathcal{M}$, the $\Cech(A)$-module 
$\Cech(A)\otimes M$ is equivalent to the $\Phi_f(\one)$-module 
$\Phi_f(M)$. 
\end{lemma}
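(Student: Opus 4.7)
The plan is to unfold the adjunction $(f,g)$ explicitly and identify the cosimplicial object $\Phi_f(M)$ term-by-term with $\Cech(A)\otimes M$.

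First, I would spell out the functors $f$ and $g$. Using the equivalence $\Mod_A(\mathcal{M})^{\otimes}\simeq \Mod_{d(A)}(\mathcal{C})^{\otimes}$ from Situation \ref{situ:for-having-hopf-algebra-in-general}, the functor $f$ is naturally equivalent to the free $A$-module functor $M\mapsto A\otimes M$ (viewed in $\Mod_{d(A)}(\mathcal{C})$ via the equivalence), and its right adjoint $g$ corresponds to the forgetful functor $\Mod_A(\mathcal{M})\to \mathcal{M}$. Consequently, the endofunctor $g\circ f$ of $\mathcal{M}$ is equivalent to $A\otimes(-)$, with unit $\eta:\id\to g\circ f$ given by $M\mapsto (u\otimes \id_M):\one\otimes M \to A\otimes M$, and the counit $\delta:f\circ g\to \id$ corresponding, on free modules, to the action map $A\otimes (A\otimes M)\to A\otimes M$ induced by the multiplication ${\rm m}:A\otimes A\to A$.

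Next I would iterate: by induction on $n$, $\Phi_f^n(M)=(g\circ f)^{\circ n+1}(M)\simeq A^{\otimes n+1}\otimes M$. Applying this with $M=\one_{\mathcal{M}}$ gives $\Phi^n_f(\one)\simeq A^{\otimes n+1}=\Cech^n(A)$, and in general $\Phi^n_f(M)\simeq \Cech^n(A)\otimes M$. To identify the cosimplicial structures, I would trace through Construction \ref{cons:hopf-algebra-in-term-of-realization-funtcor}: the $i$-th face $\Phi^n_f\to \Phi^{n+1}_f$ is obtained by whiskering the unit $\eta$ into the $(i+1)$-st slot, which under the identification above becomes the map that inserts the factor $\one\xrightarrow{u}A$ in the $(i+1)$-st tensor position of $A^{\otimes n+1}\otimes M$; this is exactly the $i$-th face of $\Cech(A)\otimes M$ as described in Notation \ref{not:cobar-of-a-commutative-algebra-object-in-}. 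Similarly, whiskering $\delta$ collapses two adjacent $A$ factors by the multiplication ${\rm m}$, recovering the $i$-th codegeneracy. This matching of the universal cosimplicial identities gives the required equivalence of cosimplicial objects in $\mathcal{M}$; the commutative algebra structure on $\Phi_f(\one)$ provided by \cite[Proposition 4.7.3.3]{Lurie-HA} is, by construction, the one induced by the symmetric monoidal multiplication on $A$, so the identification is one of cosimplicial commutative algebras.

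The main obstacle is the $\infty$-categorical bookkeeping: rather than construct the equivalence levelwise and then promote it to a cosimplicial map by hand, the cleanest route is to invoke the universal property of the bar/cobar construction. Specifically, I would observe that the identification $g\circ f\simeq A\otimes(-)$ upgrades to an equivalence of comonads on $\mathcal{M}^{\otimes}$ between the one arising from the adjunction $(f,g)$ and the one given by the coalgebra $A$ in the $E_0$-sense (with comultiplication induced by $u\otimes \id_A\simeq \id_A\otimes u$); then by \cite[Proposition 4.7.3.3]{Lurie-HA} the associated cosimplicial objects agree, and this identification is automatically compatible with evaluation at any $M\in \mathcal{M}$, yielding the second assertion simultaneously with the first.
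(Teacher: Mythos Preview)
Your proposal is correct, but the paper takes a more direct route that sidesteps the $\infty$-categorical bookkeeping you flag as the main obstacle. Instead of matching cosimplicial structures levelwise or comparing comonads, the paper exploits the universal property of the \v{C}ech conerve from Notation~\ref{not:cobar-of-a-commutative-algebra-object-in-}: since $\Cech(A)$ is defined as the left Kan extension of $[0]\mapsto A$ along $\Deltasimp^{\leq 0}\subset \Deltasimp$ in $\CAlg(\mathcal{M})$, a morphism of cosimplicial commutative algebras $\Cech(A)\to \Phi_f(\one)$ is determined by the identification $\Phi_f(\one)(\Delta^0)=g\circ f(\one)\simeq A$ in degree zero. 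One then checks levelwise that this map is an equivalence using $g\circ f\simeq A\otimes(-)$, which is the observation you also make. For the second assertion, the paper produces the map $\Cech(A)\otimes M\to \Phi_f(M)$ using only the right-lax monoidal structure on $\Phi_f$ (rather than a comonad comparison), and again checks it is an equivalence levelwise.

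Your comonad approach is a valid alternative and has the merit of packaging the cosimplicial identification and the module statement into a single equivalence of resolutions; the paper's approach trades that uniformity for a shorter argument that never needs to name the comonad or verify the coalgebra axioms on $A\otimes(-)$.
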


\begin{proof}
By construction, we have $\Phi_f(\one)(\Delta^0)=g\circ f(\one)=A$.
By the universal property of $\Cech(A)$, we deduce a morphism of 
cosimplicial commutative algebras
$\Cech(A) \to \Phi_f(\one)$.
More generally, for $M\in \mathcal{M}$, we have a morphism 
$\Cech(A)\otimes M \to \Phi_f(M)$
given by the composition of 
$$\Cech(A)\otimes M \to \Phi_f(\one)\otimes M
\to \Phi_f(\one)\otimes \Phi_f(M) \to \Phi_f(M).$$
(Recall that the cosimplicial functor $\Phi_f$ is
right-lax monoidal.) 
Using the natural equivalence 
$g\circ f\simeq A\otimes -$, it is easy to see that 
$\Cech(A)\otimes M \to \Phi_f(M)$ is an equivalence.
\end{proof}

\begin{thm}
\label{thm:hopf-algebroid-using-plain-realization}
We work in Situation
\ref{situ:for-having-hopf-algebra-in-general}
and we keep the notation from Construction
\ref{cons:hopf-algebra-in-term-of-realization-funtcor}.
\begin{enumerate}

\item The cosimplicial commutative algebra $d(\Phi_f(\one))$
is a Hopf algebroid in $\mathcal{C}^{\otimes}$ which is equivalent
to $d(\Cech(A))$.

\item For $M\in \mathcal{M}$, the $d(\Phi_f(\one))$-module 
$d(\Phi_f(M))$ is a comodule over the Hopf algebroid 
$d(\Phi_f(\one))$ which is equivalent to the comodule 
$d(\Cech(A)\otimes M)$ over $d(\Cech(A))$.

\end{enumerate}
Moreover, we have a symmetric monoidal functor
\begin{equation}
\label{eq-thm:hopf-algebroid-using-plain-realization-1}
d(\Phi_f(-)):\mathcal{M}^{\otimes} \to 
\coMod_{d(\Phi_f(\one))}(\mathcal{C})^{\otimes}
\end{equation}
which is equivalent to the functor 
\eqref{eq-thm:motivic-hopf-algebra-from-Weil-spectrum-1}.
\end{thm}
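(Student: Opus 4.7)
The plan is to deduce this theorem directly from the preceding Lemma \ref{lemma:equivalence-phif-one-with-cobar-on-A} together with Theorem \ref{thm:motivic-hopf-algebra-from-Weil-spectrum}, by transporting all structures across the equivalence $\Cech(A) \xrightarrow{\sim} \Phi_f(\mathbf{1})$ and applying the functor $d$. First, by Lemma \ref{lemma:equivalence-phif-one-with-cobar-on-A}, we already have an equivalence of cosimplicial commutative algebras $\Cech(A) \simeq \Phi_f(\mathbf{1})$ in $\mathcal{M}$, and, for every $M \in \mathcal{M}$, an equivalence $\Cech(A) \otimes M \simeq \Phi_f(M)$ of modules over this cosimplicial algebra. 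Applying the lax monoidal functor $d$ levelwise gives equivalences of cosimplicial commutative algebras $d(\Cech(A)) \simeq d(\Phi_f(\mathbf{1}))$ in $\mathcal{C}$, and equivalences $d(\Cech(A) \otimes M) \simeq d(\Phi_f(M))$ compatible with the module structures. By Theorem \ref{thm:motivic-hopf-algebra-from-Weil-spectrum}(1), $d(\Cech(A))$ is a Hopf algebroid in $\mathcal{C}^{\otimes}$, so $d(\Phi_f(\mathbf{1}))$ inherits a Hopf algebroid structure via this equivalence; this proves (1). Similarly, Theorem \ref{thm:motivic-hopf-algebra-from-Weil-spectrum}(2) combined with the equivalences yields (2).

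For the final assertion about the symmetric monoidal functor \eqref{eq-thm:hopf-algebroid-using-plain-realization-1}, I would argue as follows. Construction \ref{cons:hopf-algebra-in-term-of-realization-funtcor} presents $\Phi_f$ as a right-lax symmetric monoidal functor $\mathcal{M}^{\otimes} \to (\mathcal{M}^{\Deltasimp})^{\otimes}$ obtained, via \cite[Proposition 4.7.3.3]{Lurie-HA}, from the comonad $f^{\op}$. Post-composing with the lax monoidal functor $d$ produces a lax monoidal cosimplicial functor, so that $d(\Phi_f(-))$ carries values in $\coMod_{d(\Phi_f(\mathbf{1}))}(\mathcal{C})$ with its natural symmetric monoidal structure induced from $\mathcal{C}^{\otimes}$. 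The identification $\Phi_f(M) \simeq \Cech(A) \otimes M$ from Lemma \ref{lemma:equivalence-phif-one-with-cobar-on-A} matches this lax monoidal structure with the one on $\Cech(A) \otimes (-)$ (because it is constructed from the unit of $(f,g)$ and the multiplication of $A$ in a parallel way), and applying $d$ gives the desired equivalence between \eqref{eq-thm:hopf-algebroid-using-plain-realization-1} and the functor \eqref{eq-thm:motivic-hopf-algebra-from-Weil-spectrum-1}.

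The main technical obstacle is verifying that the equivalence $\Cech(A) \otimes M \simeq \Phi_f(M)$ is genuinely natural in $M$ as a morphism of right-lax symmetric monoidal cosimplicial functors, not merely objectwise. Concretely, one must check that the comparison map constructed in the proof of Lemma \ref{lemma:equivalence-phif-one-with-cobar-on-A} (which factors through $\Phi_f(\mathbf{1}) \otimes \Phi_f(M) \to \Phi_f(M)$) is compatible with the lax structures on both sides at all cosimplicial degrees. This amounts to unwinding the definition of the comonad coming from $(f,g)$ and using repeatedly that $g \circ f \simeq A \otimes (-)$ as lax symmetric monoidal endofunctors of $\mathcal{M}^{\otimes}$, which follows from Situation \ref{situ:for-having-hopf-algebra-in-general}(1) applied to free $A$-modules. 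Once this naturality is established, claim (3) is a formal consequence, since the Hopf-comodule structures and the symmetric monoidal structures of the two functors are intertwined by the same natural equivalence.
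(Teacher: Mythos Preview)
Your proposal is correct and follows essentially the same approach as the paper: the paper's proof consists of the single sentence ``This follows immediately from Theorem \ref{thm:motivic-hopf-algebra-from-Weil-spectrum} and Lemma \ref{lemma:equivalence-phif-one-with-cobar-on-A},'' which is precisely the strategy you outline. Your additional paragraph on the naturality of the equivalence $\Cech(A)\otimes M \simeq \Phi_f(M)$ as lax symmetric monoidal cosimplicial functors spells out a verification that the paper treats as implicit in the word ``immediately.''
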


\begin{proof}
This follows immediately from Theorem 
\ref{thm:motivic-hopf-algebra-from-Weil-spectrum}
and Lemma 
\ref{lemma:equivalence-phif-one-with-cobar-on-A}.
\end{proof}

\begin{rmk}
\label{rmk:alternative-construction-of-motivic-hopf-alg}
Let $\Gamma_W$ be a Weil cohomology theory for algebraic 
$k$-varieties (or rigid analytic $K$-varieties), 
and let $\Rder^*_W$ be the associated realization functor. 
Then the motivic Hopf algebroid $\Hmot(\Gamma_W)$ is
equivalent to $\Gamma(\pt;\Omega^{\infty}_T(\Phi_{\Rder^*_W}(\Lambda)))$
and the associated motivic realization functor is equivalent to 
$\Gamma(\pt;\Omega^{\infty}_T(\Phi_{\Rder^*_W}(-)))$.
In particular, note that $\Hmot(\Gamma_W)(\Delta^1)=
\Rder^*_W\Rder_{W,\,*}\Gamma_W(\pt)$.
\end{rmk}

\section{Complements}

\label{sect:complements}

In this section, we gather a few more facts about 
Weil cohomology theories and their motivic Hopf algebroids. 
As in the previous sections, we fix a connective commutative
ring spectrum $\Lambda$, and let $k$ be 
a ground field whose exponent characteristic 
is invertible in $\pi_0(\Lambda)$. We also let 
$K$ be a field endowed with a rank $1$ valuation, 
and whose residue field is $k$.

\begin{thm}
\label{thm:relation-between-weil-coh-theories-and-extension}
Let $k'/k$ be a field extension and let 
$e:\Spec(k') \to \Spec(k)$ be the associated morphism. 
The right-lax monoidal functor $e_*:\SH_{\et}(k';\Lambda)
\to \SH_{\et}(k;\Lambda)$ takes Weil spectra to Weil spectra 
inducing a functor 
\begin{equation}
\label{eq-thm:relation-between-weil-coh-theories-and-exten-1}
e_*:\WSp(k';\Lambda) \to \WSp(k;\Lambda).
\end{equation}
Assume now that the extension $k'/k$ is algebraic, then
the same is true for the symmetric monoidal functor 
$e^*:\SH_{\et}(k;\Lambda) \to \SH_{\et}(k';\Lambda)$
and the induced functor
\begin{equation}
\label{eq-thm:relation-between-weil-coh-theories-and-exten-3}
e^*:\WSp(k;\Lambda) \to \WSp(k';\Lambda)
\end{equation}
is left adjoint to the functor
\eqref{eq-thm:relation-between-weil-coh-theories-and-exten-1}.
Moreover, given a Weil spectrum $\mathbf{\Gamma}_W\in 
\WSp(k;\Lambda)$, the coefficient ring of the Weil 
spectrum $e^*\mathbf{\Gamma}_W$ is equivalent to 
$\Rder_W^*(e_*\Lambda)$ where $\Rder^*_W$
is the realization functor associated to $\Gamma_W$.
\end{thm}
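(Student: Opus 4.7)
The plan is to reduce everything to the equivalence $\WSp \simeq \Real$ of Theorem \ref{thm:equiv-between-WSp-and-Plain-real}, and to mimic the strategy of Theorem \ref{thm:rig-upper-star-weil-coh-theory} for the algebraic extension case. For the first statement, the functor $e^*:\SH_{\et}(k;\Lambda)^{\otimes}\to \SH_{\et}(k';\Lambda)^{\otimes}$ is a morphism in $\CAlg(\Prl)$, so precomposition induces a functor $-\circ e^*: \Real(k';\Lambda)\to \Real(k;\Lambda)$. Given a Weil spectrum $\mathbf{\Gamma}_{W'}\in\WSp(k';\Lambda)$ with associated realization $\Rder^*_{W'}$, the composition $\Rder^*_{W'}\circ e^*$ lies in $\Real(k;\Lambda)$, and by Theorem \ref{thm:equiv-between-WSp-and-Plain-real} its associated Weil spectrum is $(\Rder^*_{W'}\circ e^*)_*\Lambda\simeq e_*\Rder_{W',\,*}\Rder^*_{W'}\Lambda\simeq e_*\mathbf{\Gamma}_{W'}$, where the last equivalence uses Remark \ref{rmk:weil-cohomology-recovered-realization}. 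This establishes (1).

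For the algebraic case, the key preparatory step is to establish the analog of Theorem \ref{thm:the-main-equivalence-for-new-Weil-coh}: when $k'/k$ is algebraic, the functor $\widetilde{e}^*:\SH_{\et}(k;e_*\Lambda)\to \SH_{\et}(k';\Lambda)$, given by $M\mapsto e^*M\otimes_{e^*e_*\Lambda}\Lambda$, is an equivalence. The proof will follow the template of Theorem \ref{thm:the-main-equivalence-for-new-Weil-coh}: (i) $\widetilde{e}^*$ is colimit-preserving; (ii) the image of $e^*$ generates $\SH_{\et}(k';\Lambda)$ under colimits, which reduces via continuity \cite[Proposition 2.5.11]{AGAV} along the decomposition $k'=\colim_\alpha k_\alpha$ into finite subextensions to the observation that any $X'\in \Sm_{k'}$ descends to some $X_\alpha\in \Sm_{k_\alpha}$, and then $\pi_{X',\,\sharp}\Lambda$ is a direct summand of $e^*\pi_{X_\alpha/k,\,\sharp}\Lambda$ via the splitting $k_\alpha\otimes_k k'\simeq \prod_\sigma k'$; and (iii) the unit morphism $M\otimes e_*\Lambda\to e_*e^*M$ is an equivalence for dualizable $M\in \SH_{\et}(k;\Lambda)$, by the projection formula as in \cite[Lemme 2.8]{gal-mot-1}.

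Once $\widetilde{e}^*$ is known to be an equivalence, the remainder of the argument proceeds as in Theorem \ref{thm:rig-upper-star-weil-coh-theory}. Since $\mathbf{\Gamma}_W\otimes e_*\Lambda$ is a commutative $\mathbf{\Gamma}_W$-algebra in $\SH_{\et}(k;\Lambda)$, it is a Weil spectrum by Corollary \ref{cor:algebra-over-weil-spectrum-weil-spectrum}, with coefficient ring $\Rder^*_W(e_*\Lambda)$ by Proposition \ref{prop:every-module-is-extended-constant}. The equivalence $\widetilde{e}^*$ then identifies $\SH_{\et}(k';e^*\mathbf{\Gamma}_W)$ with $\SH_{\et}(k;\mathbf{\Gamma}_W\otimes e_*\Lambda)$, so by Proposition \ref{prop:equivalent-condition-for-a-motivic-ring-spectrum} the algebra $e^*\mathbf{\Gamma}_W$ is a Weil spectrum for $k'$-varieties, and unwinding the identification yields that its coefficient ring is $\Rder^*_W(e_*\Lambda)$. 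Finally, the adjunction $e^*\dashv e_*$ at the level of Weil spectra is inherited from the adjunction on $\CAlg(\SH_{\et}(-;\Lambda))$, since both functors preserve the respective full sub-$\infty$-categories of Weil spectra. The main obstacle lies in step (ii) above: the finite separable case is essentially étale/Galois descent, but one must carefully combine this with the continuity of $\SH_{\et}$ along filtered colimits of fields in order to pass to infinite algebraic extensions.
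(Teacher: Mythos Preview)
Your overall strategy matches the paper's: reduce to showing that $\widetilde{e}_*:\SH_{\et}(k';\Lambda)\to\SH_{\et}(k;e_*\Lambda)$ is an equivalence (the paper's Proposition~\ref{prop:SH-et-of-k-prime-is-module-over-e-star}), and then argue exactly as in the proof of Theorem~\ref{thm:rig-upper-star-weil-coh-theory}. However, your sketch of that equivalence has two gaps.

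First, in step (ii) the splitting $k_\alpha\otimes_k k'\simeq\prod_\sigma k'$ is false in general: it requires $k'$ to contain a Galois closure of $k_\alpha/k$, and it also fails outright when $k_\alpha/k$ is inseparable. The paper (Lemma~\ref{lemma:e-star-generates-sh-k-prime}) first passes to perfections via \cite[Theorem 2.9.7]{AGAV} to reduce to the separable case, and then observes only that if $X_0\in\Sm_l$ with $X_0\otimes_l k'\simeq X$, then $X$ is a \emph{clopen component} of $X_0\otimes_k k'$, since $k'$ is one direct factor of the \'etale $k'$-algebra $l\otimes_k k'$ (the other factors need not be $k'$). No continuity argument is needed.

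Second, and more substantially, you omit the analogue of Lemma~\ref{lemma:Rig-lower-star-commutes-with-colimits}: that $e_*$ is colimit-preserving. Without this, you cannot pass from the unit $\id\to\widetilde{e}_*\widetilde{e}{}^*$ being an equivalence on the generators $M\otimes e_*\Lambda$ (your step (iii)) to it being an equivalence on all of $\SH_{\et}(k;e_*\Lambda)$, since $\widetilde{e}_*\widetilde{e}{}^*$ is not a priori colimit-preserving. The paper supplies this as Lemma~\ref{lemma:commutation-with-colimits-for-e-lower-star}, proved by checking that $e_*$ at the level of presheaves preserves \'etale local equivalences (using that $A\otimes_k k'$ is a filtered colimit of finite products of strictly henselian rings whenever $A$ is strictly henselian over $k$). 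This, rather than your step (ii), is the real point requiring care.
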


The first assertion is obvious. The proof of the part concerning 
$e^*$ is similar to the proof of 
Theorem \ref{thm:rig-upper-star-weil-coh-theory}. We will need
the following analog of Theorem
\ref{thm:the-main-equivalence-for-new-Weil-coh}.

\begin{prop}
\label{prop:SH-et-of-k-prime-is-module-over-e-star}
Let $k'/k$ be an algebraic extension and let 
$e:\Spec(k') \to \Spec(k)$ be the associated morphism. 
Then the functor 
$$\widetilde{e}_*:\SH_{\et}(k';\Lambda)
\to \SH_{\et}(k;e_*\Lambda)$$
is an equivalence of $\infty$-categories. 
\end{prop}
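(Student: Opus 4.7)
The plan is to mimic the proof of Theorem \ref{thm:the-main-equivalence-for-new-Weil-coh}, substituting $e$ for $\Rig$. I would consider the adjunction
$$\widetilde{e}^*:\SH_{\et}(k;e_*\Lambda)\rightleftarrows\SH_{\et}(k';\Lambda):\widetilde{e}_*$$
with $\widetilde{e}^*(M)=e^*M\otimes_{e^*e_*\Lambda}\Lambda$, and show that $\widetilde{e}^*$ is an equivalence (which is equivalent to showing the same for $\widetilde{e}_*$).

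First I would establish the analog of Proposition \ref{prop:rig-upper-star-generates-under-colim}: the image of $e^*$ generates $\SH_{\et}(k';\Lambda)$ under colimits. Writing $k'=\colim_\alpha k_\alpha$ as a filtered colimit over its finite subextensions, continuity in the form of \cite[Proposition 2.5.11]{AGAV} ensures that the natural functor $\colim_\alpha\SH_{\et}(k_\alpha;\Lambda)\to\SH_{\et}(k';\Lambda)$ is essentially surjective. For each finite subextension $e_\alpha:\Spec(k_\alpha)\to\Spec(k)$, the generator $\pi_{X_\alpha,\sharp}\Lambda$ of $\SH_{\et}(k_\alpha;\Lambda)$ is a retract of $e_\alpha^*\pi_{(X_\alpha)_{/k},\sharp}\Lambda$, because $X_\alpha\times_k k_\alpha$ contains $X_\alpha$ as a clopen component via the diagonal; thus the image of $e_\alpha^*$ generates $\SH_{\et}(k_\alpha;\Lambda)$ under colimits, and assembling across $\alpha$ yields generation of $\SH_{\et}(k';\Lambda)$ by the image of $e^*$. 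Next, the analog of Lemma \ref{lemma:Rig-lower-star-commutes-with-colimits} should hold: $e_*$, and hence $\widetilde{e}_*$ (since the forgetful functor $\SH_{\et}(k;e_*\Lambda)\to\SH_{\et}(k;\Lambda)$ is conservative and colimit-preserving), is colimit-preserving. For finite extensions this is immediate from $e_*=e_!$ being a left adjoint, and the general algebraic case follows by combining the finite case with continuity along the system $(k_\alpha)_\alpha$.

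Given these two ingredients, full faithfulness of $\widetilde{e}^*$ reduces to checking that the unit $\id\to\widetilde{e}_*\widetilde{e}^*$ is an equivalence on a generating set of objects. By Step~1 together with Lemma \ref{lemma:generation-by-strongly-dualizable-objects}, it suffices to consider objects of the form $M_0\otimes e_*\Lambda$ with $M_0\in\SH_{\et}(k;\Lambda)$ dualizable; there the unit identifies with the projection formula map $M_0\otimes e_*\Lambda\to e_*e^*M_0$, which is an equivalence by \cite[Lemme 2.8]{gal-mot-1}. Essential surjectivity follows from Step~1 together with colimit-preservation of $\widetilde{e}^*$, completing the proof. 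The main technical point will be establishing colimit-preservation of $e_*$ for the pro-(finite \'etale) morphism $e$; this should nonetheless be considerably cleaner than the analogous statement for $\Rig_*$, as no rational-versus-torsion dichotomy is required and only an assembly of finite-\'etale statements along the filtered system $(k_\alpha)_\alpha$ via continuity is needed.
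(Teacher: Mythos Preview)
Your overall strategy matches the paper's: prove that the image of $e^*$ generates $\SH_{\et}(k';\Lambda)$ under colimits (the paper's Lemma~\ref{lemma:e-star-generates-sh-k-prime}), that $e_*$ is colimit-preserving (Lemma~\ref{lemma:commutation-with-colimits-for-e-lower-star}), and then check the unit of $(\widetilde{e}{}^*,\widetilde{e}_*)$ on free $e_*\Lambda$-modules generated by dualizable objects via \cite[Lemme 2.8]{gal-mot-1}.

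Two places where your sketches of the auxiliary facts differ from the paper and would benefit from more care. For generation, your claim that $X_\alpha$ sits as a clopen component of $X_\alpha\times_k k_\alpha$ via the diagonal is only literally true when $k_\alpha/k$ is separable; for a purely inseparable subextension the diagonal of $\Spec(k_\alpha)$ in $\Spec(k_\alpha\otimes_k k_\alpha)$ is a nilpotent thickening rather than an open-closed subscheme. The paper handles this by first passing to perfections via \cite[Theorem 2.9.7]{AGAV}, after which every finite subextension is separable and your retract argument goes through verbatim. For colimit-preservation of $e_*$, the phrase ``combining the finite case with continuity'' hides a genuine argument: one must know that in the identification $\SH_{\et}(k';\Lambda)\simeq\colim_\alpha\SH_{\et}(k_\alpha;\Lambda)$ in $\Prl$ (equivalently a limit in $\CAT_\infty$ along the right adjoints $(f_{\alpha\beta})_*$), colimits are computed levelwise, so that the projection right adjoints are colimit-preserving. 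This in turn requires the transition functors $(f_{\alpha\beta})_*$ to preserve colimits, which is again the finite case; so the argument can be closed, but it is not the one-liner you suggest. The paper's proof of Lemma~\ref{lemma:commutation-with-colimits-for-e-lower-star} is instead direct and avoids continuity: it observes that $e_*$ on $t_{\amalg}$-sheaves preserves \'etale-local equivalences, since for any strictly henselian $k$-algebra $A$ the ring $A\otimes_k k'$ is a filtered colimit of finite products of strictly henselian rings.
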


\begin{proof}
The functor $\widetilde{e}_*$ admits a left adjoint 
$\widetilde{e}{}^*$ sending an $e_*\Lambda$-module $M$ 
to $e^*(M)\otimes_{e^*e_*\Lambda}\Lambda$.
It is enough to prove the following two properties.
\begin{enumerate}

\item The image of the functor 
$\widetilde{e}^*$ generates $\SH_{\et}(k';\Lambda)$
under colimits. 

\item The unit of the adjunction 
$\id \to \widetilde{e}_*\widetilde{e}{}^*$
is an equivalence. 

\end{enumerate}
The first property follows from Lemma
\ref{lemma:e-star-generates-sh-k-prime} below. 
To prove the second property, we use Lemmas
\ref{lemma:generation-by-strongly-dualizable-objects}
and \ref{lemma:commutation-with-colimits-for-e-lower-star}
to reduce to showing that the unit of the adjunction is 
an equivalence when evaluated at objects of the form
$M\otimes e_*\Lambda$, with $M\in \SH_{\et}(k;\Lambda)$
dualizable. The resulting morphism coincides with
$M\otimes e_*\Lambda
\to e_*e^*(M)$ which is indeed an equivalence by
\cite[Lemme 2.8]{gal-mot-1}.
\end{proof}

\begin{lemma}
\label{lemma:e-star-generates-sh-k-prime}
Let $k'/k$ be an algebraic extension and let 
$e:\Spec(k') \to \Spec(k)$ be the associated morphism. 
The image of the functor 
$e^*:\SH_{\et}(k;\Lambda)\to \SH_{\et}(k';\Lambda)$
generates $\SH_{\et}(k';\Lambda)$ under colimits.
\end{lemma}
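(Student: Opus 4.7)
The plan is to reduce to the case of a finite extension via continuity, and then handle the finite separable and finite purely inseparable subcases separately.

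First I would write $k'=\colim_{\alpha}k'_{\alpha}$ as the filtered colimit of its finite subextensions of $k$. By continuity of \'etale motives, in the form of \cite[Proposition 2.5.11]{AGAV}, this yields an equivalence $\colim_{\alpha}\SH_{\et}(k'_{\alpha};\Lambda)\simeq \SH_{\et}(k';\Lambda)$ in $\Prl$. In particular, the images of the base-change functors $(e_{k'/k'_{\alpha}})^{*}$ jointly generate $\SH_{\et}(k';\Lambda)$ under colimits. Since $e$ factors as $e_{k'/k'_{\alpha}}\circ e_{\alpha}$ with $e_{\alpha}:\Spec(k'_{\alpha})\to\Spec(k)$, it suffices to prove the statement when $k'/k$ is finite.

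For such a finite extension I would factor $k\subset k_{0}\subset k'$ with $k_{0}/k$ separable and $k'/k_{0}$ purely inseparable. The morphism associated to the purely inseparable factor is a universal homeomorphism, so since the characteristic of $k$ is invertible in $\pi_{0}(\Lambda)$, it induces an equivalence on $\SH_{\et}$ by a standard invariance result for \'etale motives under universal homeomorphisms. We are thus reduced to the case where $e$ is finite \'etale, which I would handle by a simple retract argument: the tensor product $k'\otimes_{k}k'$ splits off a copy of $k'$ via the multiplication map, so for any $X\in\Sm_{k'}$, viewing $X$ as a smooth $k$-variety via the composite $X\to\Spec(k')\to\Spec(k)$, one has a decomposition $X\times_{k}k'\simeq X\amalg X'$ in $\Sm_{k'}$. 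This exhibits $\M_{k'}(X)$ as a direct summand of $e^{*}\M_{k}(X)$, so $\M_{k'}(X)$ belongs to the localizing subcategory generated by the image of $e^{*}$. Since the objects $\M(X)(n)[m]$ for $X\in\Sm_{k'}$ and $m,n\in\Z$ generate $\SH_{\et}(k';\Lambda)$ under colimits, desuspensions and Tate twists, the conclusion follows.

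The only nontrivial input in this strategy is the invariance of \'etale motives under universal homeomorphisms with $p$-invertible coefficients; the remaining steps are combinations of elementary continuity and a retract argument.
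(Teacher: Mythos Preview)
Your argument is correct and follows essentially the same approach as the paper: both handle the inseparable part via invariance of $\SH_{\et}$ under universal homeomorphisms (the paper passes to perfections via \cite[Theorem 2.9.7]{AGAV}) and conclude with the same retract argument exhibiting $\M(X)$ as a summand of $e^*\M_k(X_0)$. The only difference is organizational: the paper descends each $X\in\Sm_{k'}$ directly to a model over a finite separable subextension rather than first invoking continuity to reduce to the finite case.
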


\begin{proof}
Using \cite[Theorem 2.9.7]{AGAV}, we may replace 
$k$ and $k'$ with their perfections and assume that $k'/k$ is a 
separable extension. If $X\in \Sm_{k'}$, then $X$ can be defined
over a finite separable sub-extension $l/k$ of $k'/k$. 
But, if $X_0\in \Sm_l$ is such that $X_0\otimes_l k'\simeq X$, 
then $X$ is a clopen subscheme of $X_0\otimes_k k'$.
It follows that the motive of $X$ is a direct summand of the base change
along $e$ of the motive of $X_0$. This enables us to conclude.
\end{proof}

\begin{lemma}
\label{lemma:commutation-with-colimits-for-e-lower-star}
Let $k'/k$ be an algebraic extension and let 
$e:\Spec(k') \to \Spec(k)$ be the associated morphism. 
The functor 
$e_*:\SH_{\et}(k';\Lambda)\to \SH_{\et}(k;\Lambda)$
is colimit-preserving.
\end{lemma}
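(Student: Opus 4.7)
The plan is to show that the symmetric monoidal functor $e^*$ preserves compact objects, which is equivalent to $e_*$ preserving filtered colimits; combined with the preservation of finite colimits (automatic for any exact functor between stable $\infty$-categories), this yields colimit-preservation of $e_*$.

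First I would reduce to the case where the base field has finite virtual $\Lambda$-cohomological dimension, using the device already employed in the proofs of Proposition \ref{prop:key-for-constructing-bf-Gamma-W} and Proposition \ref{prop:weil-spectrum-from-realization-functor}. Write $k=\colim_\beta k_\beta$ as a filtered union of subfields $k_\beta$ finitely generated over the prime field, and for each $\beta$ set $k'_\beta$ to be the algebraic closure of $k_\beta$ in $k'$; then $k'=\colim_\beta k'_\beta$ with each $k'_\beta/k_\beta$ algebraic. Letting $\iota_\beta,\iota'_\beta$ denote the inclusions of spectra and $e_\beta:\Spec(k'_\beta)\to\Spec(k_\beta)$ the induced morphism, we have $\iota_\beta\circ e_\beta=e\circ\iota'_\beta$. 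By continuity \cite[Theorem 2.5.1]{AGAV}, $\SH_{\et}(k;\Lambda)$ and $\SH_{\et}(k';\Lambda)$ are the filtered colimits in $\Prl$ of the $\SH_{\et}(k_\beta;\Lambda)$ and $\SH_{\et}(k'_\beta;\Lambda)$ respectively, along the pullback functors.

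The main case concerns $e_\beta$. Here $k_\beta$, being finitely generated over the prime field, has finite virtual $\Lambda$-cohomological dimension by \cite[Proposition 3.2.3]{AGAV}, and the same holds for $k'_\beta$ since algebraic extensions preserve this invariant. By \cite[Proposition 3.1.3]{perfection-SH}, both $\SH_{\et}(k_\beta;\Lambda)$ and $\SH_{\et}(k'_\beta;\Lambda)$ are then compactly generated by their dualizable objects. Since the symmetric monoidal functor $e_\beta^*$ preserves dualizable objects and these coincide with the compact objects on both sides, $e_\beta^*$ preserves compact objects. The same argument applied to the transition functors of the two continuity colimits shows that these colimits are actually filtered colimits of compactly generated $\infty$-categories along compact-preserving functors, and consequently the structure maps $\iota_\beta^*$ and $\iota'_\beta{}^*$ also preserve compact objects.

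To conclude, the objects $\iota_\beta^*G_\beta$ with $G_\beta$ compact in some $\SH_{\et}(k_\beta;\Lambda)$ are compact in $\SH_{\et}(k;\Lambda)$ and generate it under colimits (as every $X\in\Sm_k$ is defined over some $k_\beta$). By the base change identity $e^*\iota_\beta^*\simeq\iota'_\beta{}^*e_\beta^*$, the image $e^*\iota_\beta^*G_\beta\simeq\iota'_\beta{}^*e_\beta^*G_\beta$ is compact in $\SH_{\et}(k';\Lambda)$, as the composite of two compact-preserving functors applied to a compact object. Since $e^*$ is exact and preserves a generating set of compact objects, it preserves all compact objects. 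The main obstacle is the bookkeeping of the $\Prl$-versus-``compactly-generated'' structure of the continuity decomposition, ensuring that $\iota'_\beta{}^*$ really preserves compactness in $\SH_{\et}(k';\Lambda)$; this is handled uniformly by applying the ``dualizable equals compact'' principle to all the relevant functors, including the transition functors.
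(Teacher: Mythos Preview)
Your approach—showing that $e^*$ preserves compact objects, hence $e_*$ preserves filtered colimits—is natural and genuinely different from the paper's, but there is a real gap in the reduction step. The continuity result you invoke (the paper uses \cite[Proposition 2.5.11]{AGAV} for the algebraic case, not Theorem~2.5.1) only asserts that the comparison functor
\[
\underset{\beta}{\colim}\,\SH_{\et}(k_\beta;\Lambda)\longrightarrow \SH_{\et}(k;\Lambda)
\]
is a \emph{localization}, not an equivalence; the paper is explicit about this in the proof of Proposition~\ref{prop:key-for-constructing-bf-Gamma-W}. Consequently you cannot conclude that $\SH_{\et}(k;\Lambda)$ is compactly generated, nor that the structure maps $\iota_\beta^*$, $\iota'_\beta{}^*$ send compacts to compacts: the ``dualizable equals compact'' principle holds in $\SH_{\et}(k_\beta;\Lambda)$ precisely because $k_\beta$ has finite virtual $\Lambda$-cohomological dimension, but it need not hold in $\SH_{\et}(k;\Lambda)$ for arbitrary $k$ (this is exactly why the paper reduces to the finite-vcd case in the proofs of Propositions~\ref{prop:key-for-constructing-bf-Gamma-W} and~\ref{prop:weil-spectrum-from-realization-functor}). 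Without compact generation of the source, the implication ``$e^*$ preserves compacts $\Rightarrow$ $e_*$ preserves filtered colimits'' breaks down, since testing against compact objects no longer detects equivalences.

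The paper sidesteps this entirely by a direct argument at the sheaf level: the presheaf pushforward $e_*:\Shv_{t_\amalg}(\Sm_{k'})\to\Shv_{t_\amalg}(\Sm_k)$, given by $F\mapsto F((-)\times_k k')$, is manifestly colimit-preserving, and one checks it preserves \'etale-local equivalences using the fact that for any strictly henselian $k$-algebra $A$ the base change $A\otimes_k k'$ is a filtered colimit of finite products of strictly henselian rings (because $k'/k$ is algebraic). The passage to $T$-spectra and motivic localization then poses no difficulty. This argument works uniformly, with no cohomological-dimension hypothesis on $k$.
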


\begin{proof}
The functor 
$e_*:\Shv_{t_{\amalg}}(\Sm_{k'}) \to
\Shv_{t_{\amalg}}(\Sm_k)$
preserves \'etale local equivalences. 
(Indeed, if $A$ is a strictly henselian ring over $k$, then 
$A\otimes_k k'$ can be written as a filtered colimit 
of finite products of strictly henselian rings.)
It follows that 
\begin{equation}
\label{eq-lemma:commut-with-colimits-for-e-lower-star-1}
e_*:\Shv^{\wedge}_{\et}(\Sm_{k'};\Lambda)
\to \Shv^{\wedge}_{\et}(\Sm_k;\Lambda)
\end{equation}
is also colimit-preserving, and the same applies for 
the $\infty$-categories of $T$-prespectra. Since 
$\SH_{\et}(k;\Lambda)$
and $\SH_{\et}(k';\Lambda)$ are obtained from these 
$\infty$-categories of $T$-prespectra by localizations
that are compatible with the functor
\eqref{eq-lemma:commut-with-colimits-for-e-lower-star-1}, 
the result follows.
\end{proof}

\begin{proof}[Proof of Theorem 
\ref{thm:relation-between-weil-coh-theories-and-extension}]
We need to show that $e^*\mathbf{\Gamma}_W$ is a Weil spectrum. 
By Proposition
\ref{prop:SH-et-of-k-prime-is-module-over-e-star},
the functor $e^*$ is equivalent to 
$-\otimes e_*\Lambda:\SH_{\et}(k;\Lambda)
\to \SH_{\et}(k;e_*\Lambda)$.
Thus, it suffices to prove that the functor
$\Mod_{\Rder_W^*(e_*\Lambda)} \to \SH_{\et}(k;\mathbf{\Gamma}_W\otimes e_*\Lambda)$
is an equivalence, which follows from the fact that the commutative algebra
$\mathbf{\Gamma}_W\otimes e_*\Lambda\simeq 
\mathbf{\Gamma}_W\otimes_{\Gamma_W(\pt)}\Rder_W^*(e_*\Lambda)$
is a Weil spectrum over $k$.
\end{proof}

We also have the analogs of Theorem
\ref{thm:relation-between-weil-coh-theories-and-extension}
and Proposition
\ref{prop:SH-et-of-k-prime-is-module-over-e-star}
in the rigid analytic setting.
For the ease of reference, we state these results leaving the 
details of their proofs to the reader.

\begin{thm}
\label{thm:relation-bet-weil-coh-theories-and-exten-rigan}
Let $K'/K$ be an extension of height $1$ valued fields and let
$e:\Spa(\widehat{K}{}')
\to \Spa(\widehat{K})$ be the induced morphism. 
The right-lax monoidal 
functor $e_*:\RigSH_{\et}(K';\Lambda)
\to \RigSH_{\et}(K;\Lambda)$ takes Weil spectra to Weil spectra 
inducing a functor 
\begin{equation}
\label{thm:rel-bet-weil-coh-theories-and-exten-rigan-1}
e_*:\RigWSp(K';\Lambda) \to \RigWSp(K;\Lambda).
\end{equation}
Assume now that the extension $K'/K$ is algebraic, then
the same is true for the symmetric monoidal functor 
$e^*:\RigSH_{\et}(K;\Lambda) \to 
\RigSH_{\et}(K';\Lambda)$
and the induced functor
\begin{equation}
\label{thm:rel-bet-weil-coh-theories-and-exten-rigan-3}
e^*:\RigWSp(K;\Lambda) \to \RigWSp(K';\Lambda)
\end{equation}
is left adjoint to the functor
\eqref{thm:rel-bet-weil-coh-theories-and-exten-rigan-1}.
Moreover, given a Weil spectrum $\mathbf{\Gamma}_W\in 
\RigWSp(K;\Lambda)$, the coefficient ring of the Weil 
spectrum $e^*\mathbf{\Gamma}_W$ is equivalent to 
$\Rder_W^*(e_*\Lambda)$ where $\Rder^*_W$
is the realization functor associated to $\Gamma_W$.
\end{thm}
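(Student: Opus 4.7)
The plan is to mirror the proofs of Theorem \ref{thm:relation-between-weil-coh-theories-and-extension}, Proposition \ref{prop:SH-et-of-k-prime}{}, sorry -- Proposition \ref{prop:SH-et-of-k-prime-is-module-over-e-star}, and Lemmas \ref{lemma:e-star-generates-sh-k-prime}--\ref{lemma:commutation-with-colimits-for-e-lower-star} in the rigid analytic setting, exactly as the authors suggest. For the assertion about $e_*$ (with no algebraicity hypothesis) no real work is needed: given $\mathbf{\Gamma}_W\in \RigWSp(K';\Lambda)$ with associated plain realization $\Rder^*_W\in \RigReal(K';\Lambda)$ (Theorem \ref{thm:equiv-between-WSp-and-Plain-real-rigan}), precomposition with the symmetric monoidal colimit-preserving functor $e^*:\RigSH_{\et}(K;\Lambda)^{\otimes}\to \RigSH_{\et}(K';\Lambda)^{\otimes}$ yields $\Rder^*_W\circ e^*\in \RigReal(K;\Lambda)$, and the Weil spectrum attached to this composite is $e_*\mathbf{\Gamma}_W$.

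For the part concerning $e^*$, the crux is the rigid analog of Proposition \ref{prop:SH-et-of-k-prime-is-module-over-e-star}: assuming $K'/K$ algebraic, I want to establish that the functor
$$\widetilde{e}_*:\RigSH_{\et}(K';\Lambda) \to \RigSH_{\et}(K;e_*\Lambda)$$
is an equivalence of $\infty$-categories. Once this is granted, the argument proceeds exactly as in the proof of Theorem \ref{thm:rig-upper-star-weil-coh-theory}: under $\widetilde{e}_*$ the object $e^*\mathbf{\Gamma}_W$ corresponds to $\mathbf{\Gamma}_W\otimes e_*\Lambda$, which is a commutative $\mathbf{\Gamma}_W$-algebra and hence a Weil spectrum by Corollary \ref{cor:algebra-over-weil-spectrum-weil-spectrum-rigan}; Proposition \ref{prop:every-module-is-extended-constant-rig-anal} then identifies its coefficient ring with $\Rder_W^*(e_*\Lambda)$, and the adjunction $(e^*,e_*)$ on the ambient motivic $\infty$-categories restricts to the claimed adjunction between Weil spectra.

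To prove that $\widetilde{e}_*$ is an equivalence I need two ingredients. First, the image of $e^*$ generates $\RigSH_{\et}(K';\Lambda)$ under colimits. For this I would argue as in Lemma \ref{lemma:e-star-generates-sh-k-prime}, replacing the affine fact used there by its rigid analog: every smooth quasi-compact rigid analytic $\widehat{K}'$-variety descends, up to a clopen decomposition, to a smooth rigid analytic variety over the completion of some finite sub-extension $L/K$ inside $K'/K$ (using that finite extensions of $\widehat{K}$ come from finite extensions of $K$ by Krasner's lemma, as in the proof of Proposition \ref{prop:rig-upper-star-generates-under-colim}), together with Lemma \ref{lemma:generation-good-reduction-dualizability}. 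Second, the functor $e_*:\RigSH_{\et}(K';\Lambda)\to \RigSH_{\et}(K;\Lambda)$ is colimit-preserving. Here I would copy the strategy of Lemma \ref{lemma:Rig-lower-star-commutes-with-colimits}: split along the cofiber sequence $M\to M\otimes \Q \to M\otimes \Q/\Z$; on the rational part use the compatible square with $\Lambda\otimes \Q$ coefficients to reduce to the case where $\Lambda$ is a $\Q$-algebra, where $\RigSH_{\et}$ is compactly generated by dualizable objects and $e_*$ preserves compact generators; on the torsion part invoke rigidity (\cite[Theorems 2.10.3 \& 2.10.4]{AGAV}) to identify the torsion subcategories with small \'etale sheaves on $\Et_{\widehat{K}'}$ and $\Et_{\widehat{K}}$, and check colimit-preservation there directly using the structure of strictly henselian covers. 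With both ingredients in hand, the unit $\id\to \widetilde{e}_*\widetilde{e}{}^*$ is tested on generators of the form $M\otimes e_*\Lambda$ with $M\in\RigSH_{\et}(K;\Lambda)$ dualizable, and becomes the projection-formula equivalence $M\otimes e_*\Lambda \simeq e_*e^*M$ of \cite[Lemme 2.8]{gal-mot-1}. The main obstacle is the colimit-preservation of $e_*$: unlike in the algebraic setting, one cannot argue directly on small \'etale sites of rigid spaces, and the rigidity detour is unavoidable.
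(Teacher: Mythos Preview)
Your proposal is correct and follows the paper's intended approach: the paper explicitly leaves the proof to the reader, indicating it is the straightforward rigid analytic analog of Theorem~\ref{thm:relation-between-weil-coh-theories-and-extension} and Proposition~\ref{prop:SH-et-of-k-prime-is-module-over-e-star}, and you have correctly identified the two ingredients (generation and colimit-preservation of $e_*$) and how to adapt them.

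One small wobble worth flagging: in your rational argument for colimit-preservation of $e_*$, the phrase ``$e_*$ preserves compact generators'' is not quite what you want. The clean statement is that, with $\Q$-coefficients, compact objects coincide with dualizable objects (via Lemma~\ref{lemma:generation-good-reduction-dualizability}), and since $e^*$ is symmetric monoidal it preserves dualizables, hence compacts, so its right adjoint $e_*$ is colimit-preserving. Alternatively, and perhaps more in line with how the paper treats similar reductions (see the uniquely divisible case in the proof of Lemma~\ref{lemma:adjointability-rig-upper-star-henselian}), you can invoke \cite[Theorem~2.5.1 \& Proposition~2.5.11]{AGAV} to reduce to finite extensions $K'/K$, where $e_*$ is colimit-preserving because $e$ is finite. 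Either route closes the argument; your torsion-side treatment via rigidity is exactly right.
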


\begin{prop}
\label{prop:RigSH-et-of-k-prime-is-module-over-e-star}
Let $K'/K$ be an algebraic extension of height $1$ valued fields and let
$e:\Spa(\widehat{K}{}')
\to \Spa(\widehat{K})$ be the induced morphism.
Then the functor 
$$\widetilde{e}_*:\RigSH_{\et}(K';\Lambda)
\to \RigSH_{\et}(K;e_*\Lambda)$$
is an equivalence of $\infty$-categories. 
\end{prop}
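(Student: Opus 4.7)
The plan is to transplant the proof of Proposition \ref{prop:SH-et-of-k-prime-is-module-over-e-star} to the rigid analytic setting, replacing every appeal to a result about $\SH_{\et}$ by its rigid analytic counterpart established earlier in the paper. Concretely, I construct the left adjoint $\widetilde{e}^*$ to $\widetilde{e}_*$ by the formula $\widetilde{e}^*(M) = e^*(M) \otimes_{e^*e_*\Lambda} \Lambda$, and it suffices to check that (a) the image of $\widetilde{e}^*$ generates $\RigSH_{\et}(K';\Lambda)$ under colimits, and (b) the unit morphism $\id \to \widetilde{e}_*\widetilde{e}^*$ is an equivalence.

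For (a), I establish the rigid analytic analog of Lemma \ref{lemma:e-star-generates-sh-k-prime}, asserting that the image of $e^*$ generates $\RigSH_{\et}(K';\Lambda)$ under colimits. By Lemma \ref{lemma:generation-good-reduction-dualizability}, this target is generated by the motives $\pi_{X,\sharp}\Lambda(m)[n]$ with $X$ having potential good reduction, i.e., $X = \mathcal{X}_\eta$ for $\mathcal{X}$ a smooth formal scheme over $L^\circ$ for some finite extension $L/\widehat{K'}$. Writing $K'$ as the filtered union of its finite sub-extensions $L_\alpha/K$ and invoking the continuity of $\RigSH_{\et}$ along this system (analogous to \cite[Theorem 2.5.1]{AGAV}, in the same form used in the proof of Proposition \ref{prop:key-for-constructing-bf-Gamma-W-rig-anal}), any such motive is pulled back from $\RigSH_{\et}(L_\alpha;\Lambda)$ for $\alpha$ sufficiently large. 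Since each $L_\alpha/K$ is finite, the corresponding pullback factors through $e^*$ up to base change along a finite \'etale cover, placing $X$ in the localizing subcategory generated by the image of $e^*$.

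For (b), I need the analog of Lemma \ref{lemma:commutation-with-colimits-for-e-lower-star}, namely that $e_*:\RigSH_{\et}(K';\Lambda)\to \RigSH_{\et}(K;\Lambda)$ is colimit-preserving. This is obtained by splitting any inductive system $(M_i)_i$ via the cofiber sequence $M_i \to M_i\otimes \Q \to M_i\otimes \Q/\Z$, exactly as in the proof of Lemma \ref{lemma:Rig-lower-star-commutes-with-colimits}: the rational part is handled by a commutative square with $e_*$ on $\RigSH_{\et}(-;\Lambda\otimes\Q)$, where the relevant colimit-preservation reduces via \cite{mot-rig} to the analogous assertion in characteristic zero; the torsion part reduces, by rigidity \cite[Theorems 2.10.3 \& 2.10.4]{AGAV}, to the obvious colimit-preservation of the direct image $\iota_*:\Shv_{\et}(\Et_{\widehat{K'}};\Lambda)\to \Shv_{\et}(\Et_{\widehat{K}};\Lambda)$, which holds because finite \'etale $\widehat{K'}$-algebras are refined by base changes of finite \'etale $\widehat{K}$-algebras (Krasner's lemma along the algebraic extension). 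Granted this, both sides of the unit are colimit-preserving in the argument, so by Lemma \ref{lemma:generation-good-reduction-dualizability} and part (a) it suffices to test the unit on objects of the form $M\otimes e_*\Lambda$ with $M\in\RigSH_{\et}(K;\Lambda)$ dualizable. The unit then becomes the projection-formula morphism $M\otimes e_*\Lambda \to e_*e^*M$, which is an equivalence by \cite[Lemme 2.8]{gal-mot-1}.

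The principal obstacle is the colimit-preservation of $e_*$: unlike its algebraic analog, the decomposition into rational and torsion parts requires some care, because the torsion half rests on the rigidity equivalence of \cite[Theorems 2.10.3 \& 2.10.4]{AGAV} and the behaviour of the \'etale site of $\widehat{K'}$ over that of $\widehat{K}$ along an infinite algebraic (hence not finitely presented) extension. Once this technical point is settled, the rest of the argument is an essentially mechanical transcription of the proof of Proposition \ref{prop:SH-et-of-k-prime}.
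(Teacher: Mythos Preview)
The paper does not give a proof of this proposition; it explicitly leaves the details to the reader, expecting a transplant of the proof of Proposition~\ref{prop:SH-et-of-k-prime-is-module-over-e-star}. Your proposal does exactly this, and the overall architecture --- construct $\widetilde{e}^*$, prove generation under colimits, prove the unit is an equivalence by reducing to dualizable generators via colimit-preservation of $e_*$ and invoking \cite[Lemme~2.8]{gal-mot-1} --- is correct and is what the paper intends.

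There is one imprecise spot. In your analogue of Lemma~\ref{lemma:commutation-with-colimits-for-e-lower-star}, you handle colimit-preservation of $e_*$ by a rational/torsion splitting modeled on Lemma~\ref{lemma:Rig-lower-star-commutes-with-colimits}, which is a sensible adaptation (the direct algebraic argument via strictly henselian stalks does not transplant so cleanly to the rigid setting). The torsion part is fine. But your justification of the rational part --- ``reduces via \cite{mot-rig} to the analogous assertion in characteristic zero'' --- does not parse: there is no reduction to algebraic motives here, and both sides are already rigid analytic categories. What you actually need is that $e_*:\RigSH_{\et}(K';\Lambda_{\Q})\to\RigSH_{\et}(K;\Lambda_{\Q})$ is colimit-preserving. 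This follows because with $\Q$-coefficients these categories are compactly generated (cf.\ \cite[Proposition~2.4.22]{AGAV}) and $e^*$ preserves the compact generators $\M(X)(m)[n]$ with $X\in\RigSm_K^{\qcqs}$, since base change of a qcqs rigid space along $\Spa(\widehat{K}')\to\Spa(\widehat{K})$ remains qcqs. Once you replace the vague sentence with this, the argument goes through.
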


The next results describe the relation between motivic 
Hopf algebroids and the classical Galois groups of algebraic extensions.

\begin{prop}
\label{prop:hopf-algebroid-of-Gamma-in-finite-ext}
Let $k'/k$ be a quasi-Galois algebraic extension
with Galois group $G$. Let $\Gamma_{W'}$ be a Weil cohomology theory 
for algebraic $k'$-varieties and let $\Gamma_W=(k'/k)_*\Gamma_{W'}$.
Then, there is a cocartesian diagram of Hopf algebroids
$$\xymatrix{\Lambda^{\B_{\bullet}(G)} \ar[r] \ar[d] 
& \Hmot(\Gamma_W)
\ar[d]\\
\Lambda \ar[r] & \Hmot(\Gamma_{W'}).\!}$$
In particular, if $\Hmot(\Gamma_W)$ is connective, then so it 
$\Hmot(\Gamma_{W'})$.
\end{prop}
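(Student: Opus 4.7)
The plan is to establish the cocartesian diagram level by level by means of the projection formula for the morphism $e : \Spec(k') \to \Spec(k)$, and to identify $e^*e_*\Lambda$ with $\Lambda^{\B_1 G}$ using the quasi-Galois structure. First, iterating the projection formula for $e_*$, together with the identity $e^*e_*(A \otimes e^*e_*B) \simeq e^*e_*A \otimes e^*e_*B$ obtained by applying the projection formula twice, yields a cosimplicial equivalence
$$\mathbf{\Gamma}_W^{\otimes_\Lambda \bullet + 1} \simeq e_*\bigl(\mathbf{\Gamma}_{W'} \otimes_\Lambda (e^*e_*\mathbf{\Gamma}_{W'})^{\otimes \bullet}\bigr)$$
in $\SH_{\et}(k;\Lambda)$. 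Applying $\Gamma(\pt; \Omega^\infty_T(-))$ and noting that it composes with $e_*$ to give $\Gamma(\Spec(k'); \Omega^\infty_T(-))$, one obtains
$$\Hmot(\Gamma_W)(\Delta^n) \simeq \Gamma\bigl(\Spec(k'); \Omega^\infty_T(\mathbf{\Gamma}_{W'} \otimes_\Lambda (e^*e_*\mathbf{\Gamma}_{W'})^{\otimes n})\bigr).$$

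Next, the quasi-Galois structure supplies a canonical equivalence $e^*e_*\Lambda \simeq \Lambda^{\B_1 G}$ of commutative algebras in $\SH_{\et}(k';\Lambda)$ arising from the identification $k' \otimes_k k' \simeq \mathrm{Map}_{\rm cts}(G, k')$ (the profinite case by passing to the limit over finite Galois subextensions via continuity as in \cite[Proposition 2.5.11]{AGAV}), and the counit $e^*e_*\Lambda \to \Lambda$ matches the augmentation of $\Lambda^{\B_1 G}$ at the basepoint. By monoidality, $(e^*e_*\Lambda)^{\otimes n} \simeq \Lambda^{\B_n G}$. Composing with the unit $\Lambda \to \mathbf{\Gamma}_{W'}$ and applying $\Gamma(\Spec(k');-)$ produces the natural morphism of cosimplicial commutative algebras $\Lambda^{\B_\bullet G} \to \Hmot(\Gamma_W)$.

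For the pushout property at level $n$, it suffices to verify that the natural morphism
$$(e^*e_*\mathbf{\Gamma}_{W'})^{\otimes n} \otimes_{\Lambda^{\B_n G}} \Lambda \to \mathbf{\Gamma}_{W'}^{\otimes n}$$
is an equivalence. In the finite Galois case, $e^*e_*\mathbf{\Gamma}_{W'} \simeq \bigoplus_{g \in G} g^*\mathbf{\Gamma}_{W'}$ and $\Lambda^{\B_1 G} \simeq \bigoplus_G \Lambda$ acts componentwise, so the augmentation at $(\mathrm{id},\ldots,\mathrm{id})$ picks out the trivial component, giving $\mathbf{\Gamma}_{W'}^{\otimes n}$; the profinite case follows by continuity. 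Applying $\Gamma(\Spec(k');\Omega^\infty_T(-))$ and commuting it with $\otimes_{\Lambda^{\B_n G}}$ (justified by the constant nature of $\Lambda^{\B_n G}$ on $\Spec(k')$) yields the level-$n$ cocartesian square, which assembles to the cocartesian square of Hopf algebroids.

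For the connectivity assertion, by Lemma \ref{lemma:connectivity-hopf-algebroid} it suffices to verify connectivity of $\Hmot(\Gamma_{W'})(\Delta^n)$ for $n \leq 1$. For $n = 0$, this is $\Gamma_W(\pt) = \Gamma_{W'}(\pt)$, connective by the connectivity of $\Hmot(\Gamma_W)$. For $n = 1$, the pushout gives $\Hmot(\Gamma_{W'})(\Delta^1) \simeq \Hmot(\Gamma_W)(\Delta^1) \otimes_{\Lambda^{\B_1 G}} \Lambda$, connective as a tensor product of connective algebras. The main obstacle to anticipate is the careful treatment of the profinite case: the identification $e^*e_*\Lambda \simeq \Lambda^{\B_1 G}$ for infinite $G$ and the connectivity of the terms $\Lambda^{\B_n G}$ both rest on invoking the continuity of $\SH_{\et}$ to reduce to finite Galois subextensions before passing to the limit.
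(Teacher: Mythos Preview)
Your argument is correct and reaches the same conclusion, but it is organized differently from the paper's proof. The paper invokes Proposition~\ref{prop:SH-et-of-k-prime-is-module-over-e-star} to identify $\SH_{\et}(k';\Lambda)$ with $\SH_{\et}(k;e_*\Lambda)$, so that $\mathbf{\Gamma}_{W'}$ becomes $\mathbf{\Gamma}_W$ equipped with an $e_*\Lambda$-algebra structure; then $\Hmot(\Gamma_{W'})$ is $\Gamma(\pt;\Omega^{\infty}_T(\Cech(\mathbf{\Gamma}_W/e_*\Lambda)))$, and the cocartesian square arises from a two-step chain of cocartesian squares $\Lambda^{\B(G)}\to \Cech(e_*\Lambda)\to \Cech(\mathbf{\Gamma}_W)$ over $\Lambda\to e_*\Lambda\to \Cech(\mathbf{\Gamma}_W/e_*\Lambda)$, using that $\B_\bullet(G)\simeq \pi_0(\Cech^\bullet(k'/k))$. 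You instead stay over $k'$, expand $\Cech(\mathbf{\Gamma}_W)$ via the projection formula for $e_*$, and read off the $\Lambda^{\B_\bullet G}$-structure from $e^*e_*\Lambda\simeq\Lambda^G$. The two routes are equivalent reformulations of the same computation: yours is more hands-on and avoids appealing to Proposition~\ref{prop:SH-et-of-k-prime-is-module-over-e-star}, while the paper's packaging via relative \v{C}ech conerves makes the cosimplicial compatibilities automatic. One point where the paper is sharper: to commute $\Gamma(\pt;\Omega^{\infty}_T(-))$ with $-\otimes_{\Lambda^{\B_n G}}\Lambda$, it invokes Corollary~\ref{cor:algebra-over-weil-spectrum-weil-spectrum} (each $\Cech^n(\mathbf{\Gamma}_W)$ is a Weil spectrum, hence so is any algebra over it, with the expected coefficient ring). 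Your ``constant nature'' justification is correct in spirit---in the finite case it is a direct-summand projection, and the profinite case reduces to this---but citing the Weil-spectrum machinery would make the step airtight without a separate continuity argument.
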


\begin{proof}
Modulo the equivalence in Proposition 
\ref{prop:SH-et-of-k-prime-is-module-over-e-star},
the Weil spectrum $\mathbf{\Gamma}_{W'}\in \WSp(k';\Lambda)$
corresponds to an $e_*\Lambda$-algebra structure on the 
Weil spectrum $\mathbf{\Gamma}_W\in \SH_{\et}(k;\Lambda)$. 
Thus, we see that 
$$\Hmot(\Gamma_W)=\Gamma(\pt;\Omega^{\infty}_T\Cech(\mathbf{\Gamma}_W))
\quad \text{and} \quad
\Hmot(\Gamma_{W'})=\Gamma(\pt;\Omega^{\infty}_T\Cech(\mathbf{\Gamma}_W/e_*\Lambda)).$$
Now, there is a commutative diagram with cocartesian squares 
$$\xymatrix{\Lambda^{\B(G)} \ar[r] \ar[d] & \Cech(e_*\Lambda)
\ar[r] \ar[d] & \Cech(\mathbf{\Gamma}_W) \ar[d] \\
\Lambda \ar[r] & e_*\Lambda \ar[r] & \Cech(\mathbf{\Gamma}_W/e_*\Lambda)}$$
in $\CAlg(\SH_{\et}(k;\Lambda))^{\Deltasimp}$, where 
$\B_{\bullet}(G)$ is the classifying space of the profinite group 
$G$ which can be identified with the simplicial profinite set
$\pi_0(\Cech^{\bullet}(k'/k))$.
It follows that 
$\Cech(\mathbf{\Gamma}_W/e_*\Lambda)\simeq 
\Cech(\mathbf{\Gamma}_W)\otimes_{\Lambda^{\B(G)}}\Lambda$.
To conclude, it remains to see that the obvious morphism
$$\Rder\Gamma(\pt;\Omega^{\infty}_T(\Cech(\mathbf{\Gamma}_W)))
\otimes_{\Lambda^{\B(G)}}\Lambda
\to \Rder\Gamma(\pt;\Omega^{\infty}_T(\Cech(\mathbf{\Gamma}_W)
\otimes_{\Lambda^{\B(G)}}\Lambda))$$
is an equivalence, and we may prove this degreewise.
For $n\geq 0$, 
$\Cech^n(\mathbf{\Gamma}_W)$ is a Weil spectrum 
with ring of coefficients 
$A=\Rder\Gamma(\pt;\Omega^{\infty}_T(\Cech^n(\mathbf{\Gamma}_W)))$.
Using Corollary
\ref{cor:algebra-over-weil-spectrum-weil-spectrum},
we deduce that 
$\Cech^n(\mathbf{\Gamma}_W)\otimes_{\Lambda^{\B_n(G)}}\Lambda$
is a Weil spectrum with ring of coefficients
$A\otimes_{\Lambda^{\B_n(G)}}\Lambda$.
This gives the desired equivalence.
\end{proof}

\begin{prop}
\label{prop:hopf-algebroid-of-Gamma-in-finite-ext-rigan}
Let $K'/K$ be an algebraic extension of height $1$
valued fields, such that $\widehat{K}{}'/\widehat{K}$
is quasi-Galois with Galois group $G$.
Let $\Gamma_{W'}$ be a Weil cohomology theory 
for rigid analytic $K'$-varieties and let $\Gamma_W=(K'/K)_*\Gamma_{W'}$.
Then, there is a cocartesian diagram of Hopf algebroids
$$\xymatrix{\Lambda^{\B_{\bullet}(G)} \ar[r] \ar[d] 
& \Hmot(\Gamma_W)
\ar[d]\\
\Lambda \ar[r] & \Hmot(\Gamma_{W'}).\!}$$
In particular, if $\Hmot(\Gamma_W)$ is connective, then so it 
$\Hmot(\Gamma_{W'})$.
\end{prop}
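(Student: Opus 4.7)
My plan is to adapt the proof of Proposition \ref{prop:hopf-algebroid-of-Gamma-in-finite-ext} essentially verbatim, substituting the rigid analytic analogues of the tools used there. First, I would invoke Proposition \ref{prop:RigSH-et-of-k-prime}{}-\ref{prop:RigSH-et-of-k-prime-is-module-over-e-star} to identify $\RigSH_{\et}(K';\Lambda)$ with $\RigSH_{\et}(K;e_*\Lambda)$ via the functor $\widetilde{e}_*$, where $e:\Spa(\widehat{K}{}')\to\Spa(\widehat{K})$ is the structural morphism. Under this identification, the Weil spectrum $\mathbf{\Gamma}_{W'}\in\RigWSp(K';\Lambda)$ corresponds to an $e_*\Lambda$-algebra structure on the Weil spectrum $\mathbf{\Gamma}_W=e_*\mathbf{\Gamma}_{W'}\in\RigWSp(K;\Lambda)$, so that by Definition \ref{dfn:the-motivic-Hopf-algebra-for-Weil-spectrum} we may write
$$\Hmot(\Gamma_W)=\Gamma(\pt;\Omega^{\infty}_T\Cech(\mathbf{\Gamma}_W))\quad\text{and}\quad \Hmot(\Gamma_{W'})=\Gamma(\pt;\Omega^{\infty}_T\Cech(\mathbf{\Gamma}_W/e_*\Lambda)).$$

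Next, as in the algebraic case, I would produce a commutative diagram with cocartesian squares
$$\xymatrix{\Lambda^{\B(G)} \ar[r] \ar[d] & \Cech(e_*\Lambda)\ar[r] \ar[d] & \Cech(\mathbf{\Gamma}_W) \ar[d]\\ \Lambda \ar[r] & e_*\Lambda \ar[r] & \Cech(\mathbf{\Gamma}_W/e_*\Lambda)}$$
in $\CAlg(\RigSH_{\et}(K;\Lambda))^{\Deltasimp}$, where the first square uses the identification of $\pi_0(\Cech^{\bullet}(\widehat{K}{}'/\widehat{K}))$ with the classifying simplicial profinite set $\B_{\bullet}(G)$ of the profinite Galois group $G$ of the extension $\widehat{K}{}'/\widehat{K}$. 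The quasi-Galois assumption on the completions ensures that this identification makes sense and that $\Cech(e_*\Lambda)$ is computed as $\Lambda^{\B(G)}$. From this it follows that $\Cech(\mathbf{\Gamma}_W/e_*\Lambda)\simeq \Cech(\mathbf{\Gamma}_W)\otimes_{\Lambda^{\B(G)}}\Lambda$.

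The main step that requires genuine input is then the commutation of $\Rder\Gamma(\pt;\Omega^{\infty}_T(-))$ with the base change $-\otimes_{\Lambda^{\B(G)}}\Lambda$, i.e., the equivalence
$$\Rder\Gamma(\pt;\Omega^{\infty}_T\Cech(\mathbf{\Gamma}_W))\otimes_{\Lambda^{\B(G)}}\Lambda\xrightarrow{\sim}\Rder\Gamma(\pt;\Omega^{\infty}_T(\Cech(\mathbf{\Gamma}_W)\otimes_{\Lambda^{\B(G)}}\Lambda)).$$
This I would check degreewise: for each $n\geq 0$, the object $\Cech^n(\mathbf{\Gamma}_W)$ is a commutative $\mathbf{\Gamma}_W$-algebra in $\RigSH_{\et}(K;\Lambda)$, hence by Corollary \ref{cor:algebra-over-weil-spectrum-weil-spectrum-rigan} is itself a Weil spectrum with coefficient ring $A_n=\Rder\Gamma(\pt;\Omega^{\infty}_T\Cech^n(\mathbf{\Gamma}_W))$, and its base change $\Cech^n(\mathbf{\Gamma}_W)\otimes_{\Lambda^{\B_n(G)}}\Lambda$ is again a Weil spectrum with coefficient ring $A_n\otimes_{\Lambda^{\B_n(G)}}\Lambda$, giving the required degreewise equivalence.

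Finally, the connectivity statement is immediate: if $\Hmot(\Gamma_W)$ is connective, then tensoring over the connective $\Lambda^{\B_{\bullet}(G)}$ with the connective $\Lambda$ preserves connectivity, so $\Hmot(\Gamma_{W'})$ is connective as well. The only real obstacle I anticipate is verifying the cocartesian square for the Čech conerve $\Cech(e_*\Lambda)\simeq \Lambda^{\B(G)}$ in the rigid analytic setting, since one must check that the descent-type comparison for the profinite Galois extension $\widehat{K}{}'/\widehat{K}$ holds at the level of $\RigSH_{\et}$; this should follow from \'etale hyperdescent together with Proposition \ref{prop:RigSH-et-of-k-prime-is-module-over-e-star}.
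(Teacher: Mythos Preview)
Your proposal is correct and follows exactly the approach the paper takes: the paper simply states that the proof is identical to that of Proposition~\ref{prop:hopf-algebroid-of-Gamma-in-finite-ext}, and you have spelled out precisely that argument with the appropriate rigid analytic substitutions (Proposition~\ref{prop:RigSH-et-of-k-prime-is-module-over-e-star} in place of Proposition~\ref{prop:SH-et-of-k-prime-is-module-over-e-star}, and Corollary~\ref{cor:algebra-over-weil-spectrum-weil-spectrum-rigan} in place of Corollary~\ref{cor:algebra-over-weil-spectrum-weil-spectrum}). The anticipated obstacle concerning $\Cech(e_*\Lambda)\simeq\Lambda^{\B(G)}$ is handled just as in the algebraic case, using the identification $\B_{\bullet}(G)\simeq\pi_0(\Cech^{\bullet}(\widehat{K}{}'/\widehat{K}))$.
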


\begin{proof}
The proof is identical to the proof of Proposition
\ref{prop:hopf-algebroid-of-Gamma-in-finite-ext}.
\end{proof}

\begin{thm}
\label{thm:completion-of-motivic-Hopf-algebras}
Let $k^{\alg}/k$ be an algebraic closure and let 
$G$ be the Galois group of $k^{\alg}/k$. Let $\Gamma_{W'}$ 
be a Weil cohomology theory for algebraic $k^{\alg}$-varieties
and let $\Gamma_W=(k^{\alg}/k)_*\Gamma_{W'}$.
Then, the natural morphism
$\Lambda^{\B(G)} \to \Hmot(\Gamma_W)$
from Proposition
\ref{prop:hopf-algebroid-of-Gamma-in-finite-ext}
induces an equivalence
\begin{equation}
\label{eq-thm:completion-of-motivic-Hopf-algebras-1}
\left(\Cech(\Gamma_W(\pt))\otimes \Lambda^{\B(G)}\right)^{\wedge}_{\ell} \xrightarrow{\sim} 
\Hmot(\Gamma_W)^{\wedge}_{\ell}
\end{equation}
after $\ell$-completion. 
\end{thm}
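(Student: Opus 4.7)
The strategy is to reduce to a form of rigidity for \'etale motives with $\ell$-torsion coefficients. Since $\ell$-completion is expressed as the (inverse) limit over $n$ of reductions modulo $\ell^n$, it suffices to show that for each $n$, the natural map
\[
\bigl(\Cech(\Gamma_W(\pt))\otimes \Lambda^{\B(G)}\bigr)/\ell^n \longrightarrow \Hmot(\Gamma_W)/\ell^n
\]
is an equivalence; the full result then follows by passing to the limit $n\to \infty$, using that both sides are built from connective spectra. This converts the problem from one about arbitrary connective coefficients to one about $\ell$-torsion coefficients, where rigidity takes its strongest form.

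For the $\Lambda/\ell^n$-coefficient version, I would appeal to the rigidity theorem for \'etale motives in its modern $\infty$-categorical formulation (due to Suslin--Voevodsky, Bachmann--Hoyois, Ayoub, and others): since $\mathrm{char}(k)\neq \ell$, the $\infty$-category $\SH_{\et}(k;\Lambda/\ell^n)$ is equivalent to the $\infty$-category of $\Lambda/\ell^n$-module spectra equipped with a continuous action of the absolute Galois group $G$, and $\SH_{\et}(k^{\alg};\Lambda/\ell^n)$ is simply $\Mod_{\Lambda/\ell^n}$. Under these equivalences, $(k^{\alg}/k)^*$ is forgetting the action and $(k^{\alg}/k)_*$ is taking continuous cochains, so in particular $(k^{\alg}/k)^*(k^{\alg}/k)_*\Lambda \simeq \Lambda^{\B(G)}$ after $\ell$-completion. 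Combining this with Proposition~\ref{prop:every-module-is-extended-constant} (which gives that $\mathbf{\Gamma}_{W'}$ is extended constant from its coefficient ring) and Remark~\ref{rmk:alternative-construction-of-motivic-hopf-alg} (which computes $\Hmot(\Gamma_W)(\Delta^n)$ from iterates of $\Rder^*_W\Rder_{W,*}$), one identifies each cosimplicial level of $\Hmot(\Gamma_W)/\ell^n$ with the corresponding level of $(\Cech(\Gamma_W(\pt))\otimes \Lambda^{\B(G)})/\ell^n$, matching the left-hand side.

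The main obstacle is the rigorous deployment of rigidity in the form needed here: specifically, the identification $(k^{\alg}/k)^*(k^{\alg}/k)_*\Lambda \simeq \Lambda^{\B(G)}$ in $\SH_{\et}(k^{\alg};\Lambda)^{\wedge}_{\ell}$, together with its compatibility with the adjunction, the monoidal structure, and the realization functor $\Rder^*_{W'}$. Once this is in place, the cosimplicial identification follows by tracing through the rigidity equivalences, and the passage from $\Lambda/\ell^n$ to $\ell$-completion via limits is straightforward given the connectivity of $\Lambda^{\B(G)}$ and $\Gamma_W(\pt)$. A subsidiary issue is to verify that the comparison map defined in the statement agrees with the map produced by the rigidity-based computation, which is a matter of checking that both respect the Hopf algebroid structures originating from Proposition~\ref{prop:hopf-algebroid-of-Gamma-in-finite-ext} and the \v{C}ech conerve construction.
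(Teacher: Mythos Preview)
Your proposal is correct and follows essentially the same route as the paper: rigidity is the key input, reducing the computation of $\Hmot(\Gamma_W)$ after $\ell$-completion to a computation in Galois representations, where $\Rder^*_W\Rder_{W,*}$ becomes transparent.

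A few differences in execution are worth noting. First, the paper works directly in the $\ell$-complete category rather than reducing modulo $\ell^n$; both are fine, but your appeal to connectivity (``both sides are built from connective spectra'') is both unnecessary and dubious---we do not know $\Hmot(\Gamma_W)$ is connective, as that is precisely what the paper is building toward, and in any case the equivalence of $\ell$-completion with the inverse limit of mod-$\ell^n$ reductions holds unconditionally. Second, the paper checks only cosimplicial degrees $0$ and $1$: since both sides are Hopf algebroids, degree $n$ is an iterated tensor product over degree $0$ of copies of degree $1$, and this structure passes to mod-$\ell$ reductions, so equivalence in degrees $0$ and $1$ suffices. This is more economical than verifying every level. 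Third, the paper spells out precisely what you call the ``main obstacle'': it shows that under the rigidity equivalence $\SH_{\et}(k;\Lambda)_{\ellcpl}\simeq \Shv_{\et}(\Et_k;\Lambda)_{\ellcpl}$, the $\ell$-completed realization $\widehat{\Rder}{}^*_W$ is identified with the composite of $(k^{\alg}/k)^*$ and scalar extension along $\Lambda\to\Gamma_W(\pt)$, using the factorization $\Rder^*_W=\Rder^*_{W'}\circ (k^{\alg}/k)^*$ and the fact that any symmetric monoidal left adjoint $\Mod_{\Lambda}\to\Mod_{\Gamma_W(\pt)}$ is base change. Your sketch points in this direction but does not isolate this step as cleanly.
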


\begin{proof}
The morphism
\eqref{eq-thm:completion-of-motivic-Hopf-algebras-1}
is clearly an equivalence in cosimplicial degree zero. 
We need to see that
it is also an equivalence in cosimplicial degree $1$, i.e., 
that the morphism
$$\left(\Gamma_W(\pt)^{\otimes 2}\otimes \Lambda^G\right)^{\wedge}_{\ell}\to 
\left(\Hmot(\Gamma_W)(\Delta^1)\right)^{\wedge}_{\ell}$$
is an equivalence.
Recall from Remark
\ref{rmk:alternative-construction-of-motivic-hopf-alg}
that $\Hmot(\Gamma_W)(\Delta^1)$
is equivalent to $\Rder_W^*\Rder_{W,\,*}\Gamma_W(\pt)$
where $\Rder_W^*:\SH_{\et}(k;\Lambda) \to \Mod_{\Gamma_W(\pt)}$
is the realization functor associated to $\Gamma_W$. 
The right adjoint $\Rder_{W,\,*}:\Mod_{\Gamma_W(\pt)}
\to \SH_{\et}(k;\Lambda)$ is colimit-preserving since it is 
equivalent to the forgetful functor 
$\SH_{\et}(k;\mathbf{\Gamma}_W) \to \SH_{\et}(k;\Lambda)$.
In particular, it belongs to $\Prl$ and we have a commutative diagram
$$\xymatrix{\Mod_{\Gamma_W(\pt)}
\ar[r]^-{\Rder_{W,\,*}} \ar[d]^-{(-)^{\wedge}_{\ell}} & 
\SH_{\et}(k;\Lambda) \ar[r]^-{\Rder_W^*} \ar[d]^-{(-)^{\wedge}_{\ell}}
& \Mod_{\Gamma_W(\pt)} \ar[d]^-{(-)^{\wedge}_{\ell}}\\
(\Mod_{\Gamma_W(\pt)})_{\ellcpl}
\ar[r]^-{\widehat{\Rder}_{W,\,*}} & 
\SH_{\et}(k;\Lambda)_{\ellcpl} \ar[r]^-{\widehat{\Rder}{}_W^*} 
& (\Mod_{\Gamma_W(\pt)})_{\ellcpl}}$$
where $\widehat{\Rder}_{W,\,*}$ is the 
right adjoint of $\widehat{\Rder}{}_W^*$.
This gives an equivalence
$$\left(\Hmot(\Gamma_W)(\Delta^1)\right)^{\wedge}_{\ell}
\simeq 
\widehat{\Rder}{}^*_W\widehat{\Rder}{}_{W,\,*}\Gamma_W(\pt)^{\wedge}_{\ell}.$$
By rigidity \cite{bachmann-rigidity} (see also 
\cite[Theorem 2.10.4]{AGAV}), 
we have a commutative diagram where the 
vertical arrows are equivalences: 
$$\xymatrix{(\Mod_{\Gamma_W(\pt)})_{\ellcpl} 
\ar[r]^-{\widehat{f}_*} \ar@{=}[d] & 
\Shv_{\et}(\Et_k;\Lambda)_{\ellcpl} \ar[r]^-{\widehat{f}{}^*} 
\ar[d]^-{\sim}
& (\Mod_{\Gamma_W(\pt)})_{\ellcpl} \ar@{=}[d]\\
(\Mod_{\Gamma_W(\pt)})_{\ellcpl}
\ar[r]^-{\widehat{\Rder}_{W,\,*}} & 
\SH_{\et}(k;\Lambda)_{\ellcpl} \ar[r]^-{\widehat{\Rder}{}_W^*} 
& (\Mod_{\Gamma_W(\pt)})_{\ellcpl}.\!}$$
Recall that $\Shv_{\et}(\Et_k;\Lambda)$ is the $\infty$-category of 
\'etale hypersheaves of $\Lambda$-modules on the small \'etale site 
$\Et_k$ of $k$. 
This gives an equivalence
$$\left(\Hmot(\Gamma_{\ell,\,k})(\Delta^1)\right)^{\wedge}_{\ell}
\simeq \widehat{f}{}^*\widehat{f}_*\Gamma_W(\pt)^{\wedge}_{\ell}.$$

We claim that $\widehat{f}{}^*$ 
is the $\ell$-completion of the composite functor  
$$f^*:\Shv_{\et}(\Et_k;\Lambda) \xrightarrow{(k^{\alg}/k)^*} 
\Mod_{\Lambda} \to \Mod_{\Gamma_W(\pt)}.$$
Indeed, since $\Gamma_W=(k^{\alg}/k)_*\Gamma_{W'}$, we have 
a natural factorization $\Rder_W^*=\Rder_{W'}^*\circ (k^{\alg}/k)^*$
where $\Rder_{W'}^*:\SH_{\et}(k^{\alg};\Lambda) \to \Mod_{\Gamma_W(\pt)}$
is the realization functor associated to $\Gamma_{W'}$.
It follows that $\widehat{f}{}^*$ is equal to the composition of 
$$\Shv_{\et}(\Et_k;\Lambda)_{\ellcpl}
\xrightarrow{(k^{\alg}/k)^*}
\Shv_{\et}(\Et_{k^{\alg}};\Lambda)_{\ellcpl}
\xrightarrow{\sim} 
\SH_{\et}(k^{\alg};\Lambda)_{\ellcpl}
\xrightarrow{\widehat{\Rder}{}^*_{W'}}
(\Mod_{\Gamma_W(\pt)})_{\ellcpl}.$$
But the composition of the last two functors has to be the obvious one
since it is a morphism of $\Mod^{\otimes}_{\Lambda}$-modules 
in $\Prl$. This proves our claim. 
To conclude, we use the following commutative diagram
$$\xymatrix{\Mod_{\Gamma_W(\pt)}
\ar[r]^-{f_*} \ar[d]^-{(-)^{\wedge}_{\ell}} & 
\Shv_{\et}(\Et_k;\Lambda) \ar[r]^-{f^*} 
\ar[d]^-{(-)^{\wedge}_{\ell}} & \Mod_{\Gamma_W(\pt)}
\ar[d]^-{(-)^{\wedge}_{\ell}}\\
(\Mod_{\Gamma_W(\pt)})_{\ellcpl} 
\ar[r]^-{\widehat{f}_*} & 
\Shv_{\et}(\Et_k;\Lambda)_{\ellcpl} \ar[r]^-{\widehat{f}{}^*} 
& (\Mod_{\Gamma_W(\pt)})_{\ellcpl}}$$
showing that 
$$\widehat{f}{}^*\widehat{f}_*\Gamma_W(\pt)^{\wedge}_{\ell}
\simeq (f^*f_*\Gamma_W(\pt))^{\wedge}_{\ell}
\simeq (\Gamma_W(\pt)^{\otimes}\otimes \Lambda^G)^{\wedge}_{\ell}.$$
This finishes the proof.
\end{proof}

\begin{thm}
\label{thm:completion-of-motivic-Hopf-algebras-rigan}
Let $K^{\alg}/K$ be an algebraic closure of $K$.
Choose an extension of the valuation of $K$ to $K^{\alg}$, and
let $G$ be the Galois group of $\widehat{K}{}^{\alg}/\widehat{K}$.
Let $\Gamma_{W'}$ be a Weil cohomology theory for rigid analytic 
$K^{\alg}$-varieties and let $\Gamma_W=(K^{\alg}/K)_*\Gamma_{W'}$.
Then, the natural morphism 
$\Lambda^{\B(G)} \to \Hmot(\Gamma_{\ell,\,K})$
from Proposition 
\ref{prop:hopf-algebroid-of-Gamma-in-finite-ext-rigan}
induces an equivalence 
\begin{equation}
\label{eq-thm:completion-of-motivic-Hopf-algebras-rigan-2}
\left(\Cech(\Gamma_W(\pt))\otimes \Lambda^{\B(G)}\right)^{\wedge}_{\ell} \xrightarrow{\sim} 
\Hmot(\Gamma_{\ell,\,K})^{\wedge}_{\ell}
\end{equation}
after $\ell$-completion.
\end{thm}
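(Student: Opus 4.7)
The plan is to adapt the proof of Theorem \ref{thm:completion-of-motivic-Hopf-algebras} essentially verbatim to the rigid analytic setting, substituting $\RigSH_{\et}(K;\Lambda)$ for $\SH_{\et}(k;\Lambda)$ and the small \'etale site $\Et_{\widehat{K}}$ for $\Et_k$. The three main building blocks translate directly: Proposition \ref{prop:RigSH-et-of-k-prime-is-module-over-e-star} replaces Proposition \ref{prop:SH-et-of-k-prime-is-module-over-e-star}, the rigid analytic rigidity theorem \cite[Theorem 2.10.4]{AGAV} identifies $\RigSH_{\et}(K;\Lambda)_{\ellcpl}$ with $\Shv_{\et}(\Et_{\widehat{K}};\Lambda)_{\ellcpl}$, and Proposition \ref{prop:hopf-algebroid-of-Gamma-in-finite-ext-rigan} supplies the morphism whose $\ell$-completion we wish to show is an equivalence.

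First, I would reduce to cosimplicial degree $1$: both sides of \eqref{eq-thm:completion-of-motivic-Hopf-algebras-rigan-2} are Hopf algebroids over $\Lambda$, and an equivalence in degrees $0$ and $1$ propagates to higher degrees via the tensor-product formula $H(\Delta^n) \simeq H(\Delta^1)^{\otimes_{H(\Delta^0)} n}$ from Lemma \ref{lemma:connectivity-hopf-algebroid}. Degree $0$ is immediate from the construction of the morphism, so everything reduces to showing
$$\left(\Gamma_W(\pt)^{\otimes 2}\otimes \Lambda^G\right)^{\wedge}_{\ell} \xrightarrow{\sim} \left(\Hmot(\Gamma_W)(\Delta^1)\right)^{\wedge}_{\ell}.$$

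Next I would use Remark \ref{rmk:alternative-construction-of-motivic-hopf-alg} to rewrite $\Hmot(\Gamma_W)(\Delta^1) \simeq \Rder_W^*\Rder_{W,\,*}\Gamma_W(\pt)$, observe that $\Rder_{W,\,*}$ is the colimit-preserving forgetful functor from $\mathbf{\Gamma}_W$-modules, and therefore commutes with $\ell$-completion. After completion, rigidity gives a commutative diagram identifying the adjunction $(\widehat{\Rder}{}_W^*, \widehat{\Rder}_{W,\,*})$ with the small-site adjunction $(\widehat{f}{}^*, \widehat{f}_*)$ attached to the canonical morphism $f:\Spa(\widehat{K}{}^{\alg}) \to \Spa(\widehat{K})$. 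The factorization $\Rder_W^* = \Rder_{W'}^* \circ (K^{\alg}/K)^*$, which follows from $\Gamma_W = (K^{\alg}/K)_*\Gamma_{W'}$, combined with the fact that any $\Mod_{\Lambda}^{\otimes}$-linear morphism in $\Prl$ from $\Shv_{\et}(\Et_{\widehat{K}{}^{\alg}};\Lambda)_{\ellcpl} \simeq (\Mod_{\Lambda})_{\ellcpl}$ to $(\Mod_{\Gamma_W(\pt)})_{\ellcpl}$ is forced to be the obvious one, identifies $\widehat{f}{}^*$ with the $\ell$-completion of the composite $\Shv_{\et}(\Et_{\widehat{K}};\Lambda) \to \Mod_{\Lambda} \to \Mod_{\Gamma_W(\pt)}$. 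One then concludes by computing $f^*f_*\Gamma_W(\pt)$ on the small \'etale site (where $f_*$ is a Galois-fixed-points functor yielding a $\Lambda^G$-twist) and passing to $\ell$-completions, obtaining $(\Gamma_W(\pt)^{\otimes 2}\otimes \Lambda^G)^{\wedge}_{\ell}$ as required.

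The step that will require the most care is verifying that the rigid analytic rigidity theorem furnishes an identification with the site $\Et_{\widehat{K}}$ (as opposed to some site over $K$ itself) and that the profinite group acting on the generic fiber of this site is precisely $G = \Gal(\widehat{K}{}^{\alg}/\widehat{K})$, which is in general a proper subgroup of $\Gal(K^{\alg}/K)$. This is compatible with the conventions of Section \ref{sect:generalized-weil-coh-theories-rigid-analytic}, under which rigid analytic $K$-varieties are adic spaces over $\widehat{K}$, so the small \'etale site canonically attached to the base point is indeed $\Et_{\widehat{K}}$. With this in hand, no Krasner-type argument is needed beyond what is already absorbed into the setup of the theorem, and the algebraic proof goes through mutatis mutandis.
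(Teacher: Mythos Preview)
Your proposal is correct and matches the paper's own proof, which simply states that the argument is identical to that of Theorem~\ref{thm:completion-of-motivic-Hopf-algebras}. The only minor slip is that the rigid analytic rigidity equivalence $\RigSH_{\et}(K;\Lambda)_{\ellcpl}\simeq \Shv_{\et}(\Et_{\widehat{K}};\Lambda)_{\ellcpl}$ is \cite[Theorem~2.10.3]{AGAV}, not~2.10.4.
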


\begin{proof}
The proof is identical to the proof of Theorem 
\ref{thm:completion-of-motivic-Hopf-algebras}.
\end{proof}

In the remainder of this section, we will establish some
criteria for proving the connectivity of motivic
Hopf algebras. We start with a converse to the last assertion in
Proposition
\ref{prop:hopf-algebroid-of-Gamma-in-finite-ext}.

\begin{prop}
\label{prop:reduction-algeb-closed-connectivity}
Let $k'/k$ be a quasi-Galois algebraic extension
with Galois group $G$. Let $\Gamma_{W'}$ be a Weil cohomology theory 
for algebraic $k'$-varieties and let $\Gamma_W=(k'/k)_*\Gamma_{W'}$.
Assume that the following properties are satisfied.
(Below, we write $\pt'$ for $\Spec(k')$.)
\begin{enumerate}

\item The Hopf algebroid $\Hmot(\Gamma_{W'})$ is connective. 

\item For every $g\in G$, there is a faithfully flat 
$\Gamma_{W'}(\pt')$-algebra $A$ such that the Weil cohomology theories 
$\Gamma_{W'}\otimes_{\Gamma_{W'}(\pt')}A$ and 
$g_*(\Gamma_{W'})\otimes_{\Gamma_{W'}(\pt')}A$
are equivalent.

\end{enumerate}
Then, the Hopf algebroid $\Hmot(\Gamma_W)$ is connective.
\end{prop}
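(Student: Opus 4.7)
By Lemma~\ref{lemma:connectivity-hopf-algebroid}, it suffices to show that $\Hmot(\Gamma_W)(\Delta^0)$ and $\Hmot(\Gamma_W)(\Delta^1)$ are both connective. The degree-zero case is immediate: the pushforward $(k'/k)_*$ does not alter the coefficient ring, so $\Hmot(\Gamma_W)(\Delta^0)=\Gamma_W(\pt)=\Gamma_{W'}(\pt')$, which is connective by hypothesis~(1). The substantive task is therefore to show that $\Hmot(\Gamma_W)(\Delta^1)$ is connective. I would \emph{not} try to use the cocartesian square of Proposition~\ref{prop:hopf-algebroid-of-Gamma-in-finite-ext}, since the map $\Lambda^{\B_\bullet(G)}\to\Lambda$ does not lend itself to connectivity descent; instead I would exploit the identification $\mathbf{\Gamma}_W\simeq e_*\mathbf{\Gamma}_{W'}$ furnished by Theorem~\ref{thm:relation-between-weil-coh-theories-and-extension}, where $e:\pt'\to\pt$ denotes the morphism induced by the extension.

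By Remark~\ref{rmk:alternative-construction-of-motivic-hopf-alg}, I have to prove connectivity of $\Rder\Gamma(\pt;\Omega^{\infty}_T(\mathbf{\Gamma}_W\otimes\mathbf{\Gamma}_W))$. Writing $k'$ as the filtered union of its finite Galois subextensions and invoking the continuity of $\SH_{\et}$ along algebraic extensions (in the spirit of Proposition~\ref{prop:SH-et-of-k-prime-is-module-over-e-star} and the proof of Theorem~\ref{thm:the-functor-Psi-using-basis-of-value-group}), the problem reduces to the case where $G$ is finite. In this situation the projection formula for the finite \'etale morphism $e$, together with the standard identification $e^*e_*\simeq\bigoplus_{g\in G}g_*$ for a finite Galois cover, yields
\[
\mathbf{\Gamma}_W\otimes\mathbf{\Gamma}_W \;\simeq\; e_*\bigl(\mathbf{\Gamma}_{W'}\otimes e^*e_*\mathbf{\Gamma}_{W'}\bigr)
\;\simeq\; \bigoplus_{g\in G} e_*\bigl(\mathbf{\Gamma}_{W'}\otimes g_*\mathbf{\Gamma}_{W'}\bigr),
\]
so that $\Hmot(\Gamma_W)(\Delta^1)$ splits as the finite direct sum $\bigoplus_{g\in G}N_g$ with $N_g:=\Rder\Gamma(\pt';\Omega^{\infty}_T(\mathbf{\Gamma}_{W'}\otimes g_*\mathbf{\Gamma}_{W'}))$.

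It remains to show that each $N_g$ is connective. Hypothesis~(2) provides a faithfully flat $\Gamma_{W'}(\pt')$-algebra $A_g$ together with an equivalence $\mathbf{\Gamma}_{W'}\otimes_{\Gamma_{W'}(\pt')}A_g\simeq g_*\mathbf{\Gamma}_{W'}\otimes_{\Gamma_{W'}(\pt')}A_g$ of Weil spectra over $k'$. Base-changing $\mathbf{\Gamma}_{W'}\otimes g_*\mathbf{\Gamma}_{W'}$ to $A_g$ along the factor $g_*\mathbf{\Gamma}_{W'}$, applying this equivalence, and using the symmetric monoidality of the automorphism $g$ (so that $g_*\mathbf{\Gamma}_{W'}\otimes g_*\mathbf{\Gamma}_{W'}\simeq g_*(\mathbf{\Gamma}_{W'}\otimes\mathbf{\Gamma}_{W'})$), one obtains
\[
N_g\otimes_{\Gamma_{W'}(\pt')}A_g \;\simeq\; \Hmot(\Gamma_{W'})(\Delta^1)\otimes_{\Gamma_{W'}(\pt')}A_g,
\]
which is connective by hypothesis~(1) (the flat connective algebra $A_g$ preserves connectivity under tensor). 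Faithful flatness of $A_g$ over the connective algebra $\Gamma_{W'}(\pt')$ then descends connectivity back to $N_g$, completing the argument.

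The step I expect to be the main obstacle is making both the reduction to finite $G$ and the decomposition $e^*e_*\simeq\bigoplus_g g_*$ fully rigorous in the motivic/Weil-spectrum framework; the remainder is a fairly direct combination of the formalism developed earlier in the paper with faithfully flat descent of connectivity.
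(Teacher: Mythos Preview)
Your overall strategy is the paper's: reduce to connectivity of $\Hmot(\Gamma_W)(\Delta^1)=\Rder\Gamma(\pt;\Omega^\infty_T(\mathbf{\Gamma}_W\otimes\mathbf{\Gamma}_W))$, break this into pieces $N_g$ indexed by $g\in G$, and show each $N_g$ is connective using hypothesis~(2) and faithfully flat descent. The endgame for each $N_g$ is essentially identical to what the paper does.

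The genuine gap is your reduction to finite $G$. If you write $k'=\bigcup_\alpha k_\alpha$ with $k_\alpha/k$ finite Galois and set $\Gamma_{W,\alpha}=(k'/k_\alpha)_*\Gamma_{W'}$, then applying the finite-$G$ argument to the extension $k_\alpha/k$ would require hypothesis~(1) for $\Gamma_{W,\alpha}$, i.e.\ connectivity of $\Hmot(\Gamma_{W,\alpha})$; but that is precisely the proposition you are proving, for the still-infinite extension $k'/k_\alpha$. Continuity alone does not dissolve this circularity: unwinding $e^*e_*\mathbf{\Gamma}_{W'}$ via finite stages always leaves a residual factor of $(e'_\alpha)^*(e'_\alpha)_*$ coming from the infinite extension $k'/k_\alpha$, so the clean direct-sum decomposition you want is never reached at any finite level.

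The paper does not reduce to finite $G$; it handles profinite $G$ directly by localizing on $\Spec(\pi_0(\Lambda)^G)$. Here, ironically, your decision to avoid Proposition~\ref{prop:hopf-algebroid-of-Gamma-in-finite-ext} costs you: the map $\Lambda^{\B_\bullet(G)}\to\Hmot(\Gamma_W)$ from that proposition equips $\Hmot(\Gamma_W)(\Delta^1)$ with a $\Lambda^G$-algebra structure, and since every prime of $\pi_0(\Lambda)^G$ lies in the image of $\Spec(\Lambda^{\{g\}})\to\Spec(\Lambda^G)$ for a unique $g\in G$, it suffices to show that each $\Hmot(\Gamma_W)(\Delta^1)\otimes_{\Lambda^G}\Lambda^{\{g\}}$ is connective. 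A short computation via the equivalence $\SH_{\et}(k;e_*\Lambda)\simeq\SH_{\et}(k';\Lambda)$ of Proposition~\ref{prop:SH-et-of-k-prime-is-module-over-e-star} identifies this base change with your $N_g$, after which your argument for each $N_g$ goes through verbatim.
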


\begin{proof}
Using \cite[Theorem 2.9.7]{AGAV}, we may replace 
$k$ and $k'$ with their perfections and assume that $k'/k$ is a 
separable extension. Then $G$ is precisely the group of automorphisms
of the extension $k'/k$.
By Lemma
\ref{lemma:connectivity-hopf-algebroid},
it is enough to show that 
$$\Hmot(\Gamma_W)(\Delta^1)=\Rder \Gamma(\pt;\Omega^{\infty}_T(
\mathbf{\Gamma}_W\otimes \mathbf{\Gamma}_W))$$
is connective. By Proposition
\ref{prop:hopf-algebroid-of-Gamma-in-finite-ext},
there is a morphism 
$\Lambda^G \to \Hmot(\Gamma_W)(\Delta^1)$,
and it is enough to show that, for every 
prime ideal $\mathfrak{p}$ of $\pi_0(\Lambda)^{G}$,
the localized algebra 
$\Hmot(\Gamma_W)(\Delta^1)_{\mathfrak{p}}$ is connective.
Since every prime ideal of $\pi_0(\Lambda)^{G}$ is in the image of 
the morphism $\Spec(\Lambda^{\{g\}}) \to \Spec(\Lambda^G)$ 
for a unique $g\in G$, it is enough to show that 
$\Hmot(\Gamma_W)(\Delta^1)\otimes_{\Lambda^{G}}\Lambda^{\{g\}}$ 
is connective for every $g\in G$. Arguing as we did at the end
of the proof of Proposition
\ref{prop:hopf-algebroid-of-Gamma-in-finite-ext}, 
we have:
$$\begin{array}{rcl}
\Hmot(\Gamma_W)(\Delta^1)\otimes_{\Lambda^{G}}\Lambda^{\{g\}}
& = & 
\Rder \Gamma(\pt;\Omega^{\infty}_T(
\mathbf{\Gamma}_W\otimes \mathbf{\Gamma}_W))
\otimes_{\Lambda^{G}}\Lambda^{\{g\}}\\
& \simeq & \Rder \Gamma(\pt;\Omega^{\infty}_T(
(\mathbf{\Gamma}_W\otimes \mathbf{\Gamma}_W)
\otimes_{\Lambda^{G}}\Lambda^{\{g\}})).
\end{array}$$
On the other hand, we have the following equivalences
of commutative algebras in
$\SH_{\et}(k;e_*\Lambda)$:
$$\begin{array}{rcl}
(\mathbf{\Gamma}_W\otimes \mathbf{\Gamma}_W)
\otimes_{\Lambda^{G}}\Lambda^{\{g\}}
& \simeq & 
(\mathbf{\Gamma}_W\otimes \mathbf{\Gamma}_W)
\otimes_{e_*\Lambda\,\otimes\, e_*\Lambda, \id \otimes g^*}e_*\Lambda\\
&\simeq & \mathbf{\Gamma}_W \otimes_{e_*\Lambda}(\mathbf{\Gamma}_W\otimes_{e_*\Lambda,\,g^*}e_*\Lambda),
\end{array}$$
where $g^*=\Spec(g)^*:e_*\Lambda \to e_*\Lambda$ is the action of 
$g\in G$ on the commutative algebra $e_*\Lambda$ which is  
induced from the morphism $\Spec(g):\Spec(k') \to \Spec(k')$.
Modulo the equivalence of $\infty$-categories
$\SH_{\et}(k;e_*\Lambda)\simeq \SH_{\et}(k';\Lambda)$, 
the $e_*\Lambda$-algebra $\mathbf{\Gamma}_W$ 
corresponds to $\mathbf{\Gamma}_{W'}$ and the
$e_*\Lambda$-algebra $\mathbf{\Gamma}_W\otimes_{e_*\Lambda,\,g^*}e_*\Lambda$ 
corresponds to $g^*\mathbf{\Gamma}_{W'}\simeq g^{-1}_*\mathbf{\Gamma}_{W'}$.
From this, we deduce the equivalence
$$\Hmot(\Gamma_W)(\Delta^1)\otimes_{\Lambda^{G}}\Lambda^{\{g\}}
\simeq \Rder\Gamma(\pt';\Omega^{\infty}_T(\mathbf{\Gamma}_{W'}
\otimes g^*\mathbf{\Gamma}_{W'})).$$
By the second assumption in the statement, we can find a faithfully 
flat $\Gamma_{W'}(\pt')$-algebra $A$ such that 
$g^*(\mathbf{\Gamma}_{W'})\otimes_{\Gamma_{W'}(\pt')}A$
is equivalent to $\mathbf{\Gamma}_{W'}\otimes_{\Gamma_{W'}(\pt')}A$. It follows that 
$$\begin{array}{rcl}
\left(\Hmot(\Gamma_W)(\Delta^1)\otimes_{\Lambda^{G}}
\Lambda^{\{g\}}\right)\otimes_{\Gamma_{W'}(\pt')^{\otimes 2}}A^{\otimes 2}
& \simeq & \Rder\Gamma(\pt';\Omega^{\infty}_T(\mathbf{\Gamma}_{W'}
\otimes \mathbf{\Gamma}_{W'}))\otimes_{\Gamma_{W'}(\pt')^{\otimes 2}}A^{\otimes 2}\\
& \simeq & \left(\Hmot(\Gamma_{W'})(\Delta^1)\right)\otimes_{\Gamma_{W'}(\pt')^{\otimes 2}}A^{\otimes 2}.
\end{array}$$
Since $\Hmot(\Gamma_{W'})(\Delta^1)$ is connective by assumption, the 
result follows.
\end{proof}

\begin{prop}
\label{prop:reduction-algeb-closed-connectivity-rigan}
Let $K'/K$ be an algebraic extension of height $1$
valued fields, such that $\widehat{K}{}'/\widehat{K}$
is quasi-Galois with Galois group $G$. Let $\Gamma_{W'}$ 
be a Weil cohomology theory 
for rigid analytic $K'$-varieties and let $\Gamma_W=(K'/K)_*\Gamma_{W'}$.
Assume that the following properties are satisfied.
(Below, we write $\pt'$ for $\Spa(\widehat{K}{}')$.)
\begin{enumerate}

\item The Hopf algebroid $\Hmot(\Gamma_{W'})$ is connective. 

\item For every $g\in G$, there is a faithfully flat 
$\Gamma_{W'}(\pt')$-algebra $A$ such that the Weil cohomology theories 
$\Gamma_{W'}\otimes_{\Gamma_{W'}(\pt')}A$ and 
$g_*(\Gamma_{W'})\otimes_{\Gamma_{W'}(\pt')}A$
are equivalent.

\end{enumerate}
Then, the Hopf algebroid $\Hmot(\Gamma_W)$ is connective.
\end{prop}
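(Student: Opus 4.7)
The plan is to mirror, step by step, the proof of Proposition \ref{prop:reduction-algeb-closed-connectivity}, substituting each algebraic input by its rigid analytic counterpart. First, by Lemma \ref{lemma:connectivity-hopf-algebroid}, it suffices to show that
\[
\Hmot(\Gamma_W)(\Delta^1) = \Rder\Gamma(\pt;\Omega^{\infty}_T(\mathbf{\Gamma}_W\otimes \mathbf{\Gamma}_W))
\]
is connective. By Proposition \ref{prop:hopf-algebroid-of-Gamma-in-finite-ext-rigan}, there is a natural morphism of commutative algebras $\Lambda^G \to \Hmot(\Gamma_W)(\Delta^1)$ (the degree-$1$ piece of the cosimplicial morphism $\Lambda^{\B(G)}\to \Hmot(\Gamma_W)$). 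Since every prime of $\pi_0(\Lambda)^G$ is in the image of a unique $\Spec(\pi_0(\Lambda)^{\{g\}})\to \Spec(\pi_0(\Lambda)^G)$, I reduce to proving that the localization $\Hmot(\Gamma_W)(\Delta^1)\otimes_{\Lambda^G}\Lambda^{\{g\}}$ is connective for each $g\in G$.

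Next, let $e:\Spa(\widehat{K}{}')\to \Spa(\widehat{K})$ denote the induced morphism of adic spectra. By Proposition \ref{prop:RigSH-et-of-k-prime-is-module-over-e-star}, the equivalence $\RigSH_{\et}(K';\Lambda)\simeq \RigSH_{\et}(K;e_*\Lambda)$ lets us view the Weil spectrum $\mathbf{\Gamma}_{W'}$ as an $e_*\Lambda$-algebra structure on $\mathbf{\Gamma}_W$ in $\RigSH_{\et}(K;\Lambda)$. Repeating the tensor-product manipulations from the algebraic case—decomposing $e_*\Lambda\otimes e_*\Lambda\to e_*\Lambda$ by the twisted map $\id\otimes g^*$—should produce an equivalence
\[
\Hmot(\Gamma_W)(\Delta^1)\otimes_{\Lambda^{G}}\Lambda^{\{g\}}
\simeq \Rder\Gamma(\pt';\Omega^{\infty}_T(\mathbf{\Gamma}_{W'}\otimes g^*\mathbf{\Gamma}_{W'})),
\]
using that, under the equivalence of Proposition \ref{prop:RigSH-et-of-k-prime-is-module-over-e-star}, the $e_*\Lambda$-algebra $\mathbf{\Gamma}_W\otimes_{e_*\Lambda,\,g^*}e_*\Lambda$ corresponds to $g^*\mathbf{\Gamma}_{W'}\simeq g^{-1}_*\mathbf{\Gamma}_{W'}$.

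Finally, assumption (2) furnishes a faithfully flat $\Gamma_{W'}(\pt')$-algebra $A$ such that $g_*\mathbf{\Gamma}_{W'}\otimes_{\Gamma_{W'}(\pt')}A\simeq \mathbf{\Gamma}_{W'}\otimes_{\Gamma_{W'}(\pt')}A$ as Weil spectra. Base-changing the previous displayed equivalence along the faithfully flat map $\Gamma_{W'}(\pt')^{\otimes 2}\to A^{\otimes 2}$ identifies the right-hand side, after extension, with $\Hmot(\Gamma_{W'})(\Delta^1)\otimes_{\Gamma_{W'}(\pt')^{\otimes 2}}A^{\otimes 2}$, which is connective by hypothesis (1). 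Faithfully flat descent of connectivity then yields the desired statement. I do not expect any serious obstacle: the only moderately delicate point is ensuring that the twist by $g$ is correctly tracked through the equivalence of Proposition \ref{prop:RigSH-et-of-k-prime-is-module-over-e-star}, and that a reduction to the perfection of the residue field (as in the algebraic proof) is unnecessary here since $G$ is defined from the outset as the continuous Galois group of $\widehat{K}{}'/\widehat{K}$.
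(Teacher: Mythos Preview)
Your proposal is correct and follows essentially the same route as the paper, which simply states that the proof is identical to that of Proposition~\ref{prop:reduction-algeb-closed-connectivity} (noting that the relevant categories depend only on $\widehat{K}$ and $\widehat{K}{}'$). Your observation that the perfection step is unnecessary here---because $G$ is already given as the Galois group of $\widehat{K}{}'/\widehat{K}$---is exactly the minor adaptation needed.
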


\begin{proof}
The proof is identical to the proof of Proposition
\ref{prop:reduction-algeb-closed-connectivity}.
Notice that the functor 
$\sigma_*$ between $\RigWCT(K';\Lambda)$ and
$\RigWCT(K;\Lambda)$ is well defined since these categories 
depend only on the completed fields $\widehat{K}$ and $\widehat{K}{}'$.
\end{proof}

We will need also another criterion for the connectivity
of motivic Hopf algebras.

\begin{thm}
\label{thm:connectivity-rigid-anal-vs-algebraic-}
We work in Situation
\ref{situ:finite-rank-value-group-rho-for-psi}.
Let $\mathbf{\Gamma}_{W'}\in \RigWSp(K;\Lambda)$ 
be a Weil spectrum 
and set $\mathbf{\Gamma}_W=\xi_*\mathbf{\Gamma}_{W'}$. 
Assume that $\Lambda$ is a $\Q$-algebra, and that
$\Gamma_{W'}(\pt)$ and $\Gamma_{W'}(\pt)(1)$ 
are connective.
Then $\Hmot(\Gamma_W)$ is connective if and only if 
$\Hmot(\Gamma_{W'})$ is connective. 
\end{thm}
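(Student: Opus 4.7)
The plan is to reduce, via Lemma \ref{lemma:connectivity-hopf-algebroid}, to comparing the $\Delta^1$-values of the two Hopf algebroids. Both share the common $\Delta^0$-value $A := \Gamma_W(\pt) = \Gamma_{W'}(\pt')$, which is connective by hypothesis. Writing $H := \Hmot(\Gamma_W)(\Delta^1) \simeq \Rder_W^*(\mathbf{\Gamma}_W)$ and $H' := \Hmot(\Gamma_{W'})(\Delta^1) \simeq \Rder_{W'}^*(\mathbf{\Gamma}_{W'})$, using Remark \ref{rmk:alternative-construction-of-motivic-hopf-alg}, the theorem reduces to the assertion that $H$ is connective if and only if $H'$ is connective.

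Next I would express $H'$ explicitly in terms of $H$ by combining three inputs: Theorem \ref{thm:essential-surjectivity-of-psi-lower-star}, which gives $\mathbf{\Gamma}_{W'} \simeq \psi_*\mathbf{\Gamma}_W$; the description $\psi^* \simeq \widetilde{\xi}_* \circ (-\otimes_{\xi_*\Lambda}\Lambda)$ from Construction \ref{cons:the-vanishing-cycles-functor-in-general}; and the identification $\Rder_{W'}^* \simeq \Rder_W^* \circ \psi^*$ furnished by Proposition \ref{prop:rig-lower-star-of-weil-cohomology-theory} applied to the Weil spectrum $\psi_*\mathbf{\Gamma}_W$. Together these yield $\psi^*\mathbf{\Gamma}_{W'} \simeq \mathbf{\Gamma}_W \otimes_{\xi_*\Lambda}\Lambda$; applying the symmetric monoidal realization $\Rder_W^*$ then gives
$$H' \simeq H \otimes_R A,$$
where $R := \Rder_W^*(\xi_*\Lambda)$ and $R \to A$ is the realization of the algebra morphism $\xi_*\Lambda \to \Lambda$ fixed in Construction \ref{cons:the-vanishing-cycles-functor-in-general}.

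The decisive step is the explicit description of $R$: by Theorem \ref{thm:the-functor-Psi-using-basis-of-value-group}, and because $\Lambda$ is a $\Q$-algebra, $R \simeq \mathrm{S}(V)$ is the free commutative $A$-algebra on $V := A^{\oplus n}(-1)[-1]$, with the augmentation $R \to A$ sending $V$ to zero. Applying standard Koszul duality in characteristic zero for free commutative algebras, $A \otimes_R^L A \simeq \mathrm{S}(V[1])$, and therefore
$$H' \simeq H \otimes_A \mathrm{S}(V[1]).$$
The hypotheses that $A$ and $A(1)$ are connective ensure that $\mathrm{S}(V[1])$ is a connective, faithfully flat $A$-algebra whose augmentation $\mathrm{S}(V[1]) \to A$ splits the unit $A \to \mathrm{S}(V[1])$. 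Hence the implication ``$H$ connective $\Rightarrow$ $H'$ connective'' follows from the fact that base change along a connective flat extension preserves connectivity, while the converse ``$H'$ connective $\Rightarrow$ $H$ connective'' follows because the splitting exhibits $H$ as a retract of $H'$ and retracts of connective objects are connective. The principal technical point to verify will be the Koszul identification $A \otimes_R^L A \simeq \mathrm{S}(V[1])$ in the spectral setting, which rests essentially on the rationality of $\Lambda$ and on a careful accounting of the Tate-twist shifts so that $\mathrm{S}(V[1])$ is indeed connective under the stated hypotheses on $A$ and $A(1)$.
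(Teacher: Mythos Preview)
Your approach is essentially the paper's: both invoke Theorem~\ref{thm:essential-surjectivity-of-psi-lower-star} to identify $\mathbf{\Gamma}_{W'}\simeq\psi_*\mathbf{\Gamma}_W$, and then compare the two Hopf algebroids using the description $\xi_*\Lambda\simeq\mathrm{S}(\Lambda^{\oplus n}(-1)[-1])$. The paper packages this comparison as the semi-direct tensor product formula $\Hmot(\Gamma_{W'})\simeq\Hmot(\Gamma_W)\otimes^{\rhd}\mathrm{S}(A^{\oplus n}(-1))$ of Theorem~\ref{thm:connectivity-algebraic-vs-rigid-analytic}, obtained through a bi-cosimplicial computation of $\Cech(\mathbf{\Gamma}_W/\xi_*\Lambda)$; you instead extract the $\Delta^1$-value directly via the Koszul identity $A\otimes_{\mathrm{S}(V)}A\simeq\mathrm{S}(V[1])$. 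Both routes give the same formula $H'\simeq H\otimes_A\mathrm{S}(A^{\oplus n}(-1))$, and your retract argument for the implication ``$H'$ connective $\Rightarrow$ $H$ connective'' is correct.

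There is one slip in the forward implication. Since $V[1]=A^{\oplus n}(-1)$, connectivity of $\mathrm{S}(V[1])$ requires $A(-1)$ to be connective, whereas the hypothesis only gives connectivity of $A$ and $A(1)$; for an invertible $A$-module $L$, connectivity of $L$ does not force connectivity of $L^{-1}$ (e.g.\ $A=\Q$, $L=\Q[1]$). The connectivity hypothesis on $A(1)$ is used in the paper solely to invoke Theorem~\ref{thm:essential-surjectivity-of-psi-lower-star}, where one needs $\pi_{-1}(A(1))=0$; it does not by itself control $A(-1)$. The paper's Theorem~\ref{thm:connectivity-algebraic-vs-rigid-analytic} asserts the same ``if and only if'' as a consequence of the semi-direct product formula without isolating this point, and in the application in Section~\ref{sect:proof-of-main-thm} only the retract direction is actually used.
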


Theorem 
\ref{thm:connectivity-rigid-anal-vs-algebraic-}
follows from Theorem
\ref{thm:essential-surjectivity-of-psi-lower-star},
asserting that $\mathbf{\Gamma}_{W'}$ is equivalent 
to $\psi_*\mathbf{\Gamma}_W$, and Theorem 
\ref{thm:connectivity-algebraic-vs-rigid-analytic}
below describing the motivic Hopf algebroid of 
$\psi_*\mathbf{\Gamma}_W$ in term of the motivic Hopf algebroid
of $\mathbf{\Gamma}_W$. To state Theorem 
\ref{thm:connectivity-algebraic-vs-rigid-analytic},
we need the notion of a semi-direct tensor product of Hopf algebroids.

\begin{cons}
\label{cons:semi-direct-product-Hopf-algebroid}
Let $\mathcal{C}^{\otimes}$ be a presentable symmetric monoidal 
$\infty$-category. Let $H$ be a Hopf algebroid in $\mathcal{C}$ and
let $L$ be a Hopf algebroid in $\coMod_H(\mathcal{C})$. 
Then $H$ is given by a cosimplicial commutative algebra 
$H:\Deltasimp\to \CAlg(\mathcal{C})$ and 
$L$ is given by a cosimplicial commutative algebra
$$L:\Deltasimp\to \CAlg(\Mod_H(\mathcal{C}^{\Deltasimp}))=\CAlg(\mathcal{C}^{\Deltasimp})_{H\backslash}.$$
Equivalently, we can view $L$ as a bi-cosimplicial commutative algebra 
$L:\Deltasimp^2\to \CAlg(\mathcal{C})$
together with an augmentation $H \to L$. It is easy to see that 
$$L(\Delta^m,\Delta^n)  \simeq  H(\Delta^m)\otimes L(\Delta^0,\Delta^n)
\simeq  H(\Delta^1)^{\otimes m}\otimes L(\Delta^0,\Delta^1)^{\otimes n}.$$
This implies easily that the diagonal cosimplicial algebra 
${\rm diag}(L)$ is a Hopf algebra in $\mathcal{C}$. 
We denote it by $H\otimes^{\rhd} L$ and call it the semi-direct
tensor product of $H$ with $L$.
\end{cons}

\begin{thm}
\label{thm:connectivity-algebraic-vs-rigid-analytic}
We work in Situation
\ref{situ:finite-rank-value-group-rho-for-psi}.
Let $\mathbf{\Gamma}_W\in \WSp(k;\Lambda)$ be a Weil 
spectrum and set $\mathbf{\Gamma}_{W'}=\psi_*\mathbf{\Gamma}_W$.
There is an equivalence of Hopf algebroids
\begin{equation}
\label{eq-thm:connectivity-algebraic-vs-rigid-analytic-1}
\Hmot(\Gamma_{W'})\simeq \Hmot(\Gamma_W)\otimes^{\rhd} {\rm S}
(\Gamma_W(\pt)_{\Q}^{\oplus n}(-1)).
\end{equation}
In particular, 
$\Hmot(\mathbf{\Gamma}_W)$ is connective if and only if 
$\Hmot(\mathbf{\Gamma}_{W'})$ is connective.
\end{thm}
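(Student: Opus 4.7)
The plan is to compute $\Hmot(\Gamma_{W'})$ directly by iterating the cobar construction for the composite realization $\Rder^*_{W'} \simeq \Rder^*_W \circ \psi^*$, and to identify the resulting cosimplicial algebra with the semi-direct tensor product. First I would unwind $\psi^*\psi_*$: using the decomposition $\psi^* = e^* \circ \widetilde{\xi}_*$, where $\widetilde{\xi}_* \colon \RigSH_{\et}(K;\Lambda) \simeq \SH_{\et}(k;\xi_*\Lambda)$ is the equivalence of \cite[Theorem 3.7.21]{AGAV} and $e^*$ is base change along $\eta \colon \xi_*\Lambda \to \Lambda$, a direct computation gives $\psi^*\psi_*(M) \simeq M \otimes B$ with $B := \Lambda \otimes_{\xi_*\Lambda,\eta} \Lambda$ a commutative algebra in $\SH_{\et}(k;\Lambda)$. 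By Theorem \ref{thm:the-functor-Psi-using-basis-of-value-group}, $\xi_*\Lambda \simeq {\rm S}(\Lambda_{\Q}^{\oplus n}(-1)[-1])$ with $\eta$ corresponding to the zero map on generators, so the standard bar calculation for augmented free commutative algebras identifies $B \simeq {\rm S}(\Lambda_{\Q}^{\oplus n}(-1))$.

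Second, using the description of the motivic Hopf algebroid via the cobar construction from Remark \ref{rmk:alternative-construction-of-motivic-hopf-alg}, iteration of the above formula yields
\begin{equation*}
\Phi^m_{\Rder^*_{W'}}(\Lambda) \simeq \psi_*\bigl(\mathbf{\Gamma}_W^{\otimes(m+1)} \otimes B^{\otimes m}\bigr).
\end{equation*}
Applying $\Gamma(\pt;\Omega^{\infty}_T(-))$, which commutes with $\psi_*$ by adjunction since $\psi^*\Lambda \simeq \Lambda$, and then viewing $\mathbf{\Gamma}_W^{\otimes(m+1)} \otimes B^{\otimes m}$ as an extended $\mathbf{\Gamma}_W$-module via Proposition \ref{prop:every-module-is-extended-constant}, the symmetric monoidality of $\Rder^*_W$ together with the Segal condition for $\Hmot(\Gamma_W)$ give
\begin{equation*}
\Hmot(\Gamma_{W'})(\Delta^m) \simeq \Hmot(\Gamma_W)(\Delta^m) \otimes_R {\rm S}(V)^{\otimes_R m},
\end{equation*}
where $R = \Gamma_W(\pt)$ and $V = R_{\Q}^{\oplus n}(-1)$. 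This is precisely the formula on the diagonal of the bicosimplicial object in Construction \ref{cons:semi-direct-product-Hopf-algebroid}, associated to $H = \Hmot(\Gamma_W)$ and to the Hopf algebra $L = {\rm S}(V)$ classifying the additive group scheme on $V$ (with coproduct $v \mapsto v \otimes 1 + 1 \otimes v$).

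Third, one has to verify that this identification matches not only the underlying cosimplicial objects but the full semi-direct product structure. The key point is that the cobar face maps corresponding to inserting a copy of $\mathbf{\Gamma}_W$ in the $i$-th slot split, under the above identification, into a face map on the $H$-part combined with a corresponding face on the ${\rm S}(V)$-part twisted by the action of $H$ on $L$ through the comodule structure. This is essentially bookkeeping: tracking the unit morphisms $\Lambda \to \mathbf{\Gamma}_W$ and $\Lambda \to B$ through the identifications made above produces the required face and degeneracy maps, and this is the part of the proof I expect to be the main technical obstacle, as it requires careful compatibility of the projection formulas used throughout.

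Finally, the connectivity statement follows formally: the augmentation ${\rm S}(V) \to R$ sending $V$ to zero splits the unit $R \to {\rm S}(V)$, exhibiting $\Hmot(\Gamma_W)(\Delta^m)$ as a retract of $\Hmot(\Gamma_{W'})(\Delta^m)$ for each $m$, so connectivity of the latter implies connectivity of the former; conversely, under the connectivity hypotheses on $R$ and its Tate twist implicit in the setup (which ensure that $V$, and hence ${\rm S}(V)$, is a connective $R$-module), the tensor product $\Hmot(\Gamma_W)(\Delta^m) \otimes_R {\rm S}(V)^{\otimes_R m}$ inherits connectivity from its factors.
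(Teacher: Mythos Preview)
Your approach is essentially the same as the paper's, and your degreewise computation is correct: under the equivalence $\widetilde{\xi}_*$, your formula $\Phi^m_{\Rder^*_{W'}}(\Lambda)\simeq \psi_*(\mathbf{\Gamma}_W^{\otimes(m+1)}\otimes B^{\otimes m})$ becomes exactly the diagonal $A^{m,m}=\Cech^m(\mathbf{\Gamma}_W)\otimes\Cech^m(\Lambda/\xi_*\Lambda)$ of the paper's bi-cosimplicial algebra (note $B^{\otimes_\Lambda m}\simeq \Lambda^{\otimes_{\xi_*\Lambda}(m+1)}$).

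The one place where the paper is slicker is precisely the step you flag as the technical obstacle. Rather than tracking face and degeneracy maps by hand through your iterated projection formulas, the paper observes that $\mathbf{\Gamma}_{W'}$, transported along $\widetilde{\xi}_*$, is just $\mathbf{\Gamma}_W$ regarded as a commutative algebra in $\SH_{\et}(k;\xi_*\Lambda)$ via $\xi_*\Lambda\to\Lambda\to\mathbf{\Gamma}_W$. Hence $\Hmot(\Gamma_{W'})$ is computed from the \emph{relative} conerve $\Cech^\bullet(\mathbf{\Gamma}_W/\xi_*\Lambda)$. Since this is the left Kan extension from $\Deltasimp^{\leq 0}$, and since $A^{0,0}\simeq \mathbf{\Gamma}_W$, the universal property produces a canonical morphism of cosimplicial $\xi_*\Lambda$-algebras
\[
\Cech(\mathbf{\Gamma}_W/\xi_*\Lambda)\longrightarrow {\rm diag}(A),
\]
and one only needs to check it is an equivalence in each degree, which is the easy computation you already did. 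This single move replaces all the bookkeeping in your third paragraph: the cosimplicial structure on the target is the diagonal of the evident bi-cosimplicial structure, and that bi-cosimplicial object is by construction the one encoding the $\Hmot(\Gamma_W)$-comodule Hopf algebra $\Rder^*_{W,\,\mot}({\rm S}(\Lambda_\Q^{\oplus n}(-1)))$. So the semi-direct product identification is automatic rather than something to be verified.

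Your retract argument for the connectivity equivalence is fine; the paper leaves this as implicit in the ``in particular''.
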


\begin{proof}
Recall that we have an equivalence
$\RigSH_{\et}(K;\Lambda)\simeq \SH_{\et}(k;\xi_*\Lambda)$, 
modulo which $\psi^*$ is given by base change along the 
obvious co-augmentation 
$\xi_*\Lambda \simeq {\rm S}(\Lambda^{\oplus n}_{\Q}(-1)[-1])
\to \Lambda$. (See Construction
\ref{cons:the-vanishing-cycles-functor-in-general}.)
Unravelling the definitions, it follows that 
$\mathbf{\Gamma}_{W'}=\psi_*\mathbf{\Gamma}_W$ is equivalent to 
$\mathbf{\Gamma}_W$ viewed as a commutative algebra in 
$\SH_{\et}(k;\xi_*\Lambda)$ using the composite morphism
$$\xi_*\Lambda \to \Lambda \to \mathbf{\Gamma}_W.$$
Thus, the motivic Hopf algebra $\Hmot(\Gamma_{W'})$ is obtained 
by applying $\Gamma(\pt;\Omega^{\infty}_T(-))$ to the cosimplicial algebra
$\Cech^{\bullet}(\mathbf{\Gamma}_W/\xi_*\Lambda)$ 
in $\SH_{\et}(k;\xi_*\Lambda)$. Consider the bi-cosimplicial algebra 
$$A^{\bullet,\bullet}=
\Cech^{\bullet}(\mathbf{\Gamma}_W)\otimes\Cech^{\bullet}(\Lambda/\xi_*\Lambda).$$
Since $\Cech^{\bullet}(\Lambda/\xi_*\Lambda)$
is naturally a cosimplicial $\xi_*\Lambda$-algebra, 
we may view $A^{\bullet,\bullet}$ as a bi-cosimplicial 
algebra in $\SH_{\et}(k;\xi_*\Lambda)$.
Moreover, the $\xi_*\Lambda$-algebra 
$A^{0,0}$ is equivalent to $\mathbf{\Gamma}_W$. 
Thus, by the universal property of left Kan extensions, there is a 
unique morphism of cosimplicial $\xi_*\Lambda$-algebras
$$\Cech(\mathbf{\Gamma}_W/\xi_*\Lambda)
\to {\rm diag}(A).$$
We claim that this morphism is an equivalence.
Indeed, in degree $n$, we have
$$\begin{array}{rcl}
A^{n,n} & = & \overbrace{\mathbf{\Gamma}_W\otimes \cdots \otimes \mathbf{\Gamma}_W}^{n+1\,\text{times}}
\otimes \overbrace{\Lambda \otimes_{\xi_*\Lambda} \cdots 
\otimes_{\xi_*\Lambda}\Lambda}^{n+1\,\text{times}}\\
& \simeq & \overbrace{
(\mathbf{\Gamma}_W\otimes \Lambda)\otimes_{\xi_*\Lambda}\cdots 
\otimes_{\xi_*\Lambda}(\mathbf{\Gamma}_W\otimes \Lambda)}^{n+1\,\text{times}}.
\end{array}$$
Modulo this identification, 
the morphism $A^{0,0} \to A^{n,n}$ corresponding to $\Delta^{\{i\}}
\to \Delta^{\{0,\ldots,n\}}$
is given by
$$\mathbf{\Gamma}_W\otimes \Lambda
\simeq 
(\xi_*\Lambda)^{\otimes_{\xi_*\Lambda}\,i} 
\otimes_{\xi_*\Lambda}(\mathbf{\Gamma}_W\otimes \Lambda)
\otimes_{\xi_*\Lambda}(\xi_*\Lambda)^{\otimes_{\xi_*\Lambda}\,n-i}
\xrightarrow{u\otimes \id \otimes u} (\mathbf{\Gamma}_W\otimes \Lambda)^{\otimes_{\xi_*\Lambda}\,n+1}.$$
This proves our claim. Thus, at this point, we have proven that 
$$\Hmot(\Gamma_{W'})\simeq 
{\rm diag}(\Rder\Gamma(\pt;\Omega^{\infty}_T(A))).$$

Recall that $\xi_*\Lambda={\rm S}(\Lambda_{\Q}^{\oplus n}(-1)[-1])$.
Since ${\rm S}(-)$ is a left adjoint functor, it follows that 
$\Cech^{\bullet}(\Lambda/\xi_*\Lambda)$ is equivalent to
the free commutative algebra on the cosimplicial object of $\SH_{\et}(k;\xi_*\Lambda)$:
$$\Cech^{\bullet}(0/\Lambda_{\Q}^{\oplus n}(-1)[-1])
\simeq 
\underline{\Hom}(\B_{\bullet}(\Z),\Lambda_{\Q}^{\oplus n}(-1)).$$
Said differently, $\Cech(\Lambda/\xi_*\Lambda)$ is a Hopf algebra
whose underlying commutative algebra is 
${\rm S}(\Lambda_{\Q}^{\oplus n}(-1))$.
By abuse of notation, we will write 
${\rm S}(\Lambda_{\Q}^{\oplus n}(-1))$
instead of $\Cech(\Lambda/\xi_*\Lambda)$.
Now, notice that the bi-cosimplicial 
algebra $\Rder\Gamma(\pt;\Omega^{\infty}_T(A))$
encodes the Hopf algebra 
$$\Rder_{W,\,\mot}^*({\rm S}(\Lambda_{\Q}^{\oplus n}(-1)))=
{\rm S}(\Gamma_W(\pt)_{\Q}^{\oplus n}(-1))$$
in $\coMod_{\Hmot(\Gamma_W)}$
obtained by applying the motivic realization functor 
(of Definition 
\ref{dfn:the-motivic-Hopf-algebra-for-Weil-spectrum})
to the Hopf algebra ${\rm S}(\Lambda_{\Q}^{\oplus n}(-1))$.
Since $\Hmot(\Gamma_{W'})$ is the diagonal of 
$\Rder\Gamma(\pt;\Omega^{\infty}_T(A))$, this gives the 
equivalence 
\eqref{eq-thm:connectivity-algebraic-vs-rigid-analytic-1}
in the statement.
\end{proof}

\section{Examples of Weil cohomology theories}

\label{sect:example-of-weil-coh-theo}

In this section, we recall the constructions of the 
classical Weil cohomology 
theories, and we revisit the new Weil cohomology theories 
introduced in \cite{Nouvelle-Weil}.
As in the previous sections, we fix a connective commutative
ring spectrum $\Lambda$, and let $k$ be 
a ground field whose exponent characteristic 
is invertible in $\pi_0(\Lambda)$. We also let 
$K$ be a field endowed with a rank $1$ valuation, 
and whose residue field is $k$.
We start with the $\ell$-adic cohomology theories.

\begin{nota}
\label{nota:ell-complete-in-stable-infty-cat}
Let $\ell$ be a prime number. Given a presentable stable 
$\infty$-category $\mathcal{C}$, we denote by 
$\mathcal{C}_{\ell}\subset \mathcal{C}$
its full sub-$\infty$-category of $\ell$-complete objects
and by $(-)^{\wedge}_{\ell}:\mathcal{C} \to \mathcal{C}_{\ell}$ 
the $\ell$-completion functor.
(See \cite[Chapter 7]{Lurie-SAG} for a detailed discussion
and \cite[Notation 2.10.1 \& Remark 2.10.2]{AGAV} for a very quick
review.) Below, we will assume that $\ell$ is invertible in $k$
and we denote by 
$\Lambda_{\ell}$ the $\ell$-completion of $\Lambda$.
\end{nota}

\begin{cons}
\label{cons:the-ell-adic-realization-functor-}
Assume that $k$ is algebraically closed. By rigidity
\cite{bachmann-rigidity} (see also 
\cite[Theorem 2.10.4]{AGAV}), we have an equivalence
of $\infty$-categories
$$(\Mod_{\Lambda})_{\ellcpl} \xrightarrow{\sim} \
\SH_{\et}(k;\Lambda)_{\ellcpl}.$$
Recall that 
$\SH_{\et,\,\ct}(k;\Lambda) \subset \SH_{\et}(k;\Lambda)$
is the thick stable sub-$\infty$-category
generated by the motives $\M(X)(n)$, for $X\in \Sm_k$ and $n\in \Z$.
In fact, $\SH_{\et}(k;\Lambda)$
is compactly generated, and
$\SH_{\et,\,\ct}(k;\Lambda)$ is its sub-$\infty$-category of 
compact objects by \cite[Proposition 3.2.3]{AGAV}. 
By Lemma 
\ref{lemma:generation-by-strongly-dualizable-objects},
the composite functor
\begin{equation}
\label{eq-cons:the-ell-adic-realization-functor-17}
\SH_{\et,\,\ct}(k;\Lambda) \to 
\SH_{\et}(k;\Lambda) \xrightarrow{(-)^{\wedge}_{\ell}}
\SH_{\et}(k;\Lambda)_{\ellcpl}\simeq (\Mod_{\Lambda})_{\ellcpl}
\end{equation}
lands in the full sub-$\infty$-category of $(\Mod_{\Lambda})_{\ellcpl}$
spanned by dualizable objects. It also lands in the sub-$\infty$-category
spanned by eventually connective objects. Indeed, by Lemma 
\ref{lemma:generation-by-strongly-dualizable-objects},
it is enough to show that the image of $\pi_{X,\,*}\Lambda$,
for $X\in \Sm_k$, is eventually connective. But this image is given by 
$\lim_n \Rder\Gamma_{\et}(X;\Lambda/\ell^n\Lambda)$ which is
$-2\dim(X)$-connective since the \'etale cohomological dimension 
of $X$ is bounded 
by twice its dimension. 
On the other hand, by \cite[Corollary 8.3.5.9]{Lurie-SAG}, 
the functor 
$$(-)^{\wedge}_{\ell}:\Mod_{\Lambda_{\ell}}
\to (\Mod_{\Lambda})_{\ellcpl}$$
induces an equivalence between the full sub-$\infty$-category 
of dualizable $\Lambda_{\ell}$-modules and the full sub-$\infty$-category
of eventually connective and dualizable objects in 
$(\Mod_{\Lambda})_{\ellcpl}$. Thus, by the previous discussion, 
we see that the functor 
\eqref{eq-cons:the-ell-adic-realization-functor-17}
yields a functor 
\begin{equation}
\label{eq-cons:the-ell-adic-realization-functor-23}
\Rder_{\ell}:\SH_{\et,\,\ct}(k;\Lambda)
\to \Mod_{\Lambda_{\ell}}.
\end{equation}
Since $\SH_{\et}(k;\Lambda)$ is equivalent to the indization of 
$\SH_{\et,\,\ct}(k;\Lambda)$, the functor
\eqref{eq-cons:the-ell-adic-realization-functor-23} 
extends uniquely to a colimit-preserving functor 
$$\Rder^*_{\ell}:\SH_{\et}(k;\Lambda) \to \Mod_{\Lambda_{\ell}},$$
which we call the plain $\ell$-adic realization functor. 
Clearly, $\Rder^*_{\ell}$ underlies a symmetric monoidal 
functor. We denote by $\mathbf{\Gamma}_{\ell}=
\Rder_{\ell,\,*}\Lambda_{\ell}$ and 
$\Gamma_{\ell}=\Omega^{\infty}_T(\mathbf{\Gamma}_{\ell})$
the Weil spectrum and the Weil cohomology theory 
associated to $\Rder^*_{\ell}$.
To stress the dependence on $k$, we would write 
`$\Rder^*_{\ell,\,k}$', `$\mathbf{\Gamma}_{\ell,\,k}$'
and `$\Gamma_{\ell,\,k}$'.
\end{cons}

\begin{rmk}
\label{rmk:ell-adic-realization-for-rigid-analytic-mot}
Construction 
\ref{cons:the-ell-adic-realization-functor-}
extends to the rigid analytic setting. 
Indeed, if $K$ is algebraically closed, there is 
an equivalence of $\infty$-categories
$$(\Mod_{\Lambda})_{\ellcpl} \xrightarrow{\sim} 
\RigSH_{\et}(K;\Lambda)_{\ellcpl}$$
by \cite[Theorem 2.10.3]{AGAV}.
Moreover, Lemma
\ref{lemma:generation-good-reduction-dualizability}
implies that the composite functor analogous to
\eqref{eq-cons:the-ell-adic-realization-functor-17}
lands in the full sub-$\infty$-category of eventually connective and 
dualizable objects. Using \cite[Corollary 8.3.5.9]{Lurie-SAG},
we obtain the plain $\ell$-adic realization functor 
$$\Rder^*_{\ell}:\RigSH_{\et}(K;\Lambda) \to \Mod_{\Lambda_{\ell}}.$$
We also denote by $\mathbf{\Gamma}_{\ell}=
\Rder_{\ell,\,*}\Lambda_{\ell}$ and 
$\Gamma_{\ell}=\Omega^{\infty}_T(\mathbf{\Gamma}_{\ell})$
the Weil spectrum and the Weil cohomology theory 
associated to $\Rder^*_{\ell}$.
To stress the dependence on $K$, we would write 
`$\Rder^*_{\ell,\,K}$', `$\mathbf{\Gamma}_{\ell,\,K}$'
and `$\Gamma_{\ell,\,K}$'.
\end{rmk}

\begin{lemma}
\label{lemma:ell-adic-cohomology-invariant-galois}
Let $\sigma:k \to k'$ be a morphism between algebraically 
closed fields. Then, there is an equivalence 
$\sigma_*\Gamma_{\ell,\,k'}\simeq \Gamma_{\ell,\,k}$
of Weil cohomology theories
on algebraic $k$-varieties. A similar statement is also true in the 
rigid analytic setting.
\end{lemma}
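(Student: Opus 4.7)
The plan is to invoke Theorem \ref{thm:equiv-between-WSp-and-Plain-real}, which allows us to detect equivalences of Weil spectra at the level of the associated plain realization functors. First, by (the argument of) Theorem \ref{thm:relation-between-weil-coh-theories-and-extension}, $\sigma_* \mathbf{\Gamma}_{\ell,\,k'}$ is a Weil spectrum on algebraic $k$-varieties, and as in Proposition \ref{prop:rig-lower-star-of-weil-cohomology-theory}, its associated plain realization functor is $\Rder^*_{\ell,\,k'}\circ \sigma^*$. Hence it suffices to produce a natural equivalence
$$\Rder^*_{\ell,\,k'} \circ \sigma^* \simeq \Rder^*_{\ell,\,k}$$
of symmetric monoidal colimit-preserving functors from $\SH_{\et}(k;\Lambda)$ to $\Mod_{\Lambda_{\ell}}$.

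To construct this equivalence I would unravel Construction \ref{cons:the-ell-adic-realization-functor-}: by continuity it suffices to compare the two functors on the compact generators $\SH_{\et,\,\ct}(k;\Lambda)$, and on this sub-$\infty$-category each realization factors as $\ell$-completion followed by the rigidity equivalence \cite[Theorem 2.10.4]{AGAV} and the passage from $(\Mod_{\Lambda})_{\ellcpl}$ to $\Mod_{\Lambda_{\ell}}$ on eventually connective dualizable objects supplied by \cite[Corollary 8.3.5.9]{Lurie-SAG}. We are therefore reduced to the commutativity of the diagram
$$\xymatrix{
\SH_{\et}(k;\Lambda) \ar[r]^-{\sigma^*} \ar[d]_-{(-)^{\wedge}_{\ell}} & \SH_{\et}(k';\Lambda) \ar[d]^-{(-)^{\wedge}_{\ell}} \\
\SH_{\et}(k;\Lambda)_{\ellcpl} \ar[r]^-{\sigma^*} \ar[d]_-{\sim} & \SH_{\et}(k';\Lambda)_{\ellcpl} \ar[d]^-{\sim} \\
(\Mod_{\Lambda})_{\ellcpl} \ar@{=}[r] & (\Mod_{\Lambda})_{\ellcpl}.
}$$
The top square commutes since $\sigma^*$ is colimit-preserving and therefore commutes with $\ell$-completion. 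The bottom square commutes by naturality of the rigidity equivalence: its inverse is (the $\ell$-completion of) the constant-motive functor, which is manifestly compatible with $\sigma^*$ for any morphism between algebraically closed fields. Ind-extending the identification of functors on compact objects then yields the required equivalence, and because both functors are obtained as colimit-preserving extensions of symmetric monoidal functors on the compact sub-$\infty$-category, this equivalence automatically lifts to $\CAlg(\Prl)$.

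The rigid analytic case is entirely parallel, using Remark \ref{rmk:ell-adic-realization-for-rigid-analytic-mot} and the rigidity equivalence \cite[Theorem 2.10.3]{AGAV} in place of the algebraic versions; one also uses Lemma \ref{lemma:generation-good-reduction-dualizability} to control dualizable generators. The only real point to verify is the naturality of the rigidity equivalence in the ground field, which boils down to the evident compatibility of the constant-sheaf functor with pullback; once this is in hand, the rest of the argument is formal.
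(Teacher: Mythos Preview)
Your proposal is correct and follows essentially the same route as the paper: reduce to comparing the plain realization functors, restrict to compact objects, and then use that $\sigma^*$ commutes with $\ell$-completion together with the naturality of the rigidity equivalence $(\Mod_{\Lambda})_{\ellcpl}\xrightarrow{\sim}\SH_{\et}(-;\Lambda)_{\ellcpl}$ under base change. The paper's version is terser (it draws a single commutative triangle in $(\Mod_{\Lambda})_{\ellcpl}$ rather than your three-row diagram), but the content is the same.
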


\begin{proof}
We prove that the associated plain 
realization functors agree, and it is enough 
to do so after restrictions to compact objects
since these functors are colimit-preserving.
Thus, we need to show that the following triangle commutes
$$\xymatrix{\SH_{\et,\,\ct}(k;\Lambda) \ar[r]^-{\sigma^*}
\ar[dr]_-{\Rder_{\ell,\,k}^*} 
& \SH_{\et,\,\ct}(k';\Lambda) \ar[d]^-{\Rder^*_{\ell,\,k'}}\\
& \Mod_{\Lambda_{\ell}}.\, }$$
Since the functor $\sigma^*$
commutes with $\ell$-completion, it is enough to show that 
the following triangle commutes
$$\xymatrix{\SH_{\et}(k;\Lambda)_{\ellcpl} 
\ar[r]^-{\sigma^*} &  \SH_{\et}(k';\Lambda)_{\ellcpl}\\
& (\Mod_{\Lambda})_{\ellcpl},\! \ar[u]_-{\sim} \ar[ul]^-{\sim} }$$
which is obvious.
\end{proof}

\begin{dfn}
\label{dfn:ell-adic-cohomo-theory-when-k-nonclosed}
We extend the $\ell$-adic cohomology theories to non necessary 
algebraically closed fields
in the usual way. Given an algebraic closure $k^{\alg}/k$
of $k$, we set 
$\Gamma_{\ell,\,k}=(k^{\alg}/k)_*\Gamma_{\ell,\,k^{\alg}}$.
Similarly, given an algebraic closure $K^{\alg}/K$
and an extension of the valuation of $K$ to $K^{\alg}$,
we set
$\Gamma_{\ell,\,K}=(K^{\alg}/K)_*
\Gamma_{\ell,\,K^{\alg}}$.
\end{dfn}

\begin{rmk}
\label{rmk:independence-of-algebraic-closure-up-to-equiv}
Lemma
\ref{lemma:ell-adic-cohomology-invariant-galois}
implies that the equivalence class of the 
$\ell$-adic cohomology theory $\Gamma_{\ell,\,k}$ 
is independent of the choice of the algebraic closure $k^{\alg}/k$. 
For this reason, when discussing
$\ell$-adic cohomology, we often keep the choice of the algebraic closure 
implicit. The same applies in the rigid analytic setting.
\end{rmk}

\begin{prop}
\label{prop:various-relations-between-ell-adic-coh}
The three $\ell$-adic cohomology theories
$\Gamma_{\ell,\,k}\in \WCT(k;\Lambda)$,
$\Gamma_{\ell,\,K}\in \WCT(K;\Lambda)$ and  
$\Gamma_{\ell,\,K}\in \RigWCT(K;\Lambda)$
are related as follows.
\begin{enumerate}

\item There is an equivalence of Weil spectra on algebraic $K$-varieties
$\mathbf{\Gamma}_{\ell,\,K} \simeq \Rig_*(\mathbf{\Gamma}_{\ell,\,K})$.

\item There is an equivalence of Weil spectra on algebraic $k$-varieties
$\mathbf{\Gamma}_{\ell,\,k}\simeq \xi_*(\mathbf{\Gamma}_{\ell,\,K})$.

\item In Situation 
\ref{situ:finite-rank-value-group-rho-for-psi} 
(and with the notation of Construction 
\ref{cons:the-vanishing-cycles-functor-in-general}),
there is an equivalence of Weil spectra on rigid analytic $K$-varieties
$\mathbf{\Gamma}_{\ell,\,K}
\simeq \psi_*(\mathbf{\Gamma}_{\ell,\,k})$.

\end{enumerate}
\end{prop}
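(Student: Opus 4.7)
The plan is to handle the three equivalences in turn, and in each case to work through the equivalences $\WSp(-;\Lambda) \simeq \Real(-;\Lambda)$ of Theorems \ref{thm:equiv-between-WSp-and-Plain-real} and \ref{thm:equiv-between-WSp-and-Plain-real-rigan} to reduce to equivalences of plain realization functors. By Proposition \ref{prop:rig-lower-star-of-weil-cohomology-theory}, the realization attached to $\Rig_*\mathbf{\Gamma}_{\ell,\,K}$ (on the rigid side) is $\Rder^*_{\ell,\,K}\circ \Rig^*$, so (1) amounts to proving the equivalence of plain realization functors $\Rder^*_{\ell,\,K}^{\,\mathrm{alg}} \simeq \Rder^*_{\ell,\,K}^{\,\mathrm{rig}}\circ \Rig^*$; likewise (2) becomes $\Rder^*_{\ell,\,k} \simeq \Rder^*_{\ell,\,K}^{\,\mathrm{rig}}\circ \xi^*$, and (3) becomes $\Rder^*_{\ell,\,K}^{\,\mathrm{rig}}\simeq \Rder^*_{\ell,\,k}\circ \psi^*$. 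In each case the codomain is already known to be a Weil spectrum by Proposition \ref{prop:rig-lower-star-of-weil-cohomology-theory} for (1) and (2), and by Lemma \ref{lemma:psi-star-of-Weil-coho-theories} for (3).

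First I would reduce each assertion to the case when $K$ is algebraically closed; by Krasner's lemma $\widehat{K}$ is then also algebraically closed, and $k$ is algebraically closed as the residue field of an algebraically closed height-one valued field. The reduction uses Definition \ref{dfn:ell-adic-cohomo-theory-when-k-nonclosed} together with the fact that $\Rig_*$, $\xi_*$, $\psi_*$ are compatible with the Galois pushforward functors $(K^{\alg}/K)_*$ and $(k^{\alg}/k)_*$, via the obvious commutative squares of schemes and adic spaces.

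Once $K$ is algebraically closed, the three plain $\ell$-adic realizations of Construction \ref{cons:the-ell-adic-realization-functor-} and Remark \ref{rmk:ell-adic-realization-for-rigid-analytic-mot} are all obtained by the same recipe: $\ell$-complete, apply the rigidity equivalence (by \cite{bachmann-rigidity} in the algebraic setting, or \cite[Theorem 2.10.3]{AGAV} in the rigid analytic one) to land in $(\Mod_{\Lambda})_{\ellcpl}$, and then descend to $\Mod_{\Lambda_{\ell}}$ via the identification of dualizable eventually connective $\ell$-complete $\Lambda$-modules with dualizable $\Lambda_\ell$-modules provided by \cite[Corollary 8.3.5.9]{Lurie-SAG}. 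The desired comparisons then boil down to verifying that the symmetric monoidal functors $\Rig^*$, $\xi^*$, $\psi^*$ intertwine the three rigidity equivalences with the identity of $(\Mod_{\Lambda})_{\ellcpl}$ after $\ell$-completion. For $\Rig^*$ this is immediate from the compatibility of rigidity with the morphism of small \'etale sites $\Et_K\to\Et_{\widehat K}$, which is an equivalence when $K$ is algebraically closed (both sites reduce to a point). For $\xi^*$, one uses the identification $\FSH_{\et}(\widehat K^{\circ};\Lambda)\simeq \SH_{\et}(k;\Lambda)$ together with proper (henselian) base change for \'etale cohomology to match the two rigidity models.

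The main obstacle will be item (3). The functor $\psi^*$ is defined in Construction \ref{cons:the-vanishing-cycles-functor-in-general} as the composite of the equivalence $\widetilde{\xi}_*$ with base change along the augmentation $\mathrm{S}(\Lambda_{\Q}^{\oplus n}(-1)[-1])\to \Lambda$. After $\ell$-completion, the rational generators are $\ell$-completely trivial (the $\ell$-completion of any $\Q$-module vanishes), so $\xi_*\Lambda\to \Lambda$ becomes an equivalence on $\ell$-complete modules and the base change along it becomes the identity; combined with the equivalence $\widetilde{\xi}_*$, this should yield the required identification of realizations. Making this precise requires a careful analysis of the interaction between $\widetilde{\xi}_*$, $\ell$-completion, and the rigidity equivalence on the $k$-side, and is the most delicate bookkeeping step in the argument.
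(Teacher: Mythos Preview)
Your approach is essentially the paper's: reduce to $K$ algebraically closed (for (3) you are already there by Situation~\ref{situ:finite-rank-value-group-rho-for-psi}), translate via Theorems~\ref{thm:equiv-between-WSp-and-Plain-real} and~\ref{thm:equiv-between-WSp-and-Plain-real-rigan} to an identification of plain realizations, and then verify the latter after $\ell$-completion using rigidity. The difference is only in how you execute the last step.

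You treat the three cases separately, invoking small \'etale sites for $\Rig^*$, proper base change for $\xi^*$, and the vanishing of $\Lambda_{\Q}$ after $\ell$-completion for $\psi^*$. All of this is correct, but the paper handles the three cases uniformly and more cheaply. After restricting to compact objects and $\ell$-completing, each of $\Rig^*$, $\xi^*$, $\psi^*$ becomes a symmetric monoidal colimit-preserving functor between categories both equivalent to $(\Mod_{\Lambda})_{\ellcpl}$ via the rigidity equivalences. The point is that the two arrows out of $(\Mod_{\Lambda})_{\ellcpl}$ in each triangle are the \emph{unique} morphisms from $\Mod_{\Lambda}^{\otimes}$ in $\CAlg(\Prl)$ (then $\ell$-completed), and each of $\Rig^*$, $\xi^*$, $\psi^*$ is $\Mod_{\Lambda}^{\otimes}$-linear by construction; so the triangles commute automatically. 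This is why the paper simply says ``which is obvious'' for all three.

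In particular, the ``delicate bookkeeping'' you anticipate for $\psi^*$ never materialises: once you observe that $\psi^*$ is a morphism under $\Mod_{\Lambda}^{\otimes}$, compatibility with the rigidity equivalences is forced, and no separate analysis of $\widetilde{\xi}_*$ or of $\xi_*\Lambda\to\Lambda$ is needed. Your argument that $(\xi_*\Lambda)^{\wedge}_{\ell}\simeq\Lambda^{\wedge}_{\ell}$ is correct and gives an alternative route, but it is doing more work than the situation demands.
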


\begin{proof}
We can assume that $K$ is algebraically closed:
for (3) this is the only case we need to consider
and, for (1) and (2), 
we can reduce easily to this case. 
For each equivalence, we will show that the two associated 
plain realizations agree, and it is enough to show this 
after restriction to compact objects since these realizations
are colimit-preserving. 
Thus, we need to show that the following triangles
commute
$$\xymatrix{\SH_{\et,\,\ct}(K;\Lambda) \ar[r]^-{\Rig^*}
\ar[dr]_-{\Rder_{\ell}^*} 
&  \RigSH_{\et,\,\ct}(K;\Lambda) \ar[d]^-{\Rder^*_{\ell}}\\
& \Mod_{\Lambda_{\ell}},\!}
\qquad
\xymatrix{\SH_{\et,\,\ct}(k;\Lambda) \ar[r]^-{\xi^*}
\ar[dr]_-{\Rder_{\ell}^*} 
& \RigSH_{\et,\,\ct}(K;\Lambda) \ar[d]^-{\Rder^*_{\ell}}\\
& \Mod_{\Lambda_{\ell}},\!}$$
$$\xymatrix{\RigSH_{\et,\,\ct}(K;\Lambda) \ar[r]^-{\psi^*} 
\ar[dr]_-{\Rder^*_{\ell}}  & 
\SH_{\et,\,\ct}(k;\Lambda) \ar[d]^-{\Rder^*_{\ell}} \\
& \Mod_{\Lambda_{\ell}}.\!}$$
Since the functors $\Rig^*$, $\xi^*$ and $\psi^*$
commute with $\ell$-completion, it is enough to show that 
the following triangles commute
$$\xymatrix{\SH_{\et}(K;\Lambda)_{\ellcpl} 
\ar[r]^-{\Rig^*} 
& \RigSH_{\et}(K;\Lambda)_{\ellcpl} \\
& (\Mod_{\Lambda})_{\ellcpl},\! \ar[lu]^-{\sim} 
\ar[u]_-{\sim}}
\qquad
\xymatrix{\SH_{\et}(k;\Lambda)_{\ellcpl} 
\ar[r]^-{\xi^*}
& \RigSH_{\et}(K;\Lambda)_{\ellcpl} \\
& (\Mod_{\Lambda})_{\ellcpl},\! 
\ar[lu]^-{\sim} \ar[u]_-{\sim}}$$
$$\xymatrix{\RigSH_{\et}(K;\Lambda)_{\ellcpl} 
\ar[r]^-{\psi^*} & 
\SH_{\et}(k;\Lambda)_{\ellcpl}  \\
& (\Mod_{\Lambda})_{\ellcpl},\! 
\ar[lu]^-{\sim} \ar[u]_-{\sim}}$$
which is obvious.
\end{proof}

We now discuss the Betti cohomology theory.

\begin{cons}
\label{cons:betti-cohomology-in-case-k=complex-number}
Let $\CpSm$ be the category of complex smooth varieties,
and denote by $\cl$ the classical topology on this category. 
The $\infty$-category 
$\AnSH(\Lambda)$, obtained from the $\infty$-category of 
hypersheaves of $\Lambda$-modules $\Shv_{\cl}(\CpSm;\Lambda)$
by $\DD^1$-localisation and $\otimes$-inversion of the Tate object
$T=\Lmot(\Lambda_{\cl}(\PP^1,\infty))$, is equivalent to the 
$\infty$-category $\Mod_{\Lambda}$ via the obvious tensor functor 
$\Mod_{\Lambda} \to \AnSH(\Lambda)$. 
(See for example \cite[Th\'eor\`eme 1.8]{real-betti}.)
On the other hand, the analytification 
functor $\Sm_{\C} \to \CpSm$ gives rise to a functor 
$\An^*:\SH_{\et}(\C;\Lambda) \to \AnSH(\Lambda)$.
We define the Betti realization functor 
$$\Betti^*:\SH_{\et}(\C;\Lambda) \to \Mod_{\Lambda}$$
to be the composite of 
$$\SH_{\et}(\C;\Lambda) \xrightarrow{\An^*}
\AnSH(\Lambda) \simeq \Mod_{\Lambda}.$$
We set $\mathbf{\Gamma}_{\Betti}=\Betti_*\Lambda$ 
and $\Gamma_{\Betti}=\Omega^{\infty}_T(\mathbf{\Gamma}_{\Betti})$.
\end{cons}

\begin{dfn}
\label{dfn:betti-cohomology-theory-on-more-general-fields}
We extend the Betti cohomology theory to any field $k$ endowed with a 
complex embedding $\sigma:k\hookrightarrow \C$ by setting 
$\Gamma_{\Betti,\,k}=\sigma_*(\Gamma_{\Betti})$.
\end{dfn}

\begin{prop}
\label{prop:comparison-thm-betti-coh-l-adic-coh}
Let $\sigma:k\hookrightarrow \C$ be a complex embedding. 
Then, there is a morphism of Weil cohomology theories 
$\Gamma_{\Betti,\,k} \to \Gamma_{\ell,\,k}$
in $\WCT(k;\Lambda)$. (In fact, there is a canonical such a morphism
if we take for $\Gamma_{\ell,\,k}$ the $\ell$-adic cohomology theory 
associated to the algebraic closure $k^{\alg}/k$ of $k$ in $\C$.)
\end{prop}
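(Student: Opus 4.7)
The strategy is to exploit the left fibration $\WCT(k;\Lambda)\to \CAlg_{\Lambda\backslash}$ of Proposition \ref{prop:the-right-fibration-WCT-to-CAlg}. Fix an algebraic closure $k^{\alg}$ of $k$ sitting inside $\C$, and take $\Gamma_{\ell,k}=(k^{\alg}/k)_*\Gamma_{\ell,k^{\alg}}$ as in Definition \ref{dfn:ell-adic-cohomo-theory-when-k-nonclosed}; the coefficient rings of $\Gamma_{\Betti,k}$ and $\Gamma_{\ell,k}$ are then $\Lambda$ and $\Lambda_{\ell}$ respectively. The cocartesian lift of the canonical $\ell$-completion morphism $\Lambda\to \Lambda_{\ell}$ at $\Gamma_{\Betti,k}$ produces a morphism in $\WCT(k;\Lambda)$ whose target is, via Theorem \ref{thm:equivalence-between-weil-cohomol-spectra} and Corollary \ref{cor:algebra-over-weil-spectrum-weil-spectrum}, represented by the Weil spectrum $\mathbf{\Gamma}_{\Betti,k}\otimes_{\Lambda}\Lambda_{\ell}$. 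Producing the desired morphism thus reduces to proving an equivalence of Weil spectra
\begin{equation*}
\mathbf{\Gamma}_{\Betti,k}\otimes_{\Lambda}\Lambda_{\ell}\ \simeq\ \mathbf{\Gamma}_{\ell,k}\quad\text{in}\quad \WSp(k;\Lambda).
\end{equation*}

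By Theorem \ref{thm:equiv-between-WSp-and-Plain-real}, this amounts to producing a natural equivalence of the attached plain realization functors $\SH_{\et}(k;\Lambda)\to \Mod_{\Lambda_{\ell}}$. Letting $\sigma:\Spec(\C)\to \Spec(k)$ be induced by the given embedding, the realization attached to $\mathbf{\Gamma}_{\Betti,k}\otimes_{\Lambda}\Lambda_{\ell}$ unfolds as $(-\otimes_{\Lambda}\Lambda_{\ell})\circ \Betti^*\circ \sigma^*$, while, using Lemma \ref{lemma:ell-adic-cohomology-invariant-galois} applied to the inclusion $k^{\alg}\hookrightarrow \C$, the realization attached to $\mathbf{\Gamma}_{\ell,k}$ is $\Rder^*_{\ell,\C}\circ \sigma^*$. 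Everything therefore boils down to exhibiting, over $\C$, a natural equivalence
\begin{equation*}
(-\otimes_{\Lambda}\Lambda_{\ell})\circ \Betti^*\ \simeq\ \Rder^*_{\ell,\C}
\end{equation*}
of symmetric monoidal colimit-preserving functors $\SH_{\et}(\C;\Lambda)\to \Mod_{\Lambda_{\ell}}$.

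For this last step, both functors preserve colimits, so it suffices to match them on the compact sub-$\infty$-category $\SH_{\et,\ct}(\C;\Lambda)$. There, $\Betti^*$ takes values in dualizable $\Lambda$-modules, for which extension of scalars $-\otimes_{\Lambda}\Lambda_{\ell}$ and $\ell$-completion coincide; via \cite[Corollary 8.3.5.9]{Lurie-SAG} this identifies $(-\otimes_{\Lambda}\Lambda_{\ell})\circ \Betti^*$ with the $\ell$-completed Betti realization valued in $(\Mod_{\Lambda})_{\ellcpl}$. By Construction \ref{cons:the-ell-adic-realization-functor-}, $\Rder^*_{\ell,\C}$ is similarly the $\ell$-completion followed by the rigidity equivalence $\SH_{\et}(\C;\Lambda)_{\ellcpl}\simeq (\Mod_{\Lambda})_{\ellcpl}$ of \cite{bachmann-rigidity} (reviewed in \cite[Theorem 2.10.4]{AGAV}). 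The main obstacle is thus a compatibility statement: the rigidity equivalence intertwines the $\ell$-completion of Betti with the identity of $(\Mod_{\Lambda})_{\ellcpl}$. Checked on the compact generators $\M(X)$ for $X\in \Sm_{\C}$, this reduces to Artin's classical comparison theorem identifying the $\ell$-adic \'etale cohomology of $X$ with the $\ell$-completed singular cohomology of $X^{\an}$; the general case then follows by a standard thick-subcategory argument.
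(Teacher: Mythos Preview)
Your approach is essentially the same as the paper's: both reduce to showing that the plain realizations $(-\otimes_{\Lambda}\Lambda_{\ell})\circ \Betti^*$ and $\Rder^*_{\ell,\C}$ agree on $\SH_{\et}(\C;\Lambda)$, by passing to compact objects and comparing in $(\Mod_{\Lambda})_{\ellcpl}$ via rigidity.

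The one difference is in how you finish. You appeal to Artin's comparison on $\M(X)$ plus a thick-subcategory argument, but checking agreement on objects does not by itself produce a natural equivalence of symmetric monoidal functors. The paper sidesteps this by writing down a single commutative diagram: the rigidity equivalences $(\Mod_{\Lambda})_{\ellcpl}\xrightarrow{\sim}\SH_{\et}(\C;\Lambda)_{\ellcpl}$ and $(\Mod_{\Lambda})_{\ellcpl}\xrightarrow{\sim}\AnSH(\Lambda)_{\ellcpl}$ both arise from the constant-sheaf functor out of $\Mod_{\Lambda}$, and $\An^*$ manifestly respects this, so the triangle with $\Betti^*$ on top and $\Rder^*_{\ell}$ on the side commutes \emph{as a diagram of functors}, not merely objectwise. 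This yields the natural equivalence directly, with no separate invocation of Artin needed (the content of Artin is already absorbed into the rigidity theorem). Your argument can be repaired the same way: rather than checking on generators, note that modulo rigidity both $\ell$-completed realizations are $(\Mod_{\Lambda})_{\ellcpl}$-linear symmetric monoidal endofunctors of $(\Mod_{\Lambda})_{\ellcpl}$, hence necessarily the identity.
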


\begin{proof}
This is a reformulation of the classical comparison theorem 
between Betti cohomology and $\ell$-adic cohomology.
In our framework, it suffices to show that the associated 
plain realization functors coincide after the 
appropriate scalar extension, 
and it is enough to do so for $k=\C$.
But we have a commutative diagram
$$\xymatrix{\SH_{\et,\,\ct}(\C;\Lambda)
\ar[r] \ar[dd]^-{\Rder^*_{\ell}} 
\ar@/^2.5pc/[rrr]^-{\Betti^*} & \SH_{\et}(\C;\Lambda) \ar[d]^-{(-)^{\wedge}_{\ell}}
\ar[r]^-{\An^*} & \AnSH(\Lambda) \ar[d]^-{(-)^{\wedge}_{\ell}} 
\ar[r]^-{\sim} & \Mod_{\Lambda}
\ar[d]^-{(-)^{\wedge}_{\ell}} \\
& \SH_{\et}(\C;\Lambda)_{\ellcpl} \ar[r]^-{\An^*}
& \AnSH(\Lambda)_{\ellcpl} \ar[r]^-{\sim} & (\Mod_{\Lambda})_{\ellcpl}
\ar@{=}[dl]\\
\Mod_{\Lambda_{\ell}} \ar[r]^-{(-)^{\wedge}_{\ell}} & (\Mod_{\Lambda})_{\ellcpl} \ar[u]_-{\sim} \ar[r] & (\Mod_{\Lambda})_{\ellcpl}
\ar[u]_-{\sim}}$$
showing that the symmetric monoidal functors 
$\Betti^*(-) \otimes \Lambda_{\ell}$ and
$\Rder_{\ell}^*$ are equivalent on $\SH_{\et,\,\ct}(\C;\Lambda)$,
and hence also on $\SH_{\et}(\C;\Lambda)$ by indization.
\end{proof}

Next, we discuss the de Rham cohomology theory.

\begin{cons}
\label{cons:de-Rham-cohomology-theory-car-0}
Assume that $k$ has characteristic zero. We fix a morphism 
of commutative ring spectra $\Lambda\to k$ and use it to 
view $k$-modules as $\Lambda$-modules. 
The Zariski sheafification
of the algebraic de Rham complex $\Omega^{\bullet}_{/k}$,
viewed as a presheaf of $\Lambda$-modules 
on $\Sm_k$, 
defines a Weil cohomology theory 
$\Gamma_{\dR}$. This is the algebraic de Rham cohomology
theory. 
We denote by $\mathbf{\Gamma}_{\dR}$
the associated Weil spectrum and
$\dR^*:\SH_{\et}(k;\Lambda) \to \Mod_k$
the associated plain realization. 
To stress the dependence on $k$, we would write `$\Gamma_{\dR,\,k}$', 
`$\mathbf{\Gamma}_{\dR,\,k}$'
and `$\dR^*_k$'.
\end{cons}

The following is a reformulation of the Betti-de Rham comparison 
theorem of Grothendieck.

\begin{prop}
\label{prop:comparison-betti-vs-de-rham}
Let $\sigma:k\hookrightarrow \C$ be a complex embedding
and fix a morphism $\Lambda\to k$.
Then, there is a morphism of Weil cohomology theories 
$\Gamma_{\Betti,\,k} \to \Gamma_{\dR}\otimes_k\C$
in $\WCT(k;\Lambda)$.
\end{prop}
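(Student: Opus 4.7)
The approach parallels that of Proposition \ref{prop:comparison-thm-betti-coh-l-adic-coh}: we construct the required morphism at the level of plain realization functors and invoke the equivalences of Theorems \ref{thm:equivalence-between-weil-cohomol-spectra} and \ref{thm:equiv-between-WSp-and-Plain-real}. By the left fibration of Proposition \ref{prop:the-right-fibration-WCT-to-CAlg}, a morphism $\Gamma_{\Betti,\,k} \to \Gamma_{\dR}\otimes_k \C$ in $\WCT(k;\Lambda)$ is determined by the $\Lambda$-algebra structure $\Lambda\to k \xrightarrow{\sigma}\C$ on $\C$ together with an equivalence in the fiber over $\C$ between $\Gamma_{\Betti,\,k}\otimes_\Lambda \C$ and $\Gamma_{\dR,\,k}\otimes_k \C$, or equivalently between the symmetric monoidal colimit-preserving functors $\Betti^*_k(-)\otimes_\Lambda \C$ and $\dR^*_k(-)\otimes_k \C$ from $\SH_{\et}(k;\Lambda)$ to $\Mod_\C$.

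First I would reduce to the case $k=\C$. By Definition \ref{dfn:betti-cohomology-theory-on-more-general-fields} we have $\Gamma_{\Betti,\,k}=\sigma_*\Gamma_{\Betti}$, and the compatibility of algebraic de Rham cohomology with flat base change of the ground field, combined with Theorem \ref{thm:relation-between-weil-coh-theories-and-extension}, yields an equivalence $\sigma_*\mathbf{\Gamma}_{\dR,\,\C}\simeq \mathbf{\Gamma}_{\dR,\,k}\otimes_k \C$ of Weil spectra on $k$. Since $\sigma_*$ preserves coefficient rings and commutes with base change of the coefficient ring, it suffices to produce a morphism $\Gamma_{\Betti}\to \Gamma_{\dR,\,\C}$ in $\WCT(\C;\Lambda)$, i.e., a natural equivalence $\Betti^*(-)\otimes_\Lambda \C \simeq \dR^*_\C$ on $\SH_{\et}(\C;\Lambda)$.

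Over $\C$, I would introduce an auxiliary analytic de Rham realization $\dR^*_{\an,\,\C}$ obtained by composing the analytification functor $\An^*:\SH_{\et}(\C;\Lambda)\to \AnSH(\Lambda)$ of Construction \ref{cons:betti-cohomology-in-case-k=complex-number} with a symmetric monoidal functor on $\AnSH(\C)$ representing hypercohomology of the sheaf of holomorphic differential forms $\Omega^\bullet_{X^{\an}}$. The analytic Poincar\'e lemma, asserting that the inclusion $\C \hookrightarrow \Omega^\bullet_{X^{\an}}$ is a quasi-isomorphism of sheaves of $\C$-modules on $X^{\an}$, refines to a natural equivalence $\Betti^*(-)\otimes_\Lambda \C \simeq \dR^*_{\an,\,\C}$ in $\CAlg(\Prl)$. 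Grothendieck's classical comparison theorem then provides a natural equivalence $\dR^*_\C\simeq \dR^*_{\an,\,\C}$ through the morphism $\Omega^\bullet_X\to \Rder j_*\Omega^\bullet_{X^{\an}}$ (where $j:X^{\an}\to X$ is the canonical map of locally ringed spaces), using a smooth compactification with normal crossings boundary and log de Rham complexes to handle the non-proper case. Composing these two equivalences delivers the desired morphism.

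The main obstacle is lifting the classical pointwise quasi-isomorphisms (analytic Poincar\'e lemma and Grothendieck's comparison) to natural equivalences of lax symmetric monoidal, colimit-preserving functors in $\CAlg(\Prl)$, rather than mere level-wise equivalences on individual $X$. This is handled by working at the level of presheaves of $E_\infty$-algebras on $\Sm_\C$ and on the analytic site of smooth complex spaces, exploiting the multiplicativity and functoriality of the cited quasi-isomorphisms, and then propagating the equivalence from the representable generators $\M(X)(n)$ to all of $\SH_{\et}(\C;\Lambda)$ by colimit preservation together with Lemma \ref{lemma:generation-by-strongly-dualizable-objects}.
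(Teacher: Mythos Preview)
Your approach is correct and follows the standard route: reduce to $k=\C$, then factor the comparison through an analytic de Rham realization using the analytic Poincar\'e lemma on one side and Grothendieck's algebraic--analytic comparison on the other, taking care to lift everything to the level of lax symmetric monoidal functors. The paper's own proof is simply a reference to \cite[Proposition~2.88 \& Corollaire~2.89]{gal-mot-1}, where precisely this construction is carried out in the motivic setting; your sketch is an accurate outline of what one finds there.
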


\begin{proof}
See for example
\cite[Proposition 2.88 \& Corollaire 2.89]{gal-mot-1}.
\end{proof}

The last classical Weil cohomology theory that we mention 
is Berthelot's rigid cohomology. 

\begin{cons}
\label{cons:berthelot-weil-cohomology-theories-}
Assume that $K$ has characteristic zero, but allow $k$ to have 
any characteristic. We fix a morphism 
of commutative ring spectra $\Lambda\to \widehat{K}$ and use it to 
view $\widehat{K}$-modules as $\Lambda$-modules. 
There is a Weil cohomology theory $\Gamma_{\dR}^{\dagger}\in 
\RigWCT(K;\Lambda)$ on rigid analytic $K$-varieties
given by overconvergent de Rham cohomology
in the sense of \cite{Gross-Klonne-dR}.
A construction of $\Gamma_{\dR}^{\dagger}$ can be found in 
\cite[Proposition 5.12]{Vezz-MW}.
Roughly speaking, one considers the overconvergent de Rham complex
$\Omega^{\dagger,\,\bullet}_{/K}$
which is a presheaf on the category $\RigSm^{\dagger}_K$
of smooth dagger rigid analytic $K$-varieties.
The \'etale hypersheafification of 
$\Omega^{\dagger,\,\bullet}_{/K}$ 
being $\BB^1$-local, \cite[Theorem 4.23]{Vezz-MW}
shows that $\Lder_{\et}(\Omega^{\dagger,\,\bullet}_{/K})$
factors uniquely through the forgetful functor 
$\RigSm^{\dagger}_K \to \RigSm_K$
yielding the \'etale hypersheaf $\Gamma^{\dagger}_{\dR}$.
Moreover, by \cite[Proposition 5.12]{Vezz-MW}, the Weil spectrum 
$\mathbf{\Gamma}_{\rig}=
\xi_*\mathbf{\Gamma}^{\dagger}_{\dR}$
represents Berthelot's rigid cohomology on algebraic $k$-varieties.
To stress the dependence on $K$, we would write 
`$\Gamma^{\dagger}_{\dR,\,K}$', `$\Gamma_{\rig,\,K}$', etc. 
\end{cons}

\begin{prop}
\label{prop:berthelot-rigid-cohomology-vs-de-rham-}
Fix a morphism of commutative ring spectra $\Lambda\to K$.
The Weil cohomology theories $\Gamma_{\dR,\,K}\in\WCT(K;\Lambda)$
and $\Gamma^{\dagger}_{\dR,\,K}\in \RigWCT(K;\Lambda)$
are related by a morphism of Weil spectra
$\mathbf{\Gamma}_{\dR,\,K} \to 
\Rig_*(\mathbf{\Gamma}^{\dagger}_{\dR,\,K})$
in $\WSp(K;\Lambda)$.
\end{prop}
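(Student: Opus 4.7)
The plan is to use Theorem \ref{thm:equivalence-between-weil-cohomol-spectra} to reduce the problem from the level of Weil spectra to the level of presheaves in $\WCT(K;\Lambda)$. By Remark \ref{rmk:compatibility-of-WCT-rig-star-xi}, the functor $\Rig_*$ on Weil cohomology theories is simply precomposition with the analytification $X \mapsto X^{\an}$, so the task becomes the construction of a natural morphism of commutative $\Lambda$-algebras $\Gamma_{\dR,K}(X) \to \Gamma^{\dagger}_{\dR,K}(X^{\an})$, functorial in $X \in \Sm_K$ and satisfying the Tate-twist compatibility required in Definition \ref{dfn:infty-category-of-WCT}.

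To produce this morphism, I would exploit the fact that $X^{\an}$ carries a canonical dagger structure $X^{\an,\dagger}$ with the property that algebraic functions on $X$ pull back to overconvergent functions on $X^{\an,\dagger}$ (concretely, for affine $X = \Spec(A)$, this is the tautological inclusion $A \hookrightarrow \mathcal{O}^{\dagger}(X^{\an,\dagger})$). This yields a morphism of sheaves of $\Lambda$-dg-algebras from the algebraic de Rham complex $\Omega^{\bullet}_{X/K}$ on the Zariski site of $X$ to (the pushforward of) the overconvergent de Rham complex $\Omega^{\dagger,\bullet}_{X^{\an,\dagger}/K}$. Passing to derived global sections, and using that $\Lder_{\et}(\Omega^{\dagger,\bullet}_{/K})$ factors through the forgetful functor $\RigSm^{\dagger}_K \to \RigSm_K$ (as recalled in Construction \ref{cons:berthelot-weil-cohomology-theories-}, following \cite{Vezz-MW}), produces the sought natural morphism $\Gamma_{\dR,K}(X) \to \Gamma^{\dagger}_{\dR,K}(X^{\an})$. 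Functoriality in $X$ and compatibility with the multiplicative structure are automatic since the construction takes place in the category of dg-$\Lambda$-algebras.

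To finish, I would verify the Tate-twist compatibility, which reduces to checking that the induced map on $\Gamma(\PP^1,\infty)$ sends the canonical generator, coming from $d\log(t)$ around $\infty$, to its overconvergent analogue; this is classical and follows from the compatibility of the first Chern class of $\mathcal{O}(1)$ under analytification. The main obstacle I anticipate is the careful handling of topologies: the source is defined via Zariski hypersheafification of an algebraic complex while the target is defined via \'etale hypersheafification of a complex of presheaves on dagger rigid spaces, before being transported to ordinary rigid spaces via a forgetful functor. Verifying that the morphism of presheaves of dg-algebras respects these sheafifications and descends correctly through $\RigSm^{\dagger}_K \to \RigSm_K$ is the one genuinely technical point, but it is essentially handled by the descent properties of $\Omega^{\dagger,\bullet}_{/K}$ established in \cite{Vezz-MW}.
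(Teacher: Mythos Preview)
Your proposal is correct and follows essentially the same route as the paper: the paper also uses the canonical dagger structure on $X^{\an}$ to get a functor $(-)^{\an,\dagger}:\Sm_K\to\RigSm^{\dagger}_K$ factoring analytification, and then the obvious morphism of presheaves of complexes $\Omega^{\bullet}_{/K}(-)\to\Omega^{\dagger,\bullet}_{/K}((-)^{\an,\dagger})$ to induce the desired morphism of Weil cohomology theories. The paper is simply more terse, leaving implicit the reduction via Theorem~\ref{thm:equivalence-between-weil-cohomol-spectra} and the Tate-twist compatibility that you spell out.
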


\begin{proof}
The rigid analytification $X^{\an}$ of an algebraic $K$-variety $X$
has a natural dagger
structure. Thus, there is a functor $(-)^{\an,\,\dagger}:
\Sm_K \to \RigSm^{\dagger}_K$ factoring the usual rigid analytification 
functor. Moreover, there is an obvious morphism of complexes
of presheaves 
$\Omega^{\bullet}_{/K}(-) \to 
\Omega^{\dagger,\,\bullet}_{/K}((-)^{\an,\,\dagger})$.
This induces a morphism of Weil cohomology theories
$\Gamma_{\dR,\,K}\to \Gamma^{\dagger}_{\dR,\,K}\circ (-)^{\an}$
as needed.
\end{proof}

Using Theorem 
\ref{thm:rig-upper-star-weil-coh-theory},
we can construct `new' Weil cohomology theories 
using the classical ones. This was undertaken in 
\cite{Nouvelle-Weil}, and we revisit the construction below.

\begin{cons}
\label{cons:new-weil-cohomology-theories-rigid-analytic}
We assume that $K$ has characteristic zero, but we allow
its residue field $k$ to have arbitrary characteristic. 
\begin{enumerate}

\item Let $K^{\alg}/K$ be an algebraic closure of $K$, 
and let $\Gamma_{\ell,\,K}$ be the associated $\ell$-adic 
cohomology theory on algebraic $K$-varieties. 
The Weil cohomology theory $\widehat{\Gamma}_{\new,\,\ell}$ 
on rigid analytic $K$-varieties 
corresponds to the Weil spectrum $\Rig^*\mathbf{\Gamma}_{\ell,\,K}$.
Its ring of coefficients is $\Rder_{\ell}^*(\Rig_*\Lambda)$.

\item Let $\sigma:K\hookrightarrow \C$ be a complex embedding, and let
$\Gamma_{\Betti,\,K}$ be the associated Betti cohomology theory on algebraic 
$K$-varieties. The Weil cohomology theory $\widehat{\Gamma}_{\new,\,\Betti}$
on rigid analytic $K$-varieties corresponds to the 
Weil spectrum $\Rig^*\mathbf{\Gamma}_{\Betti,\,K}$.
Its ring of coefficients is $\Betti^*(\Rig_*\Lambda)$.

\item Fix a morphism $\Lambda \to K$.
Let $\Gamma_{\dR,\,K}$ be the de Rham cohomology theory
on algebraic $K$-varieties. 
The Weil cohomology theory $\widehat{\Gamma}_{\new,\,\dR}$
on rigid analytic $K$-varieties corresponds to the Weil spectrum 
$\Rig^*\mathbf{\Gamma}_{\dR,\,K}$.
Its ring of coefficients is  
$\dR^*(\Rig_*\Lambda)$.

\end{enumerate}
The Weil spectra $\mathbf{\Gamma}_{\new,\,\ell}$, $\mathbf{\Gamma}_{\new,\,\Betti}$ and $\mathbf{\Gamma}_{\new,\,\dR}$ on algebraic $k$-varieties
are obtained form 
$\widehat{\mathbf{\Gamma}}_{\new,\,\ell}$, 
$\widehat{\mathbf{\Gamma}}_{\new,\,\Betti}$ and 
$\widehat{\mathbf{\Gamma}}_{\new,\,\dR}$ respectively by applying $\xi_*$.
To stress the dependence on $K$, we would write 
`$\widehat{\Gamma}_{\new,\,\ell,\,K}$', `$\Gamma_{\new,\,\ell,\,K}$', etc.
\end{cons}

\begin{prop}
\label{prop:diagram-of-weil-cohomology-theory-rig-an}
There is a diagram in $\RigWCT(K;\Lambda)$
as follows:
\begin{equation}
\label{eq-prop:diagram-of-weil-cohomology-theory-rig-an-1}
\xymatrix{\widehat{\Gamma}_{\new,\,\dR} \ar[r] \ar[d] &
\widehat{\Gamma}_{\new,\,\dR}\otimes_K\C  &
\ar[l] \widehat{\Gamma}_{\new,\,\Betti} \ar[r] & 
\widehat{\Gamma}_{\new,\,\ell} \ar[d]\\
\Gamma^{\dagger}_{\dR} & & & \Gamma_{\ell}.\!}
\end{equation}
Similarly, there is a diagram in $\WCT(k;\Lambda)$
as follows:
\begin{equation}
\label{eq-prop:diagram-of-weil-cohomology-theory-rig-an-2}
\xymatrix{\Gamma_{\new,\,\dR} \ar[r] \ar[d] &
\Gamma_{\new,\,\dR}\otimes_K\C  &
\ar[l] \Gamma_{\new,\,\Betti} \ar[r] & 
\Gamma_{\new,\,\ell} \ar[d]\\
\Gamma_{\rig} & & & \Gamma_{\ell}.\!}
\end{equation}
\end{prop}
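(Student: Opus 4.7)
The plan is to deduce both diagrams from the classical comparison morphisms on algebraic $K$-varieties by applying the two functors constructed earlier in the paper: $\Rig^*\colon \WSp(K;\Lambda)\to \RigWSp(K;\Lambda)$ from Theorem \ref{thm:rig-upper-star-weil-coh-theory}, and $\xi_*\colon \RigWSp(K;\Lambda)\to \WSp(k;\Lambda)$ from Proposition \ref{prop:rig-lower-star-of-weil-cohomology-theory}. First I would construct the first diagram \eqref{eq-prop:diagram-of-weil-cohomology-theory-rig-an-1}; the second diagram \eqref{eq-prop:diagram-of-weil-cohomology-theory-rig-an-2} will then follow by applying $\xi_*$ to the first, once we identify the vertices.

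For the three horizontal arrows in the first diagram, I would take the classical Betti--de Rham comparison $\Gamma_{\Betti,\,K}\to \Gamma_{\dR,\,K}\otimes_K\C$ (Proposition \ref{prop:comparison-betti-vs-de-rham}) and the Betti--$\ell$-adic comparison $\Gamma_{\Betti,\,K}\to \Gamma_{\ell,\,K}$ (Proposition \ref{prop:comparison-thm-betti-coh-l-adic-coh}), both in $\WCT(K;\Lambda)$, and push them through the functor $\Rig^*$. Using that $\Rig^*$ is symmetric monoidal (hence commutes with scalar extension along $K\to\C$), this yields the desired morphisms $\widehat{\Gamma}_{\new,\,\Betti}\to\widehat{\Gamma}_{\new,\,\dR}\otimes_K\C$ and $\widehat{\Gamma}_{\new,\,\Betti}\to\widehat{\Gamma}_{\new,\,\ell}$ in $\RigWCT(K;\Lambda)$. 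For the two vertical arrows, I would invoke the $(\Rig^*,\Rig_*)$-adjunction on Weil spectra: Proposition \ref{prop:berthelot-rigid-cohomology-vs-de-rham-} provides a morphism $\mathbf{\Gamma}_{\dR,\,K}\to \Rig_*(\mathbf{\Gamma}^{\dagger}_{\dR,\,K})$ in $\WSp(K;\Lambda)$, whose adjoint transpose is precisely a morphism $\widehat{\mathbf{\Gamma}}_{\new,\,\dR}=\Rig^*\mathbf{\Gamma}_{\dR,\,K}\to \mathbf{\Gamma}^{\dagger}_{\dR}$ in $\RigWSp(K;\Lambda)$; analogously, Proposition \ref{prop:various-relations-between-ell-adic-coh}(1) gives an equivalence $\mathbf{\Gamma}_{\ell,\,K}\simeq \Rig_*(\mathbf{\Gamma}_{\ell,\,K}^{\rig})$, whose adjoint transpose supplies the morphism $\widehat{\mathbf{\Gamma}}_{\new,\,\ell}\to \mathbf{\Gamma}_{\ell}$ in $\RigWSp(K;\Lambda)$.

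For the second diagram, I would simply apply $\xi_*$ to the diagram just constructed. The identifications of the vertices are then straightforward: $\xi_*\widehat{\mathbf{\Gamma}}_{\new,\,\star}=\mathbf{\Gamma}_{\new,\,\star}$ for $\star\in\{\dR,\Betti,\ell\}$ is the very definition given in Construction \ref{cons:new-weil-cohomology-theories-rigid-analytic}; the identification $\xi_*\mathbf{\Gamma}^{\dagger}_{\dR}=\mathbf{\Gamma}_{\rig}$ is built into Construction \ref{cons:berthelot-weil-cohomology-theories-}; and the identification $\xi_*\mathbf{\Gamma}_{\ell,\,K}\simeq \mathbf{\Gamma}_{\ell,\,k}$ is the content of Proposition \ref{prop:various-relations-between-ell-adic-coh}(2). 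Applying $\xi_*$ to the horizontal and vertical morphisms of \eqref{eq-prop:diagram-of-weil-cohomology-theory-rig-an-1} then yields all the morphisms of \eqref{eq-prop:diagram-of-weil-cohomology-theory-rig-an-2}, where again $\xi_*$ is symmetric monoidal so commutes with the scalar extension $-\otimes_K\C$.

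There is no real obstacle here; the proposition is essentially a bookkeeping assertion that packages the classical comparison isomorphisms together with the formal compatibilities of $\Rig^*$ and $\xi_*$ established in Section \ref{sect:from-alg-geo-to-rig-analytic}. The only minor point worth verifying carefully is that scalar extension along $\Lambda\to K\to \C$ interacts correctly with both $\Rig^*$ and $\xi_*$, which follows immediately from the symmetric monoidality of these functors together with Proposition \ref{prop:the-right-fibration-WCT-to-CAlg}.
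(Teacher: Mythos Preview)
Your proof is correct and follows exactly the paper's approach: apply $\Rig^*$ to the classical comparison diagram $\Gamma_{\dR}\to\Gamma_{\dR}\otimes_K\C\leftarrow\Gamma_{\Betti}\to\Gamma_{\ell}$ for the horizontal arrows of \eqref{eq-prop:diagram-of-weil-cohomology-theory-rig-an-1}, obtain the vertical arrows as adjoints of the morphisms from Propositions~\ref{prop:berthelot-rigid-cohomology-vs-de-rham-} and~\ref{prop:various-relations-between-ell-adic-coh}(1), and then apply $\xi_*$ for \eqref{eq-prop:diagram-of-weil-cohomology-theory-rig-an-2}. One wording issue: $\xi_*$ is only right-lax monoidal on the ambient motivic $\infty$-categories, not symmetric monoidal; the compatibility with $-\otimes_K\C$ you need holds because on realizations $\xi_*$ is precomposition with $\xi^*$ (Remark~\ref{rmk:compatibility-of-WCT-rig-star-xi}), or equivalently by the left-fibration argument via Proposition~\ref{prop:the-right-fibration-WCT-to-CAlg} that you already cite.
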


\begin{proof}
The horizontal line in the diagram
\eqref{eq-prop:diagram-of-weil-cohomology-theory-rig-an-1}
is obtained from the following diagram 
$$\Gamma_{\dR} \to \Gamma_{\dR}\otimes_K\C \leftarrow 
\Gamma_{\Betti} \to \Gamma_{\ell}$$
by applying the functor 
$\Rig^*:\WCT(K;\Lambda)
\to \RigWCT(K;\Lambda)$.
(See Propositions
\ref{prop:comparison-thm-betti-coh-l-adic-coh}
and 
\ref{prop:comparison-betti-vs-de-rham}.)
The vertical arrows in the diagram
\eqref{eq-prop:diagram-of-weil-cohomology-theory-rig-an-1}
are deduced by adjunction from the natural morphisms
$\Gamma_{\dR} \to \Rig_*(\Gamma^{\dagger}_{\dR})$
and $\Gamma_{\ell} \to \Rig_*(\Gamma_{\ell})$
provided by Propositions
\ref{prop:various-relations-between-ell-adic-coh}
and 
\ref{prop:berthelot-rigid-cohomology-vs-de-rham-}.
Finally, the diagram 
\eqref{eq-prop:diagram-of-weil-cohomology-theory-rig-an-2}
is obtained from the diagram
\eqref{eq-prop:diagram-of-weil-cohomology-theory-rig-an-1}
by applying $\xi_*$ and using Proposition
\ref{prop:various-relations-between-ell-adic-coh}.
\end{proof}

\section{The connectivity theorem}

\label{sect:proof-of-main-thm}

In this final section, we prove the main new result of this paper, 
namely the connectivity of the motivic Hopf algebras associated to
the classical Weil cohomology theories. More precisely, we will 
prove the following.

\begin{thm}
\label{thm:motivic-connectivity-for-ell-adic}
Let $\Lambda$ be a connective commutative ring spectrum,
$k$ a field whose exponent characteristic is invertible
in $\pi_0(\Lambda)$, and $K$ a valued field of characteristic zero
whose valuation has height $1$ and whose residue field is $k$.
Below, $\ell$ is a prime invertible in $k$, a complex
embedding $\sigma:K\hookrightarrow \C$
is chosen, and a morphism $\Lambda \to K$ is fixed allowing us 
to view $K$-modules as $\Lambda$-modules. 
\begin{enumerate}

\item The motivic Hopf algebroids of the following Weil 
cohomology theories 
$$\Gamma_{\ell},\quad \Gamma_{\rig}, \quad
\Gamma_{\new,\,\ell},\quad 
\Gamma_{\new,\,\Betti} \quad \text{and} \quad
\Gamma_{\new,\,\dR}\quad \in \WCT(k;\Lambda)$$
are connective.

\item The motivic Hopf algebroids of the following Weil 
cohomology theories 
$$\Gamma_{\ell},\quad 
\Gamma^{\dagger}_{\dR}, \quad
\widehat{\Gamma}_{\new,\,\ell},\quad 
\widehat{\Gamma}_{\new,\,\Betti} \quad \text{and} \quad
\widehat{\Gamma}_{\new,\,\dR}\quad \in \RigWCT(K;\Lambda)$$
are connective.

\end{enumerate}
\end{thm}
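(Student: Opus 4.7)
The plan is to use $\widehat{\Gamma}_{\new,\,\Betti}$ (and its algebraic counterpart $\Gamma_{\new,\,\Betti}$) as the pivot: prove connectivity of its motivic Hopf algebroid by direct computation, then propagate to the other five theories in each part using the comparison morphisms of Proposition~\ref{prop:diagram-of-weil-cohomology-theory-rig-an} together with Corollary~\ref{cor:simple-useful-criterion-connectedness}.

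For the base case, I would reduce the connectivity of $\Hmot(\widehat{\Gamma}_{\new,\,\Betti,\,K})$ to the classical Betti case. The starting point is \cite[Corollaire~2.105]{gal-mot-1}, which handles $\Hmot(\Gamma_{\Betti,\,K})$ for $\Lambda=\Q$; for a general connective $\Lambda$, I extend this using Lemma~\ref{lem:comparing-of-hopf-algebroid-for-wct} and the fact that $\Gamma_{\Betti,\,K}$ with $\Lambda$-coefficients is obtained from the rational version by extension of scalars. For the new Betti theory itself: Construction~\ref{cons:new-weil-cohomology-theories-rigid-analytic} identifies $\widehat{\mathbf{\Gamma}}_{\new,\,\Betti,\,K}$ with $\Rig^*\mathbf{\Gamma}_{\Betti,\,K}$, and since $\Rig^*$ is symmetric monoidal, $\Cech(\Rig^*\mathbf{\Gamma}_{\Betti,\,K})\simeq\Rig^*\Cech(\mathbf{\Gamma}_{\Betti,\,K})$. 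Applying $\Rder\Gamma(\pt;\Omega^{\infty}_T(-))$ and using the adjunction $(\Rig^*,\Rig_*)$ with Theorem~\ref{thm:the-main-equivalence-for-new-Weil-coh} yields an explicit formula for $\Hmot(\widehat{\Gamma}_{\new,\,\Betti,\,K})$ in terms of $\Hmot(\Gamma_{\Betti,\,K})$ and $\Rig_*\Lambda$, from which connectivity can be read off. The algebraic counterpart $\Hmot(\Gamma_{\new,\,\Betti,\,k})$ is obtained from the rigid one: for the rational part one invokes Theorems~\ref{thm:essential-surjectivity-of-psi-lower-star} and \ref{thm:connectivity-algebraic-vs-rigid-analytic} (yielding the equivalence $\Hmot(\widehat{\Gamma}_{\new,\,\Betti,\,K})\simeq\Hmot(\Gamma_{\new,\,\Betti,\,k})\otimes^{\rhd}{\rm S}(\Gamma_{\new,\,\Betti,\,k}(\pt)_{\Q}^{\oplus n}(-1))$); for each $\ell$-torsion part, one uses Theorem~\ref{thm:completion-of-motivic-Hopf-algebras-rigan}, whose right-hand side is manifestly connective since the Galois group contribution is concentrated in homotopy degree zero.

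With the base case in hand, I propagate connectivity using Corollary~\ref{cor:simple-useful-criterion-connectedness}: given a morphism $\Gamma_W\to\Gamma_{W'}$ with connective target coefficient ring, connectivity of $\Hmot(\Gamma_W)$ implies connectivity of $\Hmot(\Gamma_{W'})$. Using the diagrams of Proposition~\ref{prop:diagram-of-weil-cohomology-theory-rig-an}, from $\widehat{\Gamma}_{\new,\,\Betti}$ I reach $\widehat{\Gamma}_{\new,\,\ell}$ (whose coefficient ring is $\Rder^*_{\ell}(\Rig_*\Lambda)$, connective), then $\Gamma_{\ell}$ (coefficient ring $\Lambda_{\ell}$, connective); similarly via $\widehat{\Gamma}_{\new,\,\Betti}\to\widehat{\Gamma}_{\new,\,\dR}\otimes_K\C$, and using that the map $\widehat{\Gamma}_{\new,\,\dR}(\pt)\to\widehat{\Gamma}_{\new,\,\dR}(\pt)\otimes_K\C$ is faithfully flat to descend connectivity, followed by $\widehat{\Gamma}_{\new,\,\dR}\to\Gamma^{\dagger}_{\dR}$ (target $\widehat{K}$, discrete). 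The algebraic counterparts $\Gamma_{\ell}$, $\Gamma_{\rig}$, $\Gamma_{\new,\,\ell}$, $\Gamma_{\new,\,\dR}$ follow identically, either by applying $\xi_*$ to the rigid analytic chain of implications or by repeating the argument with the algebraic half of the diagram of Proposition~\ref{prop:diagram-of-weil-cohomology-theory-rig-an}.

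The main obstacle I anticipate is the explicit computation in the base case: producing a description of $\Hmot(\widehat{\Gamma}_{\new,\,\Betti,\,K})$ sufficiently concrete that connectivity can be deduced from the known connectivity of $\Hmot(\Gamma_{\Betti,\,K})$. Related to this, controlling the connectivity of the auxiliary coefficient rings $\Rder_W^*(\Rig_*\Lambda)$ (for $W\in\{\Betti,\ell,\dR\}$) is delicate: although $\Rig_*$ is colimit-preserving by Lemma~\ref{lemma:Rig-lower-star-commutes-with-colimits}, it need not preserve connective objects a priori. The argument is likely to proceed by splitting $\Rig_*\Lambda$ into its rational and $\ell$-completed pieces (using the same $\Q$-vs-torsion decomposition that appears in the proofs of Lemma~\ref{lemma:Rig-lower-star-commutes-with-colimits} and Proposition~\ref{prop:rig-upper-star-generates-under-colim}), and handling each piece separately, the torsion part again via rigidity of \'etale motives.
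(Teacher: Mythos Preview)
Your overall strategy matches the paper's closely: reduce to $\widehat{\Gamma}_{\new,\,\Betti}$ via Corollary~\ref{cor:simple-useful-criterion-connectedness} and Proposition~\ref{prop:diagram-of-weil-cohomology-theory-rig-an}, compute $\Hmot(\widehat{\Gamma}_{\new,\,\Betti})$ explicitly using the equivalence $\widetilde{\Rig}_*$ and the known connectivity of $\Hmot(\Gamma_{\Betti})$ from \cite{gal-mot-1}, and handle $\Betti^*(\Rig_*\Lambda)$ by a rational/torsion split (the paper finishes the rational piece via the de Rham comparison and \cite[Corollaire~3.8]{Nouvelle-Weil}, exactly as you anticipate). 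The propagation step and the passage from $\widehat{\Gamma}_{\new,\,\Betti}$ to $\Gamma_{\new,\,\Betti}$ via Theorems~\ref{thm:completion-of-motivic-Hopf-algebras} and \ref{thm:connectivity-rigid-anal-vs-algebraic-} are also as in the paper.

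There is, however, a substantial step you have not addressed: the reduction to $K$ algebraically closed. The theorems you invoke for the passage rigid-analytic $\leadsto$ algebraic (Theorems~\ref{thm:essential-surjectivity-of-psi-lower-star} and \ref{thm:connectivity-algebraic-vs-rigid-analytic}) are stated in Situation~\ref{situ:finite-rank-value-group-rho-for-psi}, which requires $K$ to be algebraically closed; likewise Theorem~\ref{thm:completion-of-motivic-Hopf-algebras} applies only once one knows $\Gamma_{\new,\,\Betti}$ arises as $(k^{\alg}/k)_*\Gamma_{W'}$, which is not automatic. In the paper this reduction is Lemma~\ref{lemma:reduction-to-the-case-k-alg-closed}, and it is not short: one first reduces to $K$ henselian (comparing Betti realizations attached to different extensions of $\sigma$ to the henselization, and controlling them via the $\ell$-adic comparison and Lemma~\ref{lemma:ell-adic-cohomology-invariant-galois}), then to $K$ algebraically closed using Lemma~\ref{lemma:adjointability-rig-upper-star-henselian} together with Propositions~\ref{prop:reduction-algeb-closed-connectivity} and \ref{prop:reduction-algeb-closed-connectivity-rigan}, again invoking that different complex embeddings of $K^{\alg}$ become equivalent after a faithfully flat extension via $\ell$-adic comparison. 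Without this reduction the pivot argument does not get off the ground.

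A smaller point: your claim that ``$\Gamma_{\Betti,\,K}$ with $\Lambda$-coefficients is obtained from the rational version by extension of scalars'' is not correct (Betti with sphere-spectrum coefficients is genuinely different from rational Betti). The paper simply cites \cite[Corollaire~2.105]{gal-mot-1} for the connectivity of $\Hmot(\Gamma_{\Betti})$; you should do the same rather than attempt a base-change argument that does not hold.
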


We start by establishing the following reduction.

\begin{lemma}
\label{lemma:reduction-to-the-case-k-alg-closed}
It is enough to prove Theorem 
\ref{thm:motivic-connectivity-for-ell-adic}
for $\Gamma_{\new,\,\Betti}$ and
$\widehat{\Gamma}_{\new,\,\Betti}$ under the 
extra assumption that $K$ is algebraically closed. 
\end{lemma}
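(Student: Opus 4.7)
The plan is to propagate connectivity through the comparison diagrams of Proposition \ref{prop:diagram-of-weil-cohomology-theory-rig-an}, and then to descend to the algebraically closed case via Propositions \ref{prop:reduction-algeb-closed-connectivity} and \ref{prop:reduction-algeb-closed-connectivity-rigan}.

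For the first reduction, I would start from the assumed connectivity of $\Hmot(\widehat{\Gamma}_{\new,\,\Betti})$ and follow the forward morphisms in diagram \eqref{eq-prop:diagram-of-weil-cohomology-theory-rig-an-1}: $\widehat{\Gamma}_{\new,\,\Betti}\to \widehat{\Gamma}_{\new,\,\ell}$, $\widehat{\Gamma}_{\new,\,\Betti}\to \widehat{\Gamma}_{\new,\,\dR}\otimes_K\C$, and then $\widehat{\Gamma}_{\new,\,\ell}\to \Gamma_{\ell}$ and $\widehat{\Gamma}_{\new,\,\dR}\to \Gamma^{\dagger}_{\dR}$. Each application of Corollary \ref{cor:simple-useful-criterion-connectedness} transports connectivity of the source's motivic Hopf algebroid to that of the target, once the connectivity of the target coefficient ring is verified; the latter is visible from the explicit descriptions in Corollary \ref{cor:coefficients-of-rig-upper-star-Gamma-W}, using that $\Rig_*\Lambda$ is connective (it is an \'etale hypersheaf of connective spectra) and so are $\Gamma_{\ell}(\pt)$, $K$, and $\Betti^*(\Rig_*\Lambda)$. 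The only remaining arrow, $\widehat{\Gamma}_{\new,\,\dR}\to \widehat{\Gamma}_{\new,\,\dR}\otimes_K\C$, points the wrong way; I would invoke Lemma \ref{lem:comparing-of-hopf-algebroid-for-wct} to express
\[
\Hmot(\widehat{\Gamma}_{\new,\,\dR}\otimes_K\C)
\simeq
\Hmot(\widehat{\Gamma}_{\new,\,\dR})
\otimes_{\Cech(\widehat{\Gamma}_{\new,\,\dR}(\pt))}
\Cech(\widehat{\Gamma}_{\new,\,\dR}(\pt)\otimes_K\C),
\]
and descend connectivity through the faithfully flat base change induced by the field extension $K\subset\C$. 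The very same procedure, applied to diagram \eqref{eq-prop:diagram-of-weil-cohomology-theory-rig-an-2}, reduces part (1) of Theorem \ref{thm:motivic-connectivity-for-ell-adic} to the connectivity of $\Hmot(\Gamma_{\new,\,\Betti})$.

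For the second reduction, I would pick an algebraic closure $K^{\alg}/K$ together with an extension of the valuation; since the valuation has height~$1$, the residue field $k^{\alg}$ of $K^{\alg}$ is an algebraic closure of $k$, and the completion $\widehat{K}^{\alg}$ is quasi-Galois over $\widehat{K}$, with some Galois group $G$. Using the functoriality of $\Rig^*$ (Theorem \ref{thm:rig-upper-star-weil-coh-theory}) and $\xi_*$ along extensions of the ground field, and the factorisation of the chosen complex embedding $K\hookrightarrow\C$ through $K^{\alg}$, I would obtain equivalences $\widehat{\Gamma}_{\new,\,\Betti,\,K}\simeq (K^{\alg}/K)_*\widehat{\Gamma}_{\new,\,\Betti,\,K^{\alg}}$ and $\Gamma_{\new,\,\Betti,\,k}\simeq (k^{\alg}/k)_*\Gamma_{\new,\,\Betti,\,k^{\alg}}$. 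Propositions \ref{prop:reduction-algeb-closed-connectivity-rigan} and \ref{prop:reduction-algeb-closed-connectivity} then yield the desired conclusion, provided their Galois-equivariance hypothesis is verified.

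The main obstacle is precisely this Galois-equivariance hypothesis: for every $g\in G$ one must exhibit a faithfully flat algebra $A$ over the coefficient ring making $g_*\widehat{\Gamma}_{\new,\,\Betti,\,K^{\alg}}\otimes A$ equivalent to $\widehat{\Gamma}_{\new,\,\Betti,\,K^{\alg}}\otimes A$. This reduces, via $\Rig^*$ and $\xi_*$, to the corresponding statement for ordinary Betti cohomology on algebraic $K^{\alg}$-varieties: the action of $g$ amounts to replacing the complex embedding $\sigma$ by $\sigma\circ g^{-1}$, and after base change to $\C$ both embeddings produce canonically the same singular cohomology of the analytified variety. Thus $A=\C$ suffices, and everything else in the argument is a direct application of the tools developed in the preceding sections.
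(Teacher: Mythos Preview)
Your first reduction, via Corollary~\ref{cor:simple-useful-criterion-connectedness} and the comparison diagrams, is essentially what the paper does (the paper leaves the backward-arrow step implicit, so your faithfully flat descent along $K\hookrightarrow\C$ is a welcome elaboration).

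The second reduction, however, has two genuine gaps.

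\textbf{Missing henselian step.} The equivalence $\widehat{\Gamma}_{\new,\,\Betti,\,K}\simeq (K^{\alg}/K)_*\widehat{\Gamma}_{\new,\,\Betti,\,K^{\alg}}$ that you claim unwinds to the base-change formula $\Rig^*e_*\simeq \widehat{e}_*\Rig'^*$ applied to $\mathbf{\Gamma}_{\Betti,\,K^{\alg}}$. This formula is the content of Lemma~\ref{lemma:adjointability-rig-upper-star-henselian}, and it \emph{requires $K$ to be henselian}. When $K$ is not henselian, the valuation on $K$ extends to $K^{\alg}$ in many ways, and the coefficient ring $\Betti^*(\Rig_*\Lambda)$ of $\widehat{\Gamma}_{\new,\,\Betti,\,K}$ acquires an algebra structure over $\Lambda^{\Hom_K(K',\C)}$ (with $K'$ the henselization) that records all of these extensions; the Weil spectrum is then genuinely larger than $\widehat{e}_*\widehat{\mathbf{\Gamma}}_{\new,\,\Betti,\,K^{\alg}}$. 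This is why the paper's proof proceeds in two steps: first a reduction to $K$ henselian (by localizing over the profinite set $\Hom_K(K',\C)$), and only then the Galois descent to $K^{\alg}$.

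\textbf{Galois-equivariance.} Your verification of hypothesis~(2) in Propositions~\ref{prop:reduction-algeb-closed-connectivity} and \ref{prop:reduction-algeb-closed-connectivity-rigan} is not correct: for two complex embeddings $\sigma_1,\sigma_2:K^{\alg}\hookrightarrow\C$, the Betti cohomology theories $\Gamma_{\Betti,\sigma_1}$ and $\Gamma_{\Betti,\sigma_2}$ are \emph{not} known to become equivalent after base change to $\C$ (the de Rham comparison identifies $\Gamma_{\Betti,\sigma_i}\otimes\C$ with $\Gamma_{\dR}\otimes_{K^{\alg},\sigma_i}\C$, and the two base changes of $\Gamma_{\dR}$ along different embeddings are not canonically the same). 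The paper instead reduces at the outset to $\Lambda=\SS_{(\ell)}$, uses the comparison of Proposition~\ref{prop:comparison-thm-betti-coh-l-adic-coh} to replace $\Gamma_{\Betti,\sigma_i}\otimes\SS_\ell$ by an $\ell$-adic theory, and then invokes Lemma~\ref{lemma:ell-adic-cohomology-invariant-galois} together with the faithful flatness of $\SS_{(\ell)}\to\SS_\ell$.
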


\begin{proof}
By Corollary
\ref{cor:simple-useful-criterion-connectedness}
and Proposition
\ref{prop:diagram-of-weil-cohomology-theory-rig-an},
it is enough to treat the Weil cohomology theories
$\Gamma_{\new,\,\Betti}$ and $\widehat{\Gamma}_{\new,\,\Betti}$.
It remains to explain why we can assume that $K$ is algebraically closed.
We do this in two steps: first we reduce the general case
to the henselian one, and then we reduce the henselian 
case to the algebraically closed one. For later use, 
it will be convenient to assume that 
$\Lambda=\SS_{(\ell)}$ is the localization of the sphere spectrum 
at a prime number $\ell$, which we can do without lost of generality. 

\subsubsection*{Step 1: reduction to the case where $K$ is henselian}
Let $K'$ be the henselization of $K$ with respect to its valuation. 
Thus, the ring of integers $K'^{\circ}$ in $K'$ 
is the henselization of the ring of integers $K^{\circ}$ in $K$,
and $K$ and $K'$ have the same completion $\widehat{K}$.
We denote by $e:\Spec(K') \to \Spec(K)$ the obvious morphism 
and, for the purpose of this proof, we denote by 
$\Rig'^*:\SH_{\et}(K';\Lambda)
\to \RigSH_{\et}(\widehat{K};\Lambda)$
the rigid analytification functor. Notice that 
$\Rig_*\Lambda\simeq e_*\Rig'_*\Lambda$
is naturally an $e_*\Lambda$-algebra.
It follows that 
$\Betti^*(\Rig_*\Lambda)$ is naturally an algebra over 
$\Betti^*(e_*\Lambda)=\Lambda^{\Hom_K(K',\C)}$.
(As usual, we view $\Hom_K(K',\C)$ as a profinite set 
and $\Lambda^{\Hom_K(K',\C)}$ is the $\Lambda$-algebra of locally
constant functions on $\Hom_K(K',\C)$ with values in $\Lambda$.)
Since the ring of coefficients of $\Gamma_{\new,\,\Betti}$
and $\widehat{\Gamma}_{\new,\,\Betti}$ is 
$\Betti^*(\Rig_*\Lambda)$, the Weil spectra
$\mathbf{\Gamma}_{\new,\,\Betti}$ and 
$\widehat{\mathbf{\Gamma}}_{\new,\,\Betti}$
have the structure of a $\Lambda^{\Hom_K(K',\C)}$-algebra.
Thus, the motivic Hopf algebroids 
$\Hmot(\Gamma_{\new,\,\Betti})$
and $\Hmot(\widehat{\Gamma}_{\new,\,\Betti})$
are algebras over the cosimplicial ring 
$\Lambda^{\Cech_{\bullet}(\Hom_K(K',\C))}$.
To prove that these motivic Hopf algebroids
are connective, it is enough to show the following.
\begin{enumerate}

\item Given $\sigma'\in\Hom_K(K',\C)$, the commutative algebra
\begin{equation}
\label{eq-lemma:reduction-to-the-case-k-alg-closed-1}
\Hmot(\Gamma_{\new,\,\Betti})(\Delta^0)
\otimes_{\Lambda^{\Hom_K(K',\C)}}
\Lambda^{\{\sigma'\}}=
\Hmot(\widehat{\Gamma}_{\new,\,\Betti})(\Delta^0)
\otimes_{\Lambda^{\Hom_K(K',\C)}}
\Lambda^{\{\sigma'\}}
\end{equation}
is connective.

\item Given $\sigma'_1,\sigma'_2\in\Hom_K(K',\C)$, 
the commutative algebras
\begin{equation}
\label{eq-lemma:reduction-to-the-case-k-alg-closed-2}
\Hmot(\Gamma_{\new,\,\Betti})(\Delta^1)
\otimes_{\Lambda^{\Hom_K(K',\C)^{\times 2}}}
\Lambda^{\{(\sigma'_1,\sigma'_2)\}}
\; \text{and} \;
\Hmot(\widehat{\Gamma}_{\new,\,\Betti})(\Delta^1)
\otimes_{\Lambda^{\Hom_K(K',\C)^{\times 2}}}
\Lambda^{\{(\sigma'_1,\sigma'_2)\}}
\end{equation}
are connective.

\end{enumerate}
Now, the algebra 
\eqref{eq-lemma:reduction-to-the-case-k-alg-closed-1} 
is obtained by applying 
$\Gamma(\widehat{K};\Omega^{\infty}_T(-))$ to 
$\widehat{\mathbf{\Gamma}}_{\new,\,\Betti}\otimes_{\Lambda^{\Hom_K(K',\C)}}\Lambda^{\{\sigma'\}}$, and the algebras
\eqref{eq-lemma:reduction-to-the-case-k-alg-closed-2} 
are obtained by applying $\Gamma(k;\Omega^{\infty}_T(-))$
and $\Gamma(\widehat{K};\Omega^{\infty}_T(-))$ to
$$(\widehat{\mathbf{\Gamma}}_{\new,\,\Betti}\otimes_{\Lambda^{\Hom_K(K',\C)}}\Lambda^{\{\sigma_1'\}})\otimes 
(\widehat{\mathbf{\Gamma}}_{\new,\,\Betti}\otimes_{\Lambda^{\Hom_K(K',\C)}}\Lambda^{\{\sigma_2'\}}) \quad \text{and}\hspace{2cm}$$
$$\hspace{2cm}(\mathbf{\Gamma}_{\new,\,\Betti}\otimes_{\Lambda^{\Hom_K(K',\C)}}\Lambda^{\{\sigma_1'\}})\otimes 
(\mathbf{\Gamma}_{\new,\,\Betti}\otimes_{\Lambda^{\Hom_K(K',\C)}}\Lambda^{\{\sigma_2'\}}).$$
Thus, we need a good description of the spectra $\widehat{\mathbf{\Gamma}}_{\new,\,\Betti}\otimes_{\Lambda^{\Hom_K(K',\C)}}\Lambda^{\{\sigma'\}}$,
for $\sigma'\in \Hom_K(K',\C)$. We claim that there is an equivalence
\begin{equation}
\label{eq-lemma:reduction-to-the-case-k-alg-closed-3}
\widehat{\mathbf{\Gamma}}_{\new,\,\Betti}\otimes_{\Lambda^{\Hom_K(K',\C)}}\Lambda^{\{\sigma'\}}
\simeq \widehat{\mathbf{\Gamma}}_{\new,\,\Betti'}
\end{equation}
where $\widehat{\Gamma}_{\new,\,\Betti'}\in \RigWCT(\widehat{K};\Lambda)$ 
is the new Weil cohomology theory obtained from the Betti cohomology 
theory $\Gamma_{\Betti'}\in \WCT(K';\Lambda)$ associated to the
complex embedding $\sigma':K'\to \C$.
Indeed, recall from Theorem
\ref{thm:the-main-equivalence-for-new-Weil-coh}
that we have equivalences of $\infty$-categories
$$\xymatrix{\RigSH_{\et}(\widehat{K};\Lambda) 
\ar[r]^-{\widetilde{\Rig}{}'_*}_-{\sim} 
\ar[dr]^-{\sim}_-{\widetilde{\Rig}_*}
& \SH_{\et}(K';\Rig'_*\Lambda) \ar[d]_-{\sim}^-{\widetilde{e}_*}\\
& \SH_{\et}(K;\Rig_*\Lambda)}$$
where the vertical arrow is induced by the equivalence
$$\widetilde{e}_*:\SH_{\et}(K';\Lambda) \xrightarrow{\sim}
\SH_{\et}(K;e_*\Lambda)$$
from Proposition
\ref{prop:SH-et-of-k-prime-is-module-over-e-star}.
Modulo $\widetilde{\Rig}_*$, the Weil spectrum 
$\widehat{\mathbf{\Gamma}}_{\new,\,\Betti}$ is given by 
$\Rig_*\Lambda\otimes \Betti_*\Lambda$.
Thus, modulo $\widetilde{\Rig}{}'_*$, the same Weil spectrum
$\widehat{\mathbf{\Gamma}}_{\new,\,\Betti}$ is given by 
\begin{equation}
\label{eq-lemma:reduction-to-the-case-k-alg-closed-5}
e^*(e_*\Rig'_*\Lambda\otimes \Betti_*\Lambda)
\otimes_{e^*e_*\Rig'_*\Lambda}\Rig'_*\Lambda
\simeq \Rig'_*\Lambda\otimes e^*\Betti_*\Lambda.
\end{equation}
Note that 
$$\begin{array}{rcl}
\Gamma(K';\Omega^{\infty}_T(e^*\Betti_*\Lambda))
& \simeq & \Gamma(K;\Omega_T^{\infty}(e_*e^*\Betti_*\Lambda))\\
& \simeq & \Gamma(K;\Omega_T^{\infty}(\Betti_*e_*e^*\Lambda))\\
& \simeq & \Gamma(K;\Omega_T^{\infty}(\Betti_*\Lambda^{\Hom_K(K',\C)}))\\
& \simeq & \Lambda^{\Hom_K(K',\C)}.
\end{array}$$
In particular, $e^*\Betti_*\Lambda$ has a natural structure of a 
$\Lambda^{\Hom_K(K',\C)}$-algebra, and it is easy to see that this structure
induces the one we have on 
$\widehat{\mathbf{\Gamma}}_{\new,\,\Betti}$ modulo the 
equivalence $\widetilde{\Rig}{}'_*$. 
For the purpose of this proof, we denote by
$\Betti'^*:\SH_{\et}(K';\Lambda) \to \Mod_{\Lambda}$
the Betti realization associated to the embedding $\sigma'$, so that 
$\mathbf{\Gamma}_{\Betti'}=\Betti'_*\Lambda$.
We have equivalences in $\SH_{\et}(K;e_*\Lambda)$:
$$\begin{array}{rcl}
(e_*e^*\Betti_*\Lambda)
\otimes_{\Lambda^{\Hom_K(K',\C)}}\Lambda^{\{\sigma'\}}
& \simeq & (\Betti_*e_*e^*\Lambda) \otimes_{\Lambda^{\Hom_K(K',\C)}}\Lambda^{\{\sigma'\}}\\
& \simeq & (\Betti_*\Lambda^{\Hom_K(K',\C)}) 
\otimes_{\Lambda^{\Hom_K(K',\C)}}\Lambda^{\{\sigma'\}}\\
& \simeq & \Betti_*\Lambda,
\end{array}$$
where the action of $e_*\Lambda$ on $\Betti_*\Lambda$ 
is the one deduced from the equivalence 
$\Betti_*\Lambda\simeq e_*\Betti'_*\Lambda$.
Using Proposition
\ref{prop:SH-et-of-k-prime-is-module-over-e-star},
this yields an equivalence in $\SH_{\et}(K';\Lambda)$:
$$(e^*\Betti_*\Lambda)
\otimes_{\Lambda^{\Hom_K(K',\C)}}\Lambda^{\{\sigma'\}}
\simeq \Betti'_*\Lambda.$$
Combining this with the fact that 
$\widehat{\mathbf{\Gamma}}_{\new,\,\Betti}$
is given by 
\eqref{eq-lemma:reduction-to-the-case-k-alg-closed-5}
modulo the equivalence $\widetilde{\Rig}{}'_*$, 
we deduce the claimed equivalence 
\eqref{eq-lemma:reduction-to-the-case-k-alg-closed-3}.

It is now easy to conclude. Indeed, recall that 
we want to check the properties (1) and (2) above.
Using the equivalence 
\eqref{eq-lemma:reduction-to-the-case-k-alg-closed-3}, 
the commutative algebra 
\eqref{eq-lemma:reduction-to-the-case-k-alg-closed-1}
can be identified with the ring of coefficients of 
$\widehat{\Gamma}_{\new,\,\Betti'}$ whose connectivity 
would be granted if Theorem
\ref{thm:motivic-connectivity-for-ell-adic}
was known for $K'$.
Similarly, denoting $\Gamma_{\new,\,\Betti'_i}\in \WCT(k;\Lambda)$
and $\widehat{\Gamma}_{\new,\,\Betti'_i}\in \RigWCT(K';\Lambda)$, 
for $i\in \{1,2\}$, the new 
Weil cohomology theories obtained from the Betti cohomology theories 
$\Gamma_{\Betti'_i}$
associated to the complex embeddings $\sigma'_i$, the commutative 
algebras in 
\eqref{eq-lemma:reduction-to-the-case-k-alg-closed-2}
can be identified with 
$$\Gamma(k;\Omega^{\infty}_T(\mathbf{\Gamma}_{\new,\,\Betti'_1}\otimes
\mathbf{\Gamma}_{\new,\,\Betti'_2})) \qquad \text{and}
\qquad \Gamma(\widehat{K};
\Omega^{\infty}_T(\widehat{\mathbf{\Gamma}}_{\new,\,\Betti'_1}\otimes
\widehat{\mathbf{\Gamma}}_{\new,\,\Betti'_2})).$$
If $\sigma'_1=\sigma'_2=\sigma'$, these algebras coincide with 
$\Hmot(\Gamma_{\new,\,\Betti'})(\Delta^1)$ and 
$\Hmot(\widehat{\Gamma}_{\new,\,\Betti'})(\Delta^1)$,
and we would be done. In general, it would suffices to know that 
$\widehat{\Gamma}_{\new,\,\Betti'_1}$ and 
$\widehat{\Gamma}_{\new,\,\Betti'_2}$ 
become equivalent after a common faithfully flat extension 
of their ring of coefficients. This would follow if 
the Betti cohomology theories 
$\Gamma_{\Betti'_1}$ and $\Gamma_{\Betti'_2}$
become equivalent after a faithfully flat extension 
of $\Lambda$. To prove this, recall from the beginning of the proof
that $\Lambda=\SS_{(\ell)}$. Proposition 
\ref{prop:comparison-thm-betti-coh-l-adic-coh}
implies that $\Gamma_{\Betti'_1}\otimes \SS_{\ell}$ and $\Gamma_{\Betti'_2}
\otimes \SS_{\ell}$
are equivalent to the $\ell$-adic cohomology theories 
$\Gamma_{\ell,1}$ and $\Gamma_{\ell,2}$ in $\WCT(K';\Lambda)$ 
associated the algebraic closures $\overline{K}_1/K'$ and 
$\overline{K}_2/K'$ deduced from the complex embeddings 
$\sigma'_1$ and $\sigma'_2$. Lemma
\ref{lemma:ell-adic-cohomology-invariant-galois}
implies that $\Gamma_{\ell,1}$ and $\Gamma_{\ell,2}$
is equivalent. This finishes the proof since 
$\SS_{\ell}$ is faithfully flat over $\SS_{(\ell)}$
by \cite[Corollary 7.3.6.9]{Lurie-SAG}.

\subsubsection*{Step 2: reduction to the case where 
$K$ is algebraically closed}
We now assume that $K$ is henselian. Let $K^{\alg}\subset \C$
be the algebraic closure of $K$ in $\C$. 
The valuation of $K$ extends uniquely to a valuation of 
$K^{\alg}$ and every element of the Galois group 
$G$ of $K^{\alg}/K$ induces an automorphism of the completion 
$\widehat{K}{}^{\alg}$ of $K^{\alg}$.
Let $e:\Spec(K^{\alg}) \to \Spec(K)$ and
$\widehat{e}:\Spa(\widehat{K}{}^{\alg}) \to \Spa(\widehat{K})$
be the obvious morphisms. For the purpose of this proof,
we denote by $\Rig'^*:\SH_{\et}(K^{\alg};\Lambda)
\to \RigSH_{\et}(K^{\alg};\Lambda)$
the rigid analytification functor. By Lemma 
\ref{lemma:adjointability-rig-upper-star-henselian}
below, we have an equivalence 
$$e^*\Rig_*\Lambda \simeq \Rig'_*\Lambda$$
of commutative algebras in $\SH_{\et}(K^{\alg};\Lambda)$.
Write $\Betti'^*:\SH_{\et}(K^{\alg};\Lambda)\to \Mod_{\Lambda}$
for the plain Betti realization functor associated to the 
complex embedding $K^{\alg}\subset \C$. The functor 
$$e_*:\SH_{\et}(K^{\alg};\Rig'_*\Lambda)
\to \SH_{\et}(K;\Rig_*\Lambda)$$
takes the algebra 
$\Rig'_*\Lambda\otimes \Betti'_*\Lambda$
to 
$$\begin{array}{rcl}
e_*(e^*\Rig_*\Lambda \otimes \Betti'_*\Lambda)
& \simeq & \Rig_*\Lambda\otimes e_*\Betti'_*\Lambda\\
& \simeq & \Rig_*\Lambda \otimes \Betti_*\Lambda,
\end{array}$$
where the first equivalence is deduced from the projection formula 
for the morphism $e$ which is a profinite morphism. 
This implies that the Weil cohomology theories
$\widehat{\Gamma}_{\new,\,\Betti}\in \RigWCT(K;\Lambda)$
and 
$\widehat{\Gamma}_{\new,\,\Betti'}\in \RigWCT(K^{\alg};\Lambda)$
are related by the formula:
\begin{equation}
\label{eq-lemma:reduction-to-the-case-k-alg-closed-53}
\widehat{\Gamma}_{\new,\,\Betti}\simeq 
\widehat{e}{}_*(\widehat{\Gamma}_{\new,\,\Betti'}).
\end{equation}
Letting $k^{\alg}$ be the residue field of $K^{\alg}$ and
$\overline{e}:\Spec(k^{\alg}) \to \Spec(k)$ the obvious morphism, 
we also have the formula
\begin{equation}
\label{eq-lemma:reduction-to-the-case-k-alg-closed-61}
\Gamma_{\new,\,\Betti}\simeq \overline{e}_*
(\Gamma_{\new,\,\Betti'})
\end{equation}
relating the Weil cohomology theories 
$\Gamma_{\new,\,\Betti}\in \WCT(k;\Lambda)$
and $\Gamma_{\new,\,\Betti'}\in \WCT(k^{\alg};\Lambda)$.
This follows immediately from the commutative square
$$\xymatrix{\SH_{\et}(k;\Lambda) \ar[r]^-{\xi^*} \ar[d]^-{\overline{e}{}^*}
& \RigSH_{\et}(K;\Lambda) \ar[d]^-{\hat{e}{}^*}\\
\SH_{\et}(k^{\alg};\Lambda) 
\ar[r]^-{\xi^*} & \RigSH_{\et}(K^{\alg};\Lambda)}$$
by considering the associated plain realization functors.

It is now easy to conclude. Indeed, we want to show that 
$\Hmot(\Gamma_{\new,\,\Betti})$ and 
$\Hmot(\widehat{\Gamma}_{\new,\,\Betti})$
are connective knowing that 
$\Hmot(\Gamma_{\new,\,\Betti'})$ and 
$\Hmot(\widehat{\Gamma}_{\new,\,\Betti'})$
are connective. Using the equivalences
\eqref{eq-lemma:reduction-to-the-case-k-alg-closed-53}
and
\eqref{eq-lemma:reduction-to-the-case-k-alg-closed-61},
and Proposition
\ref{prop:reduction-algeb-closed-connectivity},
we are reduced to showing that, given an automorphism 
$g$ of $K^{\alg}/K$, the Weil cohomology theories
$\widehat{\Gamma}_{\new,\,\Betti'}$ and 
$g_*\widehat{\Gamma}_{\new,\,\Betti'}$
become equivalent after a common faithfully flat extension of their
rings of coefficients. It is easy to see that 
$g_*\widehat{\Gamma}_{\new,\,\Betti'}\in \RigWCT(K^{\alg};\Lambda)$ 
is the new Weil cohomology theory obtained from the Weil 
cohomology theory $g_*\Gamma_{\Betti'} \in \WCT(K^{\alg};\Lambda)$.
Thus, it is enough to show that $\Gamma_{\Betti'}$ and 
$g_*\Gamma_{\Betti'}$ become equivalent after a failfully flat 
extension of $\Lambda$. 
To prove this, recall from the beginning of the proof
that $\Lambda=\SS_{(\ell)}$. Proposition 
\ref{prop:comparison-thm-betti-coh-l-adic-coh}
implies that $\Gamma_{\Betti'}\otimes \SS_{\ell}$ is 
equivalent to the $\ell$-adic cohomology theory
$\Gamma_{\ell} \in \WCT(K^{\alg};\Lambda)$. Lemma
\ref{lemma:ell-adic-cohomology-invariant-galois}
implies that $\Gamma_{\ell}$ and $g_*\Gamma_{\ell}$
are equivalent. This finishes the proof since 
$\SS_{\ell}$ is faithfully flat over $\SS_{(\ell)}$
by \cite[Corollary 7.3.6.9]{Lurie-SAG}.
\end{proof}

\begin{lemma}
\label{lemma:adjointability-rig-upper-star-henselian}
Assume that the valued field $K$ is henselian. Let 
$K'/K$ be an algebraic extension of $K$ and endow $K'$ with the unique
extension of the valuation of $K$. Let 
$e:\Spec(K') \to \Spec(K)$ and $\widehat{e}:\Spa(\widehat{K}{}')
\to \Spa(\widehat{K})$ be the obvious morphisms.  
The commutative square of $\infty$-categories
$$\xymatrix{\SH_{\et}(K;\Lambda) \ar[r]^-{e^*} \ar[d]^-{\Rig^*}
& \SH_{\et}(K';\Lambda) \ar[d]^-{\Rig^*}\\
\RigSH_{\et}(K;\Lambda) \ar[r]^-{\hat{e}{}^*} 
& \RigSH_{\et}(K';\Lambda)}$$
is right adjointable horizontally and vertically, i.e., the induced natural transformations 
$$\Rig^* e_* \to \widehat{e}_*\Rig^*
\qquad \text{and} \qquad e^*\Rig_* \to \Rig_*\widehat{e}{}^*$$
are equivalences.
\end{lemma}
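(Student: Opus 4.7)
The plan is to prove the horizontal adjointability $\Rig^* e_* \to \widehat{e}_* \Rig^*$, from which the vertical adjointability $e^*\Rig_* \to \Rig_*\widehat{e}{}^*$ will follow either by an identical argument or by the mate correspondence applied to the same commutative square. The argument proceeds by reducing first to the case of a finite extension via continuity, and then to the case of a finite \'etale extension via invariance under purely inseparable base change.

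First, I would write $K'$ as the filtered union of its finite subextensions $K_\alpha/K$. Since $K$ is henselian, the valuation of $K$ extends uniquely to each $K_\alpha$ and to $K'$; moreover, the standard valuation-theoretic identification $\widehat{K}_\alpha = \widehat{K}\otimes_K K_\alpha$ holds for each finite $K_\alpha/K$, and $\widehat{K}'$ is the completion of $\colim_\alpha \widehat{K}_\alpha$. Invoking the continuity theorems \cite[Proposition 2.5.11]{AGAV} for algebraic motives and \cite[Theorem 2.5.1]{AGAV} for rigid analytic motives, the $\infty$-categories $\SH_{\et}(K';\Lambda)$ and $\RigSH_{\et}(\widehat{K}{}';\Lambda)$ appear as colimits in $\Prl$ along the diagram of finite subextensions, and these colimit presentations are compatible with $\Rig^*$ termwise. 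Together with Propositions \ref{prop:SH-et-of-k-prime-is-module-over-e-star} and \ref{prop:RigSH-et-of-k-prime-is-module-over-e-star}, which describe $e_*$ and $\widehat{e}_*$ in terms of the commutative algebras $e_*\Lambda$ and $\widehat{e}_*\Lambda$, this reduces the claim to the case where $K'/K$ is finite.

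Second, for $K'/K$ finite, the purely inseparable part is invisible to \'etale motives with coefficients in $\Lambda$ because the exponent characteristic of $k$ is invertible in $\pi_0(\Lambda)$ (cf.\ \cite[Theorem 2.9.7]{AGAV} and \cite[Proposition 3.2.4]{AGAV}); so one may further reduce to the case where $K'/K$ is finite separable. Then $e:\Spec(K')\to\Spec(K)$ is finite \'etale, so by the combination of properness and smooth purity we have $e_*\simeq e_!\simeq e_\sharp$, and analogously $\widehat{e}_*\simeq\widehat{e}_\sharp$. Under the henselian identification of $\Spa(\widehat{K}{}')$ with the analytic fiber product $\Spa(\widehat{K})\times_{\Spec(K)}\Spec(K')$, the functor $\Rig^*$, being a morphism of $\Sm$-premotivic categories, intertwines $f_\sharp$ for smooth morphisms: this yields $\Rig^* e_\sharp \simeq \widehat{e}_\sharp \Rig^*$, which is the desired equivalence.

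The main obstacle will be the continuity reduction, namely ensuring that the colimit presentations of $\SH_{\et}(K';\Lambda)$ and $\RigSH_{\et}(\widehat{K}{}';\Lambda)$ along the system of finite subextensions are compatible not only with the pullbacks $e^*,\widehat{e}{}^*$, which is transparent, but also with the mate transformation $\Rig^* e_* \to \widehat{e}_* \Rig^*$, so that pointwise equivalence at each finite level assembles correctly in the colimit. The finite \'etale case itself is essentially formal from the standard $\Sm$-premotivic machinery of \cite{AGAV}.
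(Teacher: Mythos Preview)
Your outline is sound and coincides with what the paper does in the rational case, but the paper's proof is organized differently. Rather than running continuity uniformly, the paper first decomposes into the torsion and uniquely divisible parts. For torsion objects, rigidity \cite[Theorems~2.10.3 \& 2.10.4]{AGAV} identifies the square with the corresponding square of small \'etale topoi
$$\xymatrix{\Shv_{\et}(\Et_K;\Lambda)_{\tor} \ar[r]^-{e^*} \ar[d] & \Shv_{\et}(\Et_{K'};\Lambda)_{\tor} \ar[d]\\ \Shv_{\et}(\Et_{\widehat{K}};\Lambda)_{\tor} \ar[r]^-{\hat e^*} & \Shv_{\et}(\Et_{\widehat{K}'};\Lambda)_{\tor}}$$
whose vertical arrows are equivalences precisely because $K$ and $K'$ are henselian; both adjointabilities are then automatic. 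Only after rationalizing does the paper invoke continuity to pass to finite subextensions, then \cite[Theorems~2.9.6 \& 2.9.7]{AGAV} to pass to the finite \'etale case, and finally \cite[Propositions~2.2.13 \& 2.2.14]{AGAV}---which is exactly your $e_*\simeq e_\sharp$ argument. The virtue of the paper's split is that the ``main obstacle'' you flag (compatibility of the mate with the colimit presentation) is avoided on the torsion side and only has to be faced rationally, where the continuity statements are cleanest.

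Two small corrections. First, your aside that vertical adjointability follows from horizontal adjointability ``by the mate correspondence'' is not right: the two Beck--Chevalley transformations are genuinely different mates of the same square, and one being invertible does not formally imply the other is. You do need the parallel argument (which, as you say, is identical). Second, \cite[Proposition~3.2.4]{AGAV} is a continuity statement, not an inseparable-invariance statement; the references you want for the latter are \cite[Theorems~2.9.6 \& 2.9.7]{AGAV}.
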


\begin{proof}
Consider objects 
$M'\in \SH_{\et}(K';\Lambda)$ and $N\in \RigSH_{\et}(K;\Lambda)$.
We want to prove that 
$$\Rig^* e_*(M') \to \widehat{e}_*\Rig^*(M')
\qquad \text{and} \qquad e^*\Rig_*(N) \to \Rig_*\widehat{e}{}^*(N)$$
are equivalences. It is enough to do this with $M'$ replaced by 
$M'\otimes \Q$ and $M'\otimes \Q/\Z$, 
and similarly for $N$. Said differently, we may assume that 
$M'$ and $N$ are uniquely divisible or 
torsion. 
In the torsion case, the commutative square
$$\xymatrix{\SH_{\et}(K;\Lambda)_{\tor} \ar[r]^-{e^*} \ar[d]^-{\Rig^*}
& \SH_{\et}(K';\Lambda)_{\tor} \ar[d]^-{\Rig^*}\\
\RigSH_{\et}(K;\Lambda)_{\tor} \ar[r]^-{\hat{e}{}^*} 
& \RigSH_{\et}(K';\Lambda)_{\tor}}$$
can be identified, using 
\cite[Theorems 2.10.3 \& 2.10.4]{AGAV},
with the following commutative square
\begin{equation}
\label{eq-lemma:adjointability-rig-upper-star-henselian-135}
\begin{split}
\xymatrix{\Shv_{\et}(\Et_K;\Lambda)_{\tor} \ar[r]^-{e^*} 
\ar[d]^-{\sim} & \Shv_{\et}(\Et_{K'};\Lambda)_{\tor} \ar[d]^{\sim}\\
\Shv_{\et}(\Et_{\widehat{K}};\Lambda)_{\tor} \ar[r]^-{\hat{e}{}^*} & 
\Shv_{\et}(\Et_{\widehat{K}{}'};\Lambda)_{\tor}}
\end{split}
\end{equation}
where the vertical arrows are equivalences since $K$ and $K'$ are
henselian. Lemmas
\ref{lemma:Rig-lower-star-commutes-with-colimits}
and \ref{lemma:commutation-with-colimits-for-e-lower-star}
imply that $e_*$, $\widehat{e}_*$ and $\Rig_*$ preserve torsion objects.
Thus, it is enough to prove that the square 
\eqref{eq-lemma:adjointability-rig-upper-star-henselian-135}
is adjointable horizontally and vertically, which is clear.

We now assume that $M'$ and $N$ are uniquely divisible or, 
equivalently, that $\Lambda$ is a $\Q$-algebra. In this case, 
we can invoke \cite[Theorem 2.5.1 \& Proposition 2.5.11]{AGAV}
to reduce to the case where the extension $K'/K$ is finite.
Using \cite[Theorems 2.9.6 \& 2.9.7]{AGAV}, we can even assume 
that $K'/K$ is \'etale. 
Then, the result follows from 
\cite[Propositions 2.2.13 \& 2.2.14]{AGAV}.
\end{proof}

We are now ready to finish the proof of Theorem 
\ref{thm:motivic-connectivity-for-ell-adic}.

\begin{proof}[Proof of Theorem 
\ref{thm:motivic-connectivity-for-ell-adic}]
By Lemma 
\ref{lemma:reduction-to-the-case-k-alg-closed},
we may assume that $K$ is algebraically closed, and we only need
to consider $\Gamma_{\new,\,\Betti}$ and 
$\widehat{\Gamma}_{\new,\,\Betti}$.
We now claim that it suffices to treat the case of 
$\widehat{\Gamma}_{\new,\,\Betti}$. Indeed, assume that 
$\Hmot(\widehat{\Gamma}_{\new,\,\Betti})$ is connective.
Then, this already implies that 
$\Gamma_{\new,\,\Betti}(\pt)$ is a connective 
algebra. By Theorem 
\ref{thm:completion-of-motivic-Hopf-algebras},
the obvious morphism
$$\Cech(\Gamma_{\new,\,\Betti}(\pt))
\to \Hmot(\Gamma_{\new,\,\Betti})$$
induces an equivalence after $\ell$-completion, for every prime
$\ell$. 
It follows that the square
$$\xymatrix{\Cech(\Gamma_{\new,\,\Betti}(\pt)) 
\ar[r] \ar[d] & \Hmot(\Gamma_{\new,\,\Betti}) \ar[d]\\
\Cech(\Gamma_{\new,\,\Betti}(\pt))\otimes\Q \ar[r] & 
\Hmot(\Gamma_{\new,\,\Betti})\otimes\Q}$$
is cocartesian in $(\Mod_{\Lambda})^{\Deltasimp}$,
and hence it is enough to show that 
$\Hmot(\Gamma_{\new,\,\Betti}) \otimes \Q$
is connective. To do so, we may replace $\Lambda$ by $\Lambda_{\Q}$
and assume that $\Lambda$ is a $\Q$-algebra. Under this assumption, the 
connectivity of $\Hmot(\Gamma_{\new,\,\Betti})$ follows from the 
connectivity of $\Hmot(\widehat{\Gamma}_{\new,\,\Betti})$
by Theorem
\ref{thm:connectivity-rigid-anal-vs-algebraic-}. This proves our claim.

It remains to see that 
$\Hmot(\widehat{\Gamma}_{\new,\,\Betti})$
is connective. By Lemma 
\ref{lemma:connectivity-hopf-algebroid}, 
it is enough to show that the algebras
$\Hmot(\widehat{\Gamma}_{\new,\,\Betti})(\Delta^0)$ and 
$\Hmot(\widehat{\Gamma}_{\new,\,\Betti})(\Delta^1)$ 
are connective. These are obtained by applying
$\Gamma(\pt;\Omega^{\infty}_T(-))$ to the
two algebras $\Rig_*\Lambda\otimes \mathbf{\Gamma}_{\Betti}$
and $\Rig_*\Lambda\otimes \mathbf{\Gamma}_{\Betti}\otimes 
\mathbf{\Gamma}_{\Betti}$ in $\SH_{\et}(K;\Rig_*\Lambda)$.
We have an equivalence
$\mathbf{\Gamma}_{\Betti}\otimes \mathbf{\Gamma}_{\Betti}\simeq
\mathbf{\Gamma}_{\Betti}\otimes (\Hmot(\Gamma_{\Betti})(\Delta^1))$
which implies that 
$$\Hmot(\widehat{\Gamma}_{\new,\,\Betti})(\Delta^1)
\simeq \Rder\Gamma(\pt;\Omega^{\infty}_T(\Rig_*\Lambda\otimes \mathbf{\Gamma}_{\Betti}))\otimes (\Hmot(\Gamma_{\Betti})(\Delta^1)).$$
Since the motivic Hopf algebra $\Hmot(\Gamma_{\Betti})$ 
is known to be connective by \cite[Corollaire 2.105]{gal-mot-1},
we are left to check that 
$$\begin{array}{rcl}
\Hmot(\widehat{\Gamma}_{\new,\,\Betti})(\Delta^0) & = & 
\Rder\Gamma(\pt;\Omega^{\infty}_T(\Rig_*\Lambda\otimes \mathbf{\Gamma}_{\Betti}))\\
& \simeq & \Betti^*(\Rig_*\Lambda).
\end{array}$$
is connective. We claim that the morphism
$\Lambda \to \Rig_*\Lambda$ induces an equivalence in 
$\SH_{\et}(K;\Lambda)_{\ellcpl}$
after $\ell$-completion for every prime $\ell$. 
Indeed, since $\Rig_*$ is colimit-preserving 
(see Lemma 
\ref{lemma:Rig-lower-star-commutes-with-colimits}), 
we have 
$(\Rig_*\Lambda)_{\ell}^{\wedge}\simeq (\Rig_*)_{\ellcpl}\Lambda_{\ell}$
where $(\Rig_*)_{\ellcpl}$ is the right adjoint to the functor 
$$(\Rig^*)_{\ellcpl}:\SH_{\et}(K;\Lambda)_{\ellcpl}
\to \RigSH_{\et}(K;\Lambda)_{\ellcpl}.$$
It follows from \cite[Theorem 2.10.3 \& 2.10.4]{AGAV} that the functor 
$(\Rig^*)_{\ellcpl}$ is an equivalence, proving our claim.
That said, we have a cocartesian square in $\SH_{\et}(K;\Lambda)$
$$\xymatrix{\Lambda \ar[r] \ar[d] & \Rig_*\Lambda\ar[d]\\
\Lambda_{\Q} \ar[r] & \Rig_*\Lambda_{\Q}.\!}$$
Thus, we are reduced to showing that 
$\Betti^*(\Rig_*\Lambda_{\Q})$ is connective.
Said differently, we may assume that $\Lambda$ is a 
$\Q$-algebra and even that $\Lambda=\Q$. Using Proposition
\ref{prop:comparison-betti-vs-de-rham}, it is enough to show that the 
algebra 
$$\Rder_{\dR}^*(\Rig_*\Q)=\Rder\Gamma(\pt;\Omega^{\infty}_T(
\Rig^*\mathbf{\Gamma}_{\dR}))$$
is connective. This algebra is known to be connective 
by \cite[Corollaire 3.8]{Nouvelle-Weil}.
\end{proof}

\appendix

\section{Some computation with Milnor--Witt $K$-theory}

Given a scheme $S$, we denote by
$\SH(S;\Lambda)$ the stable Morel--Voevodsky $\infty$-category 
with coefficients in $\Lambda$. 
(This is the Nisnevich local counterpart of the 
$\infty$-category $\SH_{\et}(S;\Lambda)$.)
We do not assume in this appendix 
that the residual characteristics of $S$ are invertible 
in $\pi_0(\Lambda)$.
Our goal is to show the following result
which was used in the proof of Theorem 
\ref{thm:the-functor-Psi-using-basis-of-value-group}.

\begin{prop}
\label{prop:action-of-elevation-power-m-on-Gm}
Let $S$ be a scheme such that $-1\in \mathcal{O}(S)$ is a square.
Then, for every integer $m\in \Z$, elevation to the $m$-th power
on $\Gm_{,\,S}$ is given by the matrix
$$\begin{pmatrix} 1 & 0\\
0 & m
\end{pmatrix}:\Lambda \oplus \Lambda(1)[1] \to 
\Lambda \oplus \Lambda(1)[1]$$
on the associated homological motive in $\SH(S;\Lambda)$.
\end{prop}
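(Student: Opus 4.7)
The plan proceeds in three steps: diagonalize the matrix using the canonical splitting, reduce to a universal base where Morel's theorem is available, and conclude by evaluating Morel's formula for the power map.

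First, I would exploit the canonical splitting. The retraction $\Gm_{,\,S} \xrightarrow{\pi} S \xrightarrow{u} \Gm_{,\,S}$, where $u$ is the unit section and $\pi$ the structure map, produces the canonical decomposition $\M(\Gm_{,\,S}) \simeq \Lambda \oplus \Lambda(1)[1]$ in $\SH(S;\Lambda)$. Since the $m$-th power map $[m] : \Gm_{,\,S} \to \Gm_{,\,S}$ commutes with $\pi$ and fixes $u$, it respects this retraction, so $[m]_*$ is automatically block-diagonal in the above splitting with top-left block equal to $\id_\Lambda$. The entire content of the proposition is therefore the identification of the bottom-right block, which is an element of $\End_{\SH(S;\Lambda)}(\Lambda(1)[1]) \simeq \End_{\SH(S;\Lambda)}(\Lambda)$, with the integer $m$.

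Second, I would reduce to a computation over a field. The assumption that $-1 \in \mathcal{O}(S)^\times$ is a square provides a morphism $S \to \Spec \Z[i]$ with $i^2 = -1$, and the construction of $[m]_*$ is natural in the base. It therefore suffices to treat the universal base $S = \Spec \Z[i]$, and then, by pulling back along the generic point $\Spec \Q(i) \to \Spec \Z[i]$ (which is conservative on the cellular subcategory containing the motives under consideration), to work over a field $k$ containing $i$. Over such a field, Morel's theorem on the endomorphisms of the motivic sphere furnishes a canonical ring homomorphism
\[
\mathrm{GW}(k) \;\longrightarrow\; \End_{\SH(k;\Lambda)}(\Lambda(1)[1]),
\]
and the image of the $m$-th power map on $\Gm$ is the element
\[
m_\epsilon \;=\; \sum_{j=1}^{m} \langle (-1)^{j-1} \rangle \;\in\; \mathrm{GW}(k)
\]
(with the evident extension to $m \leq 0$). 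Since $-1 = i^2$ in $k$, we have $\langle -1 \rangle = \langle i^2 \rangle = 1$ in $\mathrm{GW}(k)$, hence $m_\epsilon = m \cdot \langle 1 \rangle = m$, which is exactly the desired bottom-right entry.

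The main obstacle will be making the base-change argument of the second step entirely rigorous: Morel's identification of $[m]_*$ with $m_\epsilon$ is naturally stated over a perfect field in the $\A^1$-homotopy category, and transferring it back to the universal base $\Spec \Z[i]$ (and thence to an arbitrary $S$ with $-1$ a square) requires either a continuity/cellularity argument for the $\A^1$-local sphere spectrum, or a direct reformulation of Morel's computation at the universal base. Once this transfer is justified, the rest of the argument is a routine manipulation in the Milnor--Witt $K$-theory ring.
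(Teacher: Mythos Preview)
Your diagonalization step and your endpoint computation (that the diagonal entry is $m_\epsilon$, which collapses to $m$ once $\langle -1\rangle=1$) are both correct and agree with the paper. The genuine gap is exactly the one you flag: the passage from $\Spec\Z[i]$ to $\Spec\Q(i)$. You need injectivity of $\End_{\SH(\Z[i];\Lambda)}(\Lambda)\to\End_{\SH(\Q(i);\Lambda)}(\Lambda)$, and this is not an elementary fact; over the sphere spectrum it amounts to a statement about $\pi_{0,0}$ of the motivic sphere over a Dedekind base, which needs Bachmann-type extensions of Morel's theorem rather than Morel's theorem itself. Moreover, the identification of the power map with $m_\epsilon$ is itself part of Morel's field-based computation, so even after you justify conservativity you are still importing a nontrivial result. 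Your reduction to $\Lambda=\SS$ (implicit in invoking $\mathrm{GW}$) also needs a word.

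The paper avoids the reduction to a field entirely by working directly in $\Hom_{\SH(S;\Lambda)}(\Lambda,\Lambda(\bullet)[\bullet])$ for an \emph{arbitrary} base $S$. The multiplication map on $\Gm$ already forces the relation $[ab]=[a]+[b]+\eta[a][b]$, and Druzhinin supplies the remaining Morel relations (Steinberg, $\eta$-centrality, $\eta h=0$) over a general base. The only missing ingredient, which over a field follows from Steinberg but over a general base does not (since $1-a$ need not be a unit), is $[a][-a]=0$. The paper proves this by a direct trick: by naturality it suffices to show $[t][-t]=0$ over $S[t,t^{-1}]$; restriction to $S[t,t^{-1},(1-t)^{-1}]$ is \emph{injective} on $\Hom(\Lambda,\Lambda(2)[2])$ because the cohomological motives of these two $S$-schemes are explicitly $\Lambda\oplus\Lambda(-1)[-1]$ and $\Lambda\oplus\Lambda(-1)[-1]^{\oplus 2}$; and over $S[t,t^{-1},(1-t)^{-1}]$ the Steinberg relation $[t][1-t]=0$ is available, from which $[t][-t]=0$ follows by the standard manipulation. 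From $[a][-a]=0$ one derives $\langle a^2\rangle=1$ for all units $a$, hence $\langle -1\rangle=1$ when $-1$ is a square, and then $[a^m]=m_\epsilon[a]=m[a]$ exactly as in Morel's argument. This gives the result uniformly over any base, with no conservativity or continuity input.
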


The decomposition of the homological motive of $\Gm_{,\,S}$ 
alluded to in the statement of Proposition
\ref{prop:action-of-elevation-power-m-on-Gm}
is induced by the unit section of the group scheme $\Gm_{,\,S}$.
Modulo this decomposition, the multiplication morphism 
$m:\Gm_{,\,S} \times_S \Gm_{,\,S} \to \Gm_{,\,S}$
induces a matrix of the form
$$\begin{pmatrix} 1 & 0 & 0 & 0 \\
0 & 1 & 1 & \eta
\end{pmatrix}:
\Lambda\oplus \Lambda(1)[1]\oplus \Lambda(1)[1]
\oplus \Lambda(2)[2]
\to \Lambda\oplus \Lambda(1)[1]$$
on the associated homological motives. 
The morphism $\eta:\Lambda(1)[1] \to \Lambda$ obtained in this way
is known as the Hopf map. On the other hand, an element 
$a\in \mathcal{O}^{\times}(S)$, viewed as a section of $\Gm_{,\,S}$, 
gives rise to a matrix of the form
$$\begin{pmatrix} 
1\\
[a]
\end{pmatrix}:
\Lambda \to \Lambda \oplus \Lambda(1)[1].$$
It follows immediately from the above discussion that the identity
$$[ab]=[a]+[b]+\eta[a][b]$$
holds in the graded ring 
$\Hom_{\SH(S;\Lambda)}(\Lambda,\Lambda(\bullet)[\bullet])$
for all $a,b\in \mathcal{O}^{\times}(S)$. 
In fact, by \cite{Druzhinin-Milnor-Witt}, we even have a morphism
of graded rings
$${\rm K}_{\bullet}^{\text{"MW"}}(S)
\to \Hom_{\SH(S;\Lambda)}(\Lambda,\Lambda(\bullet)[\bullet])$$
where ${\rm K}_{\bullet}^{\text{"MW"}}(S)$ is the naive Milnor--Witt
$K$-theory ring of $S$, freely generated by symbols 
$\eta\in {\rm K}_{-1}^{\text{"MW"}}(S)$ and 
$[a]\in {\rm K}_1^{\text{"MW"}}(S)$, one for each 
$a\in \mathcal{O}^{\times}(S)$,
that satisfy Morel's relations in
\cite[Definition 3.1]{A1-alg-top}:
\begin{enumerate}

\item (Steinberg relation) For $a, b\in \mathcal{O}^{\times}(S)$
such that $a+b=1$, we have $[a][b]=0$.

\item For $a,b\in \mathcal{O}^{\times}(S)$, we have 
$[ab]=[a]+[b]+\eta[a][b]$.

\item For $a\in \mathcal{O}^{\times}(S)$, $[u]\eta=\eta[u]$.

\item $\eta(2+\eta[-1])=0$.

\end{enumerate}
Given $a\in \mathcal{O}^{\times}(S)$, 
we set $\langle a \rangle = 1+\eta[a]$ as in
\cite{A1-alg-top}.

\begin{rmk}
\label{rmk:easy-consequence-of-axioms-for-MW-K-th}
Over a base scheme $S$, extra care is needed 
when using the Steinberg relation since
there could be invertible elements 
$a\in \mathcal{O}^{\times}(S)\smallsetminus \{1\}$ 
such that $1-a$ is not invertible. 
However, the proof of \cite[Lemma 3.5]{A1-alg-top}
does not use the Steinberg relation and hence is 
valid over a general base $S$. Thus, the following relations
hold in ${\rm K}_{\bullet}^{\text{"MW"}}(S)$
for all $a,b\in \mathcal{O}^{\times}(S)$:
\begin{itemize}

\item $\langle 1 \rangle =1$ and $[1]=0$;

\item $[ab]=[a]+\langle a\rangle [b]=[a]\langle b \rangle +[b]$;

\item $\langle ab\rangle =\langle a \rangle \langle b \rangle$;

\item $\langle a\rangle$ is central in 
${\rm K}_{\bullet}^{\text{"MW"}}(S)$;

\item $[ab^{-1}]=[a]-\langle ab^{-1}\rangle [b]$
and, in particular, $[a^{-1}]=-\langle a^{-1}\rangle [a]$.

\end{itemize}
For $m\in \Z$, we set following 
\cite{A1-alg-top}:
$$m_{\epsilon}=\left\lceil \frac{m}{2} 
\right\rceil + \left\lfloor \frac{m}{2}
\right\rfloor\langle -1\rangle$$
where, for a real number $x$, $\lceil x \rceil$ is the smallest integer 
$\geq x$ and $\lfloor x \rfloor$ is the largest integer $\leq x$.
In particular, for $m\geq 0$, we have 
$$m_{\epsilon}=\overbrace{1+\langle 1 \rangle +1 +
\cdots }^{m\;\text{terms}}$$
and $(-m)_{\epsilon}=m_{\epsilon}\langle -1\rangle$.
It is also easy to check that $m_{\epsilon}n_{\epsilon}=(mn)_{\epsilon}$
for all $m,n\in \Z$. 
\end{rmk}

If $S$ is the spectrum of a field, Morel's relations imply
that $[a][-a]=0$. Although, this is unreasonable to expect 
in ${\rm K}_{\bullet}^{\text{"MW"}}(S)$, for a general $S$, 
it is nevertheless satisfied in the graded ring
$\Hom_{\SH(S;\Lambda)}(\Lambda,\Lambda(\bullet)[\bullet])$.

\begin{lemma}
\label{lemma:[a][-a]-is-zero-in-SH}
For $a\in \mathcal{O}^{\times}(S)$, we have 
$[a][-a]=0$ in 
$\Hom_{\SH(S;\Lambda)}(\Lambda,\Lambda(2)[2])$.
\end{lemma}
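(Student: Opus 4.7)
The plan is to reduce to a universal case and then combine a Steinberg-type argument on a dense open with a purity/residue argument on the complement.

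First, I would reduce to the universal base. The symbol $[a][-a]$ is functorial: the map $a : S \to T$ with $T = \Spec \Z[t, t^{-1}]$ sending $t \mapsto a$ pulls back the tautological class $[t][-t] \in \Hom_{\SH(T;\Lambda)}(\Lambda, \Lambda(2)[2])$ to $[a][-a]$ on $S$. Moreover, since this class comes by extension of scalars from $\pi_{2,2}\SS_T$ along $\SS \to \Lambda$, it suffices to establish the vanishing in $\pi_{2,2}\SS_T$.

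Next, I would consider the open-closed decomposition $T = U \sqcup Z$ where $U = T[(1-t)^{-1}]$, $Z = \{t = 1\}_T \cong \Spec \Z$, with inclusions $j : U \hookrightarrow T$ and $i : Z \hookrightarrow T$. On $U$, the function $1-t$ is invertible, so the Steinberg relation gives $[t][1-t]|_U = 0$; using $[t^{-1}] = -\langle t^{-1}\rangle[t]$ and the invertibility of $\langle t^{-1}\rangle$ in the graded ring, one also gets $[t][1-t^{-1}]|_U = 0$. The factorization $-t = (1-t)(1-t^{-1})^{-1}$ together with the formula $[xy^{-1}] = [x] - \langle xy^{-1}\rangle[y]$ from Remark \ref{rmk:easy-consequence-of-axioms-for-MW-K-th} yields
\[
[-t]|_U = [1-t] - \langle -t\rangle[1-t^{-1}],
\]
and hence $[t][-t]|_U = 0$, following Morel's classical argument.

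To transfer the vanishing from $U$ to $T$, I would invoke the localization cofiber sequence $i_* i^! \Lambda_T \to \Lambda_T \to j_* j^*\Lambda_T$. Since $(T,Z)$ is a smooth pair of codimension $1$ with trivial normal bundle (generated by $1-t$), relative purity gives $i^!\Lambda_T \simeq \Lambda_Z(-1)[-2]$, and applying $\Hom_{\SH(T)}(\Lambda, -\otimes \Lambda(2)[2])$ produces a long exact sequence
\[
\Hom_{\SH(Z)}(\Lambda, \Lambda(1)) \xrightarrow{\partial} \Hom_{\SH(T)}(\Lambda, \Lambda(2)[2]) \xrightarrow{j^*} \Hom_{\SH(U)}(\Lambda, \Lambda(2)[2]).
\]
Under the standard identification of $\partial$ with the Rost--Schmid/Milnor--Witt residue along $Z$, the residue of $[t][-t]$ at $t = 1$ vanishes because both $t$ and $-t$ specialize to units there (namely $1$ and $-1$), and residues of Milnor--Witt symbols in which all entries are units are zero (Morel, \emph{A1-algebraic topology}, Lemma 3.19). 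Combined with the vanishing $j^*[t][-t] = 0$, exactness forces $[t][-t] = 0$ in $\pi_{2,2}\SS_T$, and the lemma follows.

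The main obstacle will be the correct identification of the purity boundary $\partial$ with the MW-theoretic residue over the mixed-characteristic base $T$; this is standard but bookkeeping-intensive. An alternative route avoiding this identification is to invoke the Rost--Schmid/Gersten resolution of $\pi_{\bullet,\bullet}\SS$ on the regular scheme $T$, which provides an injection $\pi_{2,2}\SS_T \hookrightarrow K^{\mathrm{MW}}_2(\Q(t))$, reducing directly to Morel's vanishing over the function field.
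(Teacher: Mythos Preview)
Your Steinberg computation on $U=S[t,t^{-1},(1-t)^{-1}]$ is correct and is exactly what the paper does there. The gap is in the passage from $U$ back to $T$.

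In your displayed localization sequence, the map you label $\partial$ is the Gysin pushforward $i_*:\Hom_{\SH(Z)}(\Lambda,\Lambda(1))\to\Hom_{\SH(T)}(\Lambda,\Lambda(2)[2])$, not a residue. Exactness together with $j^*[t][-t]=0$ only tells you that $[t][-t]=i_*(\alpha)$ for some $\alpha$. The Milnor--Witt residue goes in the opposite direction, from classes on $U$ (or on the function field) to classes on $Z$; it does not act on elements of $\Hom_{\SH(T)}$, and in particular ``the residue of $[t][-t]$ at $t=1$ vanishes'' is not a statement that feeds into the exact sequence you wrote. To finish along these lines you would need either $\Hom_{\SH(Z)}(\Lambda,\Lambda(1))=0$ (not addressed, and over $Z=\Spec\Z$ not obvious) or a retraction of $i_*$ that you can compute on $[t][-t]$, which you have not constructed. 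Your alternative via a Gersten injection $\pi_{2,2}\SS_T\hookrightarrow K^{\mathrm{MW}}_2(\Q(t))$ would work in principle but relies on Gersten-type exactness for $\SH$ over the two-dimensional regular scheme $\Spec\Z[t,t^{-1}]$, which is substantially heavier than what is needed here.

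The paper bypasses all of this by showing directly that $j^*$ is \emph{split} injective, without ever reducing to $\Spec\Z$. Working over the original base $S$, the cohomological motives of $\Gm_{,\,S}$ and $\Gm_{,\,S}\smallsetminus\{1\}$ in $\SH(S;\Lambda)$ are $\Lambda\oplus\Lambda(-1)[-1]$ and $\Lambda\oplus\Lambda(-1)[-1]\oplus\Lambda(-1)[-1]$ respectively, and by adjunction the restriction map
\[
\Hom_{\SH(S[t,t^{-1}];\Lambda)}(\Lambda,\Lambda(2)[2])\longrightarrow\Hom_{\SH(S[t,t^{-1},(1-t)^{-1}];\Lambda)}(\Lambda,\Lambda(2)[2])
\]
is obtained by applying $\Hom_{\SH(S;\Lambda)}(\Lambda,-)$ to the obvious split inclusion $\Lambda(2)[2]\oplus\Lambda(1)[1]\hookrightarrow\Lambda(2)[2]\oplus\Lambda(1)[1]\oplus\Lambda(1)[1]$. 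Injectivity is then immediate, and your computation on $U$ finishes the proof.
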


\begin{proof}
By functoriality, it is enough to prove that $[t][-t]=0$ in 
$\Hom_{\SH(S[t,t^{-1}];\Lambda)}(\Lambda,\Lambda(2)[2])$.
The cohomological motives of the $S$-schemes $S[t,t^{-1}]$ and 
$S[t,t^{-1},(1-t)^{-1}]$ are equal to 
$\Lambda\oplus \Lambda(-1)[-1]$ and 
$\Lambda\oplus \Lambda(-1)[-1]\oplus \Lambda(-1)[-1]$
respectively. By adjunction, the morphism 
\begin{equation}
\label{eq-lemma:[a][-a]-is-zero-in-SH-1}
\Hom_{\SH(S[t,t^{-1}];\Lambda)}(\Lambda,\Lambda(2)[2])
\to \Hom_{\SH(S[t,t^{-1},(1-t)^{-1}];\Lambda)}(\Lambda,\Lambda(2)[2])
\end{equation}
can be identified with the morphism obtained from 
$$\begin{pmatrix}1 & 0\\
0 & 1\\
0 & 0
\end{pmatrix}:\Lambda(2)[2]\oplus \Lambda(-1)[-1]
\to \Lambda(2)[2]\oplus \Lambda(-1)[-1]
\oplus \Lambda(-1)[-1]$$
by applying $\Hom_{\SH(S;\Lambda)}(\Lambda,-)$.
In particular, the morphism
\eqref{eq-lemma:[a][-a]-is-zero-in-SH-1}
is injective, and it is enough to show that 
$[t][-t]=0$ in $\Hom_{\SH(S[t,t^{-1},(1-t)^{-1}];\Lambda)}(\Lambda,\Lambda(2)[2])$. We will actually show this relation in 
${\rm K}_2^{\text{"MW"}}(S[t,t^{-1},(1-t)^{-1}])$
by repeating the proof in 
\cite[Lemma 3.7]{A1-alg-top}. Indeed, 
$1-t$ and $1-t^{-1}$ are invertible in 
$\mathcal{O}(S)[t,t^{-1},(1-t)^{-1}]$.
The Steinberg relation gives
$[t^{-1}][1-t^{-1}]=0$. But we have: 
$$[t^{-1}]=-\langle t^{-1}\rangle [t]
\qquad \text{and}\qquad 
[1-t^{-1}]=\left[\frac{1-t}{-t}\right]=[1-t]-\left\langle
\frac{1-t}{-t}\right \rangle[-t].$$
Putting these relations together, and using that $[t][1-t]=0$, 
we obtain the identity $[t][-t]=0$.
\end{proof}

\begin{cor}
\label{cor:angle-a-square-angle-is-one-}
For $a\in \mathcal{O}^{\times}(S)$, we have 
$$[a][a]=[-1][a]=[a][-1]
\qquad \text{and} \qquad \langle a^2\rangle=1$$
in $\Hom_{\SH(S;\Lambda)}(\Lambda,\Lambda(\bullet)[\bullet])$.
\end{cor}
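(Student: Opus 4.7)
The plan is to derive both identities purely formally from the single vanishing $[a][-a]=0$ supplied by Lemma \ref{lemma:[a][-a]-is-zero-in-SH}, combined with Morel's relations and the formal consequences collected in Remark \ref{rmk:easy-consequence-of-axioms-for-MW-K-th}, using the centrality of $\eta$ and of the symbols $\langle u\rangle$. All computations take place in the graded ring $\Hom_{\SH(S;\Lambda)}(\Lambda,\Lambda(\bullet)[\bullet])$. A crucial preliminary observation: applying Lemma \ref{lemma:[a][-a]-is-zero-in-SH} with $a$ replaced by $-a$ immediately gives $[-a][a]=0$ as well, so both vanishings are available \emph{without} needing to invoke graded commutativity in $\SH(S;\Lambda)$.

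First I will establish $[a][-1]=[-1][a]$. Expanding $[-a] = [a] + \langle a\rangle [-1]$ in each of $[a][-a]=0$ and $[-a][a]=0$, and exploiting the centrality of $\langle a\rangle$, produces the parallel relations
$\langle a\rangle\,[a][-1] \;=\; -[a][a] \;=\; \langle a\rangle\,[-1][a].$
Multiplying through by $\langle a^{-1}\rangle=\langle a\rangle^{-1}$ cancels the prefactor and delivers the claim.

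Next I will show $[a][a] = [a][-1]$. Expanding instead $[-a] = [-1] + \langle -1\rangle [a]$ in $[a][-a]=0$ gives $[a][-1] = -\langle -1\rangle\,[a][a]$, or equivalently (using $\langle -1\rangle^2 = \langle 1\rangle = 1$) $[a][a] = -\langle -1\rangle\,[a][-1]$. Their difference is
$(1+\langle -1\rangle)\,[a][-1] \;=\; [a]\cdot(1+\langle -1\rangle)[-1],$
and the last factor equals $[(-1)\cdot(-1)] = [1] = 0$ by the basic formula $[xy]=[x]+\langle x\rangle[y]$.

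Finally, for $\langle a^2\rangle = 1$: I expand
$\langle a^2\rangle = \langle a\rangle^2 = (1+\eta[a])^2 = 1 + 2\eta[a] + \eta^2[a][a]$
and substitute $[a][a]=[a][-1]$. Centrality of $\eta$ together with Morel's relation $\eta(2+\eta[-1])=0$ yields $\eta\langle -1\rangle = \eta(1+\eta[-1]) = -\eta$, and hence
$\eta^2[a][-1] \;=\; \eta[a]\,\bigl(\langle -1\rangle - 1\bigr) \;=\; -2\eta[a],$
which cancels the middle term and leaves $\langle a^2\rangle = 1$. The only mild subtlety is the first step: the equality $[-1][a]=[a][-1]$ is not immediate from Morel's relations over a general base, and the trick is to resolve it via the $a\leftrightarrow -a$ symmetry of the lemma rather than invoking the graded commutativity of $\SH(S;\Lambda)$.
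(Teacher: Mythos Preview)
Your proof is correct and uses essentially the same ingredients as the paper: the vanishing $[a][-a]=0$ (and $[-a][a]=0$) together with the expansion $[-a]=[-1]+\langle -1\rangle[a]$ and the centrality of $\langle -1\rangle$. The paper's argument is organized slightly differently and is a bit more direct: instead of your detour through $(1+\langle -1\rangle)[-1]=[1]=0$, it expands the second factor in $[a][a]$ as $[-(-a)]=[-1]+\langle -1\rangle[-a]$ and reads off $[a][a]=[a][-1]+\langle -1\rangle[a][-a]=[a][-1]$ in one step (and symmetrically for $[-1][a]$); the computation of $\langle a^2\rangle$ is then the same as yours up to using $[a][a]=[-1][a]$ rather than $[a][a]=[a][-1]$.
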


\begin{proof}
This is identical to the proof of the second and fourth parts of 
\cite[Lemma 3.7]{A1-alg-top}.
We compute using the relation $[a][-a]=0$:
$$[a][a]=[a][-(-a)]=[a]([-1]+[-a]+\eta[-1][-a])=[a][-1].$$
The relation $[a][a]=[-1][a]$ is proven similarly. 
For the relation $\langle a^2\rangle=1$, we note that
$$1+\eta[a^2]=1+\eta([a]+[a]+\eta[a][a])=1+\eta(2+\eta[-1])[a]$$
and conclude using the fourth of Morel's relations.
\end{proof}

\begin{cor}
\label{cor:anti-commuting-of-[a][b]-in-SH}
For $a, b\in \mathcal{O}^{\times}(S)$, we have 
$[a][b]=(-1)_{\epsilon}[b][a]$ in
$\Hom_{\SH(S;\Lambda)}(\Lambda,\Lambda(2)[2])$.
\end{cor}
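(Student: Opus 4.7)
The strategy mirrors Morel's derivation of the analogous identity in the Milnor--Witt $K$-theory of a field. Since the Steinberg relation is unavailable in full generality over a base scheme, the plan is to use the relation $[c][-c]=0$ from Lemma~\ref{lemma:[a][-a]-is-zero-in-SH} as a surrogate. All the raw materials are already in place: Morel's relations, the formulas $[a][a]=[-1][a]$, $[a][-1]=[-1][a]$, and $\langle a^2\rangle=1$ from Corollary~\ref{cor:angle-a-square-angle-is-one-}, and the fourth Morel relation $\eta(2+\eta[-1])=0$.

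I will first derive the auxiliary identity $(1+\langle -1\rangle)[-1][c]=0$ for every unit $c$: expand $0=[c][-c]$ using $[-c]=[-1]+\langle -1\rangle[c]$ and simplify with $[c][-1]=[-1][c]$ and $[c][c]=[-1][c]$. Then I apply $[c][-c]=0$ with $c=ab$, expand $[ab]=[a]+\langle a\rangle[b]$ and $[-ab]=[-1]+\langle -1\rangle[a]+\langle -a\rangle[b]$, and multiply out the nine resulting terms. Centrality of the symbols $\langle u\rangle$, the identity $\langle a\rangle\langle -a\rangle=\langle -1\rangle$ (from $\langle a^2\rangle=1$), the swap $[a][-1]=[-1][a]$, and the auxiliary identity above will collapse the sum to
\[
\eta[-1][a][b] + \langle -a\rangle\bigl([a][b]+[b][a]\bigr) = 0.
\]

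Multiplying through by $\langle -a\rangle$ and invoking the simplification $\langle -a\rangle\eta[-1][a]=\eta[-1][a]$, itself a short calculation based on $\langle a\rangle[a]=\langle -1\rangle[a]$ (a consequence of $\eta[a][a]=\eta[-1][a]$) and $\langle -1\rangle^2=1$, produces $\eta[-1][a][b]+[a][b]+[b][a]=0$. Substituting $\eta[-1]=\langle -1\rangle-1$ and cancelling yields $[b][a]=-\langle -1\rangle[a][b]$, and applying $\langle -1\rangle^2=1$ gives the desired identity $[a][b]=-\langle -1\rangle[b][a]=(-1)_{\epsilon}[b][a]$.

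The main obstacle is not conceptual but notational: tracking the order of the factors $[u][v]$ in the absence of full commutativity, and recognizing when coefficient combinations collapse against $(1+\langle -1\rangle)[-1][c]=0$ or against $\langle a\rangle[a]=\langle -1\rangle[a]$, requires careful bookkeeping. Once the correct groupings are spotted, the rest unfolds mechanically from the already established relations, with no need to revisit the reduction to a universal example used in Lemma~\ref{lemma:[a][-a]-is-zero-in-SH}.
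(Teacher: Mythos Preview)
Your proof is correct and follows essentially the same strategy as the paper: both expand $0=[ab][-ab]$ using Lemma~\ref{lemma:[a][-a]-is-zero-in-SH} and reduce via the auxiliary identity $(1+\langle -1\rangle)[-1][c]=0$ (which the paper derives at the end rather than the beginning). The only real difference is cosmetic: the paper expands $[-ab]=[-a]+\langle -a\rangle[b]$, which gives four terms and leads more directly to $\langle a\rangle([b][a]+\langle -1\rangle[a][b])+(1+\langle -1\rangle)[b][-1]=0$, whereas your three-term expansion of $[-ab]$ (six terms total, not nine) requires the extra pass of multiplying by $\langle -a\rangle$ and invoking $\langle -a\rangle[a]=[a]$; both routes land on $[b][a]=-\langle -1\rangle[a][b]$.
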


\begin{proof}
This is identical to the proof of the third part of 
\cite[Lemma 3.7]{A1-alg-top}.
Using Lemma
\ref{lemma:[a][-a]-is-zero-in-SH}
and Corollary 
\ref{cor:angle-a-square-angle-is-one-}, we have 
of equalities
$$\begin{array}{rcl}
0 & = & [ab][-ab]\\
& = & ([a]+\langle a \rangle [b])([-a]+\langle -a\rangle[b])\\
& = & \langle a \rangle [b][-a]+\langle -a\rangle [a][b]
+\langle -a^2\rangle [b][b]\\
& = & \langle a \rangle[b][-a]+\langle -a\rangle [a][b]+
\langle -1 \rangle [b][-1]\\
& = & \langle a \rangle[b]([a]+\langle a\rangle [-1])
+\langle -a\rangle [a][b]+
\langle -1 \rangle [b][-1]\\
& = & \langle a \rangle([b][a]+\langle -1\rangle [a][b])+
[b][-1]+\langle -1\rangle [b][-1].
\end{array}$$
So, to conclude, it remains to show that 
$[b][-1]+\langle -1\rangle [b][-1]=0$. 
Using again Lemma
\ref{lemma:[a][-a]-is-zero-in-SH} 
and Corollary 
\ref{cor:angle-a-square-angle-is-one-},
we have
$$\begin{array}{rcl}
0 & = & [b][-b]\\
& = & [b]([-1]+\langle -1 \rangle [b])\\
& = & [b][-1]+\langle -1\rangle [b][b]\\
& = & [b][-1]+\langle -1 \rangle [b][-1]
\end{array}$$
as needed. This finishes the proof.
\end{proof}

\begin{cor}
\label{cor:[a-power-m]-is-m-epsilon-[a]}
For $a\in \mathcal{O}^{\times}(S)$ and $m\in \Z$,
we have $[a^m]=m_{\epsilon}[a]$ in 
$\Hom_{\SH(S;\Lambda)}(\Lambda,\Lambda(1)[1])$.
\end{cor}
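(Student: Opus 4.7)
The plan is to prove the identity by induction on $m$, using the fact that both $[a^m]$ and the scalar $m_\epsilon$ satisfy compatible recursions with respect to $m \mapsto m+1$. The base cases are immediate: $[a^0] = [1] = 0 = 0_\epsilon[a]$ by Remark \ref{rmk:easy-consequence-of-axioms-for-MW-K-th}, and $[a^1] = [a] = 1_\epsilon [a]$ tautologically.

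The key geometric input is the identity $\langle a\rangle[a] = \langle -1\rangle[a]$. Expanding $\langle u\rangle = 1 + \eta[u]$ and using the centrality of $\eta$, this reduces to $\eta\,[a][a] = \eta\,[-1][a]$, which is a direct consequence of the relation $[a][a] = [-1][a]$ established in Corollary \ref{cor:angle-a-square-angle-is-one-}.

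For the upward induction step, assuming $[a^m] = m_\epsilon[a]$, I would compute
$$[a^{m+1}] = [a \cdot a^m] = [a] + \langle a\rangle[a^m] = [a] + m_\epsilon \langle a\rangle[a] = [a] + m_\epsilon \langle -1\rangle[a] = (1 + m_\epsilon \langle -1\rangle)[a],$$
using the second bullet of Remark \ref{rmk:easy-consequence-of-axioms-for-MW-K-th}, the centrality of $\langle u\rangle$ and of $\eta$, and the identity of the previous paragraph (together with the observation that $m_\epsilon$, being a $\Z$-linear combination of $1$ and $\langle -1\rangle$, commutes with $\langle a\rangle$). A direct verification from the ceiling--floor formula shows $(m+1)_\epsilon = 1 + m_\epsilon \langle -1\rangle$, giving $[a^{m+1}] = (m+1)_\epsilon[a]$. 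For the downward step, one inverts the recursion using $\langle -1\rangle^2 = 1$ (a special case of $\langle a^2\rangle = 1$ from Corollary \ref{cor:angle-a-square-angle-is-one-}): the scalar relation rearranges to $m_\epsilon = \langle -1\rangle((m+1)_\epsilon - 1)$, and the analogous manipulation on the motivic side yields $[a^{m-1}] = \langle -1\rangle([a^m] - [a])$, propagating the identity to all negative integers.

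No serious obstacle is anticipated; the only bookkeeping is to verify that the scalar recursion $(m+1)_\epsilon = 1 + m_\epsilon\langle -1\rangle$ holds uniformly for all $m \in \Z$ from the piecewise ceiling--floor definition, which is a straightforward case analysis on the parity and sign of $m$.
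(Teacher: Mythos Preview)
Your proof is correct and follows essentially the same route as the paper. Both arguments establish the positive case by induction, using $[a^m]=[a]+\langle a\rangle[a^{m-1}]$ together with the identity $[a][a]=[-1][a]$ from Corollary~\ref{cor:angle-a-square-angle-is-one-} (which is exactly your $\langle a\rangle[a]=\langle -1\rangle[a]$), and then verify the scalar recursion $m_\epsilon=1+(m-1)_\epsilon\langle -1\rangle$. The only cosmetic difference is the treatment of negative $m$: the paper simply applies the positive case to $a^{-1}$ (using $[a^{-1}]=-\langle a^{-1}\rangle[a]$ and $(-m)_\epsilon=-m_\epsilon\langle -1\rangle$), whereas you run the induction downward by inverting the recursion via $\langle a\rangle^2=\langle a^2\rangle=1$; both are routine.
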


\begin{proof}
This is identical to the proof of 
\cite[Lemma 3.14]{A1-alg-top}. Arguing by induction, 
we have for $m\geq 1$:
$$\begin{array}{rcl}
[a^m] & = & [a]+[a^{m-1}]+\eta[a][a^{m-1}]\\
& = & [a]+ (m-1)_{\epsilon}[a]+(m-1)_{\epsilon}\eta[a][a]\\
& = & [a]+ (m-1)_{\epsilon}[a]+(m-1)_{\epsilon}\eta[-1][a]\\
& = & (1+(m-1)_{\epsilon}\langle -1\rangle)[a]\\
& = & m_{\epsilon}[a]. 
\end{array}$$
The case $m<0$ is obtained by obtained by applying the case 
$m>0$ to $a^{-1}$.
\end{proof}

If $-1$ is a square in $S$, Corollary
\ref{cor:angle-a-square-angle-is-one-}
implies that $\langle -1\rangle =1$
in $\Hom_{\SH(S;\Lambda)}(\Lambda,\Lambda)$.
It follows that $m_{\epsilon}=m$ for all $m\in \Z$.
This shows that Proposition
\ref{prop:action-of-elevation-power-m-on-Gm}
follows from the following more general statement.  

\begin{prop}
\label{prop:action-of-elevation-power-m-on-Gm-general}
Let $S$ be any scheme.
Then, for every integer $m\in \Z$, elevation to the $m$-th power
on $\Gm_{,\,S}$ is given by the matrix
$$\begin{pmatrix} 1 & 0\\
0 & m_{\epsilon}
\end{pmatrix}:\Lambda \oplus \Lambda(1)[1] \to 
\Lambda \oplus \Lambda(1)[1]$$
on the associated homological motive in $\SH(S;\Lambda)$.
\end{prop}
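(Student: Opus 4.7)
The plan is to reduce the computation to the identity $[t^m] = m_\epsilon [t]$ already established in Corollary \ref{cor:[a-power-m]-is-m-epsilon-[a]}, applied to the tautological unit $t \in \mathcal{O}^\times(\Gm_{,\,S})$. First I would observe that the $m$-th power map $[m]: \Gm_{,\,S} \to \Gm_{,\,S}$ satisfies $[m] \circ e = e$ (where $e: S \to \Gm_{,\,S}$ is the unit section) and $\pi \circ [m] = \pi$ (where $\pi: \Gm_{,\,S} \to S$ is the structural morphism). Consequently $M([m])$ commutes with both the inclusion $M(e): \Lambda \to M(\Gm_{,\,S})$ and the projection $M(\pi): M(\Gm_{,\,S}) \to \Lambda$, so it preserves the direct summand decomposition $M(\Gm_{,\,S}) = \Lambda \oplus \Lambda(1)[1]$ and acts as the identity on the first summand. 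Thus $M([m])$ takes the form $\begin{pmatrix} 1 & 0 \\ 0 & \alpha_m \end{pmatrix}$ for some endomorphism $\alpha_m$ of $\Lambda(1)[1]$, and the only task is to show $\alpha_m = m_\epsilon$.

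Next I would identify $\alpha_m$ by composing $M([m])$ with the tautological projection $\tau: M(\Gm_{,\,S}) \to \Lambda(1)[1]$ (the second component of the splitting). Under the adjunction
$$\Hom_{\SH(S;\Lambda)}(M(\Gm_{,\,S}), \Lambda(1)[1]) \simeq \Hom_{\SH(\Gm_{,\,S};\Lambda)}(\Lambda, \Lambda(1)[1]),$$
the morphism $\tau$ corresponds to the class $[t]$, where $t \in \mathcal{O}^\times(\Gm_{,\,S})$ is the tautological coordinate. (This is the very definition of $[t]$: for any smooth $X/S$ and any $a \in \mathcal{O}^\times(X)$ one has $[a] = a^*[t]$, and conversely $\tau \circ M(a)$ corresponds to $a^*[t]$ under the adjunction.) By naturality of this adjunction with respect to $[m]: \Gm_{,\,S} \to \Gm_{,\,S}$, the composite $\tau \circ M([m])$ corresponds to $[m]^*[t] = [t^m] \in \Hom_{\SH(\Gm_{,\,S};\Lambda)}(\Lambda, \Lambda(1)[1])$. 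Applying Corollary \ref{cor:[a-power-m]-is-m-epsilon-[a]} over the base $\Gm_{,\,S}$ then gives $[t^m] = m_\epsilon [t]$, which unwinds back to $\tau \circ M([m]) = m_\epsilon \cdot \tau$ as morphisms in $\SH(S;\Lambda)$.

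Finally, composing on the right with the inclusion $\iota: \Lambda(1)[1] \hookrightarrow M(\Gm_{,\,S})$ of the second summand, which satisfies $\tau \circ \iota = \id$, yields $\alpha_m = \tau \circ M([m]) \circ \iota = m_\epsilon$, as desired. There is essentially no obstacle here: once the tautological generator $\tau$ is recognised as the element $[t]$ via the sharp/upper-star adjunction, the proposition becomes a direct translation of the identity $[t^m] = m_\epsilon[t]$ already proved in Corollary \ref{cor:[a-power-m]-is-m-epsilon-[a]}. The only mild care required is to verify that $m_\epsilon$ makes sense as an element of $\Hom_{\SH(S;\Lambda)}(\Lambda,\Lambda)$ over a general base; but this is automatic, since both of its defining ingredients, the Hopf map $\eta$ and the class $[-1]$, are globally defined in any $\SH(S;\Lambda)$.
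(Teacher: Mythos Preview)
Your proof is correct and follows essentially the same route as the paper: both arguments reduce the computation of the off-diagonal entry to the identity $[t^m]=m_{\epsilon}[t]$ from Corollary~\ref{cor:[a-power-m]-is-m-epsilon-[a]}, applied to the tautological unit over $\Gm_{,\,S}$. The only cosmetic difference is that the paper passes to cohomological motives and uses the $(\pi^*,\pi_*)$ adjunction to decompose $\Hom_{\SH(\Gm_{,\,S};\Lambda)}(\Lambda,\Lambda(1)[1])$, whereas you work directly with homological motives via $(\pi_\sharp,\pi^*)$; since $\M(\Gm_{,\,S})$ is dualizable these are equivalent viewpoints.
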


\begin{proof}
It is more convenient to show that elevation to the $m$-th power
induces the matrix 
$$\begin{pmatrix} 1 & 0\\
0 & m_{\epsilon}
\end{pmatrix}:\Lambda \oplus \Lambda(-1)[-1] \to 
\Lambda \oplus \Lambda(-1)[-1]$$
on cohomological motives (rather than homological motives).
The fact that the matrix is diagonal with first entry $1$
follows from the fact that elevation to the $m$-th power 
preserves the unit section. It remains to determine the last entry of the 
matrix. Using the isomorphisms
$$\begin{array}{rcl}
\Hom_{\SH(S[t,t^{-1}];\Lambda)}(\Lambda,\Lambda(1)[1]) & 
\simeq & 
\Hom_{\SH(S;\Lambda)}(\Lambda,\Lambda(1)[1]\oplus \Lambda)\\
& \simeq & \Hom_{\SH(S;\Lambda)}(\Lambda,\Lambda(1)[1])
\oplus 
\Hom_{\SH(S;\Lambda)}(\Lambda,\Lambda)\cdot [t],
\end{array}$$
the result follows from the equality 
$[t^m]=m_{\epsilon}[t]$ in 
$\Hom_{\SH(S[t,t^{-1}];\Lambda)}(\Lambda,\Lambda(1)[1])$.
\end{proof}

\bibliographystyle{alpha}
\bibliography{Bib}

\end{document}